\newcommand{\nc}{\newcommand}
\newenvironment{rouge}
{\relax\color{red}}
{\hspace*{.3ex}\relax}
\newcommand{\ber}{\begin{rouge}{}\marginnote{\mbox{$\bullet$}}{}}
\newcommand{\er}{\end{rouge}}
\newcommand{\bera}{\begin{rouge}{}\marginnote{\fbox{\scshape\lowercase{A}}}{}}
\newcommand{\berm}{\begin{rouge}{}\marginnote{\fbox{\scshape\lowercase{M}}}{}}
\newcommand{\erm}{\end{rouge}}
\newenvironment{bleu}
{\relax\color{blue}}
{\hspace*{.3ex}\relax}
\newcommand{\beb}{\begin{bleu}}
\newcommand{\bebm}{\begin{bleu}{}\marginnote{\fbox{\scshape\lowercase{M}}}{}}
\newcommand{\beba}{\begin{bleu}{}\marginnote{\fbox{\scshape\lowercase{A}}}{}}
\newcommand{\eb}{\end{bleu}}
\renewcommand{\leq}{\leqslant}
\renewcommand{\geq}{\geqslant}
\numberwithin{equation}{subsection}
\theoremstyle{plain}
\newtheorem{theorem}{Theorem}[subsection]
\newtheorem*{theorem*}{Theorem}
\newtheorem{corollary}[theorem]{Corollary}
\newtheorem{proposition}[theorem]{Proposition}
\newtheorem{lemma}[theorem]{Lemma}
\theoremstyle{definition}
\newtheorem{definition}[theorem]{Definition}
\newtheorem{example}[theorem]{Example}
\newtheorem*{example*}{Example}
\newtheorem{notation}[theorem]{Notation}
\newtheorem{remark}[theorem]{Remark}
\nc{\Lemma}{\begin{lemma}}
\nc{\enlemma}{\end{lemma}}
\nc{\Prop}{\begin{proposition}}
\nc{\enprop}{\end{proposition}}
\nc{\Def}{\begin{definition}}
\nc{\edf}{\end{definition}}
\renewcommand{\emptyset}{\varnothing}
\newcommand{\point}{{\{\mathrm{pt}\}}}
\nc{\scup}{\mathop{\scalebox{.8}{$\displaystyle\bigcup$}}\mspace{1mu}\limits}
\nc{\scap}{\mathop{\scalebox{.8}{$\displaystyle\bigcap$}}\limits}
\nc{\ssqcup}{\mathop{\scalebox{.8}{$\displaystyle\bigsqcup$}}\limits}
\newcommand{\DUnion}{\bigsqcup\limits}
\newcommand{\union}{\cup}
\newcommand{\Union}{\bigcup\limits}
\newcommand{\Inter}{\bigcap\limits}
\newcommand{\C}{\mathbb{C}}
\newcommand{\R}{\mathbb{R}}
\newcommand{\Z}{\mathbb{Z}}
\newcommand{\op}{\mathrm{op}}
\DeclareMathOperator{\id}{id}
\newcommand{\derived}[1]{\mathrm{#1}}
\newcommand{\derd}{\derived{D}}
\newcommand{\dere}{\derived{E}}
\newcommand{\derr}{\derived{R}}
\newcommand{\derl}{\derived{L}}
\nc{\derb}{\derd^{\mathrm{b}}}
\newcommand{\BDC}{\derd^{\mathrm{b}}}
\nc{\soplus}{\scalebox{.65}{\raisebox{.2ex}{$\displaystyle\bigoplus$}}}
\newcommand{\indsum}{\mathop{\raisebox{.2ex}{\rm``$\soplus$''}}}
\newcommand{\DSum}{\mathop{\bigoplus}}
\newcommand{\dsum}[1][]{\mathbin{\oplus_{#1}}}
\newcommand{\ilim}[1][]{\mathop{\varinjlim}\limits_{#1}}
\DeclareMathOperator{\coker}{coker}
\DeclareMathOperator{\im}{im}
\DeclareMathOperator{\coim}{coim}
\newcommand{\comp}{\mathbin{\circ}}
\renewcommand{\to}[1][]{\xrightarrow{#1}}
\newcommand{\from}[1][]{\xleftarrow{#1}}
\newcommand{\isofrom}[1][]{\xleftarrow[#1]%
{\raisebox{-.4ex}[0ex][-.4ex]{$\mspace{2mu}\sim\mspace{2mu}$}}}
\newcommand{\isoto}[1][]{\xrightarrow[#1]{%
{\raisebox{-.6ex}[0ex][0ex]{$\mspace{1mu}\sim\mspace{2mu}$}}}}
\newcommand{\Endo}[1][]{\mathrm{End}_{\raise1.5ex\hbox to.1em{}#1}}
\newcommand{\Hom}[1][]{\mathrm{Hom}_{\raise1.5ex\hbox to.1em{}#1}}
\newcommand{\RHom}[1][]{\derr\mathrm{Hom}_{\raise1.5ex\hbox to.1em{}#1}}
\newcommand{\Ext}[2][]{\mathrm{Ext}_{\raise1.5ex\hbox to.1em{}#1}^{#2}}
\newcommand{\Mod}{\mathrm{Mod}}
\newcommand{\Tens}[1][]{\mathbin{\otimes_{\raise1.5ex\hbox to-.1em{}#1}}}
\newcommand{\LTens}[1][]{\mathbin{\otimes_{\raise1.5ex\hbox to-.1em{}#1}^{\derl}}}
\newcommand{\Tor}[2][]{\mathrm{Tor}^{\raise1.5ex\hbox to.1em{}#1}_{#2}}
\newcommand{\sheaffont}[1]{\mathcal{#1}}
\def\shc{\sheaffont{C}}
\def\shd{\sheaffont{D}}
\def\shi{\sheaffont{I}}
\def\shl{\sheaffont{L}}
\def\shm{\sheaffont{M}}
\def\shn{\sheaffont{N}}
\def\sho{\sheaffont{O}}
\def\shp{\sheaffont{P}}
\def\shr{\sheaffont{R}}
\def\shs{\sheaffont{S}}
\newcommand{\sect}{\varGamma}
\newcommand{\rsect}{\derr\varGamma}
\newcommand{\shendo}[1][]{{\sheaffont{E}nd}_{\raise1.5ex\hbox to.1em{}#1}}
\renewcommand{\hom}[1][]{{\sheaffont{H}om}_{\raise1.5ex\hbox to.1em{}#1}}
\newcommand{\aut}[1][]{{\sheaffont{A}ut}_{\raise1.5ex\hbox to.1em{}#1}}
\newcommand{\inn}[1][]{{\sheaffont{I}nn}_{\raise1.5ex\hbox to.1em{}#1}}
\newcommand{\rhom}[1][]{{\derr\sheaffont{H}om}_{\raise1.5ex\hbox to.1em{}#1}}
\newcommand{\ext}[2][]{{\sheaffont{E}xt}_{\raise1.5ex\hbox to.1em{}#1}^{#2}}
\newcommand{\thom}[1][]{{\sheaffont{T}hom}_{\raise1.5ex\hbox to.1em{}#1}}
\newcommand{\tens}[1][]{\mathbin{\otimes_{\raise1.5ex\hbox to-.1em{}#1}}}
\newcommand{\ltens}[1][]{\mathbin{\otimes_{\raise1.5ex\hbox to-.1em{}#1}^{\derl}}}
\newcommand{\tor}[2][]{{\sheaffont{T}or}^{\raise1.5ex\hbox to.1em{}#1}_{#2}}
\newcommand{\etens}[1][]{\mathbin{\boxtimes_{\raise1.5ex\hbox to-.1em{}#1}}}
\DeclareMathOperator{\supp}{supp}
\newcommand{\oim}[1]{#1_*}
\newcommand{\eim}[1]{#1_!}
\newcommand{\eeim}[1]{#1_{!!}}
\newcommand{\roim}[1]{\derr#1_*}
\newcommand{\roimv}[1]{\derr#1}
\newcommand{\reim}[1]{\derr#1_{\mspace{.5mu}!}\mspace{2mu}}
\newcommand{\reimv}[1]{\derr#1\mspace{2mu}}
\newcommand{\reeim}[1]{\derr#1_{\mspace{1mu}!!}\mspace{1mu}}
\newcommand{\opb}[1]{#1^{-1}}
\newcommand{\epb}[1]{#1^{\mspace{1.5mu}!}\mspace{2mu}}
\DeclareMathOperator{\ori}{or}
\newcommand{\tenstop}[1][]{\mathbin{\hat{\otimes}_{\raise1.5ex\hbox to-.1em{}#1}}}
\newcommand{\homtop}[1][]{\sheaffont{L}_{\raise1.5ex\hbox to.1em{}#1}}
\newcommand{\Homtop}[1][]{\mathrm{L}_{\raise1.5ex\hbox to.1em{}#1}}
\newcommand{\D}{\sheaffont{D}}
\renewcommand{\O}{\sheaffont{O}}
\newcommand{\Db}{\sheaffont{D}b}
\DeclareMathOperator{\chv}{char}
\newcommand{\detens}[1][]%
{\mathbin{\boxtimes_{\raise1.5ex\hbox to-.1em{}#1}^{\mspace{2mu}\mathsf{D}}}}
\newcommand{\doim}[1]{{\mathsf{D}#1}_*\mspace{1mu}}
\newcommand{\dopb}[1]{{\mathsf{D}#1}^{\mspace{1mu}*}}
\newcommand{\dtens}[1][]{\mathbin{\otimes_{\raise1.5ex\hbox to-.1em{}#1}^{\mathsf{D}}}}
\newcommand{\ddual}[1][X]{\mathbb{D}_{#1}}
\newcommand{\hol}{\mathrm{hol}}
\newcommand{\reghol}{{\mathrm{rh}}}
\newcommand{\Cfield}{\C}
\newcommand{\iCfield}{\ind\C}
\newcommand{\field}{\mathbf{k}}
\newcommand{\ind}{\mathrm{I}\mspace{2mu}}
\newcommand{\ifield}{\ind\field}
\newcommand{\Rc}{{\R\text-\mathrm{c}}}
\newcommand{\Cc}{{\C\text-\mathrm{c}}}
\newcommand{\cl}{\colon}
\newcommand{\ctens}{\mathbin{\mathop\otimes\limits^+}}
\newcommand{\cihom}{{\shi hom}^+}
\newcommand{\PR}{\mathsf{P}}
\newcommand{\sa}{\mathrm{sa}}
\newcommand{\tmp}{\mathsf{t}}
\newcommand{\dr}{\mathcal{DR}}
\newcommand{\sol}[1][X]{\mathcal Sol_{#1}}
\newcommand{\Dbt}{\Db^{\mspace{1mu}\tmp}}
\newcommand{\Ot}{\O^{\mspace{2mu}\tmp}}
\renewcommand{\Im}{\operatorname{Im}}
\newcommand{\ihom}[1][]{{\shi hom}_{\raise1.5ex\hbox to.1em{}#1}}
\newcommand{\rihom}[1][]{{\derr\mspace{2mu}\shi hom}_{\raise1.5ex\hbox to.1em{}#1}}
\newcommand{\ii}[1][]{{\sheaffont{I}h}_{\raise1.5ex\hbox to.1em{}#1}}
\newcommand{\indlim}[1][]{\mathop{\text{\rm``$\varinjlim$''}}\limits_{#1}}
\newcommand{\prolim}[1][]{\mathop{\text{\rm``$\varprojlim$''}}\limits_{#1}}
\renewcommand{\comp}[1][]{\mathbin{\mathop{\circ}\limits_{#1}}}
\newcommand{\dcomp}[1][]{\mathbin{\circ_{\raise1.5ex\hbox to-.1em{}#1}^{\mathsf{D}}}}
\newcommand{\enh}{\derived{E}}
\newcommand{\Tmp}{\derived{T}}
\newcommand{\OEn}{\O^\enh}
\newcommand{\DbT}{\Db^\Tmp}
\newcommand{\DbE}{\Db^\enh}
\newcommand{\OvE}{\Omega^\enh}
\newcommand{\drE}[1][X]{\mathcal{DR}^\enh_{#1}}
\newcommand{\solE}[1][X]{\mathcal{S}ol^{\mspace{1mu}\enh}_{#1}}
\newcommand{\fhom}{\mathcal{H}om^\enh}
\newcommand{\BEC}[2][\ifield]{\dere^{\mathrm{b}}(#1_{#2})}
\newcommand{\BECp}[2][\ifield]{\dere^{\mathrm{b}}_+(#1_{#2})}
\newcommand{\BECpm}[2][\ifield]{\dere^{\mathrm{b}}_\pm(#1_{#2})}
\newcommand{\Edual}{\dual^\enh}
\newcommand{\Eoim}[1]{{\enh#1}_*}
\newcommand{\Eoimv}[1]{{\enh#1}}
\newcommand{\Eeeim}[1]{{\enh#1}_{!!}}
\newcommand{\Eopb}[1]{{\enh#1}^{-1}}
\newcommand{\Eopbv}[1]{{\enh#1}}
\newcommand{\Eepb}[1]{{\enh\mspace{1mu}#1}^{\mspace{1.5mu}!}}
\newcommand{\Eepbv}[1]{{\enh\mspace{1mu}#1}}
\newcommand{\LE}{\operatorname{L^\enh}}
\newcommand{\RE}{\operatorname{R^\enh}}
\newcommand{\suban}{{\operatorname{suban}}}
\newcommand{\semicolon}{\nobreak \mskip2mu\mathpunct{}\nonscript\mkern-\thinmuskip{;}\mskip6mu plus1mu\relax}
\newcommand{\dual}{\mathrm{D}}
\newcommand{\sep}{\mspace{2mu}}
\newcommand{\defeq}{\mathbin{:=}}
\newcommand{\bl}{\bigl(}
\newcommand{\br}{\bigr)}
\newcommand{\To}[1][]{\xrightarrow[]{\mspace{10mu}{#1}\mspace{10mu}}}
\newenvironment{myarray}[1]{\relax\setlength{\arraycolsep}{1pt}

\begin{array}{#1}}{\end{array}\relax}
\newcommand{\ba}{\begin{myarray}}
\newcommand{\ea}{\end{myarray}}
\newcommand{\hs}{\hspace*}
\newcommand{\be}{\begin{enumerate}}
\newcommand{\ee}{\end{enumerate}}
\newcommand{\bnum}{\be[{\rm(i)}]}
\newcommand{\bna}{\be[{\rm(a)}]}
\nc{\bwr}{\mbox{\large{$\wr$}}}
\nc{\vphi}{\varphi}
\nc{\seteq}{\mathbin{:=}}
\nc{\noi}{\noindent}
\nc{\ro}{{\rm(}}
\nc{\rf}{{\rm)}\xspace}
\nc{\ms}{\mspace}
\nc{\sbcup}{\mathop{\scalebox{0.75}{$\displaystyle\bigcup$}}}
\nc{\ol}{\overline}
\nc{\scbul}{{\,\raise1pt\hbox{$\scriptscriptstyle\bullet$}\,}}
\nc{\set}[2]{\left\{#1\;\semicolon\; #2 \right\}}
\nc{\extp}{\mathop{\raisebox{.3ex}{\scalebox{0.8}{$\displaystyle\bigwedge$}}}\limits}
\newenvironment{myequation}
{\relax\setlength{\arraycolsep}{1pt}\begin{eqnarray}}
{\end{eqnarray}}
\newenvironment{myequationn}
{\relax\setlength{\arraycolsep}{1pt}\begin{eqnarray*}}
{\end{eqnarray*}}
\newenvironment{myalign}
{\relax\begin{align}}
{\end{align}}
\newenvironment{myalignn}
{\relax\begin{align*}}
{\relax\end{align*}}
\nc{\eq}{\begin{myequation}}
\nc{\eneq}{\end{myequation}}
\nc{\eqn}{\begin{myequationn}}
\nc{\eneqn}{\end{myequationn}}
\nc{\eqa}{\begin{myalign}}
\nc{\eneqa}{\end{myalign}}
\nc{\eqan}{\begin{myalignn}}
\nc{\eneqan}{\end{myalignn}}
\nc{\on}{\operatorname}
\nc{\Ind}{\on{Ind}}
\nc{\Proof}{\begin{proof}}
\nc{\QED}{\end{proof}}
\nc{\cor}{\field}
\nc{\tone}{\To[+1]}
\renewcommand{\ge}{\geq}
\renewcommand{\le}{\leq}
\newcommand{\CS}{\operatorname{CS}}
\newcommand{\LCS}{\operatorname{LCS}}
\newcommand{\fihom}{\shi hom^\enh}
\newcommand{\LEpm}{\operatorname{L}^\enh_\pm}
\newcommand{\REpm}{\operatorname{R}^\enh_\pm}
\renewcommand{\tor}{\mathrm{tor}}
\newcommand{\category}[1]{\mathcal{#1}}
\newcommand{\catc}{\category{C}}
\newcommand{\catn}{\category{N}}
\newcommand{\cats}{\category{S}}
\newcommand{\catt}{\category{T}}
\newcommand{\derdR}[2][]{\derd^{#1}(#2\times\bR)}
\newcommand{\bordered}[1]{{\mathsf{#1}}}
\newcommand{\bopen}[1]{{#1}}
\newcommand{\bclose}[1]{{\accentset{\vee}{#1}}}
\newcommand{\unbordered}[1]{{\accentset{\circ}{#1}}}
\newcommand{\bR}{{\R_\infty}}
\newcommand{\oR}{\R}
\newcommand{\cR}{{\overline\R}}
\newcommand{\bM}{\bordered{M}}
\nc{\unb}{\unbordered}
\nc{\eps}{\varepsilon}
\nc{\inb}{\inbordered}
\nc{\colim}{\varinjlim\limits}
\nc{\ssubset}{\subset\ms{-3mu}\subset}
\nc{\al}{\alpha}
\nc{\qtq}[1][and]{\quad\text{#1}\quad}
\nc{\olG}[1][f]{{\overset{\ms{4mu}\rule[-.05ex]{1.6ex}{.115ex}}{\Gamma}}_{%
\ms{-3mu}#1}}
\newcommand{\unbM}{\unbordered{\bM}}
\newcommand{\sunbM}{\accentset{\scalebox{.35}{$\circ$}}{\bM}}
\newcommand{\oM}{\bopen{M}}
\newcommand{\cM}{\bclose{M}}
\newcommand{\bN}{\bordered{N}}
\newcommand{\unbN}{\unbordered{\bN}}
\newcommand{\oN}{\bopen{N}}
\newcommand{\cN}{\bclose{N}}
\nc{\cf}{\bclose{f}}
\newcommand{\inbordered}[1]{{#1_\infty}}
\newcommand{\bZ}{\inbordered{Z}}
\newcommand{\bS}{\inbordered{S}}
\newcommand{\oZ}{Z}
\newcommand{\bU}{\inbordered{U}}
\newcommand{\quot}{\derived Q}
\newcommand{\Qfield}{\field^\quot}
\newcommand{\Efield}{\field^\enh}
\newcommand{\Qdual}{\dual^\quot}
\newcommand{\ECfield}{\C^\enh}
\newcommand{\Dp}[2][p]{\tensor*[^{#1}]{\derd}{^{#2}}}
\newcommand{\Dprc}[2][p]{\tensor*[^{#1}]{\derd}{_\Rc^{#2}}}
\newcommand{\Dmid}[1]{\Dprc[1/2]{#1}}
\newcommand{\tEp}[2][p]{\tensor*[_{#1}]{\dere}{^{#2}}}
\newcommand{\Erc}{\dere_\Rc}
\newcommand{\tEprc}[2][p]{\tensor*[_{#1}]{\dere}{_\Rc^{#2}}}
\newcommand{\dEprc}[2][p]{\tensor*[^\prime_{#1}]{\dere}{_\Rc^{#2}}}
\newcommand{\Eprc}[2][p]{\tensor*[^{#1}]{\dere}{_\Rc^{#2}}}
\newcommand{\Emid}[1]{\Eprc[1/2]{#1}}
\newcommand{\st}{\mathrm{st}}
\begin{document}
\title{Enhanced perversities}

\author[A.~D'Agnolo]{Andrea D'Agnolo}
\address{Dipartimento di Matematica\\
Universit{\`a} di Padova\\
via Trieste 63, 35121 Padova, Italy}
\email{dagnolo@math.unipd.it}

\author[M.~Kashiwara]{Masaki Kashiwara}
\address{Research Institute for Mathematical Sciences\\
Kyoto University\\
Kyoto 606-8502, Japan}
\email{masaki@kurims.kyoto-u.ac.jp}

\thanks{The second author
was supported by Grant-in-Aid for Scientific Research (B)
15H03608, Japan Society for the Promotion of Science.}

\keywords{holonomic D-modules, t-structure, perversity,
enhanced ind-sheaves}
\subjclass[2010]{Primary 18D, 32C38; Secondary 18E30}

\date{September 12, 2015}

\begin{abstract}
On a complex manifold, the Riemann-Hilbert correspondence embeds the triangulated category of (not necessarily regular) holonomic $\D$-modules into that of $\R$-constructible enhanced ind-sheaves. The source category has a standard
t-structure.
Here, we provide the target category with a middle perversity t-structure, and prove that the embedding is exact.

In the paper, we also discuss general perversities in the framework of $\R$-constructible enhanced ind-sheaves on bordered subanalytic spaces.
\end{abstract}

\maketitle

\tableofcontents

\section*{Introduction}
On a complex manifold $X$, the classical Riemann-Hilbert correspondence establishes an equivalence
\[\dr_X \colon \BDC_\reghol(\D_X)\isoto\BDC_\Cc(\C_X)\]
between the derived category of $\D_X$-modules with regular holonomic cohomologies, and the derived category of sheaves of $\C$-vector spaces on $X$ with $\C$-constructible cohomologies (\cite{Kas84}).
Here, $\dr_X(\shm) = \Omega_X\ltens[\D_X]\shm$ is the de Rham functor, and $\Omega_X$ the sheaf of top-degree holomorphic differential forms.
Moreover, the functor $\dr_X$ interchanges the standard t-structure on $\BDC_\reghol(\D_X)$ with the middle perversity t-structure on $\BDC_\Cc(\C_X)$. In particular, $\dr_X$ induces an equivalence between the abelian category of regular holonomic $\D_X$-modules and that of perverse sheaves on $X$.

The Riemann-Hilbert correspondence of \cite{DK13} provides a fully faithful embedding
\[
\xymatrix{
\drE \colon \BDC_\hol(\D_X) \ar@{ >->}[r] & \dere^{\mathrm{b}}_\Rc(\iCfield_X)
}
\]
from the derived category of $\D_X$-modules with (not necessarily regular) holonomic cohomologies, into the triangulated category of $\R$-constructible enhanced ind-sheaves of $\C$-vector spaces on $X$. Here, $\drE$ is the enhanced version of the de Rham functor.
The source category $\BDC_\hol(\D_X)$ has a standard t-structure.
In this paper, we provide the target category $\dere^{\mathrm{b}}_\Rc(\iCfield_X)$ with a \emph{generalized} middle perversity t-structure, and prove that $\drE$ is an exact functor.

Generalized t-structures have been introduced in \cite{Kas15}, as a reinterpretation of the notion of
slicing from \cite{Bri07}.
For example, let $\BDC_\Rc(\C_X)$ be the derived category of sheaves of $\C$-vector spaces on $X$ with
$\R$-constructible cohomologies.
Then, if $X$ has positive dimension, $\BDC_\Rc(\C_X)$ does not admit a middle perversity t-structure in the
classical sense.
That is, there is no perversity whose induced
t-structure on $\BDC_\Rc(\C_X)$ is self-dual.
However, it is shown in \cite{Kas15} that $\BDC_\Rc(\C_X)$ has a natural middle perversity t-structure in the generalized sense.
This generalized t-structure induces the
middle perversity t-structure
on the subcategory $\BDC_\Cc(\C_X)$.
Moreover, it is compatible with our construction
of the generalized middle perversity t-structure
on $\dere^{\mathrm{b}}_\Rc(\iCfield_X)$,
since the natural embedding
\[
\xymatrix{\BDC_\Rc(\C_X) \ar@{ >->}[r] & \dere^{\mathrm{b}}_\Rc(\iCfield_X)}
\]
turns out to be exact.

From now on, we shall use the term t-structure for the one in the generalized
sense, and refer to the classical notion as a \emph{classical} t-structure.

Let $\field$ be a field and $M$ a real analytic manifold, or more generally a
bordered subanalytic space.
Let $\dere^{\mathrm{b}}_\Rc(\ifield_M)$ be the triangulated category of
$\R$-constructible enhanced ind-sheaves of $\field$-vector spaces on $M$.
In this paper, we also discuss the t-structures on
$\dere^{\mathrm{b}}_\Rc(\ifield_M)$ associated with arbitrary perversities, and
study their functorial properties. Let us give some details.

On the set of maps $p\colon\Z_{\geq 0}\to\R$, consider the involution $*$ given by
\[
p^*(n) \defeq -p(n)-n.
\]
A perversity is a map $p\colon\Z_{\geq 0}\to\R$ such that $p$ and $p^*$ are decreasing.

Let $\BDC_\Rc(\field_M)$ be the derived category of $\R$-constructible sheaves
of $\field$-vector spaces on $M$.
For a locally closed subset $Z$ of $M$, let $\field_Z$ be the extension by zero
to $M$ of the constant sheaf on $Z$.
For $c\in\R$, set
\begin{align*}
{}^p\derd_\Rc^{\leq c}(\field_M) \defeq \{&F\in \BDC_\Rc(\field_M) \semicolon
\text{for any $k\in\Z_{\geq 0}$ there exists a closed} \\
&\text{subanalytic subset $Z\subset M$ of dimension $< k$ such that}\\
& H^j(\field_{M\setminus Z}\tens F)\simeq 0 \text{ for } j> c+p(k)\}, \\
{}^p\derd_\Rc^{\geq c}(\field_M) \defeq \{&F\in \BDC_\Rc(\field_M) \semicolon
\text{for any $k\in\Z_{\geq 0}$ and any closed}\\
&\text{subanalytic subset $Z\subset M$ of dimension $\leq k$ one has}\\
&H^j\rhom(\field_Z, F)\simeq 0 \text{ for } j< c+p(k) \}.
\end{align*}
Then $\bl {}^p\derd_\Rc^{\leq c}(\field_M), {}^p\derd_\Rc^{\geq c}(\field_M) \br_{c\in\R}$ is a t-structure in the sense of Definition~\ref{def:gent}.
Moreover, the duality functor interchanges ${}^p\derd_\Rc^{\leq c}(\field_M)$
and ${}^{p^*}\derd_\Rc^{\geq -c}(\field_M)$.
In particular, the t-structure
$\bl \Dmid{\leq c}(\field_M), \Dmid{\geq c}(\field_M) \br_{c\in\R}$
associated with the middle perversity
$\mathsf m(n)=-n/2$ is self-dual.

The analogous definition for $\R$-constructible enhanced ind-sheaves is
\begin{align*}
{}_p\dere_\Rc^{\leq c}(\ifield_M) \defeq \{&K\in \dere^{\mathrm{b}}_\Rc(\ifield_M) \semicolon
\text{for any $k\in\Z_{\geq 0}$ there exists a closed} \\
&\text{subanalytic subset $Z\subset M$ of dimension $< k$ such that}\\
&H^j(\opb\pi\field_{M\setminus Z}\tens K)\simeq 0 \text{ for } j> c+p(k)\},
\displaybreak[2]\\
{}_p\dere_\Rc^{\geq c}(\ifield_M) \defeq \{&K\in \dere^{\mathrm{b}}_\Rc(\ifield_M) \semicolon
\text{for any $k\in\Z_{\geq 0}$ and any closed}\\
&\text{subanalytic subset $Z\subset M$ of dimension $\leq k$ one has}\\
&H^j\rihom(\opb\pi\field_Z, K)\simeq 0 \text{ for } j< c+p(k) \}.
\end{align*}
It turns out that $\bl {}_p\dere_\Rc^{\leq c}(\ifield_M), {}_p\dere_\Rc^{\geq
c}(\ifield_M) \br_{c\in\R}$ is a t-structure, but it does not behave well
with respect to the duality functor $\Edual_M$. Hence we set
\begin{align*}
\Eprc{\leq c}(\ifield_M) \defeq \{K\in\dere^{\mathrm{b}}_\Rc(\ifield_M) \semicolon{}& K\in\tEprc{\leq c}(\ifield_M),\\
&\Edual_M K \in \tEprc[p^*]{\geq -c-1/2}(\ifield_M) \}, \\
\Eprc{\geq c}(\ifield_M) \defeq \{K\in\dere^{\mathrm{b}}_\Rc(\ifield_M) \semicolon{}& K\in\tEprc{\geq c-1/2}(\ifield_M),\\
&\Edual_M K \in \tEprc[p^*]{\leq -c}(\ifield_M) \} .
\end{align*}
Then $\bl \Eprc{\leq c}(\ifield_M), \Eprc{\geq c}(\ifield_M) \br_{c\in\R}$ is a t-structure, and the duality functor interchanges $\Eprc{\leq c}(\ifield_M)$
and $\Eprc[p^*]{\geq -c}(\ifield_M)$.
In particular, the t-structure
$\bl \Emid{\leq c}(\bM), \Emid{\geq c}(\bM) \br_{c\in\R}$
associated with the middle perversity
$\mathsf m(n)=-n/2$ is self-dual.

Going back to the Riemann-Hilbert correspondence,
the enhanced de Rham functor
\[
\xymatrix{
\drE \colon \BDC_\hol(\D_X) \ar@{ >->}[r] & \dere^{\mathrm{b}}_\Rc(\iCfield_X)
}
\]
is exact with respect to the t-structure
associated with the middle perversity.

The contents of this paper are as follows.

In Section~\ref{sec:perv},
we recall the notion of t-structure on a triangulated category. We also recall the t-structure on the derived category of $\R$-constructible sheaves on a subanalytic space associated with a given perversity.

In Section~\ref{se:enh}, we recall the notions of ind-sheaves and of enhanced ind-sheaves
on a bordered space. In both cases we also discuss the exactness of Grothendieck operations
with respect to the standard classical t-structures.

In Section~\ref{se:enhp}, we introduce the t-structure(s) on the derived category of $\R$-constructible enhanced ind-sheaves on a bordered subanalytic space associated with a given perversity.
We also discuss the exactness of Grothendieck operations
with respect to these t-structures.

Finally, in Section~\ref{se:RH}, we prove the exactness of the embedding, provided by the Riemann-Hilbert correspondence, from the triangulated category of holonomic $\D$-modules on a complex manifold into that of $\R$-constructible enhanced ind-sheaves.

\medskip
\noi
{\bf Acknowledgments} \quad
The first author acknowledges the kind hospitality at RIMS,
Kyoto University, during the preparation of this paper.

\section*{Notations}

In this paper, we take a field $\field$ as base ring.

For a category $\catc$, we denote by $\catc^\op$
the opposite category of $\catc$.

One says that a full subcategory
$\shs$ of a category $\shc$ is \emph{strictly full} if it contains every object of $\shc$ which is isomorphic to an object of $\shs$.

Let $\shc$, $\shc'$ be categories and $F\cl\shc\to \shc'$ a functor.
The {\em essential image} of $\shc$ by $F$, denoted by $F(\shc)$,
is the strictly full subcategory of $\shc'$ consisting of objects which are isomorphic to $F(X)$ for some $X\in\shc$.

For a ring $A$, we denote by $A^\op$ the opposite ring of $A$.

We say that a topological space is \emph{good} if it is Hausdorff, locally compact, countable at infinity, and has finite soft dimension.

\section{T-structures}\label{sec:perv}

The notion of t-structure on a triangulated category was introduced in \cite{BBD82}.
As shown in \cite{Sch99}, the derived category of a quasi-abelian category has two natural t-structures. They were presented in \cite{Kas08} in a unified manner, by generalizing the notion of t-structure. A further generalization is described in \cite{Kas15}, reinterpreting the notion of slicing from \cite{Bri07}.
In the present paper, we use the term t-structure in this more general sense, and we refer to the notion introduced in \cite{BBD82} as a \emph{classical} t-structure.
A basic result of \cite{BBD82} asserts that the \emph{heart} of a classical t-structure is an abelian category. More generally, it is shown in \cite{Bri07} that small \emph{slices} of a t-structure are quasi-abelian categories.

It is shown in \cite{BBD82} that, on a complex manifold, the \emph{middle perversity} induces a self-dual classical t-structure on the triangulated category of $\C$-constructible sheaves. On a real analytic manifold, using results of \cite{KS90}, it is shown in \cite{Kas15} that the middle perversity induces a self-dual t-structure on the triangulated category of $\R$-constructible sheaves.

Here we recall these facts, considering general perversities.

\subsection{Categories}

References are made to \cite[Chapter I]{KS90}, and to \cite{Sch99} for the notion of quasi-abelian category (see also \cite[\S2]{Kas08}).
\medskip

Let $\shc$ be an additive category.
The \emph{left and right orthogonal} of
a subcategory $\shs$ are the strictly full subcategories
\begin{align*}
{}^\bot\shs & \defeq \{X\in\shc\semicolon \Hom[\shc](X,Y) \simeq 0\text{ for any }Y\in\shs \}, \\
\shs^\bot & \defeq \{X\in\shc\semicolon \Hom[\shc](Y,X) \simeq 0\text{ for any }Y\in\shs \}.
\end{align*}

Assume that $\shc$ admits kernels and cokernels.
Given $f\cl X\to Y$ a morphism in $\shc$, one sets
\[
\im f\seteq \ker\bl Y\to\coker f\br, \quad \coim f\seteq\coker\bl \ker f\to X\br.
\]
The morphism $f$ is called \emph{strict} if the canonical morphism $\coim f \to \im f$ is an isomorphism.

The category $\shc$ is called \emph{abelian} if all morphisms are strict.
It is called \emph{quasi-abelian} if every pull-back of a strict epimorphism is a strict epimorphism, and every pushout of a strict monomorphism is a strict monomorphism.

\subsection{T-structures}
Let $\catt$ be a triangulated category.
Recall the notion of t-structure from \cite{BBD82}.

\begin{definition}\label{def:t-clas}
A \emph{classical t-structure} $\bl\catt^{\leq0},\catt^{\geq0}\br$ on $\catt$ is a pair of
strictly full subcategories of $\catt$ such that,
setting
\[
\catt^{\leq n}\defeq\catt^{\leq0}[-n], \quad \catt^{\geq n}\defeq\catt^{\geq0}[-n]
\]
for $n\in\Z$, one has:
\begin{itemize}
\item[(a)] $\catt^{\leq 0}\subset\catt^{\leq1}$ and
$\catt^{\geq 1}\subset\catt^{\geq 0}$;
\item[(b)] $\Hom[\catt](\catt^{\leq 0},\catt^{\geq 1})=0$;
\item[(c)] for any $X\in\catt$, there exists a distinguished triangle
\[
X_{\leq 0}\to X\to X_{\geq 1}\tone
\]
in $\catt$ with $X_{\leq 0}\in \catt^{\leq 0}$ and $X_{\geq 1}\in\catt^{\geq 1}$.
\end{itemize}
\end{definition}

The following definition of \cite{Kas15} is a reinterpretation of the notion of slicing from \cite{Bri07}.

\begin{definition}
\label{def:gent}
A \emph{t-structure} $\bl \catt^{\leq c}, \catt^{\geq c} \br_{c\in\R}$ on $\catt$ is a pair of families of strictly full subcategories of $\catt$ satisfying conditions (a)--(d) below, where we set
\[
\catt^{<c} \defeq \Union_{c'<c}\catt^{\leq c'}\qtq
\catt^{> c} \defeq \Union_{c'>c}\catt^{\geq c'}\qtq[for $c\in\R$.]
\]
\begin{itemize}
\item[(a)] $\catt^{\leq c}=\Inter_{c'>c}\catt^{\leq c'}$ and
$\catt^{\geq c}=\Inter_{c'<c}\catt^{\geq c'}$ for any $c\in\R$,
\item[(b)] $\catt^{\leq c+1}=\catt^{\leq c}[-1]$ and $\catt^{\geq c+1}=\catt^{\geq c}[-1]$ for any $c\in\R$,
\item[(c)]
$\Hom[\catt](\catt^{<c},\catt^{>c})=0$ for any $c\in\R$,
\item[(d)]
for any $X\in\catt$ and $c\in\R$, there are distinguished triangles in $\catt$
\[
X_{\leq c}\to X\to X_{> c}\tone\qtq X_{< c}\to X\to X_{\geq c}\tone
\]
with $X_{L}\in \catt^L$ for $L$ equal to $\leq c$, $>c$, $<c$ or $\geq c$.
\end{itemize}
\end{definition}

Condition (c) is equivalent to either of the following:
\begin{itemize}
\item[(c)$'$]
$\Hom[\catt](\catt^{\leq c},\catt^{>c})=0$ for any $c\in\R$,
\item[(c)$''$]
$\Hom[\catt](\catt^{<c},\catt^{\geq c})=0$ for any $c\in\R$.
\end{itemize}

The next lemma is elementary but useful.
It shows for example that, under condition (a), for any $c\in\R$ one has
\[
\catt^{\leq c}=\Inter_{c'>c} \catt^{< c'}, \quad
\catt^{\geq c}=\Inter_{c'<c} \catt^{> c'}.
\]

\begin{lemma}\label{lem:Xcc'}
Let $X$ be a set.
\bnum
\item Let $(X^{\leq c})_{c\in\R}$ be a family of subsets of $X$
such that $X^{\leq c}=\Inter_{c'>c} X^{\leq c'}$ for any $c\in\R$.
Set $X^{<c}\defeq \Union_{c'<c} X^{\leq c'}$.
Then
\begin{align*}
X^{<c}=\Union_{c'<c} X^{<c'}, \quad
X^{\leq c}=\Inter_{c'>c} X^{<c'}.
\end{align*}
\item Conversely, let $(X^{<c})_{c\in\R}$ be a family of subsets of $X$
such that $X^{<c}=\Union_{c'<c} X^{<c'}$ for any $c\in\R$.
Set $X^{\leq c}\defeq \Inter_{c'>c} X^{<c'}$.
Then
\begin{align*}
X^{\leq c}=\Inter_{c'>c} X^{\leq c'}, \quad
X^{<c}=\Union_{c'<c} X^{\leq c'}.
\end{align*}
\item
Let $(X^{\leq c})_{c\in\R}$ and $(X^{< c})_{c\in\R}$ be as in {\rm(i)}.
Let $a,b\in\R$ with $a<b$.
If $X^{<c} = X^{\leq c}$ for any $c$ such that $a<c\leq b$, then $X^{\leq a} = X^{\leq b}$.
\ee
\end{lemma}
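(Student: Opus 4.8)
The strategy is to prove the three parts in order, since (i) and (ii) are formally dual statements about the two "directions" of the indexing, and (iii) follows by combining the descriptions from (i). For part (i), the first identity $X^{<c}=\Union_{c'<c}X^{<c'}$ is almost tautological: one inclusion holds because $X^{<c'}\subset X^{<c}$ whenever $c'<c$ (monotonicity of the families, which is immediate from the definition of $X^{<c}$ as a union); the reverse inclusion holds because any $x\in X^{<c}$ lies in some $X^{\leq c''}$ with $c''<c$, and picking $c'$ with $c''<c'<c$ gives $x\in X^{\leq c''}\subset X^{<c'}$. For the second identity $X^{\leq c}=\Inter_{c'>c}X^{<c'}$, the inclusion $\subset$ is clear since $X^{\leq c}\subset X^{\leq c'}\subset X^{<c''}$ for $c<c'<c''$; for $\supset$, suppose $x\in X^{<c'}$ for every $c'>c$, i.e.\ for each $c'>c$ there is $c''<c'$ with $x\in X^{\leq c''}$. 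One then wants $x\in X^{\leq c}=\Inter_{c'>c}X^{\leq c'}$. Fix $c'>c$; choosing $c'_0$ with $c<c'_0<c'$, we get from the hypothesis some $c''<c'_0<c'$ with $x\in X^{\leq c''}\subset X^{\leq c'}$. Hence $x$ lies in $X^{\leq c'}$ for every $c'>c$, so $x\in X^{\leq c}$ by the assumed left-continuity hypothesis of (i).

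For part (ii) I would run the mirror-image argument. Starting from a left-continuous-from-below family $(X^{<c})$ with $X^{<c}=\Union_{c'<c}X^{<c'}$ and defining $X^{\leq c}\defeq\Inter_{c'>c}X^{<c'}$, one checks $X^{\leq c}=\Inter_{c'>c}X^{\leq c'}$ and $X^{<c}=\Union_{c'<c}X^{\leq c'}$ by exactly the same interleaving-of-indices manipulations, using monotonicity $X^{<c}\subset X^{<c'}$ and $X^{\leq c}\subset X^{\leq c'}$ for $c<c'$, which again follow directly from the formulas. No new idea is needed beyond what appears in (i); it is bookkeeping with three nested real parameters.

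For part (iii), take $a<b$ with $X^{<c}=X^{\leq c}$ for all $c\in\ooint{a,b}$. Using the description from (i), for any $c$ with $a<c<b$ we have $X^{\leq a}\subset X^{<c}=X^{\leq c}$, and letting $c\downarrow a$ through such values, $X^{\leq a}\subset\Inter_{a<c<b}X^{\leq c}$. Conversely, for $a<c\leq b$ we have, again by (i), $X^{\leq c}=\Inter_{c'>c}X^{<c'}$; but for $c$ slightly below $b$ one still has $X^{<c'}=X^{\leq c'}$ for $c'$ in a right-neighbourhood of $c$ inside $\ooint{a,b}$, so $X^{\leq c}$ does not grow as $c$ moves within $\ooint{a,b}$—more precisely $X^{\leq c_1}=X^{\leq c_2}$ for $a<c_1\leq c_2\leq b$, since $X^{\leq c_1}=\Inter_{c'>c_1}X^{<c'}$ and on the interval $\ooint{a,b}$ the sets $X^{<c'}$ coincide with $X^{\leq c'}$ which are increasing, forcing the intersection over $c'>c_1$ to equal the intersection over $c'>c_2$ when both $c_1,c_2\in\ooint{a,b}$. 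Then $X^{\leq b}=\Inter_{c'>b}X^{\leq c'}$ by the hypothesis of (i) applied at $b$, but $X^{\leq b}=\Union_{a<c<b}X^{\leq c}$ by the left-continuity at $b$ combined with $X^{<b}=\Union_{c<b}X^{\leq c}$; comparing with the lower bound gives $X^{\leq a}=X^{\leq b}$.

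The only genuine subtlety—the place I expect to have to be careful—is part (iii): one must not conflate "$X^{<c}=X^{\leq c}$ on the open interval $\ooint{a,b}$" with any statement at the endpoints $a$ or $b$ themselves, and the conclusion $X^{\leq a}=X^{\leq b}$ really does require chasing the definitions through both $X^{<b}=\Union_{c<b}X^{\leq c}$ (left-continuity) and $X^{\leq b}=\Inter_{c'>b}X^{\leq c'}$, using the no-jump property inside $\ooint{a,b}$ as the bridge. Parts (i) and (ii) are routine once one fixes the convention that all the manipulations amount to inserting an auxiliary index strictly between two given ones, which is possible precisely because $\R$ is densely ordered.
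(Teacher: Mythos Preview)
The paper states this lemma without proof (calling it ``elementary''), so there is no argument to compare against. Your treatment of parts (i) and (ii) is fine: the interleaving-of-indices manipulations are exactly what is needed, and monotonicity of both families follows immediately from the hypotheses.

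Part (iii), however, has a real gap. You assert that $X^{\leq c_1}=X^{\leq c_2}$ for $a<c_1\leq c_2\leq b$, and justify this by saying that since $X^{<c'}=X^{\leq c'}$ on $(a,b]$ and these are increasing, ``the intersection over $c'>c_1$ equals the intersection over $c'>c_2$''. But that does not follow: the intersection $\bigcap_{c'>c_1}X^{<c'}$ includes the extra terms $X^{<c'}$ for $c'\in(c_1,c_2]$, and nothing you have written rules out these being strictly smaller than $X^{\leq c_2}$. The final sentence (``comparing with the lower bound'') then rests on this unproved constancy. Your closing paragraph correctly identifies (iii) as the delicate part, but the argument never actually closes the gap.

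A clean fix: given $x\in X^{\leq b}$, set $s=\inf\{c:x\in X^{\leq c}\}\in[-\infty,b]$. For every $c'>s$ there is $c<c'$ with $x\in X^{\leq c}$, hence $x\in X^{\leq c'}$; by the standing hypothesis $X^{\leq s}=\bigcap_{c'>s}X^{\leq c'}$ this gives $x\in X^{\leq s}$ (or $x\in X^{\leq a}$ directly if $s=-\infty$). If $s>a$ then $a<s\leq b$, so $X^{\leq s}=X^{<s}$, whence $x\in X^{\leq c}$ for some $c<s$, contradicting the definition of $s$. Thus $s\leq a$ and $x\in X^{\leq s}\subset X^{\leq a}$.
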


Let $\bl\catt^{\leq0},\catt^{\geq0}\br$ be a classical t-structure.
For $c\in\R$, set
\begin{align*}
\catt^{\leq c} \defeq \catt^{\leq 0}[-n] &\quad\text{for $n\in\Z$ such that }n\leq c < n+1, \\
\catt^{\geq c} \defeq \catt^{\geq 0}[-n] &\quad\text{for $n\in\Z$ such that }n-1 < c \leq n.
\end{align*}
Then, $\bl \catt^{\leq c}, \catt^{\geq c} \br_{c\in\R}$ is a t-structure.
A classical t-structure is regarded as a t-structure by this
correspondence.

\smallskip
Conversely, if $\bl \catt^{\leq c}, \catt^{\geq c} \br_{c\in\R}$ is a
t-structure, then
\begin{equation}
\label{eq:tcclass}
\bl \catt^{\leq c+1},\catt^{>c} \br\qtq \bl \catt^{<c+1},\catt^{\geq c}\br
\end{equation}
are classical t-structures for any $c\in\R$.

For $c\in\R$, set
\[
\catt^c \defeq \catt^{\leq c} \cap \catt^{\geq c}.
\]

\begin{definition}
Let $\Sigma\subset\R$ be a discrete subset such that $\Sigma = \Sigma+\Z$.
A t-structure $\bl \catt^{\leq c}, \catt^{\geq c} \br_{c\in\R}$ is \emph{indexed by $\Sigma$}
if $\catt^c = 0$ for any $c\in\R\setminus\Sigma$.
\end{definition}

If $\Sigma$ is non empty,
this is equivalent to the fact that for any $c\in\R$ one has
\begin{align*}
\catt^{<c} = \catt^{\leq s'}, &\quad \catt^{\leq c} = \catt^{\leq s''}, \\
\catt^{>c} = \catt^{\geq t'}, &\quad \catt^{\geq c} = \catt^{\geq t''},
\end{align*}
where
\begin{align*}
s' &\defeq \max \{s\in\Sigma \semicolon s< c\}, &
s''&\defeq \max \{s\in\Sigma \semicolon s\leq c\}, \\
t' &\defeq \min \{s\in\Sigma \semicolon s > c\},&
t'' &\defeq \min \{s\in\Sigma \semicolon s \geq c\}.
\end{align*}

Classical t-structures correspond to t-structures indexed by $\Z$.
In this paper, we will mainly consider t-structures indexed by $\frac12\Z$.

The following lemma is easily proved
by using Lemma~\ref{lem:Xcc'}~(iii).

\Lemma Let $\bl \catt^{\leq c}, \catt^{\geq c} \br_{c\in\R}$
be a t-structure on $\catt$.
The following two conditions are equivalent.
\bna
\item $\bl \catt^{\leq c}, \catt^{\geq c} \br_{c\in\R}$ is indexed by some
 discrete subset $\Sigma\subset\R$ such that $\Sigma = \Sigma+\Z$.
\item For any $c\in\R$, there exist $a,b\in\R$ such that
$a<c<b$ and $\catt^{< c}=\catt^{\leq a}$ and $\catt^{> c}=\catt^{\geq b}$.
\ee
\enlemma

\subsection{Slices}
Let $\bl \catt^{\leq c}, \catt^{\geq c} \br_{c\in\R}$ be a t-structure on $\catt$.
Note the following facts.

For any $c\in\R$, one has
\begin{align*}
\catt^{ > c} = (\catt^{\leq c})^\bot, &\quad \catt^{\leq c} = {}^\bot(\catt^{> c}), \\
\catt^{\geq c} = (\catt^{< c})^\bot, &\quad \catt^{< c} = {}^\bot(\catt^{\geq c}).
\end{align*}

The embeddings $\catt^{\leq c}\subset\catt$ and $\catt^{< c}\subset\catt$
admit left adjoints
\[
\tau^{\leq c}\colon \catt \to \catt^{\leq c}\qtq \quad
\tau^{< c}\colon \catt \to \catt^{< c},
\]
called the \emph{left truncation functors}.
Similarly, the embeddings $\catt^{\geq c}\subset\catt$ and $\catt^{> c}\subset\catt$
admit right adjoints
\[
\tau^{\geq c}\colon \catt \to \catt^{\geq c}\qtq
\tau^{> c}\colon \catt \to \catt^{> c},
\]
called the \emph{right truncation functors}.

The distinguished triangles in Definition~\ref{def:gent} (d) are unique up to
unique isomorphism. They are, respectively, given by
\[
\tau^{\leq c}X \to X \to \tau^{> c}X \tone\qtq
\tau^{< c}X \to X \to \tau^{\geq c}X \tone.
\]

Summarizing the above notations, to a half-line $L$
(i.e.\ an unbounded connected subset $L\subsetneq \R$) is associated a truncation functor
\[
\tau^L\colon \catt \to \catt^L.
\]
If $L'\subset\R$ is another half-line, there is an isomorphism of functors
\begin{equation}
\label{eq:HI}
\tau^{L} \comp \tau^{L'} \simeq \tau^{L'} \comp \tau^{L}\colon \catt \to \catt^L \cap \catt^{L'}.
\end{equation}

Let $I\subset\R$ be a proper interval
(i.e.\ a bounded connected non empty subset $I\subset\R$). Then there are two half-lines $L,L'$
(unique up to ordering) such that
\[
I= L\cap L'.
\]
The \emph{slice} of $\catt$ associated with $I$ is the additive category
\[
\catt^I \defeq \catt^{L} \cap \catt^{L'},
\]
and one denotes the functor \eqref{eq:HI} by
\[
H^I \colon \catt \to \catt^I.
\]
For example,

$\catt^{[c,c')} = \catt^{\geq c} \cap \catt^{<c'}$ for $c<c'$, and $\catt^{\{c\}} = \catt^c$.
One writes for short
\[
H^c\seteq H^{\{c\}}.
\]

Note that the map $I\to \R/\Z$ is bijective if and only if
$I=[c,c+1)$ or $I=(c,c+1]$ for some $c\in\R$.
The map $I\to \R/\Z$ is injective if and only if there exists $c\in\R$
such that $I\subset [c,c+1)$ or $I\subset(c,c+1]$.

The following result generalizes the fact that
the heart $\catt^0$ of a classical t-structure
$\bl \catt^{\leq 0}, \catt^{\geq 0} \br$ is abelian.

\begin{proposition}[{cf.\ \cite[Lemma 4.3]{Bri07}}]
\label{pro:qabelian}
Let $\bl \catt^{\leq c}, \catt^{\geq c} \br_{c\in\R}$
be a {\em t-structure} on $\catt$, and
let $I\subset\R$ be an interval.
\bnum
\item
If $I\to \R/\Z$ is injective, then
the slice $\catt^I$ is a quasi-abelian category, and strict short exact sequences in $\catt^I$ are in one-to-one correspondence with distinguished triangles in $\catt$ with all vertices in $\catt^I$.
\item
If $I\to \R/\Z$ is bijective, then
the slice $\catt^I$ is an abelian category and the functor $H^I \colon \catt \to \catt^I$ is cohomological.
\ee
\end{proposition}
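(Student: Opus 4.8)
\textbf{Proof plan for Proposition~\ref{pro:qabelian}.}
The plan is to reduce everything to the case of a \emph{classical} t-structure via the correspondence \eqref{eq:tcclass}, together with the known results of \cite{BBD82} and \cite{Bri07} quoted in the statement. The main observation is that for any interval $I$, after translating by an element of $\Z$ we may assume $I\subset[0,1)$ or $I\subset(0,1]$, since condition (b) of Definition~\ref{def:gent} gives $\catt^{\leq c+1}=\catt^{\leq c}[-1]$ and hence the whole structure is $\Z$-periodic; a shift $[-1]$ is an auto-equivalence and carries slices to slices, strict exact sequences to strict exact sequences, and cohomological functors to cohomological functors. So without loss of generality $I\subset[0,1)$ (the case $I\subset(0,1]$ being symmetric, by the involution exchanging $\leq$ and $<$ which amounts to passing to the other pair of classical t-structures in \eqref{eq:tcclass}).

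For (i), assume $I=[a,b)\subset[0,1)$ with $0\leq a\leq b\leq 1$ (the endpoints actually present depending on the type of the two half-lines; I treat the case $L=\{\,\cdot\geq a\}$, $L'=\{\,\cdot<b\}$, the others being analogous). Consider the classical t-structure $(\catt^{\leq 0},\catt^{>-1})=(\catt^{<1},\catt^{\geq 0})$ from \eqref{eq:tcclass} — more precisely, I would use that $(\catt^{<b},\catt^{\geq b-1})$ and $(\catt^{\leq a},\catt^{>a-1})$ are classical t-structures, and that $\catt^I=\catt^{\geq a}\cap\catt^{<b}$ sits inside the heart-like regions of both. The cleanest route: note $\catt^{<b}$ is the ``$\leq 0$'' part of a classical t-structure on $\catt$, so by \cite[Lemma~4.3]{Bri07} (or rather the argument there, which only uses one classical t-structure and one orthogonality) the full subcategory $\catt^{[a,b)}=\catt^{\geq a}\cap\catt^{<b}$ is a tilt/slice of that classical structure, hence quasi-abelian; and the identification of strict short exact sequences in $\catt^I$ with distinguished triangles having all three vertices in $\catt^I$ follows by the standard argument: given a morphism $f\colon X\to Y$ in $\catt^I$, embed it in a distinguished triangle $X\to Y\to C\tone$, apply $\tau^{<b}\comp\tau^{\geq a}$ to $C$, and check using $\Hom$-vanishing (condition (c)$'$, (c)$''$) that $\ker f$, $\coker f$, $\im f$, $\coim f$ computed in $\catt^I$ are the truncations $\tau^{\geq a}C[-1]$, $\tau^{<b}C$, etc., whence strictness of $f$ is equivalent to $C\in\catt^I$.

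For (ii), if $I\to\R/\Z$ is bijective then $I=[c,c+1)$ or $(c,c+1]$; translating, $I=[0,1)$. Then $\catt^{[0,1)}=\catt^{\geq 0}\cap\catt^{<1}=\catt^{\geq 0}\cap\catt^{\leq 0}=\catt^0$ is precisely the heart of the classical t-structure $(\catt^{<1},\catt^{\geq 0})=(\catt^{\leq 0},\catt^{\geq 0})$ (where I use $\catt^{<1}=\catt^{\leq 0}$, valid by Lemma~\ref{lem:Xcc'} since the classical t-structure is indexed by $\Z$), so it is abelian by \cite{BBD82}, and $H^{[0,1)}=\tau^{\geq 0}\comp\tau^{\leq 0}=H^0$ is the cohomological functor of that classical t-structure. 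Undoing the translation, $H^I=H^0[\,\text{shift}\,]$ is cohomological, as shifts preserve distinguished triangles.

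\textbf{Main obstacle.} The genuinely nontrivial input — quasi-abelianness of a general slice, and the matching of strict exact sequences with distinguished triangles — is exactly \cite[Lemma~4.3]{Bri07}, which is cited; so the burden here is the bookkeeping of the reduction: carefully matching the four types of interval $[a,b)$, $(a,b]$, $(a,b)$, $[a,b]$ (the last giving $I\to\R/\Z$ non-injective, excluded) against the two classical t-structures in \eqref{eq:tcclass}, and verifying that the truncation functors $\tau^{\geq a},\tau^{<b}$ do restrict to kernel/cokernel functors on $\catt^I$ with the expected universal properties. The subtle point to get right is that when $I$ fails to be injective into $\R/\Z$ nothing is claimed, and that injectivity is what guarantees the two half-lines $L,L'$ define commuting truncations landing in a single slice rather than forcing $\catt^I$ to be all of $\catt$ up to shift; I would isolate this as the one place where the hypothesis on $I$ is used.
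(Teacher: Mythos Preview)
The paper does not give a proof of this proposition; it is stated with the citation to \cite[Lemma 4.3]{Bri07} and no argument is supplied. Your plan --- reduce to the classical t-structures of \eqref{eq:tcclass} and invoke the heart/slice results of \cite{BBD82} and \cite{Bri07} --- is exactly the intended one, and for (i) your sketch is fine.

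There is one genuine slip in your treatment of (ii). You write
\[
\catt^{[0,1)}=\catt^{\geq 0}\cap\catt^{<1}=\catt^{\geq 0}\cap\catt^{\leq 0}=\catt^0,
\]
claiming $\catt^{<1}=\catt^{\leq 0}$ ``since the classical t-structure is indexed by $\Z$''. This is false in general: the equality $\catt^{<1}=\catt^{\leq 0}$ would force $\catt^c=0$ for every $0<c<1$, which is not assumed. (Lemma~\ref{lem:Xcc'} says nothing of the sort; and the fact that $(\catt^{<1},\catt^{\geq 0})$ happens to be a classical t-structure does not mean the \emph{original} generalized t-structure is indexed by $\Z$.) The correct argument is shorter: $\catt^{[0,1)}=\catt^{\geq 0}\cap\catt^{<1}$ is \emph{by definition} the heart of the classical t-structure $(\catt^{<1},\catt^{\geq 0})$, hence abelian by \cite{BBD82}, and $H^{[0,1)}=\tau^{\geq 0}\circ\tau^{<1}$ is its $H^0$, hence cohomological. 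No identification with $\catt^0$ is needed, and none is available. Drop that step and your argument goes through.
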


\begin{remark}
The notion of slicing from \cite{Bri07} is equivalent to the datum of a t-structure $\bl \catt^{\leq c}, \catt^{\geq c} \br_{c\in\R}$ such that $\catt$ is generated by the family of subcategories
$\{\catt^c\}_{c\in\R}$.
\end{remark}

\subsection{Exact functors}
Let $\cats$ and $\catt$ be triangulated categories.
Let
\[
\bl \cats^{\leq c}, \cats^{\geq c} \br_{c\in\R} \qtq
\bl \catt^{\leq c}, \catt^{\geq c} \br_{c\in\R}
\]
be t-structures on $\cats$ and $\catt$, respectively.

\begin{definition}
A triangulated functor $\Phi \colon \cats \to \catt$ is called
\begin{itemize}
\item[(i)]
\emph{left exact}, if one has $\Phi (\cats^{\geq c}) \subset \catt^{\geq c}$
for any $c\in\R$;
\item[(ii)]
\emph{right exact}, if one has $\Phi (\cats^{\leq c}) \subset \catt^{\leq c}$
for any $c\in\R$;
\item[(iii)]
\emph{exact}, if it is both left and right exact.
\end{itemize}
\end{definition}

\begin{lemma}
Consider two triangulated functors
\[
\Phi \colon \cats \to \catt\qtq\Psi \colon \catt \to \cats.
\]
Assume that $(\Phi,\Psi)$ is an adjoint pair. This means that
$\Phi$ is left adjoint to $\Psi$, or equivalently that $\Psi$ is right adjoint to $\Phi$.
Then, $\Psi$ is left exact if and only if $\Phi$ is right exact.
\end{lemma}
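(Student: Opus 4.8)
The statement is the standard fact that adjointness of triangulated functors is compatible with exactness of $t$-structures, phrased in the generalized (real-indexed) setting. The plan is to prove one implication and then invoke symmetry, or rather to prove a chain of equivalences directly from the orthogonality characterization of the truncated subcategories. First I would recall, from the Slices subsection, the identities $\catt^{> c} = (\catt^{\leq c})^\bot$ and $\cats^{\leq c} = {}^\bot(\cats^{> c})$, valid for every $c\in\R$; these reduce exactness conditions to $\Hom$-vanishing statements, which pass cleanly through an adjunction.

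\textbf{Key steps.} Assume $\Phi$ is right exact, i.e.\ $\Phi(\cats^{\leq c}) \subset \catt^{\leq c}$ for all $c\in\R$. To show $\Psi$ is left exact, fix $c\in\R$ and $Y\in\catt^{\geq c}$; I must show $\Psi Y \in \cats^{\geq c}$. Using $\cats^{\geq c} = (\cats^{<c})^\bot$ (an identity listed in the Slices subsection), it suffices to check $\Hom[\cats](X, \Psi Y) \simeq 0$ for every $X\in\cats^{<c}$. By adjunction, $\Hom[\cats](X,\Psi Y)\simeq \Hom[\catt](\Phi X, Y)$. Now $X\in\cats^{<c} = \Union_{c'<c}\cats^{\leq c'}$, so $X\in\cats^{\leq c'}$ for some $c'<c$; right exactness gives $\Phi X \in \catt^{\leq c'}\subset \catt^{<c}$. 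Since $Y\in\catt^{\geq c}$ and $\Hom[\catt](\catt^{<c},\catt^{\geq c})=0$ by condition (c)$''$ of Definition~\ref{def:gent}, we conclude $\Hom[\catt](\Phi X,Y)\simeq 0$, hence $\Hom[\cats](X,\Psi Y)\simeq 0$, and therefore $\Psi Y\in\cats^{\geq c}$. This proves that $\Phi$ right exact implies $\Psi$ left exact. For the converse, run the dual argument: assume $\Psi(\catt^{\geq c})\subset\cats^{\geq c}$, fix $X\in\cats^{\leq c}$, and use $\catt^{\leq c} = {}^\bot(\catt^{>c})$ to reduce to showing $\Hom[\catt](\Phi X, Y)\simeq 0$ for all $Y\in\catt^{>c}$; adjunction turns this into $\Hom[\cats](X,\Psi Y)\simeq 0$ with $\Psi Y\in\cats^{\geq c'}$ for some $c'>c$, whence $\Psi Y\in\cats^{>c}$, and condition (c)$'$ for $\cats$ finishes it.

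\textbf{Main obstacle.} There is no serious obstacle; the proof is entirely formal once the orthogonality identities of the Slices subsection are in hand. The only point requiring a moment of care is the bookkeeping between the strict inequalities ($<c$, $>c$) and the non-strict ones ($\leq c$, $\geq c$): one must pass through a nearby threshold $c'$ using the defining equalities $\catt^{<c}=\Union_{c'<c}\catt^{\leq c'}$ and $\catt^{>c}=\Union_{c'>c}\catt^{\geq c'}$, so that the hypothesis $\Phi(\cats^{\leq c'})\subset\catt^{\leq c'}$ (stated for all real $c'$) can be applied. Equivalently, one could avoid the strict/non-strict juggling altogether by working with the classical $t$-structures $\bl\catt^{\leq c+1},\catt^{>c}\br$ attached to the given one via \eqref{eq:tcclass} and quoting the classical version of this lemma, but the direct argument above is shorter and self-contained.
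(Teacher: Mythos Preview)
Your proof is correct and follows essentially the same approach as the paper's: both use the adjunction isomorphism on $\Hom$ together with the orthogonality identities $\catt^{\leq c} = {}^\bot(\catt^{>c})$ and $\cats^{\geq c} = (\cats^{<c})^\bot$ from the Slices subsection. The only differences are cosmetic---the paper proves the implication ``$\Psi$ left exact $\Rightarrow$ $\Phi$ right exact'' first and leaves the converse as ``similar'', while you spell out both directions and are slightly more explicit about passing through an intermediate threshold $c'$ to handle the strict versus non-strict inequalities.
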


\begin{proof}
Let $c\in\R$. If $\Psi$ is exact,
then, for $S\in\cats^{\leq c}$ and $T\in\catt^{> c}$, one has
\begin{align*}
\Hom[\catt](\Phi(S),T)
&\simeq \Hom[\cats](S,\Psi(T)) \\
&\in \Hom[\cats](\cats^{\leq c},\cats^{> c}) = 0.
\end{align*}
Hence, $\Phi(S) \in {}^\bot(\catt^{> c}) = \catt^{\leq c}$.
Thus $\Phi$ is right exact.
The converse can be proved similarly.
\end{proof}

\subsection{Sheaves}\label{subsec:sheaf}
Let $M$ be a good topological space.
Denote by $\Mod(\field_M)$ the abelian category of sheaves of $\field$-vector spaces on $M$, and by $\BDC(\field_M)$ its bounded derived category. It has a
standard classical t-structure
\[
\bl \derd^{\leq 0}(\field_M), \derd^{\geq 0}(\field_M) \br.
\]

For a locally closed subset $S\subset M$, denote by $\field_S$ the sheaf on $M$ obtained
extending by zero the constant sheaf on $S$ with stalk $\field$.

For $f\colon M\to N$ a morphism of good topological spaces,
denote by $\tens$, $\rhom$, $\opb f$, $\roim f$, $\reim f$, $\epb f$ the six
Grothendieck operations for sheaves.

We define the duality functor of $\BDC(\field_M)$ by
\[
\dual_M F = \rhom(F,\omega_M) \quad\text{for $F\in\BDC(\field_M)$,}
\]
where $\omega_M$ denotes the dualizing complex.
If $M$ is a C$^0$-manifold, one has $\omega_M \simeq \ori_M[d_M]$, where $d_M$ denotes the dimension of $M$ and $\ori_M$ the orientation sheaf.
For a map $f\cl M\to N$ of C$^0$-manifolds,
the {\em relative orientation sheaf}
is defined as
$\ori_{M/N}\seteq\epb{f}\cor_N[d_N-d_M]\simeq\ori_M\tens \opb{f}\ori_N$.

\subsection{$\R$-constructible sheaves}
Recall the notion of subanalytic subsets of a real analytic manifold
(see \cite{Hi73,BM88}).

\begin{definition}
\bnum
\item
A \emph{subanalytic space} $M=(M,\shs_M)$ is an $\R$-ringed space which is locally isomorphic to $(Z,\shs_Z)$, where $Z$ is a closed subanalytic subset
of a real analytic manifold, and $\shs_Z$ is the sheaf of $\R$-algebras of real valued subanalytic continuous functions.
In this paper, we assume that subanalytic spaces are good topological spaces.
\item
A morphism of subanalytic spaces is a morphism of $\R$-ringed spaces.
\item
A subset $S$ of $M$ is {\em subanalytic} if $i(S\cap U)$ is a subanalytic subset of $N$ for any open subset $U$ of $M$, any real analytic manifold $N$
 and any subanalytic morphism $i\cl U\to N$ of subanalytic spaces
such that $i$ induces an isomorphism from $U$
to a closed subanalytic subset of $N$.
\ee
\end{definition}

Let $M$ be a subanalytic space.
One says that a sheaf $F\in\Mod(\field_M)$ is {\em $\R$-constructible}
if there exists a locally finite family of locally closed subanalytic subsets
$\{S_i\}_{i\in I}$ of $M$ such that $M=\sbcup_{i\in I}S_i$ and
$F$ is locally constant of finite rank on each $S_i$.
Denote by $\BDC_\Rc(\field_M)$ the full subcategory of $\BDC(\field_M)$ whose objects have $\R$-constructible cohomologies.

\subsection{Perversities}\label{sse:perv}
On the set of maps $p\colon\Z_{\geq 0}\to\R$, consider the involution $*$ given by
\[
p^*(n) \defeq -p(n)-n.
\]

\begin{definition}
\begin{itemize}
\item[(i)]
A function $p\colon\Z_{\geq 0}\to\R$ is a \emph{perversity} if both $p$ and $p^*$ are decreasing, i.e.\ if
\[
0\leq p(n)-p(m)\leq m-n \quad\text{for any $m,n\in\Z_{\ge0}$ such that $n\le m$.}
\]
\item[(ii)]
A \emph{classical perversity} is a $\Z$-valued perversity.
\end{itemize}
\end{definition}

Let $M$ be a subanalytic space. To a classical perversity $p$ is associated a classical t-structure $\bl {}^p\derd^{\leq 0}_\Rc(\field_M), {}^p\derd^{\geq 0}_\Rc(\field_M) \br$
on $\BDC_\Rc(\field_M)$ (refer to \cite{BBD82} and \cite[\S10.2]{KS90}).
Here, slightly generalizing a construction in \cite{Kas15}, we will associate a t-structure to a perversity.

\begin{notation}
Set
\[
\CS_M \defeq \{\text{closed subanalytic subsets of $M$}\}.
\]
For $Z\in\CS_M$, denote by
\[
i_Z\colon Z \to M
\]
the embedding. Set
$$\text{$d_Z\defeq\dim Z$ (with $d_\emptyset= -\infty$).}$$
For $k\in\Z$, set
\begin{align*}
\CS^{< k}_M &\defeq \{Z\in \CS_M \semicolon d_Z< k\}, \\
\CS^{\leq k}_M &\defeq \{Z\in \CS_M \semicolon d_Z\leq k\}.
\end{align*}
\end{notation}

\begin{definition}\label{def:midd}
Let $p$ be a perversity, $c\in\R$ and $k\in\Z_{\geq 0}$. Consider the following conditions
on $F\in\BDC(\field_M)$
\begin{align*}
(p_k^{\leq c}) &\colon
\opb i_{M\setminus Z} F\in \derd^{\leq c+p(k)}(\field_{M\setminus Z}) \text{ for some } Z\in\CS_M^{<k}, \\
(p_k^{\geq c}) &\colon
\epb{i_Z} F \in \derd^{\geq c+p(k)}(\field_Z) \text{ for any } Z\in\CS_M^{\leq k}.
\end{align*}
We define the following strictly full subcategories of $\BDC(\field_M)$
\begin{align*}
{}^p\derd^{\leq c}(\field_M)
&\defeq \{ F\in\BDC(\field_M) \semicolon (p_k^{\leq c})\text{ holds for any $k\in\Z_{\geq 0}$} \}, \\
{}^p\derd^{\geq c}(\field_M)
&\defeq \{ F\in\BDC(\field_M) \semicolon (p_k^{\geq c})\text{ holds for any $k\in\Z_{\geq 0}$} \}.
\end{align*}
Let us also set
\begin{align*}
{}^p\derd_\Rc^{\leq c}(\field_M) &\defeq {}^p\derd^{\leq c}(\field_M) \cap \BDC_\Rc(\field_M), \\
{}^p\derd_\Rc^{\geq c}(\field_M) &\defeq {}^p\derd^{\geq c}(\field_M) \cap \BDC_\Rc(\field_M).
\end{align*}
\end{definition}

Note that $\bl \Dp{\leq c}(\field_M), \Dp{\geq c}(\field_M) \br_{c\in\R}$
is not a t-structure if $\dim M>0$.

\begin{lemma}\label{lem:classtleq}
For $c\in\R$, $k\in\Z_{\geq 0}$ and $F\in\BDC_\Rc(\field_M)$,
the following conditions are equivalent
\begin{itemize}
\item[(i)] $F$ satisfies $(p_k^{\leq c})$,
\item[(ii)] $\dim(\supp(H^j F))<k$ for any $j$ with $j>c+p(k)$.
\end{itemize}
\end{lemma}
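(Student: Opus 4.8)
The plan is to prove the equivalence $(p_k^{\leq c}) \Leftrightarrow (ii)$ by first understanding what the condition $\opb i_{M\setminus Z} F \in \derd^{\leq c+p(k)}(\field_{M\setminus Z})$ means for a single closed subanalytic $Z$, and then optimizing over the choice of $Z \in \CS_M^{<k}$. The key point is that for $F\in\BDC_\Rc(\field_M)$ and $m\defeq c+p(k)$, the restriction $\opb i_{M\setminus Z}F$ lies in $\derd^{\leq m}(\field_{M\setminus Z})$ precisely when $H^j(F)|_{M\setminus Z}\simeq 0$ for all $j>m$, i.e.\ when $\supp(H^j F)\subset Z$ for all $j>m$.

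First I would establish the direction $(ii)\Rightarrow(i)$: assuming $\dim(\supp(H^j F))<k$ for all $j>m$, set $Z\defeq \Union_{j>m}\supp(H^j F)$. Since $F$ is bounded with $\R$-constructible cohomology, this is a finite union of closed subanalytic subsets each of dimension $<k$, hence $Z\in\CS_M^{<k}$; and by construction $H^j(F)|_{M\setminus Z}\simeq 0$ for $j>m$, so $\opb i_{M\setminus Z}F\in\derd^{\leq m}(\field_{M\setminus Z})$, which is exactly $(p_k^{\leq c})$. Here I use that $\supp(H^j F)$ is a closed subanalytic subset, which follows from $\R$-constructibility, and that a finite union of subanalytic sets of dimension $<k$ again has dimension $<k$.

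Conversely, for $(i)\Rightarrow(ii)$: suppose $\opb i_{M\setminus Z}F\in\derd^{\leq m}(\field_{M\setminus Z})$ for some $Z\in\CS_M^{<k}$. Then for $j>m$ we have $H^j(F)|_{M\setminus Z}\simeq 0$, so $\supp(H^j F)\subset Z$, whence $\dim(\supp(H^j F))\leq \dim Z<k$. This gives $(ii)$. The proof is thus essentially a bookkeeping argument once one identifies that the optimal $Z$ is the union of the supports of the bad cohomology sheaves.

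The main (minor) obstacle is justifying that $\supp(H^j F)$ is closed and subanalytic: this is where $\R$-constructibility is genuinely used, and one should cite the relevant structure theory (e.g.\ \cite[\S8.4]{KS90}), noting that an $\R$-constructible sheaf is locally constant of finite rank on the strata of a subanalytic stratification, so its support is a subanalytic (closed) subset. The rest is formal, using only that $\derd^{\leq m}(\field_{M\setminus Z})$ is characterized by vanishing of cohomology sheaves in degrees $>m$ and that restriction $\opb i_{M\setminus Z}$ commutes with taking cohomology sheaves.
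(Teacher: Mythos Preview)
Your proof is correct and follows exactly the same approach as the paper: both arguments rest on the observation that $\opb i_{M\setminus Z} F\in \derd^{\leq c+p(k)}(\field_{M\setminus Z})$ if and only if $\supp(H^j F)\subset Z$ for all $j>c+p(k)$. You simply spell out in more detail the construction of the optimal $Z$ as the (finite) union of the supports of the offending cohomology sheaves, whereas the paper leaves this implicit.
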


\begin{proof}
It is enough to remark that
$\opb i_{M\setminus Z} F\in \derd^{\leq c+p(k)}(\field_{M\setminus Z})$
if and only if $\supp(H^j F)\subset Z$ for any $j$ such that $j>c+p(k)$.
\end{proof}

\begin{proposition}\label{pro:t-pre}
We have the following properties.
\begin{itemize}
\item[(i)]
$\bl \Dprc{\leq c}(\field_M), \Dprc{\geq c}(\field_M) \br_{c\in\R}$
is a t-structure on $\BDC_\Rc(\field_M)$.
\item[(ii)]
For any $c\in\R$, the duality functor $\dual_M$ interchanges $\Dprc{\leq c}(\field_M)$ and $\Dprc[p^*]{\geq -c}(\field_M)$.
\item[(iii)]
For any interval $I\subset\R$ such that $I\to\R/\Z$ is injective,
the prestack on $M$
\[
U\mapsto \Dprc{I}(U)
\]
is a stack of quasi-abelian categories.
\end{itemize}
\end{proposition}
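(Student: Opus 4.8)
The plan is to prove the three parts of Proposition~\ref{pro:t-pre} largely independently, though part~(i) is the backbone and parts~(ii), (iii) follow by relatively formal arguments once the truncation functors are constructed.

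\medskip
\noindent\textbf{Part (i): the t-structure axioms.}
I would verify the four axioms of Definition~\ref{def:gent} in turn. Axiom~(b), $\catt^{\leq c+1}=\catt^{\leq c}[-1]$ and similarly for $\geq$, is immediate from the definitions since shifting $F$ by one shifts the cohomological conditions $(p_k^{\leq c})$ and $(p_k^{\geq c})$ uniformly in $k$. Axiom~(a), the intersection property $\catt^{\leq c}=\Inter_{c'>c}\catt^{\leq c'}$, reduces via Lemma~\ref{lem:classtleq} to the elementary fact that $\dim(\supp H^jF)<k$ for all $j>c+p(k)$ holds iff it holds for all $j>c'+p(k)$ for every $c'>c$ close enough to $c$ --- here one uses that $H^jF=0$ for $|j|\gg0$, so only finitely many $j$ and $k$ are relevant, and that $p$ takes values in a (possibly non-discrete but) bounded-on-relevant-range set; the key point is that the condition is a closed condition as $c'\downarrow c$. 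The dual statement for $\catt^{\geq c}$ is handled symmetrically using $\epb{i_Z}$. Axiom~(c) (equivalently (c)$'$): given $F\in{}^p\derd_\Rc^{\leq c}$ and $G\in{}^p\derd_\Rc^{>c}$, I would filter $M$ by the closed subanalytic subsets $Z_k$ witnessing $(p_k^{\leq c})$ for $F$, and argue stratum by stratum that $\Hom(F,G)=0$, using the standard adjunctions $\opb i$, $\reim i$ and the classical vanishing $\Hom(\derd^{\leq a},\derd^{\geq a+1})=0$ on each locally closed subanalytic piece; this is essentially the Beilinson--Bernstein--Deligne recollement argument adapted to the graded setting.

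\medskip
\noindent\textbf{Axiom (d): construction of truncations --- the main obstacle.}
The hard part is constructing, for each $F\in\BDC_\Rc(\field_M)$ and each $c\in\R$, the distinguished triangle $F_{\leq c}\to F\to F_{>c}\tone$ with the vertices in the prescribed subcategories. I would do this by induction on $\dim M$ (or on the length of a subanalytic stratification adapted to $F$), gluing truncation functors along the recollement
$(j_!\,\opb j,\ \opb i,\ldots)$ for $i\colon Z\hookrightarrow M$ a closed subanalytic stratum and $j\colon M\setminus Z\hookrightarrow M$ its open complement. At each stage one takes the classical perverse truncation on the open part at level $c+p(k)$ (where $k=\dim$ of the current piece), pushes forward, and corrects on the closed stratum; the compatibility of the classical truncations for consecutive values of $k$ --- which is exactly where the perversity condition $0\leq p(n)-p(m)\leq m-n$ is used --- ensures the glued object lands in ${}^p\derd_\Rc^{\leq c}(\field_M)$ and that the cone lands in ${}^p\derd_\Rc^{>c}(\field_M)$. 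The second triangle $F_{<c}\to F\to F_{\geq c}\tone$ is obtained either by a symmetric construction with $\epb{i_Z}$ and $\roim$, or --- and this is cleaner --- by applying the duality functor $\dual_M$ to the first triangle for the perversity $p^*$, anticipating part~(ii).

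\medskip
\noindent\textbf{Parts (ii) and (iii).}
For~(ii) I would use that $\dual_M$ is a contravariant triangulated self-equivalence of $\BDC_\Rc(\field_M)$ with $\dual_M\comp\dual_M\simeq\id$, together with the interchange formulas $\dual_{M\setminus Z}\comp\opb i_{M\setminus Z}\simeq \epb{i_{M\setminus Z}}\comp\dual_M$ and the classical duality $\dual(\derd^{\leq a})=\derd^{\geq -a}$ on each subanalytic piece; chasing the definitions and using $p^*(k)=-p(k)-k$ against the dimension shift $\omega_Z\simeq\ori_Z[d_Z]$ converts the condition $(p_k^{\leq c})$ for $F$ into $((p^*)_k^{\geq -c})$ for $\dual_M F$. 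For~(iii), given that $I\to\R/\Z$ is injective the slice $\catt^I$ is quasi-abelian by Proposition~\ref{pro:qabelian}~(i); the prestack $U\mapsto {}^p\derd_\Rc^{I}(U)$ is a separated prestack because the defining conditions $(p_k^{\leq c})$, $(p_k^{\geq c})$ are local on $M$ (they only involve $\opb i$ and $\epb i$, which commute with restriction to opens), and it is a stack --- i.e.\ descent holds for objects --- because $\R$-constructible complexes glue along subanalytic open covers and the truncation functors constructed in part~(i) are local, so a compatible family of sections over an open cover glues to a global section lying in the slice. I expect (iii) to be the most routine of the three once the machinery of (i) is in place.
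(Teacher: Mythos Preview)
Your approach is correct in outline but takes a genuinely different route from the paper's. You verify the t-structure axioms from scratch, reconstructing the truncation functors by the standard BBD recollement/gluing argument stratum by stratum. The paper instead makes a single clever observation: for each fixed $c\in\R$, the pair $\bl \Dprc{\leq c}(\field_M),\ \Dprc{> c-1}(\field_M)\br$ coincides with the \emph{classical} t-structure associated to the \emph{classical} perversity $p_{c,-}(n)\defeq\lfloor c+p(n)\rfloor$, and similarly $\bl \Dprc{< c+1}(\field_M),\ \Dprc{\geq c}(\field_M)\br$ is the classical t-structure for $p_{c,+}(n)\defeq\lceil c+p(n)\rceil$. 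This reduces (i) and (iii) instantly to the known classical results (cited from \cite{BBD82} and \cite[\S10.2]{KS90}), and (ii) follows from the identity $(p_{c,\pm})^*=(p^*)_{-c,\mp}$. The paper's reduction is far shorter and avoids redoing the gluing construction; your direct verification, while valid, essentially reproves the classical theorem in passing. One small point: your argument for the stack property in (iii) glosses over why objects descend --- the reason is that $\Hom^{-1}$ vanishes within the slice (a consequence of axiom~(c) and the hypothesis that $I\to\R/\Z$ is injective), which is exactly what the classical reference handles; you should make this explicit rather than appeal only to ``truncation functors are local''.
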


\begin{proof}
Note that, for (iii), it is enough to consider the case where $I\to\R/\Z$ is bijective,
i.e.\ the case where $I=[c,c+1)$ or $I=(c,c+1]$ for some $c\in\R$.

(a) If $p$ is a classical perversity, the result is due to \cite{{BBD82}}.
More precisely, for the statements (i), (ii) and (iii) refer to Theorem 10.2.8, Proposition 10.2.13 and Proposition 10.2.9 of \cite{KS90}, respectively.

(b) Let now $p$ be an arbitrary perversity.

For $c\in\R$, denote by $\lfloor c\rfloor$ the largest integer not greater than $c$, and by $\lceil c \rceil$ the smallest integer not smaller than $c$.
Note that $ \lceil c \rceil + \lfloor -c\rfloor = 0$.

Statements (i) and (iii) follow from (a) by noticing that for any $c\in\R$
\[
\bl \Dprc{< c+1}(\field_M), \Dprc{\geq c}(\field_M) \br\qtq
\bl \Dprc{\leq c}(\field_M), \Dprc{> c-1}(\field_M) \br
\]
are the classical t-structures associated to the classical perversities
\[
p_{c,\;+}(n) \defeq \lceil c + p(n) \rceil, \quad
p_{c,\;-}(n) \defeq \lfloor c+ p(n) \rfloor,
\]
respectively.

\noi
Statement (ii) follows from (a) by noticing that one has
\[
(p_{c,\;\pm})^* = (p^*)_{-c,\;\mp}.
\]
\end{proof}
Note that $\bl \Dprc{\leq c}(\field_M), \Dprc{\geq c}(\field_M) \br_{c\in\R}$
is indexed by
$\sbcup\limits_{0\le k\le d_M}(-p(k)+\Z)$.

\begin{definition}
The \emph{middle perversity t-structure}
\[\bl \Dmid{\leq c}(\field_M), \Dmid{\geq c}(\field_M) \br_{c\in\R}
\]
is the one associated with the \emph{middle perversity} $\mathsf m(n) \defeq -n/2$.
\end{definition}

Note that $\mathsf m$ is the only perversity stable by $*$. In particular, the middle perversity t-structure is self-dual. It is indexed by $\frac12\Z$.

\section{Enhanced ind-sheaves}\label{se:enh}

Let $M$ be a good topological space.
The derived category of enhanced ind-sheaves on $M$ is defined as a quotient of the derived
category of ind-sheaves on the bordered space $M\times\R_\infty$.
We recall here these notions and some related results from \cite{DK13}.
We also discuss the exactness of Grothendieck operations
with respect to the standard classical t-structures.

References are made to \cite{KS01} for ind-sheaves, and to \cite{DK13} for bordered spaces and enhanced ind-sheaves.
See also \cite{KS14} for enhanced ind-sheaves on bordered spaces and \cite{KS15} for an exposition.

\subsection{Semi-orthogonal decomposition}
Let $\catt$ be a triangulated category, and $\catn\subset\catt$ a strictly full triangulated subcategory.
We denote by $\catt/\catn$ the quotient triangulated category
(see e.g.\ \cite[{\S\,10.2}]{KS06}).

\begin{proposition}
\label{pro:D/N}
Let $\catn\subset\catt$ be a strictly full triangulated subcategory which contains every direct summand in $\catt$ of an object of $\catn$.
Then the following conditions are equivalent:
\bnum
\item the embedding $\catn \to \catt$ has a left adjoint,
\item the quotient functor $\catt\to\catt/\catn$ has a left adjoint,
\item the composition ${}^\bot\catn \to \catt \to \catt/\catn$ is an equivalence of categories,
\label{botDN}
\item for any $X\in\catt$ there is a distinguished triangle $X'\to X\to X''\tone$ with $X'\in {}^\bot\catn$ and $X''\in \catn$,
\label{dt-bot}
\item the embedding ${}^\bot\catn\to\catt$ has a right adjoint, and $\catn \simeq ({}^\bot\catn)^\bot$.
\ee
A similar result holds switching ``left'' with ``right''.
\end{proposition}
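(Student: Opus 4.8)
The plan is to prove the cycle of implications $(i)\Rightarrow(ii)\Rightarrow(iii)\Rightarrow(iv)\Rightarrow(v)\Rightarrow(i)$, exploiting the standard formalism of Bousfield localization and Verdier quotients. The hypothesis that $\catn$ is closed under direct summands in $\catt$ will be used to guarantee that $\catn$ is ``thick'', so that the quotient $\catt/\catn$ is well behaved and the canonical functor $\catt\to\catt/\catn$ has the expected properties (in particular morphisms become isomorphisms iff their cones lie in $\catn$). Throughout, I would write $Q\colon\catt\to\catt/\catn$ for the quotient functor and $\iota\colon\catn\hookrightarrow\catt$, $\jmath\colon{}^\bot\catn\hookrightarrow\catt$ for the two embeddings.

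First I would establish $(i)\Rightarrow(iv)$: if $\iota$ has a left adjoint $\lambda$, then for $X\in\catt$ the unit $X\to\iota\lambda X$ can be completed to a distinguished triangle $X'\to X\to \iota\lambda X\tone$; a standard adjunction argument (apply $\Hom[\catt](-,\iota N)$ to the triangle and use that $\Hom[\catt](X,\iota N)\to\Hom[\catt](\iota\lambda X\to, \ldots)$ is bijective, together with $\Hom[\catt](\iota\lambda X[1],\iota N)\cong\Hom[\catn](\lambda X[1],N)\to\Hom[\catn](\lambda X,N)^{\!*}$ being surjective via the co-unit) shows $\Hom[\catt](X',\iota N)=0$ for all $N$, i.e.\ $X'\in{}^\bot\catn$. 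This is exactly $(iv)$, and it also sets up $(iv)\Rightarrow(v)$: the assignment $X\mapsto X'$ is functorial and is right adjoint to $\jmath$ because $\Hom[\catt](\jmath Y, X')\isoto\Hom[\catt](\jmath Y,X)$ by applying $\Hom[\catt](\jmath Y,-)$ to the triangle and using $\Hom[\catt](\jmath Y,\iota N)=0=\Hom[\catt](\jmath Y,\iota N[-1])$. The identity $\catn\simeq({}^\bot\catn)^\bot$ then follows: ``$\subseteq$'' is clear, and for the reverse, if $Z\in({}^\bot\catn)^\bot$, apply the triangle $Z'\to Z\to Z''\tone$ with $Z'\in{}^\bot\catn$, $Z''\in\catn$; since $Z'\in{}^\bot\catn$ and $Z\in({}^\bot\catn)^\bot$ the map $Z'\to Z$ is zero, forcing $Z$ to be a direct summand of $Z''$, hence in $\catn$ by the summand-closure hypothesis.

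Next I would prove $(iv)\Rightarrow(iii)$ and $(iii)\Rightarrow(ii)$. For $(iv)\Rightarrow(iii)$: the triangle in $(iv)$ shows $Q$ restricted to ${}^\bot\catn$ is essentially surjective, since $Q(X')\cong Q(X)$ (the cone $X''$ lies in $\catn$, so $X'\to X$ becomes an isomorphism in the quotient); fully faithfulness amounts to checking that for $Y_1,Y_2\in{}^\bot\catn$ every roof $Y_1\leftarrow W\to Y_2$ with cone of $W\to Y_1$ in $\catn$ is equivalent to a genuine morphism, which follows by replacing $W$ by its ${}^\bot\catn$-part and using $\Hom[\catt]({}^\bot\catn,\catn)=0$ to see the relevant Hom-groups are unchanged. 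For $(iii)\Rightarrow(ii)$: given the equivalence $e\colon{}^\bot\catn\isoto\catt/\catn$, a quasi-inverse composed with $\jmath$ gives a functor $\catt/\catn\to\catt$; checking it is left adjoint to $Q$ is a formal manipulation of the universal property of the Verdier quotient together with the vanishing $\Hom[\catt]({}^\bot\catn,\catn)=0$.

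Finally I would close the loop with $(ii)\Rightarrow(i)$ and $(v)\Rightarrow(i)$. For $(ii)\Rightarrow(i)$: if $Q$ has a left adjoint $S$, then I claim $X\mapsto\mathrm{cone}(SQX\to X)[-1]$ — more precisely the fiber of the co-unit $SQ\to\id$ — lands in $\catn$ and furnishes a left adjoint to $\iota$; indeed $Q$ kills this fiber (since $SQ\to\id$ becomes invertible after $Q$, as $QS\simeq\id$ for a fully faithful-type left adjoint — here one uses that $S$ is automatically fully faithful because $\catn$ is thick), and a triangulated subcategory equal to the kernel of $Q$ is precisely $\catn$ (again using thickness). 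Then adjunction $\Hom[\catt](X,\iota N)$ is computed from the triangle fiber$\to SQX\to X\tone$ and $\Hom[\catt](SQX,\iota N)\cong\Hom[\catt/\catn](QX,Q\iota N)=0$. The implication $(v)\Rightarrow(i)$ is symmetric to $(i)\Rightarrow(v)$ with the roles of $\catn$ and ${}^\bot\catn$ interchanged, using $\catn\simeq({}^\bot\catn)^\bot$ to identify the orthogonal correctly. The ``similar result switching left with right'' is obtained by passing to the opposite category $\catt^\op$, under which ${}^\bot\catn$ and $\catn^\bot$ swap and left/right adjoints swap.

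The main obstacle I anticipate is not any single implication but the careful bookkeeping of \emph{which} orthogonality and \emph{which} side of adjunction is in play at each step — in particular verifying that the candidate functors constructed from triangles (the fiber/cone assignments) are genuinely functorial and triangulated, and that the summand-closure (thickness) hypothesis is invoked exactly where needed, namely to identify $\ker Q$ with $\catn$ and to run the direct-summand argument in $(iv)\Rightarrow(v)$. None of these steps is deep, but the proof is essentially an exercise in assembling the Bousfield-localization picture, so the challenge is organizational rather than conceptual.
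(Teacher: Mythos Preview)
The paper does not prove this proposition: it is stated as a standard fact about semi-orthogonal decompositions and then used, with no proof given. So there is nothing to compare against; I can only assess whether your plan is sound.

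Your plan is essentially correct and follows the standard Bousfield-localization argument. A few remarks on execution. First, the cycle you announce, $(i)\Rightarrow(ii)\Rightarrow(iii)\Rightarrow(iv)\Rightarrow(v)\Rightarrow(i)$, is not the one you actually sketch: you prove $(i)\Rightarrow(iv)\Rightarrow(iii)\Rightarrow(ii)\Rightarrow(i)$ together with $(iv)\Rightarrow(v)\Rightarrow(i)$. That is fine logically, but you should say so. Second, your write-up of $(i)\Rightarrow(iv)$ contains a garbled passage (``$\Hom(\iota\lambda X\to,\ldots)$'' and ``$\Hom(\lambda X,N)^{*}$'' are not meaningful); what you really need is simply that for every $N\in\catn$ and every shift, the unit $X\to\iota\lambda X$ induces the adjunction isomorphism $\Hom_\catt(\iota\lambda X,\iota N)\isoto\Hom_\catt(X,\iota N)$, whence the long exact sequence forces $\Hom_\catt(X',\iota N)=0$. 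Third, in $(ii)\Rightarrow(i)$ the crucial step is showing that the cone $C$ of the counit $SQX\to X$ lies in $\catn$, not merely in $({}^\bot\catn)^\bot$. You invoke ``$S$ is automatically fully faithful because $\catn$ is thick'', which is true but deserves a line: since $SZ\in{}^\bot\catn$ one has $\Hom_{\catt/\catn}(W,QC)\cong\Hom_\catt(SW,C)=0$ for all $W$, hence $QC=0$, hence $C\in\ker Q=\catn$ by thickness. This simultaneously yields $({}^\bot\catn)^\bot=\catn$ and $QS\simeq\id$. Finally, for $(v)\Rightarrow(i)$ it is cleaner to prove $(v)\Rightarrow(iv)$ directly (complete the counit $\jmath\rho X\to X$ to a triangle and use $({}^\bot\catn)^\bot=\catn$) rather than appeal to a vague symmetry, since $(i)\Rightarrow(v)$ was itself proved via $(iv)$.

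None of these are genuine gaps; they are matters of precision. Your identification of where thickness is used (the direct-summand argument in $(iv)\Rightarrow(v)$ and the identification $\ker Q=\catn$) is exactly right.
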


\subsection{Ind-sheaves}
Let $\catc$ be a category and denote by $\catc^\wedge$ the category of
contravariant functors from $\catc$ to the category of sets. Consider the Yoneda
embedding $h\colon \catc \to \catc^\wedge$, $X\mapsto\Hom[\catc](\ast,X)$.
The category $\catc^\wedge$ admits small colimits.
As colimits do not commute with $h$, one denotes by $\colim$ 
the colimits taken in $\catc$, and by 
$\indlim$ the colimits taken in
$\catc^\wedge$.

An ind-object in $\catc$ is an object of $\catc^\wedge$ isomorphic to
$\indlim \varphi$ for some functor $\varphi\colon I\to\catc$ with $I$ a small filtrant category.
Denote by $\operatorname{Ind}(\catc)$ the full subcategory of $\catc^\wedge$ consisting of ind-objects in $\catc$.

\medskip

Let $M$ be a good topological space. The category of ind-sheaves on $M$ is the category
\[
\ind(\field_M) \defeq \operatorname{Ind}(\Mod_c(\field_M))
\]
of ind-objects in the category $\Mod_c(\field_M)$ of sheaves with
compact support.

The category $\ind(\field_M)$ is abelian, and the prestack on $M$ given by $U\mapsto \ind(\field_U)$ is a stack of abelian categories. There
is a natural exact fully faithful functor $\iota_M\colon\Mod(\field_M) \to \ind(\field_M)$ given by $F\mapsto\indlim (\field_U \tens F)$, for $U$ running over the relatively compact open subsets of $M$.
The functor $\iota_M$ has an exact left adjoint
$\alpha_M\colon\ind(\field_M) \to \Mod(\field_M)$ given by $\alpha_M(\indlim
\varphi) = \ilim \varphi$.

In this paper, we set for short
\[
\derd(M) \defeq \BDC(\ind(\field_M)),
\]
and denote by
\[
\bl \derd^{\leq 0}(M), \derd^{\geq 0}(M) \br
\]
its standard classical t-structure.

For $f\colon M\to N$ a morphism of good topological spaces, denote by $\tens$,
$\rihom$, $\opb f$, $\roim f$, $\reeim f$, $\epb f$ the six Grothendieck
operations for ind-sheaves.

Since ind-sheaves form a stack, they have a sheaf-valued hom-functor $\hom$. One has $\rhom \simeq \alpha_M\circ\rihom$.

\subsection{Bordered spaces}\label{sse:bord}
A \emph{bordered space} $\bM = (\oM,\cM)$ is a pair
of a good topological space $\cM$
and an open subset $\oM$ of $\cM$.

\begin{notation}
\label{not:Gammaf}
Let $\bM = (\oM,\cM)$ and $\bN = (\oN,\cN)$ be bordered spaces.
For a continuous map $f\colon \oM\to\oN$,
denote by $\Gamma_f \subset \oM\times\oN$ its graph, and by
$\olG$ the closure of $\Gamma_f$ in $\cM\times\cN$.
Consider the projections
\[
\xymatrix{
\cM & \ar[l]_-{q_1} \cM \times \cN \ar[r]^-{q_2} & \cM.
}
\]
\end{notation}

Bordered spaces form a category as follows:
a morphism $f\colon \bM \to \bN$ is a continuous map
$f\colon \oM\to\oN$ such that $q_1|_{\olG}\cl\olG\to\cM$ is proper;
the composition of two morphisms is the composition of the underlying continuous maps.

\begin{remark}\bnum
\item
If $f\cl \oM\to \oN$ can be extended to a continuous map $\cf\cl\cM\to\cN$,
then $f$ is a morphism of bordered space.
\item
The forgetful functor from the category of bordered spaces to that of good topological spaces is given by
\[
\bM = (\oM,\cM) \mapsto \unbM \defeq \oM.
\]
It has a fully faithful left adjoint $M\mapsto(M,M)$. By this functor, we consider good topological spaces as particular bordered spaces, and denote $(M,M)$ by $M$.

Note that $\bM = (\oM,\cM) \mapsto \cM$ is not a functor.
\ee
\end{remark}

Let $\bM = (\oM,\cM)$ be a bordered space.
The continuous maps $\oM \to[\id] \oM \hookrightarrow \cM$ induce
morphisms of bordered spaces
\begin{equation}
\label{eq:joM}
\oM \To \bM \To[j_\bM] \cM.
\end{equation}

Note that $\bM \simeq (\oM,\overline M)$, where $\overline M$ is the closure of $\oM$ in $\cM$.

\begin{notation}\label{not:bZ}
For a locally closed subset $Z$ of $\oM$, set $\bZ=(Z,\overline Z)$, where $\overline Z$ is the closure of $Z$ in $\cM$, and denote $i_\bZ\colon\bZ\to\bM$ the morphism induced by the embedding $Z\subset \oM$.
\end{notation}

\begin{lemma}
Let $f\colon \bM \to \bN$ be a morphism of bordered spaces.
Let $Z\subset \unbM$ and $W\subset \unbN$ be locally closed subsets
such that $f(Z)\subset W$.
Then $f\vert_Z\cl Z\to W$ induces a morphism
$\inbordered Z \to \inbordered W $ of bordered spaces.
\end{lemma}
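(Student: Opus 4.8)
The plan is to unwind the definition of a morphism of bordered spaces from Section~\ref{sse:bord} and reduce the only non-formal point — properness of a graph projection — to the corresponding property for $f$, which holds by hypothesis.

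First I would check that the statement even makes sense, i.e.\ that $\inbordered Z=(Z,\overline Z)$ and $\inbordered W=(W,\overline W)$ (notation as in Notation~\ref{not:bZ}, with closures taken in $\cM$ and $\cN$ respectively) are genuine bordered spaces. Since $Z$ is locally closed in $\unbM=\oM$ and $\oM$ is open in $\cM$, the subset $Z$ is locally closed in $\cM$, hence open in its closure $\overline Z$; moreover $\overline Z$, being a closed subset of the good topological space $\cM$, is itself good. The same applies to $\inbordered W$. The underlying continuous map of the would-be morphism is just the restriction $f\vert_Z\colon Z\to W$, which is continuous; so the whole content of the lemma is the properness of the first projection
\[
p\colon \overline{\Gamma_{f\vert_Z}}\to\overline Z,
\]
where $\overline{\Gamma_{f\vert_Z}}$ denotes the closure of the graph $\Gamma_{f\vert_Z}=\{(z,f(z))\semicolon z\in Z\}$ inside $\overline Z\times\overline W$, and $p$ is induced by the projection $q_1$ of Notation~\ref{not:Gammaf}.

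The key step is to identify this closure with a closed subset of $\olG$. Since $f(Z)\subset W$ we have $\Gamma_{f\vert_Z}\subset Z\times W\subset\overline Z\times\overline W$, and $\overline Z\times\overline W$ is closed in $\cM\times\cN$; hence forming the closure of $\Gamma_{f\vert_Z}$ inside $\overline Z\times\overline W$ produces the same set $C$ as forming it inside $\cM\times\cN$. On the other hand $\Gamma_{f\vert_Z}\subset\Gamma_f\subset\olG$ and $\olG$ is closed in $\cM\times\cN$, so $C\subset\olG$; thus $C$ is a closed subset of $\olG$. Under these identifications, $p$ is nothing but the restriction to $C$ of the proper map $q_1\vert_{\olG}\colon\olG\to\cM$, corestricted to $\overline Z$ — which is legitimate because $q_1(C)\subset\overline Z$.

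It then remains to invoke two elementary facts about proper maps of good topological spaces: the restriction of a proper map to a closed subspace is proper, and a proper map whose image lies in a subspace is proper onto that subspace. Applying the first fact to the closed subset $C\subset\olG$ and the second to the subspace $\overline Z\subset\cM$ shows that $p$ is proper, hence that $f\vert_Z$ defines a morphism $\inbordered Z\to\inbordered W$. I expect the only point requiring care to be the bookkeeping of closures in the preceding paragraph — in particular the observation that the closure of $\Gamma_{f\vert_Z}$ is insensitive to whether it is taken in $\overline Z\times\overline W$ or in the ambient $\cM\times\cN$, and that it is automatically contained in $\olG$ — after which the conclusion is immediate.
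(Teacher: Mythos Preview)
The paper states this lemma without proof, treating it as straightforward from the definitions. Your argument is correct and supplies exactly the routine verification the authors omit: you correctly reduce to properness of the graph projection, identify the closure of $\Gamma_{f\vert_Z}$ in $\overline Z\times\overline W$ with its closure in $\cM\times\cN$ (using that $\overline Z\times\overline W$ is closed there), observe that this closure sits inside $\olG$, and then invoke the standard facts that properness is preserved under restriction to a closed subspace and under corestriction to a subspace containing the image.
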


In particular, the bordered space $\bZ$ only depends on $\bM$
(and not on $\cM$).

\begin{definition}
We say that a morphism $f\colon \bM \to \bN$ is \emph{semi-proper}
if $q_2|_{\olG}\cl\olG\to\cN$ is proper.
We say that $f$ is \emph{proper} if moreover $\unb f\cl\unbM\to\unbN$
is proper.
\end{definition}

For example, $j_\bM$ and $i_\bZ$ are semi-proper.

\Def\label{def:subset}
A subset $S$ of a bordered space $\bM=(\oM,\cM)$ is a subset of $M$.
We say that $S$ is open (resp.\ closed, locally closed) if it is so in $\oM$.
We say that $S$ is relatively compact if it is contained
in a compact subset of $\cM$.
\edf

As seen by the following obvious lemma,
the notion of relatively compact subsets only depends on $\bM$
(and not on $\cM$).

\Lemma\label{lem:proper}
Let $f\cl \bM\to\bN$ be a morphism of bordered spaces.
\bnum
\item
If $S$ is a relatively compact subset of $\bM$, then its image
$\unb f(S)\subset \unbN$ is a relatively compact subset of $\bN$.
\item Assume furthermore that $f$ is semi-proper.
If $S$ is a relatively compact subset of $\bN$, then its inverse image
${\unb f}{}^{-1}(S)\subset \unbM$ is a relatively compact subset of $\bM$.
\ee
\enlemma

\subsection{Ind-sheaves on bordered spaces}\label{sse:Ibord}
Let $\bM$ be a bordered space.
The abelian category of ind-sheaves on $\bM$ is
\[
\ind(\field_\bM) \defeq \operatorname{Ind}(\Mod_c(\field_\bM)),
\]
where $\Mod_c(\field_\bM)\subset \Mod(\field_\unbM)$ is the full subcategory of sheaves on $\unbM$ whose support is relatively compact in $\bM$.

There is a natural exact embedding $\iota_\bM\colon\Mod(\field_\unbM) \to \ind(\field_\bM)$ given by
$F\mapsto \indlim (\field_U \tens F)$, for $U$ running over the family of relatively compact open subsets of $\bM$.

We set for short
\[
\derd(\bM) \defeq \BDC(\ind(\field_\bM)),
\]
and denote by
\[
\bl \derd^{\leq 0}(\bM), \derd^{\geq 0}(\bM) \br
\]
its standard classical t-structure.

Let $\bM = (\oM,\cM)$, and consider the embeddings
\[
\xymatrix{
\cM\setminus\oM \ar[r]^-i & \cM & \oM \ar[l]_-j.
}
\]
The functor $\roim i \simeq \reeim i$ induces
the embedding $\derd(\cM\setminus\oM) \subset \derd(\cM)$,
which admits a left and a right adjoint.

\Prop There is an equivalence of triangulated categories:
$$\derd(\bM)\simeq \derd(\cM)/\derd(\cM\setminus\oM).$$
\enprop

\Proof
The functor $\eim {{j}}$ induces an exact functor
$\Mod_c(\field_\bM)\to\Mod_c(\field_\cM)$, which induces an exact functor
$\ind(\field_\bM)\to \ind(\field_\cM)$
and functors of triangulated categories
$\derd(\bM)\to \derd(\cM)\to \derd(\cM)/\derd(\cM\setminus\oM)$.

On the other hand, the functor $\opb j$ induces an
exact functor $\Mod_c(\field_\cM)\to
\Mod_c(\field_\bM)$, which induces an exact functor
$\ind(\field_\cM)\to \ind(\field_\bM)$
and a functor of triangulated categories
$\derd(\cM)\to \derd(\bM)$. Since the composition
$\derd(\cM\setminus\oM)\to \derd(\cM)\to \derd(\bM)$ vanishes,
we obtain a functor
$\derd(\cM)/\derd(\cM\setminus\oM)\to\derd(\bM)$.

It is obvious that these functors between $\derd(\bM)$
and $\derd(\cM)/\derd(\cM\setminus\oM)$ are quasi-inverse to each other.
\QED

Thus, there are equivalences
\[
\derd(\bM)
\simeq \derd(\cM)/\derd(\cM\setminus\oM)
\simeq {}^\bot\derd(\cM\setminus\oM)
\simeq \derd(\cM\setminus\oM)^\bot,
\]
and one has 
\begin{align*}
{}^\bot\derd(\cM\setminus\oM) &\simeq \{F\in \derd(\cM) \semicolon \field_M\tens F \isoto F \}, \\
\derd(\cM\setminus\oM)^\bot &\simeq \{F\in \derd(\cM) \semicolon 
\rihom(\field_M, F) \isofrom F \}.
\end{align*}

Denote by
\[
\derived q_\bM \colon \derd(\cM) \to \derd(\bM), \quad
\derived l_\bM,\derived r_\bM \colon \derd(\bM) \to \derd(\cM)
\]
the quotient functor and its left and right adjoint, respectively.
For $F\in\derd(\cM)$, they satisfy
\begin{equation}
\label{eq:lqrq}
\derived l_\bM\derived q_\bM F \simeq \field_M\tens F, \quad
\derived r_\bM\derived q_\bM F \simeq \rihom(\field_M, F).
\end{equation}

\begin{remark}
At the level of sheaves, there is a natural equivalence
\[
\BDC(\field_M) \simeq \BDC(\field_\cM)/\BDC(\field_{\cM\setminus\oM}).
\]
There is a commutative diagram
\[
\xymatrix@R=2.5ex{
\BDC(\field_M) \ar@{ >->}[r]^{\iota_\bM} \ar@{-}[d]_\wr & \derd(\bM) \ar@{-}[d]_\wr \\
\BDC(\field_\cM)/\BDC(\field_{\cM \setminus \oM}) \ar@{ >->}[r] & \derd(\cM)/\derd(\cM\setminus\oM).
}
\]\end{remark}
The functor $\iota_\bM\cl \BDC(\field_\unbM)\to \derd(\bM)$ has a left adjoint
$$\al_\bM\cl \derd(\bM)\to \BDC(\field_\unbM).$$
It coincides with the composition
$$\derd(\bM)\To\derd(\unbM)\To[\al_{\sunbM}] \BDC(\field_\unbM).$$

Let $f\colon\bM\to\bN$ be a morphism of bordered spaces.
The six Grothendieck operations for ind-sheaves on bordered spaces
\begin{align*}
\tens &\colon \derd(\bM) \times \derd(\bM) \to \derd(\bM), \\
\rihom &\colon \derd(\bM)^\op \times \derd(\bM) \to \derd(\bM), \\
\reeim f, \roim f &\colon \derd(\bM) \to \derd(\bN), \\
\opb f, \epb f &\colon \derd(\bN) \to \derd(\bM)
\end{align*}
are defined as follows.
Recalling Notation~\ref{not:Gammaf}, observe that $\Gamma_f$ is locally closed in $\cM\times\cN$.
For $F,F'\in\derd(\cM)$ and $G\in\derd(\cN)$, one sets
\begin{align*}
\derived q_\bM F\tens \derived q_\bM F' &\defeq \derived q_\bM(F \tens F'), \\
\rihom(\derived q_\bM F, \derived q_\bM F') &\defeq \derived q_\bM\rihom(F, F'), \\
\reeim f \derived q_\bM F &\defeq \derived q_\bN\roimv{q_{2\sep!!}}(\field_{\Gamma_f} \tens \opb{q_1} F), \\
\roim f \derived q_\bM F &\defeq \derived q_\bN\roimv{q_{2\sep*}}\rihom(\field_{\Gamma_f}, \epb{q_1} F), \\
\opb f \derived q_\bN G &\defeq \derived q_\bM\roimv{q_{1\sep!!}}(\field_{\Gamma_f} \tens \opb{q_2} G), \\
\epb f \derived q_\bN G &\defeq \derived q_\bM \roimv{q_{1\sep*}} \rihom(\field_{\Gamma_f}, \epb{q_2} G).
\end{align*}

\begin{remark}
The natural embedding
\[
\iota_\bM\colon \BDC(\field_\unbM) \to \derd(\bM)
\]
commutes with the operations $\tens$,
$\rihom$, $\opb f$, $\roim f$, $\epb f$.
If $f$ is semi-proper, one has
\begin{equation}
\label{eq:iotaeim}
\reeim f \circ \iota_\bM \isoto \iota_\bN\circ \reim{ \unb f}.
\end{equation}
\end{remark}

\begin{remark}
Let $\bM=(\oM,\cM)$.
For the natural morphism $j_\bM\colon \bM \to \cM$, one has
\[
\derived q_\bM \simeq \opb{j_\bM} \simeq \epb{j_\bM}, \quad
\derived l_\bM \simeq \roimv{j_{\oM\sep!!}}, \quad
\derived r_\bM \simeq \roimv{j_{\oM\sep*}}.
\]
\end{remark}

The following result generalizes \eqref{eq:lqrq}.

\begin{lemma}\label{lem:reimtensanddual}
Let $Z$ be a locally closed subset of\/ $\bM$, and let $F\in\derd(\bM)$.
Using {\rm Notation~\ref{not:bZ}}, one has
\begin{align*}
\field_{Z} \tens F &\simeq \roimv{i_{\bZ\sep!!}}\opb i_\bZ F, \\
\rihom(\field_Z, F) &\simeq \roimv{i_{\bZ\sep*}}\epb{i_\bZ} F.
\end{align*}
\end{lemma}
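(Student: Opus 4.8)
The plan is to prove the first isomorphism directly and to deduce the second one formally. Indeed, by the tensor-hom adjunction $\rihom(\field_Z,-)$ is right adjoint to $\field_Z\tens(-)$, and since $\opb{i_\bZ}$ is left adjoint to $\roimv{i_{\bZ\sep*}}$ while $\roimv{i_{\bZ\sep!!}}$ is left adjoint to $\epb{i_\bZ}$, the composite $\roimv{i_{\bZ\sep*}}\circ\epb{i_\bZ}$ is right adjoint to $\roimv{i_{\bZ\sep!!}}\circ\opb{i_\bZ}$. Hence, once $\field_Z\tens(-)\simeq\roimv{i_{\bZ\sep!!}}\opb{i_\bZ}(-)$ is established as an isomorphism of functors, uniqueness of right adjoints yields $\rihom(\field_Z,-)\simeq\roimv{i_{\bZ\sep*}}\epb{i_\bZ}(-)$.

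For the first isomorphism I would reduce to a computation with ordinary ind-sheaves on topological spaces. Write $F\simeq\derived q_\bM\widetilde F$ with $\widetilde F\in\derd(\cM)$, which is possible since $\derived q_\bM$ is a quotient functor; the isomorphism constructed will be natural in $\widetilde F$, and since all operations involved factor through the quotient functors, it descends to the required isomorphism of functors on $\derd(\bM)$. Since $\oM$ is open in $\cM$, the set $Z$, locally closed in $\oM$, is locally closed in $\cM$; the object $\field_Z\in\derd(\bM)$ equals $\derived q_\bM$ applied to the constant ind-sheaf on $Z$ extended by zero to $\cM$, so that $\field_Z\tens F\simeq\derived q_\bM(\field_Z\tens\widetilde F)$, the tensor product on the right being taken in $\derd(\cM)$. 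Moreover $Z$ is open and dense in its closure $\overline Z$, and $\overline Z$ is closed in $\cM$; write $k\colon\overline Z\hookrightarrow\cM$ for this closed embedding of topological spaces and $\delta\colon\overline Z\to\overline Z\times\cM$, $a\mapsto(a,a)$, for the induced closed embedding, so that $q_1\circ\delta=\id$ and $q_2\circ\delta=k$, where $q_1,q_2$ are the projections of $\overline Z\times\cM$. The graph $\Gamma_{i_\bZ}\subset\overline Z\times\cM$ of $i_\bZ$ is then $\delta(Z)$, which is open in the closed subset $\delta(\overline Z)=\overline{\Gamma_{i_\bZ}}$, whence $\field_{\Gamma_{i_\bZ}}\simeq\roim\delta\,\field_Z$.

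Plugging these data into the defining formulas for $\opb{i_\bZ}$ and $\roimv{i_{\bZ\sep!!}}$ from Section~\ref{se:enh}, and using the projection formula for the closed embeddings $\delta$ and $k$, the composition law $\reeim f\circ\reeim g\simeq\reeim{(f\circ g)}$, and $\reeim\delta\simeq\roim\delta$ (valid since $\delta$ is proper), one first obtains $\opb{i_\bZ}F\simeq\derived q_\bZ(\field_Z\tens\opb k\widetilde F)$, the composite direct image along $q_1$ collapsing because $q_1\circ\delta=\id$. Applying $\roimv{i_{\bZ\sep!!}}$, collapsing the composite direct image along $q_2$ via $q_2\circ\delta=k$, and using the idempotency of $\field_Z\tens(-)$, one gets
\begin{align*}
\roimv{i_{\bZ\sep!!}}\opb{i_\bZ}F
&\simeq \derived q_\bM\reeim k(\field_Z\tens\opb k\widetilde F)\\
&\simeq \derived q_\bM(\reeim k\,\field_Z\tens\widetilde F)\\
&\simeq \derived q_\bM(\field_Z\tens\widetilde F),
\end{align*}
where the last isomorphism uses the projection formula for $k$ and $\reeim k\,\field_Z\simeq\roim k\,\field_Z\simeq\field_Z$ (transitivity of extension by zero). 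The right-hand side is $\field_Z\tens F$, which proves the claim.

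The computation is essentially bookkeeping with the six operations and their compatibilities; the one point that genuinely needs care is translating the hypothesis that $Z$ is locally closed in $\oM$ into the two facts used above, namely that $Z$ is open in $\overline Z$ and that $\overline Z$ is closed in $\cM$ --- these are precisely what identify $\Gamma_{i_\bZ}$ with $\delta(Z)$ inside the closed graph $\delta(\overline Z)$ and make the composites of direct images collapse along $q_1\circ\delta=\id$ and $q_2\circ\delta=k$. Alternatively, one can shorten the argument by invoking the projection formula $\roimv{i_{\bZ\sep!!}}\bl H\tens\opb{i_\bZ}(-)\br\simeq\roimv{i_{\bZ\sep!!}}H\tens(-)$ for morphisms of bordered spaces (see \cite{DK13}) with $H=\field_\bZ$, which reduces the first isomorphism to the single identity $\roimv{i_{\bZ\sep!!}}\field_\bZ\simeq\field_Z$; this identity is then proved by the same reduction to $\derd(\cM)$.
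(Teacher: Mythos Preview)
Your proof is correct. Your main route differs from the paper's, which is much shorter: the paper observes that $i_\bZ$ is semi-proper, so \eqref{eq:iotaeim} gives $\field_Z\simeq\roimv{i_{\bZ\sep!!}}\field_Z$ immediately; then one application of the projection formula for bordered spaces (from \cite{DK13}) finishes the first isomorphism. This is precisely the shortcut you sketch at the end as an ``alternative''. Your primary argument instead unpacks the definitions of $\opb{i_\bZ}$ and $\roimv{i_{\bZ\sep!!}}$ through the graph in $\overline Z\times\cM$ and reduces everything to classical projection formulas for the closed embeddings $\delta$ and $k$ on good topological spaces. That is more laborious but has the virtue of being self-contained: it does not presuppose the projection formula for morphisms of bordered spaces.

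For the second isomorphism the paper simply says ``similarly''. Your argument via uniqueness of right adjoints is cleaner and avoids repeating the computation; it also makes transparent why the two statements come as a pair.
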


\begin{proof}
To avoid confusion, let us denote by $\field_{Z|\unbM}$ the extension by zero to
$\unbM$ of the constant sheaf $\field_Z$ on $Z$.
Since $i_\bZ$ is semi-proper, \eqref{eq:iotaeim} implies
$\field_{Z|\unbM}\simeq\roimv{i_{\bZ\sep!!}}\field_Z$. Hence
\begin{align*}
\field_{Z|\unbM} \tens F
&\simeq (\roimv{i_{\bZ\sep!!}}\field_Z) \tens F \\
&\simeq \roimv{i_{\bZ\sep!!}}(\field_Z \tens \opb i_\bZ F) \\
&\simeq \roimv{i_{\bZ\sep!!}}\opb i_\bZ F.
\end{align*}
We can prove the second isomorphism similarly.
\end{proof}

 Let $\bM=(\oM,\cM)$ be a bordered space.
By \cite[\S3.4]{DK13}, one has
\begin{align*}
\derd^{\leq 0}(\bM) &=\{ F \in \derd(\bM) \semicolon \roimv{j_{\bM\sep!!}} F \in \derd^{\leq 0}(\cM) \}, \\
\derd^{\geq 0}(\bM) &= \{ F \in \derd(\bM) \semicolon \roimv{j_{\bM\sep!!}} F \in \derd^{\geq 0}(\cM) \}.
\end{align*}

\begin{proposition}
\label{pro:t-board}
Let $\bM$ be a bordered space.
\begin{itemize}
\item[(i)]
The bifunctor $\tens$ is exact, i.e.\ for any $n,n'\in\Z$ one has
\begin{align*}
\derd^{\leq n}(\bM)\tens \derd^{\leq n'}(\bM) &\subset \derd^{\leq n+n'}(\bM), \\
\derd^{\geq n}(\bM)\tens\derd^{\geq n'}(\bM) &\subset \derd^{\geq n+n'}(\bM).
\end{align*}
\item[(ii)]
The bifunctor $\rihom$ is left exact, i.e.\ for any $n,n'\in\Z$ one has
\[
\rihom(\derd^{\leq n}(\bM), \derd^{\geq n'}(\bM)) \subset \derd^{\geq n'-n}(\bM).
\]
\end{itemize}
Let $f\colon \bM \to \bN$ be a morphism of bordered spaces.
\begin{itemize}
\item[(iii)]
$\reeim f$ and $\roim f$ are left exact, i.e.\ for any $n\in\Z$ one has
\begin{align*}
\reeim f \derd^{\geq n}(\bM) &\subset \derd^{\geq n}(\bN), \\
\roim f \derd^{\geq n}(\bM) &\subset \derd^{\geq n}(\bN).
\end{align*}
\item[(iv)]
$\opb f$ is exact, i.e.\ for any $n\in\Z$ one has
\begin{align*}
\opb f \derd^{\leq n}(\bN) &\subset \derd^{\leq n}(\bM), \\
\opb f \derd^{\geq n}(\bN) &\subset \derd^{\geq n}(\bM).
\end{align*}
\end{itemize}
Let $d\in\Z_{\geq 0}$ and assume that $\opb f(y)\subset \unbM$ has soft-dimension $\leq d$ for any $y\in \unbN$.
\begin{itemize}
\item[(v)]
$\reeim f(\ast)[d]$ is right exact, i.e., for any $n\in\Z$ one has
\[
\reeim f \derd^{\leq n}(\bM) \subset \derd^{\leq n+d}(\bN).
\]
\item[(vi)]
$\epb f(\ast)[-d]$ is left exact, i.e., for any $n\in\Z$ one has
\[
\epb f \derd^{\geq n}(\bN) \subset \derd^{\geq n-d}(\bM).
\]
\end{itemize}
\end{proposition}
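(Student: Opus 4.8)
The plan is to reduce all six assertions to their counterparts for ind-sheaves on the good topological spaces $\cM$, $\cN$ and $\cM\times\cN$, which we take as known (see \cite{KS01} and \cite[\S3]{DK13}), transporting them along the quotient functor $\derived q_\bM\cl\derd(\cM)\to\derd(\bM)$. Two preliminary remarks will do most of the work. First, the description of the standard t-structure recalled just before the statement says precisely that $\derived l_\bM\simeq\roimv{j_{\bM\sep!!}}$ both preserves and reflects the t-structure: for $G\in\derd(\bM)$ and $n\in\Z$ one has $G\in\derd^{\leq n}(\bM)$ (resp.\ $\derd^{\geq n}(\bM)$) if and only if $\derived l_\bM G\in\derd^{\leq n}(\cM)$ (resp.\ $\derd^{\geq n}(\cM)$). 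Second, $\derived q_\bM$ is t-exact: by \eqref{eq:lqrq} one has $\derived l_\bM\derived q_\bM F\simeq\field_M\tens F$, and since $\field$ is a field $\field_M$ is flat, so $\field_M\tens(\dummy)$ is t-exact on $\derd(\cM)$; hence $\derived q_\bM$ sends $\derd^{\leq n}(\cM)$ to $\derd^{\leq n}(\bM)$ and $\derd^{\geq n}(\cM)$ to $\derd^{\geq n}(\bM)$. As the left adjoint $\derived l_\bM$ is fully faithful, $\derived q_\bM\circ\derived l_\bM\simeq\id$, so every $G\in\derd(\bM)$ is $\derived q_\bM F$ for $F\defeq\derived l_\bM G$, a representative having the same t-amplitude as $G$.

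Granting this, (i) and (ii) will be immediate. For $G,G'\in\derd(\bM)$, put $F\defeq\derived l_\bM G$ and $F'\defeq\derived l_\bM G'$; by the definitions of $\tens$ and $\rihom$ on $\bM$ one has $G\tens G'\simeq\derived q_\bM(F\tens F')$ and $\rihom(G,G')\simeq\derived q_\bM\rihom(F,F')$. If $G\in\derd^{\leq n}(\bM)$ and $G'\in\derd^{\leq n'}(\bM)$ then $F\tens F'\in\derd^{\leq n+n'}(\cM)$ by exactness of $\tens$ on $\cM$ (and symmetrically for $\derd^{\geq\bullet}$), whence (i) after applying the t-exact $\derived q_\bM$; if $G\in\derd^{\leq n}(\bM)$ and $G'\in\derd^{\geq n'}(\bM)$ then $\rihom(F,F')\in\derd^{\geq n'-n}(\cM)$ by left exactness of $\rihom$ on $\cM$, whence (ii).

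For the statements attached to a morphism $f\cl\bM\to\bN$, the idea is the same, after rewriting. Let $i_{\Gamma_f}\cl\Gamma_f\hookrightarrow\cM\times\cN$ be the embedding and set $u\defeq q_1\circ i_{\Gamma_f}\cl\Gamma_f\to\cM$, $v\defeq q_2\circ i_{\Gamma_f}\cl\Gamma_f\to\cN$. Using $\field_{\Gamma_f}\tens(\dummy)\simeq\roimv{i_{\Gamma_f\sep!!}}\opb{i_{\Gamma_f}}(\dummy)$ and $\rihom(\field_{\Gamma_f},\dummy)\simeq\roimv{i_{\Gamma_f\sep*}}\epb{i_{\Gamma_f}}(\dummy)$ (as in Lemma~\ref{lem:reimtensanddual}) and the functoriality of the operations on $\cM\times\cN$, the defining formulas become, for $G\in\derd(\bM)$ and $G'\in\derd(\bN)$,
\[
\reeim f\,G\simeq\derived q_\bN\,\reeim v\,\opb u\,\derived l_\bM G,\qquad
\epb f\,G'\simeq\derived q_\bM\,\roim u\,\epb v\,\derived l_\bN G',
\]
and, in the same way, $\roim f\,G\simeq\derived q_\bN\,\roim v\,\epb u\,\derived l_\bM G$ and $\opb f\,G'\simeq\derived q_\bM\,\reeim u\,\opb v\,\derived l_\bN G'$. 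Now $u$ factors as the homeomorphism $\Gamma_f\isoto\unbM$ (projection of the graph) followed by the open embedding $\unbM\hookrightarrow\cM$, so $\opb u$, $\epb u$ and $\reeim u$ are t-exact and $\roim u$ is left exact; while $v$ has fiber $\opb f(y)$ over $y\in\unbN$ and empty fiber over $\cN\setminus\unbN$, so its fibers are precisely the fibers of $f$ together with empty ones. Combining these with t-exactness of $\derived q_\bM$, $\derived q_\bN$: statement (iv) will follow from t-exactness of $\opb v$ and $\reeim u$; statement (iii) from t-exactness of $\opb u$ (resp.\ $\epb u$) together with left exactness of $\reeim v$ (resp.\ $\roim v$) on the good space $\cN$; and under the hypothesis of (v)--(vi), all fibers of $v$ will have soft-dimension $\leq d$, so the good-space statements ``$\reeim v(\dummy)[d]$ right exact'' and ``$\epb v(\dummy)[-d]$ left exact'' apply, and composing with the t-exact $\opb u$ (resp.\ left-exact $\roim u$) and the t-exact $\derived q$ will yield (v) and (vi).

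The step requiring care is precisely this last one: neither the adjunction lemma recalled above (which only trades left exactness of a right adjoint against right exactness of its left adjoint) nor the t-exactness of $\derived q_\bM$ suffices, on its own, to produce the left exactness of $\reeim f$ or the shifted statements (v)--(vi); one genuinely needs the rewriting through $i_{\Gamma_f}$, together with the two structural facts that $u$ is---up to homeomorphism---the open inclusion $\unbM\hookrightarrow\cM$ and that $v$ has the same non-empty fibers as $f$. Once these are recorded, every assertion becomes a concatenation of functors between derived categories of ind-sheaves on genuine good topological spaces that are either t-exact or one-sidedly exact in the required way, so the known good-space versions conclude.
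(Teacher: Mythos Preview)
Your proof is correct, and the overall strategy---reduce everything to the corresponding statements for ind-sheaves on honest good spaces---is the same as the paper's. The execution differs, though, in a way worth noting.

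For (iii)--(vi) the paper first replaces $(\oM,\cM)$ by $(\oM,\olG)$, which forces $f\cl\oM\to\oN$ to extend to a continuous $\cf\cl\cM\to\cN$; it then factors each operation as a three-step composition through $j_\bM$, $\cf$, $j_\bN$ and quotes the good-space case for $\cf$. You instead keep $\cM$ fixed and work directly with the graph: factoring through $i_{\Gamma_f}$ gives the clean formulas $\reeim f\simeq\derived q_\bN\,\reeim v\,\opb u\,\derived l_\bM$, etc., and then the two structural facts you isolate (that $u$ is, up to homeomorphism, the open inclusion $\unbM\hookrightarrow\cM$, and that $v$ has exactly the fibers of $f$) finish all four statements uniformly. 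This avoids the change-of-compactification trick and treats (iii)--(vi) on the same footing. The price is that you must quote the good-space version of (v) for the map $v\cl\Gamma_f\to\cN$, whereas the paper proves (v) from scratch by d\'evissage (writing $F=\indlim F_i$ with $F_i\in\Mod_c(\field_\bM)$ and using the stalk formula for $\reim f F_i$), and then gets (vi) by adjunction. Your route is more systematic; the paper's is slightly more self-contained for (v).
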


\begin{proof}
When $\bM$ and $\bN$ are good topological spaces, statements
(i)--(iv) follow from \cite{KS01}.

Let $\bM=(\oM,\cM)$ and $\bN=(\oN,\cN)$.
Replacing $(\oM,\cM)$ with $(\oM,\olG)$, we may assume from the beginning that
$f\cl \oM\to\oN$ extends to $\cf\cl\cM\to\cN$.

\smallskip
\noi
(i) follows from the topological space case,
using the fact that $\roimv{j_{\oM\sep!!}}$ commutes with $\tens$.

\smallskip\noi
(ii) follows from (i) by adjunction.

\smallskip\noi
(iii) and (iv) follow from the topological space case
using the isomorphisms
\[
\reeim{f}\simeq\opb{j_\bN}\reeim{{\cf}}\reeim{{j_\bM}}, \quad
\roim{f}\simeq\opb{j_\bN}\roim{{\cf}}\roim{{j_\bM}}, \quad
\opb{f}\simeq\opb{{j_\bM}}\opb{\smash{\cf}}\reeim{{j_\bN}}.
\]

\smallskip
As (vi) follows from (v) by adjunction, we are left to prove (v).

\medskip
By d\'evissage, it is enough to show that for $F\in\ind(\field_\bM)$ one has
\[
H^k\reeim f F \simeq 0 \quad\text{for }k>d.
\]
Writing $F=\indlim[i]F_i$ with $F_i\in\Mod_c(\field_\bM)$, one has
\[
H^k \reeim f F
\simeq \indlim[i] H^k \reim {f} F_i.
\]
Then, for any $y\in N$,
\[
\bl H^k \reim { f} F_i \br_y
\simeq H^k_c({f}^{-1}(y); F_i\big\vert_{{f}^{-1}(y)}) \simeq 0,
\]
since $f^{-1}(y)$ has soft-dimension $\leq d$.
\end{proof}

\begin{proposition}\label{pro:opbVan}
Let $f\cl\bM\to \bN$ be a morphism of bordered spaces. Let $n\in\Z$ and $G\in\derd(\bN)$.
Assume
\begin{itemize}
\item[(a)]
$f$ is semi-proper,
\item[(b)]
$\unb f\colon \unbM \to \unbN$ is surjective.
\end{itemize}
Then
\begin{itemize}
\item[(i)]
$\opb f G\in \derd^{\geq n}(\bM)$ implies $G\in \derd^{\geq n}(\bN)$,
\item[(ii)]
$\opb f G\in \derd^{\leq n}(\bM)$ implies $G\in \derd^{\leq n}(\bN)$.
\end{itemize}
\end{proposition}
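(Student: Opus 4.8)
The plan is to deduce both (i) and (ii) from the single statement that $\opb f\colon\ind(\field_\bN)\to\ind(\field_\bM)$ is \emph{conservative}. Indeed, $\opb f$ is t-exact by Proposition~\ref{pro:t-board}~(iv), so $H^k(\opb f G)\simeq\opb f(H^k G)$ for every $k\in\Z$; hence $\opb f G\in\derd^{\leq n}(\bM)$ (resp.\ $\derd^{\geq n}(\bM)$) means $\opb f(H^k G)\simeq 0$ for all $k>n$ (resp.\ $k<n$), which --- $\opb f$ being conservative --- is equivalent to $H^k G\simeq 0$ for all $k>n$ (resp.\ $k<n$), that is, to $G\in\derd^{\leq n}(\bN)$ (resp.\ $\derd^{\geq n}(\bN)$). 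So it is enough to prove $(\ast)$: if $E\in\ind(\field_\bN)$ and $\opb f E\simeq 0$, then $E\simeq 0$.

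For $(\ast)$ I would pass to good topological spaces. As in the proof of Proposition~\ref{pro:t-board}, after replacing $(\oM,\cM)$ by $(\oM,\olG)$ we may assume that $f$ extends to a morphism $\cf\colon\cM\to\cN$ of good topological spaces; hypothesis (b) then gives $\cf(\oM)=\oN$, and one has $\opb f\simeq\derived q_\bM\comp\opb\cf\comp\derived l_\bN$. Given $E$ with $\opb f E\simeq 0$, put $\tilde E\defeq\derived l_\bN E\in\ind(\field_\cN)$; since $\derived l_\bN$ is fully faithful it suffices to show $\tilde E\simeq 0$. Applying $\derived l_\bM$ and using \eqref{eq:lqrq} gives $\field_\oM\tens\opb\cf\tilde E\simeq\derived l_\bM\opb f E\simeq 0$, and $\field_\oN\tens\tilde E\isoto\tilde E$ because the essential image of $\derived l_\bN$ equals $\{F\semicolon\field_\oN\tens F\isoto F\}$. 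Thus $(\ast)$ follows from the following statement, purely about ind-sheaves on good topological spaces: \emph{if $\cf\colon\cM\to\cN$ is continuous, $\oM\subset\cM$ and $\oN\subset\cN$ are open with $\cf(\oM)=\oN$, and $\tilde E\in\ind(\field_\cN)$ satisfies $\field_\oN\tens\tilde E\isoto\tilde E$ and $\field_\oM\tens\opb\cf\tilde E\simeq 0$, then $\tilde E\simeq 0$.}

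To prove this, write $\tilde E\simeq\field_\oN\tens\tilde E$ as $\indlim[i\in I]F_i$ with $I$ small and filtrant and each $F_i\in\Mod(\field_\cN)$ of support contained in $\oN$ and relatively compact in $\cN$. If $\tilde E\not\simeq 0$ there is an index $i_0$ such that every structure morphism $\phi_j\colon F_{i_0}\to F_j$, $j\geq i_0$, is nonzero. Then the sets $\supp(\im\phi_j)$ form a filtrant family of nonempty closed subsets of $\cN$ contained in the compact set $\overline{\supp F_{i_0}}$, so their intersection contains a point $y_0\in\oN$, and $(\phi_j)_{y_0}\neq 0$ for all $j\geq i_0$. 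Choose $x_0\in\oM$ with $\cf(x_0)=y_0$ (here $\cf(\oM)=\oN$ is used) and a relatively compact open neighbourhood $U$ of $x_0$ in $\cM$. Now $\field_\oM\tens\opb\cf\tilde E$ is represented by the filtrant system $(\field_{U'\cap\oM}\tens\opb\cf F_i)$ indexed by the pairs $(i,U')$ with $U'$ relatively compact open in $\cM$ (here $\opb\cf F_i$ denotes the sheaf-theoretic pullback); and for every $(j,U')\geq(i_0,U)$ the corresponding transition morphism $\field_{U\cap\oM}\tens\opb\cf F_{i_0}\to\field_{U'\cap\oM}\tens\opb\cf F_j$ has stalk $(\phi_j)_{y_0}\neq 0$ at $x_0$, hence is nonzero. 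So the term indexed by $(i_0,U)$ never becomes zero in the system, i.e.\ $\field_\oM\tens\opb\cf\tilde E\not\simeq 0$ --- a contradiction.

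The substantial point is this last lemma, an ind-sheaf version of the classical fact that inverse image along a surjection is conservative; the crux is the finite-intersection argument on the compact support of $F_{i_0}$. The remainder (the second paragraph) is routine, the one thing to watch being that ``compactly supported'' on a bordered space means only ``relatively compactly supported'' --- which is why one first transports $E$ to the honest topological space $\cN$ via $\derived l_\bN$ before invoking the lemma.
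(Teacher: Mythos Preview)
Your reduction to the conservativity of $\opb f$ on the heart $\ind(\field_\bN)$ is correct, and so is the passage to good topological spaces via the replacement $\cM\rightsquigarrow\olG$. But the argument you give for the key lemma breaks down, and in fact the lemma as you state it is false.

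The specific error is the inference ``$y_0\in\bigcap_j\supp(\im\phi_j)$, hence $(\phi_j)_{y_0}\neq 0$ for all $j$''. Membership in the support of a sheaf does not imply non\-vanishing of the stalk. Concretely, on $\cN=[0,1]$ take $F_i=\bigoplus_{n\geq i}\field_{\{1/n\}}$ with the projection maps $\phi_j\colon F_{i_0}\twoheadrightarrow F_j$. Then $\tilde E=\indlim F_i\not\simeq 0$ (no $\phi_j$ is zero), and $\bigcap_j\supp(\im\phi_j)=\bigcap_j\overline{\{1/n: n\geq j\}}=\{0\}$, yet $(\phi_j)_0=0$ for every $j$. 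So your finite-intersection-property argument produces a point $y_0$ at which you learn nothing about stalks.

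More tellingly, you never invoke hypothesis (a). After the replacement $\cM\rightsquigarrow\olG$, semi-properness says precisely that $\cf\colon\cM\to\cN$ is \emph{proper}, and without that your key lemma is false: take $\cM=\oM=\R$, $\cN=\cR$, $\oN=\R$, $\cf$ the inclusion, and $\tilde E=\indlim_{a\to+\infty}\field_{[a,+\infty)}$ (this is $\derived l_{\bR}$ of $\field_{\{t\gg 0\}}$). Then $\tilde E\not\simeq 0$, but $\field_\oM\tens\opb\cf\tilde E\simeq 0$: in your double-indexed system, for any $(a,U)$ choose $a'>\sup U$ and the transition map $\field_{U\cap[a,\infty)}\to\field_{U'\cap[a',\infty)}$ vanishes.

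The paper's proof avoids the pointwise argument entirely. Using semi-properness (Lemma~\ref{lem:proper}~(ii)), each $\opb f G_i$ already lies in $\Mod_c(\field_\bM)$, so $\opb fG=\indlim_i\opb fG_i$ with the \emph{same} index set. Vanishing of this ind-object gives, for each $i$, some $j$ with $\opb f(G_i\to G_j)=0$ as a map of sheaves; surjectivity of $\unb f$ then forces $G_i\to G_j$ to be zero on stalks, hence zero. No compactness/FIP argument is needed, and no point $y_0$ has to be found.
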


\begin{proof}
Let $\bM = (\oM,\cM)$ and $\bN = (\oN,\cN)$.
Since $\opb{f}$ is exact, it is enough to show that,
for $G\in \derd^{0}(\bN) \simeq \ind(\field_\bN)$,
$\opb f G \simeq 0$ implies $G \simeq 0$.

Write $G=\indlim G_i$, where $\{G_i\}_{i\in I}$ is a filtrant inductive system of objects $G_i\in\Mod_c(\field_\bN)$. Recall that this means that
$G_i\in\Mod(\field_\oN)$ and $\supp(G_i)$ is relatively compact in $\cN$.
Since $f$ is semi-proper, 
$\opb f G_i\in\Mod_c(\field_\bM)$ by Lemma~\ref{lem:proper} (ii).
The assumption $\opb f G=\indlim \opb f G_i\simeq0$
implies that, for any $i\in I$,
there exists $i \to j$ in $I$ whose induced morphism $\opb f G_i \to \opb f G_j$ is the zero map.
Since $f$ is surjective, $G_i\to G_j$ is the zero map. Thus $G=0$.
\end{proof}

\Prop\label{prop:patch}
Let $f\cl M\to N$ be a continuous map of good topological spaces,
and $\{ V_i\}_{i\in I}$ an open covering of $N$.
Let $K_i\in\derd(\opb{f}V_i)$ satisfy
$\roim{f}\rhom(K_i,K_i)\in\derd^{\ge0}(\cor_{V_i})$ and let
\[
u_{ij}\cl K_j\vert_{\opb{f}V_i\cap \opb{f}V_j}\isoto K_i\vert_{\opb{f}V_i\cap \opb{f}V_j}
\]
be isomorphisms satisfying the usual cochain condition{\rm:} $u_{ij}\circ u_{jk} = u_{ik}$ on $\opb{f}V_i\cap \opb{f}V_j\cap \opb{f}V_k$.
Then there exist $K\in\derd(M)$ and isomorphisms
$u_i\cl K\vert_{\opb{f}V_i}\isoto K_i$ compatible with $u_{ij}$, that is,
$u_{ij}\circ u_j=u_i$ on $\opb{f}V_i\cap \opb{f}V_j$.
Moreover, such a $K$ is unique up to a unique isomorphism.
\enprop

\begin{proof}
The arguments we use are standard (see e.g.\ \cite[Proposition 5.9]{Kas15}).
Let us set $U_i \defeq \opb{f}V_i \subset M$.

(i) Let us first discuss uniqueness. Let $K'\in\derd(M)$ be such that there are isomorphisms
$u'_i\cl K'\vert_{U_i}\isoto K_i$ compatible with $u_{ij}$.
Note that for any open subset $V$ of $N$, one has
\[
\Hom[\derd(f^{-1}V)](K\vert_{f^{-1}V},K'\vert_{f^{-1}V}) \simeq
H^0\rsect(V; \roim{f}\rhom(K,K')).
\]
Since one has $\roim{f}\rhom(K,K')\vert_{V_i} \simeq \roim{f}\rhom(K_i,K_i)\in\derd^{\ge0}(\cor_{V_i})$, we have
$\roim{f}\rhom(K,K')\in\derd^{\ge0}(\cor_{N})$.
Hence
\[
\text{$V\mapsto \Hom[\derd(f^{-1}V)](K\vert_{f^{-1}V},K'\vert_{f^{-1}V}) $ is a sheaf on $N$.}
\]
We thus get an isomorphism $K\isoto K'$ on $M$ by patching together the isomorphisms
$u_i^{\prime -1}\circ u_i$ on $U_i$.

\smallskip\noi
(ii) Let us now prove the existence of $K$ as in the statement.

\smallskip\noi
(ii-1) Assume that $I$ is finite. In order to prove the statement, by induction we reduce to the case $I=\{1,2\}$. Set $V_0\defeq V_1\cap V_2$ and $K_0 \defeq K_1\vert_{U_0}\simeq K_2\vert_{U_0}$.
Let $j_i\colon U_i\to M$ ($i=0,1,2$) be the open inclusion.
By adjunction, for $i=1,2$ there are natural morphisms
\[
\beta_i\colon \roimv{j_{0\sep!!}} K_0 \to \roimv{j_{i\sep!!}}K_i.
\]
Let us complete the morphism $(\beta_1,\beta_2)$ into a distinguished triangle
\[
\roimv{j_{0\sep!!}}K_0 \To[(\beta_1,\beta_2)] \roimv{j_{1\sep!!}}K_1 \dsum \roimv{j_{2\sep!!}}K_2 \To K\tone.
\]
Then $K$ satisfies the desired condition.

\smallskip\noi
(ii-2) Assume that $I=\Z_{\geq0}$ and that $\{V_n\}_{n\in\Z_{\geq0}}$ is an increasing
sequence of open subsets of $N$.
Then $K_{n+1}\vert_{U_n}\simeq K_n$.
Let $j_n\colon U_n\to M$ ($n\in\Z_{\geq 0}$) be the open inclusion.
By adjunction, there are natural morphisms $\beta_n\colon \roimv{j_{n\sep!!}} K_n \to \roimv{j_{n+1\sep!!}}K_{n+1}$ ($n\in\Z_{\geq 0}$).
Let $K$ be the homotopy colimit of the inductive system $\{\roimv{j_{n\sep!!}} K_n\}_{n\in\Z_{\ge0}}$, that is, let $K$ be the third term of the distinguished triangle
\[
\DSum_{n\in\Z_{\ge0}} \roimv{j_{n\sep!!}} K_n \To[\beta]
\DSum_{n\in\Z_{\ge0}} \roimv{j_{n\sep!!}} K_n \To
K \tone.
\]
Here $\beta$ is the only morphism making the following diagram commute for any $m\in\Z_{\geq 0}$
\[
\xymatrix{
\roimv{j_{m\sep!!}} K_m \ar[d] \ar[r]^-{(\id,-\beta_m)}
& \roimv{j_{m\sep!!}} K_m \dsum \roimv{j_{m+1\sep!!}} K_{m+1} \ar[d] \\
\DSum_{n\in\Z_{\ge0}} \roimv{j_{n\sep!!}} K_n \ar[r]^-\beta &
\DSum_{n\in\Z_{\ge0}} \roimv{j_{n\sep!!}} K_n.
}
\]
Then $K$ satisfies the desired condition.

\smallskip\noi
(ii-3) Let $I$ be arbitrary. Let
$\{Z_n\}_{n\in\Z_{\geq0}}$ be an increasing
sequence of compact subsets of $N$ such that
$N=\Union\nolimits_{n\in\Z_{\geq0}}Z_n$.
Let us take an increasing sequence $\{I_n\}_{n\in\Z_{\ge0}}$
of finite subsets of $I$
such that $Z_n$ is covered by $\{V_i\}_{i\in I_n}$,
and set $V'_n \defeq \Union\nolimits_{i\in I_n} V_i$, $U_n' \defeq \opb{f}V_n'$.
Applying (ii-1) with $N=V'_n$ and $I=I_n$,
we can find an object $K_n \in\derd(U'_n)$ such that $K_n\vert_{U_i}\simeq K_i$ for any $i\in I_n$. Then we can apply (ii-2) with $V_n=V_n'$.
\end{proof}

\subsection{Ind-sheaves with an extra variable}
Let $\cR\defeq\R\union\{+\infty,-\infty\}$ be the two-point compactification of the affine line.
The \emph{bordered line} is
\[
\bR \defeq (\oR,\cR).
\]
Let $\bM$ be a bordered space.
Consider the morphisms
\begin{equation}
\label{eq:q1q2mu}
\mu,q_1,q_2 \colon \bM\times\bR\times\bR \to \bM\times\bR,
\end{equation}
where $\mu(x,t_1,t_2) = (x,t_1+t_2)$, and $q_1,q_2$ are the natural projections.
The convolution functors
\begin{align*}
\ctens &\colon \derdR{\bM} \times \derdR{\bM} \to \derdR{\bM}, \\
\cihom &\colon \derdR{\bM}^\op \times \derdR{\bM} \to \derdR{\bM}
\end{align*}
are defined as follows, for $F_1,F_2\in\derdR{\bM}$,
\begin{align*}
F_1\ctens F_2 &\defeq \reeim \mu (\opb{q_1} F_1 \tens \opb{q_2} F_2), \\
\cihom(F_1,F_2)&\defeq \roimv{q_{1\sep*}} \rihom(\opb{q_2} F_1, \epb\mu F_2).
\end{align*}

\begin{example}
Let $\bM=\point$ and let $a,b\in\R$.
\begin{itemize}
\item[(i)]
For $a\leq b$, one has
\begin{align*}
\field_{\{t\geq 0\}} \ctens \field_{\{t \geq a\}} &\simeq \field_{\{t\geq a\}}, &
\field_{\{t\geq 0\}} \ctens \field_{\{a\leq t < b\}} &\simeq \field_{\{a\leq t < b\}}, \\
\cihom(\field_{\{t\geq 0\}}, \field_{\{t\geq a\}}) &\simeq \field_{\{t<a\}}[1], &
\cihom(\field_{\{t\geq 0\}}, \field_{\{a\leq t < b\}}) &\simeq \field_{\{a\leq t < b\}}.
\end{align*}
\item[(ii)]
For $0<a\leq b$, one has
\[
\field_{\{0\leq t <a\}} \ctens \field_{\{0\leq t <b\}}
\simeq \field_{\{0\leq t <a\}} \dsum \field_{\{b\leq t <a+b\}}[-1].
\]
\end{itemize}
\end{example}

Consider the standard classical t-structure
\[
\bl \derdR[\leq 0]{\bM}, \derdR[\geq 0]{\bM} \br
\]
on $\derdR{\bM}$ discussed in \S\ref{sse:Ibord}.

\begin{lemma}\label{lem:tC-ctenscihom}
Let $\bM$ be a bordered space.
\begin{itemize}
\item[(i)]
For $n,n'\in\Z$ one has
\begin{align*}
\derdR[\leq n]{\bM}\ctens \derdR[\leq n']{\bM} &\subset \derdR[\leq n+n'+1]{\bM}, \\
\derdR[\geq n]{\bM}\ctens \derdR[\geq n']{\bM} &\subset \derdR[\geq n+n']{\bM}.
\end{align*}
In particular, the bifunctor $\ctens$ is left exact.
\item[(ii)]
For $n,n'\in\Z$ one has
\[
\cihom(\derdR[\leq n]{\bM}, \derdR[\geq n']{\bM}) \subset \derdR[\geq n'-n-1]{\bM}.
\]
\end{itemize}
\end{lemma}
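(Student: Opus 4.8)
The plan is to deduce both statements from the exactness properties of the Grothendieck operations collected in Proposition~\ref{pro:t-board}, applied to the three morphisms $\mu,q_1,q_2\colon\bM\times\bR\times\bR\to\bM\times\bR$ that define the convolution functors. The only quantitative ingredient is the size of the fibers of $\mu$: for $(x,s)\in\unbM\times\R$ one has $\opb\mu(x,s)=\{(x,t,s-t)\semicolon t\in\R\}\cong\R$, so these fibers have soft-dimension $\leq1$. This single unit of dimension is what produces the shift by one in both parts of the statement.

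For part~(i), I would first use that $\opb{q_1}$ and $\opb{q_2}$ are exact (Proposition~\ref{pro:t-board}\,(iv)) and that $\tens$ is exact (Proposition~\ref{pro:t-board}\,(i)): for $F_1\in\derdR[\leq n]{\bM}$ and $F_2\in\derdR[\leq n']{\bM}$ this gives $\opb{q_1}F_1\tens\opb{q_2}F_2\in\derd^{\leq n+n'}(\bM\times\bR\times\bR)$. Then Proposition~\ref{pro:t-board}\,(v), applied to $\mu$ with $d=1$, yields $F_1\ctens F_2=\reeim\mu(\opb{q_1}F_1\tens\opb{q_2}F_2)\in\derdR[\leq n+n'+1]{\bM}$, which is the first inclusion. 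For the second one, start from $F_1\in\derdR[\geq n]{\bM}$ and $F_2\in\derdR[\geq n']{\bM}$; the same two steps give $\opb{q_1}F_1\tens\opb{q_2}F_2\in\derd^{\geq n+n'}(\bM\times\bR\times\bR)$, and the left exactness of $\reeim\mu$ (Proposition~\ref{pro:t-board}\,(iii)) yields $F_1\ctens F_2\in\derdR[\geq n+n']{\bM}$, with no shift. The asserted left exactness of $\ctens$ is precisely this last inclusion.

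For part~(ii), I would use that $\opb{q_2}$ is exact (Proposition~\ref{pro:t-board}\,(iv)), so $\opb{q_2}F_1\in\derd^{\leq n}(\bM\times\bR\times\bR)$ for $F_1\in\derdR[\leq n]{\bM}$, and that $\epb\mu(\ast)[-1]$ is left exact (Proposition~\ref{pro:t-board}\,(vi) with $d=1$), so $\epb\mu F_2\in\derd^{\geq n'-1}(\bM\times\bR\times\bR)$ for $F_2\in\derdR[\geq n']{\bM}$. The left exactness of $\rihom$ (Proposition~\ref{pro:t-board}\,(ii)) then gives $\rihom(\opb{q_2}F_1,\epb\mu F_2)\in\derd^{\geq n'-n-1}(\bM\times\bR\times\bR)$, and the left exactness of the direct image $\roimv{q_{1\sep*}}$ (Proposition~\ref{pro:t-board}\,(iii)) finally gives $\cihom(F_1,F_2)=\roimv{q_{1\sep*}}\rihom(\opb{q_2}F_1,\epb\mu F_2)\in\derdR[\geq n'-n-1]{\bM}$.

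I do not expect a genuine obstacle: the argument is just a composition of the items of Proposition~\ref{pro:t-board}. The only point that deserves a short verification is that Proposition~\ref{pro:t-board}\,(v)--(vi) do apply to $\mu$, viewed as a morphism of bordered spaces (which is part of the set-up of \cite{DK13} recalled above), with the roles of $\bM$ and $\bN$ there played by $\bM\times\bR\times\bR$ and $\bM\times\bR$, and that its fibers have soft-dimension at most one, as noted.
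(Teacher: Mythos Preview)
Your argument is correct and is precisely the paper's approach: the proof there simply recalls the definitions of $\ctens$ and $\cihom$ in terms of $\mu,q_1,q_2$ and says the statement follows from Proposition~\ref{pro:t-board}. You have spelled out which items of that proposition are used at each step, and correctly identified that the shift by one comes from the one-dimensional fibers of $\mu$ via parts~(v) and~(vi).
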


\begin{proof}
Recall the maps \eqref{eq:q1q2mu}.

(i) By the definition, for $F_1,F_2\in\derdR{\bM}$ one has
\[
F_1 \ctens F_2 \defeq \reeim\mu(\opb{q_1}F_1 \tens \opb{q_2}F_2).
\]
Then the statement follows from Proposition~\ref{pro:t-board}.

(ii) The proof is similar, recalling that
\[
\cihom(F_1, F_2) \defeq \roimv{q_{1\sep*}}\rihom(\opb{q_2}F_1, \epb\mu F_2).
\]
\end{proof}

\begin{remark}\label{rem:cihomderdR}
There are no estimates of the form
\[
\cihom(\field_{\{t\geq 0\}}, \derdR[0]{\bM}) \subset \derdR[\leq m]{\bM}
\]
with $m\in\Z_{\geq 0}$ independent of $\bM$.
In fact, setting,
\[
\bM=\R^n\ (n\geq 1), \quad F = \field_{\{x\neq 0,\ t = 1/|x| \}},
\]
one has
\begin{equation}
\label{eq:remcihom}
\cihom(\field_{\{t\geq 0\}}, F) \notin \derdR[\leq n-3]{\bM},
\end{equation}
which follows from
\eqn
&&\opb\pi\field_{\{x=0\}} \tens \cihom(\field_{\{t\geq 0\}}, F)
\simeq \opb\pi \field_{\{x=0\}}[1] \dsum \opb\pi\field_{\{x=0\}}[2-n].
\eneqn
\end{remark}

\begin{lemma}
\label{lem:tgeq0Hk}
For $K\in\derdR{\bM}$ and $n\in\Z$ one has
\begin{align*}
\field_{\{t\geq 0\}} \ctens \tau^{\leq n} (\field_{\{t\geq 0\}} \ctens K) &\isoto \tau^{\leq n} (\field_{\{t\geq 0\}} \ctens K), \\
\field_{\{t\geq 0\}} \ctens \tau^{\geq n} (\field_{\{t\geq 0\}} \ctens K) &\isoto \tau^{\geq n} (\field_{\{t\geq 0\}} \ctens K), \\
\end{align*}
\end{lemma}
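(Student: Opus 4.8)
The plan is to compare the functor $\field_{\{t\geq 0\}}\ctens(\dummy)$ with the auxiliary functor $Q:=\field_{\{t>0\}}\ctens(\dummy)$. Put $L:=\field_{\{t\geq 0\}}\ctens K$ and work throughout with the standard (classical) t-structure on $\derdR{\bM}$ and its truncations $\tau^{\leq n},\tau^{\geq n}$, $n\in\Z$. The exact sequence of sheaves $0\to\field_{\{t>0\}}\to\field_{\{t\geq 0\}}\to\field_{\{t=0\}}\to 0$ on $\bM\times\bR$ (with $\field_{\{t=0\}}=\field_{\bM\times\{0\}}$ the unit of $\ctens$) gives, for every $G\in\derdR{\bM}$, a distinguished triangle
\[
\field_{\{t>0\}}\ctens G\to\field_{\{t\geq 0\}}\ctens G\to G\tone .
\]
Hence, once we know the vanishing $\field_{\{t>0\}}\ctens\field_{\{t\geq 0\}}\simeq 0$ in $\derdR{\bM}$, the two asserted isomorphisms will follow by applying this triangle with $G=\tau^{\leq n}L$ and with $G=\tau^{\geq n}L$, provided we have checked $Q\tau^{\leq n}L\simeq 0$ and $Q\tau^{\geq n}L\simeq 0$.

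For the latter, convolve the vanishing with $K$ to get $QL\simeq(\field_{\{t>0\}}\ctens\field_{\{t\geq 0\}})\ctens K\simeq 0$, and note that, $\field_{\{t>0\}}$ being an object of the heart $\derdR[0]{\bM}$, Lemma~\ref{lem:tC-ctenscihom}\,(i) shows $Q$ to be left exact and to send $\derdR[\leq m]{\bM}$ into $\derdR[\leq m+1]{\bM}$. Apply $Q$ to the truncation triangle $\tau^{\leq n}L\to L\to\tau^{\geq n+1}L\tone$: the middle term dies, so $Q\tau^{\leq n}L\simeq\bl Q\tau^{\geq n+1}L\br[-1]$, where the left-hand side lies in $\derdR[\leq n+1]{\bM}$ and the right-hand side in $\derdR[\geq n+2]{\bM}$; hence both vanish. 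Using this with $n$ replaced by $n-1$ and feeding the triangle $\tau^{\leq n-1}L\to L\to\tau^{\geq n}L\tone$ through $Q$ (its first two terms now being $0$) yields $Q\tau^{\geq n}L\simeq 0$ as well. This disposes of the formal part.

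The remaining, genuinely nonformal, point is $\field_{\{t>0\}}\ctens\field_{\{t\geq 0\}}\simeq 0$, and this is where I expect the real work to be. Since both factors are inverse images of sheaves on $\bR$ and $\ctens$ operates only in the $\bR$-directions, one reduces to $\bM=\point$. There I would show that the morphism $\field_{\{t\geq 0\}}\ctens\field_{\{t\geq 0\}}\to\field_{\{t=0\}}\ctens\field_{\{t\geq 0\}}\simeq\field_{\{t\geq 0\}}$ induced by $\field_{\{t\geq 0\}}\to\field_{\{t=0\}}$ is an isomorphism — by the triangle above its cone is $(\field_{\{t>0\}}\ctens\field_{\{t\geq 0\}})[1]$. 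Writing $\field_{\{t\geq 0\}}\ctens(\dummy)\simeq\reeim\mu(\field_{\{t_1\geq 0\}}\tens\opb{q_2}(\dummy))$ and applying base change to a point $s\in\R$ (the stalk formula for $\reeim$ being legitimate here, exactly as in the Example, since $\mu$ is proper as a morphism of bordered spaces), this morphism is computed by the restriction to the endpoint $t_1=0$ of the sections of the constant sheaf on the compact segment $\{t_1+t_2=s,\ t_1,t_2\geq 0\}$, which is bijective, both sides being $0$ for $s<0$. Alternatively the vanishing can be extracted from the Example by writing $\field_{\{t>0\}}=\varinjlim_N\field_{\{0<t<N\}}$ and passing to the limit in the computation of $\field_{\{t\geq 0\}}\ctens\field_{\{a\leq t<b\}}$. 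I expect this computational lemma (together with the attendant bordered-space bookkeeping) to be the only real obstacle; everything else is a mechanical consequence of the exactness estimates of Lemma~\ref{lem:tC-ctenscihom}.
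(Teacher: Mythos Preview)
Your argument is essentially the paper's own proof. Both apply $\field_{\{t>0\}}\ctens(\dummy)$ to the truncation triangle of $L=\field_{\{t\geq 0\}}\ctens K$, use the vanishing $\field_{\{t>0\}}\ctens\field_{\{t\geq 0\}}\simeq 0$ to kill the middle term, and then invoke the degree estimates of Lemma~\ref{lem:tC-ctenscihom} to conclude that $Q\tau^{\leq n}L$ and $Q\tau^{>n}L$ both vanish; the paper simply takes the vanishing $\field_{\{t>0\}}\ctens\field_{\{t\geq 0\}}\simeq 0$ as known (it is implicit in \cite{DK13}) rather than rederiving it as you do.
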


Let us give a proof of this result slightly different from that in
\cite[Proposition 4.6.2]{DK13}.

\begin{proof}
Consider the distinguished triangle
\begin{align*}
\field_{\{t> 0\}} \ctens \tau^{\leq n} (\field_{\{t\geq 0\}} \ctens K) &\to
\field_{\{t> 0\}} \ctens (\field_{\{t\geq 0\}} \ctens K) \\
&\to \field_{\{t> 0\}} \ctens \tau^{> n} (\field_{\{t\geq 0\}} \ctens K) \tone.
\end{align*}
Since the middle term vanishes, one has
\[
\field_{\{t> 0\}} \ctens \tau^{> n} (\field_{\{t\geq 0\}} \ctens K) \simeq
\field_{\{t> 0\}} \ctens \tau^{\leq n} (\field_{\{t\geq 0\}} \ctens K)[1].
\]
By Lemma~\ref{lem:tC-ctenscihom}, the first term belongs to $\derdR[> n]{\bM}$ and the second term belongs to $\derdR[\leq n]{\bM}$.
Hence they both vanish.
\end{proof}

\subsection{Enhanced ind-sheaves}
Let $\bM$ be a bordered space, and consider the natural
morphisms
\[
\xymatrix@C=5ex{
\bM & \bM\times\bR \ar[l]_-\pi \ar[r]^-{j} & \bM\times\cR \ar[r]^-{\overline\pi} & \bM.
}
\]
Consider the full subcategories of $\derdR{\bM}$
\begin{align*}
\catn_\pm
&\defeq \{K\in \derdR{\bM}\semicolon \field_{\{\mp t\geq 0\}} \ctens K \simeq 0\} \\
&= \{K\in\derdR{\bM} \semicolon \cihom(\field_{\{\mp t\geq 0\}}, K) \simeq 0\}, \\
\catn
&\defeq \catn_+ \cap \catn_- \\
&= \opb \pi\derd(\bM),
\end{align*}
where the equalities hold by \cite[Corollary 4.3.11 and Lemma 4.4.3]{DK13}.

The categories of enhanced ind-sheaves are defined by
\[
\BECpm \bM \defeq \derdR{\bM}/\catn_\mp, \quad
\BEC \bM \defeq \derdR{\bM}/\catn.
\]
In this paper, we set for short
\[
\dere_\pm(\bM) \defeq \BECpm \bM, \quad \dere(\bM) \defeq \BEC \bM.
\]

By \cite[Proposition 4.4.4]{DK13}, there are natural equivalences
\begin{align*}
\dere_\pm(\bM)
&\simeq \catn_\pm/\catn
\simeq {}^\bot\catn_\mp
= \catn_\pm \cap {}^\bot\catn, \\
\dere(\bM)
&\simeq {}^\bot\catn
\simeq \dere_+(\bM) \dsum \dere_-(\bM),
\end{align*}
and the same equivalences hold when replacing left with right orthogonals.
Moreover, one has
\begin{align*}
{}^\bot\catn_\mp
&= \{K \in \derdR{\bM} \semicolon \field_{\{\pm t\geq 0\}} \ctens K \isoto K\}, \\
{}^\bot\catn
&= \{K\in \derdR{\bM} \semicolon (\field_{\{t\geq 0\}} \dsum \field_{\{t\leq 0\}}) \ctens K \isoto K\} \\
&= \{K\in \derdR{\bM} \semicolon \reeim\pi K \simeq 0\},
\end{align*}
and the same equalities hold for right orthogonals, replacing $\ctens$ with $\cihom$ and $\reeim\pi$ with $\roim\pi$.

We use the following notations
\[
\xymatrix@C3.5em{
\derdR{\bM} \ar@<.5ex>[r]^-{\quot_\bM} & \dere(\bM) \ar@<.5ex>[l]^-{\LE,\;\RE},
}
\quad
\xymatrix@C3.5em{
\derdR{\bM} \ar@<.5ex>[r]^-{\quot_\bM^\pm} & \dere_\pm(\bM) \ar@<.5ex>[l]%
^-{\LEpm,\;\REpm},
}
\]
for the quotient functors and their left and right adjoints, respectively.
For $F\in\derdR{\bM}$ one has
\begin{align*}
\LE(\quot_\bM F) &\simeq \bl \field_{\{t\geq 0\}} \dsum \field_{\{t\leq 0\}} \br \ctens F, \\
\RE(\quot_\bM F) &\simeq \cihom \bl
\field_{\{t\geq 0\}} \dsum \field_{\{t\leq 0\}},\; F \br.
\end{align*}

For a locally closed subset $Z\subset \oM\times\oR$, we set
\begin{equation}
\label{eq:Qfield}
\Qfield_Z \defeq \quot_\bM (\field_Z) \in \dere(\bM).
\end{equation}

There are functors
\begin{align}
\label{eq:epsilon}
\epsilon&\colon \derd(\bM) \To \dere(\bM), &
F&\mapsto \Qfield_{\{t = 0\}} \tens \opb\pi F, \\
\notag
\epsilon_\pm&\colon \derd(\bM) \To \dere_\pm(\bM), &
F&\mapsto \Qfield_{\{\pm t \geq 0\}} \tens \opb\pi F.
\end{align}
The functors $\epsilon_\pm$ are fully faithful and $\epsilon(F)\simeq\epsilon_+(F)\dsum\epsilon_-(F)$.

The bifunctors
\begin{align*}
\fihom &\colon \dere(\bM) \times \dere(\bM) \to \derd(\bM)%
, \\
\fhom &\colon \dere(\bM)^\op \times \dere(\bM) \to \BDC(\field_{\unb\bM}),
\end{align*}
are defined by
\begin{align*}
\fihom(K,K')
&\defeq \roim\pi\rihom(\LE K,\LE K') \\
&\simeq \roim\pi\rihom(\LE K,\RE K') \\
&\simeq \roim\pi\rihom(\RE K,\RE K') \\
&\simeq \roim{\overline\pi}\rihom(\reeim j \LE K,\roim j \RE K')\qtq\\
\fhom &\defeq \alpha_\bM \circ \fihom.
\end{align*}
One has
\begin{align}
\Hom[\dere(\bM)](K,K')
&\simeq \Hom[\derd(\bM)](\cor_M, \fihom(K,K')).\label{eq:Hfhom}
\end{align}
If $\bM$ is a topological space, that is, if
$\unbM\to \bM$ is an isomorphism,
one has
\[
\Hom[\dere(\bM)](K,K')
\simeq H^0\rsect\bl \unbM;\fhom(K,K')\br.
\]
Note, however, that $\Hom[\dere(\bM)](K,K')
\simeq H^0\rsect\bl \bM;\fhom(K,K')\br$ does not hold in general.

\begin{definition}[{\cite[Definition 4.6.3]{DK13}}]
For $n\in\Z$, set
\begin{align*}
\dere^{\leq n}(\bM) &\defeq \{ K\in\dere(\bM)\semicolon \LE K\in \derdR[\leq n]{\bM} \}, \\
\dere^{\geq n}(\bM) &\defeq \{ K\in\dere(\bM)\semicolon \LE K\in \derdR[\geq n]{\bM} \}.
\end{align*}
\end{definition}

Note that
\eqn
\dere^0(\bM) &\simeq& \{ F\in\ind(\field_{\bM\times\bR}) \semicolon
\text{$\bl\field_{\{t\geq 0\}} \dsum \field_{\{t\leq 0\}}\br \ctens F \isoto F$
in $\derdR{\bM}$} \}\\
&=& \{ F\in\ind(\field_{\bM\times\bR}) \semicolon
\reeim{\pi}F\simeq0 \text{ in $\derd(\bM)$}\}.
\eneqn

\begin{proposition}[{\cite[Proposition 4.6.2]{DK13}}]
$\bl \dere^{\leq 0}(\bM), \dere^{\geq 0}(\bM) \br$ is a classical $t$-structure on $\dere(\bM)$.
\end{proposition}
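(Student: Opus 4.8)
The plan is to transport the standard classical t-structure on $\derdR{\bM}$ along the fully faithful left adjoint $\LE\colon\dere(\bM)\to\derdR{\bM}$ of the quotient functor $\quot_\bM$, whose essential image is ${}^\bot\catn$. By definition $K$ lies in $\dere^{\leq n}(\bM)$ (resp.\ $\dere^{\geq n}(\bM)$) if and only if $\LE K\in\derdR[\leq n]{\bM}$ (resp.\ $\derdR[\geq n]{\bM}$); since $\LE$ is triangulated, hence commutes with shifts, this agrees with $\dere^{\leq 0}(\bM)[-n]$ (resp.\ $\dere^{\geq 0}(\bM)[-n]$), as required by Definition~\ref{def:t-clas}. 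Condition (a) is then immediate from $\derdR[\leq 0]{\bM}\subset\derdR[\leq 1]{\bM}$ and $\derdR[\geq 1]{\bM}\subset\derdR[\geq 0]{\bM}$.

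For condition (b), full faithfulness of $\LE$ gives $\Hom[\dere(\bM)](K,K')\simeq\Hom[\derdR{\bM}](\LE K,\LE K')$; if $K\in\dere^{\leq 0}(\bM)$ and $K'\in\dere^{\geq 1}(\bM)$ then $\LE K\in\derdR[\leq 0]{\bM}$ and $\LE K'\in\derdR[\geq 1]{\bM}$, so this vanishes because $\bl\derdR[\leq 0]{\bM},\derdR[\geq 0]{\bM}\br$ is a classical t-structure. One could alternatively run this through \eqref{eq:Hfhom} together with the adjunction $\opb\pi\dashv\roim\pi$.

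The real content is condition (c). Given $K\in\dere(\bM)$, I would apply $\quot_\bM$ to the classical truncation triangle $\tau^{\leq 0}\LE K\to\LE K\to\tau^{\geq 1}\LE K\tone$ in $\derdR{\bM}$; since $\quot_\bM\LE K\simeq K$ (unit of the adjunction, $\LE$ being fully faithful), this yields a distinguished triangle $\quot_\bM\tau^{\leq 0}\LE K\to K\to\quot_\bM\tau^{\geq 1}\LE K\tone$. It then remains to check that $\quot_\bM\tau^{\leq 0}\LE K\in\dere^{\leq 0}(\bM)$ and $\quot_\bM\tau^{\geq 1}\LE K\in\dere^{\geq 1}(\bM)$, i.e.\ that $\LE\quot_\bM\tau^{\leq 0}\LE K\in\derdR[\leq 0]{\bM}$ and $\LE\quot_\bM\tau^{\geq 1}\LE K\in\derdR[\geq 1]{\bM}$. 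Here lies the difficulty: since $\LE\quot_\bM G\simeq\bl\field_{\{t\geq 0\}}\dsum\field_{\{t\leq 0\}}\br\ctens G$ and $\ctens$ is merely left exact (Lemma~\ref{lem:tC-ctenscihom}), the crude estimate only puts these objects in $\derdR[\leq 1]{\bM}$, resp.\ $\derdR[\geq 1]{\bM}$.

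To obtain the sharp bound I would split $K$ via $\dere(\bM)\simeq\dere_+(\bM)\dsum\dere_-(\bM)$ (truncation and $\quot_\bM$ being additive, and a direct sum of distinguished triangles being distinguished), and treat $K\in\dere_+(\bM)$, the case $K\in\dere_-(\bM)$ being symmetric under $t\mapsto-t$. For $K\in\dere_+(\bM)$ one has $\field_{\{t\geq 0\}}\ctens\LE K\isoto\LE K$ and $\field_{\{t\leq 0\}}\ctens\LE K\simeq0$. Lemma~\ref{lem:tgeq0Hk}, applied with its ``$K$'' taken to be $\LE K$, gives $\field_{\{t\geq 0\}}\ctens\tau^{\leq 0}\LE K\isoto\tau^{\leq 0}\LE K$ and $\field_{\{t\geq 0\}}\ctens\tau^{\geq 1}\LE K\isoto\tau^{\geq 1}\LE K$. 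For the complementary summand, apply $\field_{\{t\leq 0\}}\ctens(-)$ to the truncation triangle: the middle term vanishes, so $\field_{\{t\leq 0\}}\ctens\tau^{\geq 1}\LE K\simeq\bl\field_{\{t\leq 0\}}\ctens\tau^{\leq 0}\LE K\br[1]$; since $\field_{\{t\leq 0\}}$ sits in degree $0$, Lemma~\ref{lem:tC-ctenscihom} shows the left-hand side lies in $\derdR[\geq 1]{\bM}$ and the right-hand side in $\derdR[\leq 0]{\bM}$, so both vanish, and in particular $\field_{\{t\leq 0\}}\ctens\tau^{\leq 0}\LE K\simeq0$. Therefore $\LE\quot_\bM\tau^{\leq 0}\LE K\simeq\tau^{\leq 0}\LE K\in\derdR[\leq 0]{\bM}$ and $\LE\quot_\bM\tau^{\geq 1}\LE K\simeq\tau^{\geq 1}\LE K\in\derdR[\geq 1]{\bM}$, completing (c). The main obstacle is precisely this step — forcing the truncated object to land in the correct half of the t-structure despite the left-exactness defect of $\ctens$ — and Lemma~\ref{lem:tgeq0Hk} is the ingredient that resolves it.
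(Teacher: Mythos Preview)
Your proof is correct. The paper itself does not give a proof of this proposition, citing instead \cite[Proposition 4.6.2]{DK13}; however, the paper isolates Lemma~\ref{lem:tgeq0Hk} (with a proof it notes is ``slightly different from that in \cite[Proposition 4.6.2]{DK13}'') precisely because it is the key step, and your argument---splitting into $\dere_\pm(\bM)$ and using that lemma together with the vanishing trick to show the truncations of $\LE K$ already lie in ${}^\bot\catn_\mp$---is exactly the intended route.
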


\begin{example}\label{ex:LERE}
Let $a,b\in\R$ with $a<b$.
In the category $\dere(\point)$, one has
\begin{align*}
\LE \Qfield_{\{a\leq t\}} \simeq \field_{\{a\leq t\}}, &\quad
\LE \Qfield_{\{a\leq t < b\}} \simeq \field_{\{a\leq t < b\}}, \\
\RE \Qfield_{\{a\leq t\}} \simeq \field_{\{t<a\}}[1], &\quad
\RE \Qfield_{\{a\leq t < b\}} \simeq \field_{\{a\leq t < b\}}.
\end{align*}
In particular,
\[
\Qfield_{\{a\leq t\}},\ \Qfield_{\{a\leq t < b\}} \in \dere^{0}(\point).
\]
\end{example}

\begin{proposition}\label{prop:Estack}
Let $M$ be a good topological space. Then the prestack on $M$
given by $U \mapsto \dere^0(U)$ is a stack of abelian categories.
\end{proposition}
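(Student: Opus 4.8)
The plan is to realize each abelian category $\dere^0(U)$ as a strictly full subcategory of $\ind(\field_{U\times\bR})$ cut out by a condition that is local on $M$, and then to deduce the assertion from the fact that ind-sheaves form a stack of abelian categories.

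Recall the description of the heart recorded just above: via $\LE$ there is an equivalence between $\dere^0(\bM)$ and the strictly full subcategory
\[
\bigl\{F\in\ind(\field_{\bM\times\bR})\semicolon \reeim{\pi}F\simeq0\text{ in }\derd(\bM)\bigr\}
\]
of $\ind(\field_{\bM\times\bR})$, where $\pi\colon\bM\times\bR\to\bM$ is the projection. This equivalence is natural with respect to restriction to an open subset $U'\subset U$: both $\LE$ and the condition ``$\reeim{\pi}(\dummy)\simeq0$'' are built out of $\ctens$, inverse images, and the sheaves $\field_{\{\pm t\geq0\}}$, all of which commute with pullback along the open embedding $U'\times\bR\hookrightarrow U\times\bR$.

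First I would check that the prestack $U\mapsto\ind(\field_{U\times\bR})$ on $M$ is a stack of abelian categories. By the results of \S\ref{sse:Ibord} (more precisely, the fully faithful t-exact functor $\derived l$ identifies $\derd(\bM)$ with $\{F\in\derd(\cM)\semicolon\field_M\tens F\isoto F\}$), the abelian category $\ind(\field_{U\times\bR})$ is identified with the strictly full subcategory of $\ind(\field_{U\times\cR})$ consisting of the $G$ such that $\field_{U\times\R}\tens G\isoto G$ --- a condition local on $M$ and stable under isomorphism. Now $M\times\cR$ is a good topological space and $V\mapsto\ind(\field_V)$ is a stack of abelian categories on it, so, restricting along the covers $U\times\cR=\bigcup_i U_i\times\cR$ arising from open covers $U=\bigcup_i U_i$ of opens of $M$, and using that a full subprestack of a stack of abelian categories cut out by a local, isomorphism-stable condition is again a stack --- descent data glue in the ambient stack, the glued object lies in the subcategory by locality of the condition, and the Hom-presheaves of the subprestack coincide with those of the ambient stack, hence are sheaves --- we get the claim; the restriction functors are the exact inverse-image functors.

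Next, for an open $U\subset M$ with open covering $\{U_i\}_i$ and $F\in\ind(\field_{U\times\bR})$, base change for $\reeim{\pi}$ along the open embeddings $U_i\hookrightarrow U$ gives $(\reeim{\pi}F)|_{U_i}\simeq\reeim{\pi}(F|_{U_i\times\bR})$, and since vanishing of an object of $\derd(U)=\BDC(\ind(\field_U))$ is local on $M$ (ind-sheaves forming a stack), it follows that $\reeim{\pi}F\simeq0$ if and only if $\reeim{\pi}(F|_{U_i\times\bR})\simeq0$ for all $i$. Together with the naturality noted above, this exhibits $U\mapsto\dere^0(U)$ as a full subprestack of the stack $U\mapsto\ind(\field_{U\times\bR})$ cut out by a local, isomorphism-stable condition, hence as a stack. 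As each $\dere^0(U)$ is abelian, being the heart of the classical t-structure on $\dere(U)$, and the restriction functors are exact (inverse images along open embeddings are t-exact), this is a stack of abelian categories. The one point requiring some care is the base-change identity for $\reeim{\pi}$ under open restriction together with the identification of $\ind(\field_{\bM\times\bR})$ with a substack of $U\mapsto\ind(\field_{U\times\cR})$; once $U\mapsto\ind(\field_{U\times\bR})$ is known to be a stack, the passage to $\dere^0$ is routine bookkeeping.
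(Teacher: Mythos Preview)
Your proof is correct and follows essentially the same approach as the paper: realize $\dere^0(U)$ as a full subcategory of $\ind(\field_{U\times\bR})$ cut out by a local condition, and invoke that ind-sheaves form a stack. The paper uses the equivalent characterization $(\field_{\{t\geq 0\}}\oplus\field_{\{t\leq 0\}})\ctens F\isoto F$ rather than your $\reeim{\pi}F\simeq 0$, and is terser about why $U\mapsto\ind(\field_{U\times\bR})$ is itself a stack on $M$; your detour through $\ind(\field_{U\times\cR})$ makes this explicit, which is a reasonable way to handle the bordered direction.
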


\begin{proof}
The statement holds since $U \mapsto \dere^0(U)$ is a sub-prestack
of the direct image by $\pi$ of the stack of ind-sheaves on $M\times\bR$.
More precisely, one has
\[
\dere^0(U) \simeq \{F\in\ind(\field_{U\times\bR}) \semicolon
(\field_{\{t\geq 0\}}\oplus\field_{\{t\leq 0\}}) \ctens F \isoto F \}.
\]
\end{proof}

\begin{lemma}\label{lem:dereQ}
For any $n\in\Z$ one has
\begin{align*}
\quot_\bM \derdR[\leq n]{\bM} &\subset \dere^{\leq n+1}(\bM), \\
\quot_\bM \derdR[\geq n]{\bM} &= \dere^{\geq n}(\bM).
\end{align*}
In particular, $\quot_\bM$ is left exact.
\end{lemma}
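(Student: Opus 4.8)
The plan is to trace everything back to the formula $\LE(\quot_\bM F)\simeq\bl\field_{\{t\geq0\}}\dsum\field_{\{t\leq0\}}\br\ctens F$ recalled above, and then to apply the t-exactness estimates for the convolution $\ctens$ from Lemma~\ref{lem:tC-ctenscihom}. First I would record the elementary observation that the ind-sheaves $\field_{\{t\geq0\}}$ and $\field_{\{t\leq0\}}$ on $\bM\times\bR$ lie in the heart of the standard t-structure, i.e.\ in $\derdR[\leq0]{\bM}\cap\derdR[\geq0]{\bM}$: they are constant sheaves on closed subsets of $\oM\times\oR$, hence concentrated in degree $0$, and their zero-extensions to $\cM\times\cR$ are again in degree $0$. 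In particular $\field_{\{t\geq0\}}\dsum\field_{\{t\leq0\}}$ belongs both to $\derdR[\leq0]{\bM}$ and to $\derdR[\geq0]{\bM}$.

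For the first inclusion, let $F\in\derdR[\leq n]{\bM}$. By Lemma~\ref{lem:tC-ctenscihom}(i),
\[
\LE(\quot_\bM F)\simeq\bl\field_{\{t\geq0\}}\dsum\field_{\{t\leq0\}}\br\ctens F\in\derdR[\leq0]{\bM}\ctens\derdR[\leq n]{\bM}\subset\derdR[\leq n+1]{\bM},
\]
so $\quot_\bM F\in\dere^{\leq n+1}(\bM)$ by definition of that category. For the inclusion $\quot_\bM\derdR[\geq n]{\bM}\subset\dere^{\geq n}(\bM)$, the same computation together with the estimate $\derdR[\geq0]{\bM}\ctens\derdR[\geq n]{\bM}\subset\derdR[\geq n]{\bM}$ of Lemma~\ref{lem:tC-ctenscihom}(i) — which carries no shift — gives $\LE(\quot_\bM F)\in\derdR[\geq n]{\bM}$, hence $\quot_\bM F\in\dere^{\geq n}(\bM)$.

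It remains to prove the reverse inclusion $\dere^{\geq n}(\bM)\subset\quot_\bM\derdR[\geq n]{\bM}$. Here I would use that $\LE$ is fully faithful, so that $\quot_\bM\circ\LE\simeq\id_{\dere(\bM)}$; this is part of the semi-orthogonal decomposition picture recalled above (cf.\ Proposition~\ref{pro:D/N}), under which $\LE$ is identified with the inclusion ${}^\bot\catn\hookrightarrow\derdR{\bM}$ followed by the equivalence ${}^\bot\catn\simeq\dere(\bM)$. Then any $K\in\dere^{\geq n}(\bM)$ satisfies $K\simeq\quot_\bM(\LE K)$ with $\LE K\in\derdR[\geq n]{\bM}$ by the very definition of $\dere^{\geq n}(\bM)$, so $K\in\quot_\bM\derdR[\geq n]{\bM}$; and the left exactness of $\quot_\bM$ is precisely this last inclusion. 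There is no serious obstacle in all this: the content is carried entirely by Lemma~\ref{lem:tC-ctenscihom}, and the only points needing a word of justification — the membership of $\field_{\{\pm t\geq0\}}$ in the heart and the identity $\quot_\bM\circ\LE\simeq\id$ — are purely formal, while the shift by $1$ in the first inclusion (but not in the second) simply mirrors the asymmetry already present in the two lines of Lemma~\ref{lem:tC-ctenscihom}(i).
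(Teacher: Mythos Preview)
Your proof is correct and follows essentially the same approach as the paper: both use the formula $\LE(\quot_\bM F)\simeq\bl\field_{\{t\geq0\}}\dsum\field_{\{t\leq0\}}\br\ctens F$ together with Lemma~\ref{lem:tC-ctenscihom} for the inclusions, and the identity $K\simeq\quot_\bM(\LE K)$ with $\LE K\in\derdR[\geq n]{\bM}$ for the reverse inclusion. The paper's proof is simply terser, omitting the explicit observation that $\field_{\{\pm t\geq0\}}$ lie in the heart.
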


\begin{proof}
(i) For $F\in\derdR{\bM}$, one has
$\LE \quot_\bM F \simeq (\field_{\{t\geq 0\}}\oplus\field_{\{t\leq 0\}})\ctens F$.
Hence the inclusions ``$\subset$'' follow from Lemma~\ref{lem:tC-ctenscihom}.

\smallskip\noi
(ii)
It remains to show the opposite inclusion
 $\quot_\bM \derdR[\geq n]{\bM}\supset\dere^{\geq n}(\bM)$.
If $K\in\dere^{\geq n}(\bM)$, then $F \defeq \LE K \in \derdR[\geq n]{\bM}$,
and $K\simeq\quot_\bM(F)$.
\end{proof}

\begin{lemma}\label{lem:t-RE}
For any $n\in\Z$ one has
\[
\RE \dere^{\geq n}(\bM) \subset \derdR[\geq n-1]{\bM}.
\]
\end{lemma}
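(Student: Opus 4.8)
The plan is to reduce the statement, via the explicit description of $\RE$ on objects of the form $\quot_\bM F$, to the left-exactness estimate for $\cihom$ recorded in Lemma~\ref{lem:tC-ctenscihom}~(ii). Recall that $\LE$ is a left adjoint of the quotient functor $\quot_\bM\colon\derdR{\bM}\to\dere(\bM)$; in particular $\LE$ is fully faithful (it identifies $\dere(\bM)$ with ${}^\bot\catn\subset\derdR{\bM}$, cf.\ Proposition~\ref{pro:D/N}) and $\quot_\bM\comp\LE\simeq\id_{\dere(\bM)}$. Hence, for $K\in\dere(\bM)$, applying the formula $\RE(\quot_\bM F)\simeq\cihom\bl\field_{\{t\geq 0\}}\dsum\field_{\{t\leq 0\}},\;F\br$ with $F=\LE K$ gives
\[
\RE K \simeq \RE\quot_\bM(\LE K) \simeq \cihom\bl\field_{\{t\geq 0\}}\dsum\field_{\{t\leq 0\}},\;\LE K\br .
\]

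Now suppose $K\in\dere^{\geq n}(\bM)$. By definition this means $\LE K\in\derdR[\geq n]{\bM}$ (equivalently, by Lemma~\ref{lem:dereQ}, $K\simeq\quot_\bM F$ for some $F\in\derdR[\geq n]{\bM}$). Since $\field_{\{t\geq 0\}}\dsum\field_{\{t\leq 0\}}$ is a sheaf concentrated in cohomological degree $0$, it lies in $\derdR[\leq 0]{\bM}$. Applying Lemma~\ref{lem:tC-ctenscihom}~(ii) with the first variable in $\derdR[\leq 0]{\bM}$ and the second in $\derdR[\geq n]{\bM}$, we obtain
\[
\RE K \simeq \cihom\bl\field_{\{t\geq 0\}}\dsum\field_{\{t\leq 0\}},\;\LE K\br \in \derdR[\geq n-1]{\bM},
\]
which is the desired inclusion.

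I do not expect a genuine obstacle here: the proof is essentially a one-line consequence of the explicit description of $\RE$ combined with the convolution estimate. The only point worth flagging is that the shift by $-1$ is genuine and cannot be removed — it is exactly the loss of exactness in $\cihom$ already visible in Remark~\ref{rem:cihomderdR} — so one should not attempt to prove the sharper (false) inclusion $\RE\dere^{\geq n}(\bM)\subset\derdR[\geq n]{\bM}$; for the statement as given, nothing more than Lemma~\ref{lem:tC-ctenscihom}~(ii) is needed.
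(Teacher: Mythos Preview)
Your proof is correct. It differs from the paper's argument, which is a one-line adjunction: by Lemma~\ref{lem:dereQ} the functor $\quot_\bM[1]$ is right exact, hence its right adjoint $\RE[-1]$ is left exact. Your route is instead a direct computation: you unwind $\RE$ via the formula $\RE(\quot_\bM F)\simeq\cihom(\field_{\{t\geq 0\}}\dsum\field_{\{t\leq 0\}},F)$ applied to $F=\LE K$, and then invoke the $\cihom$ estimate of Lemma~\ref{lem:tC-ctenscihom}~(ii). The two arguments are dual to one another --- the paper's Lemma~\ref{lem:dereQ} itself is proved using the $\ctens$ estimate of Lemma~\ref{lem:tC-ctenscihom}~(i), so both approaches ultimately rest on the same convolution bounds. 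The adjunction argument is slightly more economical in that it avoids re-deriving the formula for $\RE K$, while your direct argument has the virtue of making the degree shift visibly come from the $-1$ in the $\cihom$ estimate.
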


\begin{proof}
By Lemma~\ref{lem:dereQ}, the functor $\quot_\bM[1]$ is right exact.
Hence its right adjoint $\RE[-1]$ is left exact.
\end{proof}

\begin{remark}
\bnum
\item
It follows from Example~\ref{ex:LERE} that the estimate in Lemma~\ref{lem:t-RE} is optimal.
\item
It follows from Remark~\ref{rem:cihomderdR} that there are no estimates of the form
\[
\RE\dere^0(\bM) \subset \derdR[\leq m]{\bM}
\]
with $m\in\Z$ independent of $\bM$.
\item
The example in Remark~\ref{rem:cihomderdR} shows that
\begin{multline*}
\bl \{K\in\dere(\bM) \semicolon \RE K \in \derdR[\leq0]{\bM}\},\\
\{K\in\dere(\bM) \semicolon \RE K \in \derdR[\geq0]{\bM}\} \br
\end{multline*}
is not a classical t-structure on $\dere(\bM)$, in general.
\ee
\end{remark}

\begin{proposition}\label{pro:t-homE}
The functors $\fihom$ and $\fhom$ are left exact, i.e.\ for $n,n'\in\Z$ one has
\begin{itemize}
\item[(i)]
$\fihom(\dere^{\leq n}(\bM),\dere^{\geq n'}(\bM)) \subset \derd^{\geq n'-n}(\bM)$,
\item[(ii)]
$\fhom(\dere^{\leq n}(\bM),\dere^{\geq n'}(\bM)) \subset \derd^{\geq n'-n}(\field_\unb\bM)$.
\end{itemize}
\end{proposition}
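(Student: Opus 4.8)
The plan is to deduce both statements directly from the left-exactness estimates of Proposition~\ref{pro:t-board} for the operations on ind-sheaves, applied to the defining formula
\[
\fihom(K,K') \defeq \roim\pi\,\rihom(\LE K,\LE K').
\]
The decisive point is to work with the model involving $\LE$ on \emph{both} arguments. Indeed, by the very definition of the classical t-structure on $\dere(\bM)$, the condition $K\in\dere^{\leq n}(\bM)$ reads $\LE K\in\derdR[\leq n]{\bM}$ and $K'\in\dere^{\geq n'}(\bM)$ reads $\LE K'\in\derdR[\geq n']{\bM}$, so this model keeps everything inside $\derdR{\bM}$ with its standard classical t-structure and loses no cohomological degree. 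By contrast, using $\RE K'$ we would only get $\RE K'\in\derdR[\geq n'-1]{\bM}$ from Lemma~\ref{lem:t-RE}, which is one degree too weak for the sharp bound.

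For (i), I would take $K\in\dere^{\leq n}(\bM)$ and $K'\in\dere^{\geq n'}(\bM)$, so that $\LE K\in\derdR[\leq n]{\bM}$ and $\LE K'\in\derdR[\geq n']{\bM}$. As $\bM\times\bR$ is a bordered space, the left-exactness of $\rihom$ (Proposition~\ref{pro:t-board}(ii)) gives $\rihom(\LE K,\LE K')\in\derdR[\geq n'-n]{\bM}$, and the left-exactness of $\roim\pi$ for the projection $\pi\colon\bM\times\bR\to\bM$ (Proposition~\ref{pro:t-board}(iii)) then yields
\[
\fihom(K,K')\simeq\roim\pi\,\rihom(\LE K,\LE K')\in\derd^{\geq n'-n}(\bM),
\]
which is statement (i).

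For (ii), I would combine (i) with the identity $\fhom=\alpha_\bM\circ\fihom$ together with the exactness of $\alpha_\bM$. The latter holds because $\alpha_\bM$ factors as the inverse image functor $\derd(\bM)\to\derd(\unbM)$ along $\unbM\to\bM$, which is exact by Proposition~\ref{pro:t-board}(iv), followed by $\alpha_{\unbM}\colon\derd(\unbM)\to\BDC(\field_{\unbM})$, which is exact as the derived functor of an exact functor between abelian categories. Hence $\fhom(K,K')\in\derd^{\geq n'-n}(\field_{\unbM})$, completing the proof.

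This statement presents no serious obstacle: it is simply an assembly of exactness properties already established earlier. The only thing requiring attention is choosing the right model for $\fihom$ among its several isomorphic expressions, so that the estimate comes out with the sharp constant $n'-n$; everything else is formal.
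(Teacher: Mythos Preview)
Your proof is correct and follows exactly the same approach as the paper: use the model $\fihom(K,K')=\roim\pi\,\rihom(\LE K,\LE K')$ and apply Proposition~\ref{pro:t-board} (parts (ii) and (iii)) for (i), then $\fhom=\alpha_\bM\circ\fihom$ with $\alpha_\bM$ exact for (ii). The paper's proof is simply a terser version of what you wrote.
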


\begin{proof}
(i) By the definition, for $K,K'\in\dere(\bM)$ one has
\[
\fihom(K, K') = \roim\pi\rihom(\LE K,\LE K').
\]
Hence the statement follows from Proposition~\ref{pro:t-board}.

(ii) One has $\fhom = \alpha_\bM\, \fihom$. Since $\alpha_\bM$ is exact, the statement follows from (i).
\end{proof}

\subsection{Operations}
Let $f\colon\bM\to\bN$ be a morphism of bordered spaces.
The six Grothendieck operations for enhanced ind-sheaves
\begin{align*}
\ctens &\colon \dere(\bM) \times \dere(\bM) \to \dere(\bM), \\
\cihom &\colon \dere(\bM)^\op \times \dere(\bM) \to \dere(\bM), \\
\Eeeim f, \Eoim f &\colon \dere(\bM) \to \dere(\bN), \\
\Eopb f, \Eepb f &\colon \dere(\bN) \to \dere(\bM)
\end{align*}
are defined as follows.
Set $f_\bR = f\times\id_\bR\cl \bM\times\bR\to\bN\times\bR$.
For $F,F'\in\derdR{\bM}$ and $G\in\derdR{\bN}$, one sets
\begin{align*}
\quot_\bM F\ctens \quot_\bM F' &\defeq \quot_\bM(F \ctens F'), \\
\cihom(\quot_\bM F, \quot_\bM F') &\defeq \quot_\bM\cihom(F, F'), \\
\Eeeim f \quot_\bM F &\defeq \quot_\bN\roimv{f_{\R_\infty\sep!!}}F, \\
\Eoim f \quot_\bM F &\defeq \quot_\bN\roimv{f_{\R_\infty\sep*}}F, \\
\Eopb f \quot_\bN G &\defeq \quot_\bM\opb{f_\bR}G, \\
\Eepb f \quot_\bN G &\defeq \quot_\bM\epb{f_\bR}G.
\end{align*}

The duality functor is defined by
\[
\Qdual_\bM\colon\dere(\bM)\to \dere(\bM)^\op, \qquad K \mapsto \cihom(K,\omega^\quot_\bM),
\]
where
$\omega_\bM \seteq \epb {j_\bM}\omega_\cM\in\derd(\bM)$
and $\omega^\quot_\bM \seteq\epsilon(\omega_\bM)
\seteq\opb\pi\omega_\bM \tens \Qfield_{\{t=0\}}\in \dere(\bM)$.

\begin{lemma}[{\cite[Lemma 4.3.2]{DK13}}]
\label{lem:Qdual}
Let $\bM=(\oM,\cM)$.
For $F\in\derd(\cor_{M\times\R})$, one has
\[
\Qdual_\bM(\quot_\bM F) \simeq \quot_\bM(\opb a \dual_{\oM\times\oR} F),
\]
where $a$ is the involution of $M\times\R$ defined by $a(x,t) = (x,-t)$.
\end{lemma}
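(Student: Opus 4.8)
The plan is to strip off the quotient functor $\quot_\bM$, reducing the assertion to an isomorphism in $\derdR{\bM}$, which I would then prove by a direct computation with the convolution bifunctor $\cihom$.

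First I would unwind the left-hand side. By the definitions of $\Qdual_\bM$, of $\cihom$ on $\dere(\bM)$, and of $\epsilon$, one has $\Qdual_\bM(\quot_\bM F)=\cihom(\quot_\bM F,\omega^\quot_\bM)$ with $\omega^\quot_\bM=\epsilon(\omega_\bM)=\quot_\bM(\field_{\{t=0\}}\tens\opb\pi\omega_\bM)$, so that $\Qdual_\bM(\quot_\bM F)\simeq\quot_\bM\cihom(F,\field_{\{t=0\}}\tens\opb\pi\omega_\bM)$. Hence it suffices to produce an isomorphism
\[
\cihom(F,\field_{\{t=0\}}\tens\opb\pi\omega_\bM)\simeq\opb a\,\dual_{\oM\times\oR}F
\]
in $\derdR{\bM}$ and then apply $\quot_\bM$. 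Writing $s\colon\bM\hookrightarrow\bM\times\bR$ for the zero section, the projection formula identifies $\field_{\{t=0\}}\tens\opb\pi\omega_\bM\simeq\roim{s}\omega_\bM$.

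Next I would compute $\cihom(F,\roim{s}\omega_\bM)$ from the definition $\cihom(F_1,F_2)=\roimv{q_{1*}}\rihom(\opb{q_2}F_1,\epb\mu F_2)$, where $\mu,q_1,q_2\colon\bM\times\bR\times\bR\to\bM\times\bR$ and $\mu(x,t_1,t_2)=(x,t_1+t_2)$. Everything in sight is supported on the ``antidiagonal'' $A\defeq\mu^{-1}(\bM\times\{0\})=\{t_1+t_2=0\}$; write $\delta\colon A\hookrightarrow\bM\times\bR\times\bR$ for the inclusion and $\bar\mu\colon A\to\bM$ for the map induced by $\mu$. The square formed by $\delta,\mu,s,\bar\mu$ is Cartesian and $s$ is proper, so proper base change in the framework of ind-sheaves on bordered spaces (see \cite{DK13}) yields $\epb\mu\roim{s}\omega_\bM\simeq\roim{\delta}\epb{\bar\mu}\omega_\bM$. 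Parametrizing $A\cong\bM\times\bR$ by $(x,t_1)$, one checks that $\bar\mu$ becomes the projection $\pi$, that $q_1\circ\delta$ becomes $\id_{\bM\times\bR}$, and that $q_2\circ\delta$ becomes the involution $a$; in particular $\epb{\bar\mu}\omega_\bM\simeq\epb\pi\omega_\bM\simeq\opb\pi\omega_\bM[1]$, the shift $[1]$ coming from $\omega_\bR\simeq\field_\bR[1]$.

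Combining these with the standard isomorphism $\roim{\delta}\rihom(\opb\delta G,H)\simeq\rihom(G,\roim{\delta}H)$ for the closed embedding $\delta$, I obtain
\[
\cihom(F,\roim{s}\omega_\bM)\simeq\roim{(q_1\circ\delta)}\rihom(\opb{(q_2\circ\delta)}F,\epb{\bar\mu}\omega_\bM)\simeq\rihom(\opb a F,\opb\pi\omega_\bM[1]),
\]
using $q_1\circ\delta=\id$ and $q_2\circ\delta=a$. Finally $\omega_{\oM\times\oR}\simeq\opb\pi\omega_\bM[1]$, and $\opb a$ commutes with $\rihom$ while fixing $\omega_{\oM\times\oR}$ (since $a$ is an isomorphism with $\pi\circ a=\pi$), so the right-hand side equals $\opb a\rihom(F,\omega_{\oM\times\oR})=\opb a\,\dual_{\oM\times\oR}F$, as wanted. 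The part requiring care is not conceptual but technical: checking that proper base change and the closed-embedding hom-adjunction are available for morphisms of bordered spaces — the maps $\mu,q_1,q_2$ do not extend continuously to the compactifications, so one must invoke their bordered-space counterparts — together with careful bookkeeping of the single cohomological shift produced by $\omega_\bR$.
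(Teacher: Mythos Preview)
The paper does not supply its own proof of this lemma; it is simply quoted from \cite[Lemma~4.3.2]{DK13}. Your argument is correct and is essentially the natural one: after unwinding the definitions, the claim reduces to computing $\cihom(F,\roim{s}\omega_\bM)$ for $s$ the zero section, and your base-change/adjunction computation on the antidiagonal $A=\{t_1+t_2=0\}$ is the standard route. The identifications $q_1\circ\delta=\id$, $q_2\circ\delta=a$, $\bar\mu=\pi$ and $\epb\pi\omega_\bM\simeq\omega_{\oM\times\oR}$ are exactly right, and the $a$-invariance of $\omega_{\oM\times\oR}$ finishes it. The technical points you flag (proper base change $\epb\mu\roim{s}\simeq\roim{\delta}\epb{\bar\mu}$ and the closed-embedding adjunction $\rihom(G,\roim{\delta}H)\simeq\roim{\delta}\rihom(\opb{\delta}G,H)$ in the bordered setting) are genuine, but both are part of the six-functor formalism for ind-sheaves on bordered spaces developed in \cite{DK13}.
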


\begin{example}\label{ex:abEdual}
Let $a,b\in\R$ with $a<b$.
In the category $\dere(\point)$, one has
\[
\Qdual \Qfield_{\{a\leq t\}} \simeq \Qfield_{\{t<-a\}}[1] \simeq \Qfield_{\{-a\leq t\}}
\qtq
\Qdual \Qfield_{\{a\leq t < b\}} \simeq \Qfield_{\{-b\leq t < -a\}}[1].
\]
In particular,
\[
\Qdual\Qfield_{\{a\leq t\}} \in \dere^0(\point)\qtq
\Qdual\Qfield_{\{a\leq t < b\}} \in \dere^{-1}(\point).
\]
\end{example}

\begin{proposition}\label{pro:t-Eop}
Let $\bM$ be a bordered space.
\begin{itemize}
\item[(i)]
For $n,n'\in\Z$ one has
\begin{align*}
\dere^{\leq n}(\bM) \ctens \dere^{\leq n'}(\bM) &\subset \dere^{\leq n+n'+1}(\bM), \\
\dere^{\geq n}(\bM) \ctens \dere^{\geq n'}(\bM) &\subset \dere^{\geq n+n'}(\bM).
\end{align*}
In particular, the bifunctor $\ctens$ is left exact.
\item[(ii)]
For $n,n'\in\Z$ one has
\[
\cihom(\dere^{\leq n}(\bM), \dere^{\geq n'}(\bM)) \subset \dere^{\geq n'-n-1}(\bM).
\]
\end{itemize}
Let $f\colon \bM \to \bN$ be a morphism of bordered spaces.
\begin{itemize}
\item[(iii)]
$\Eeeim f$ and $\Eoim f$ are left exact, i.e.\ for any $n\in\Z$ one has
\begin{align*}
\Eeeim f \dere^{\geq n}(\bM) &\subset \dere^{\geq n}(\bN), \\
\Eoim f \dere^{\geq n}(\bM) &\subset \dere^{\geq n}(\bN).
\end{align*}
\item[(iv)]
$\Eopb f$ is exact, i.e.\ for any $n\in\Z$ one has
\begin{align*}
\Eopb f \dere^{\leq n}(\bN) &\subset \dere^{\leq n}(\bM), \\
\Eopb f \dere^{\geq n}(\bN) &\subset \dere^{\geq n}(\bM).
\end{align*}
\end{itemize}
Let $d\in\Z_{\geq 0}$ and assume that $\opb f(y)\subset \unbM$ has soft-dimension $\leq d$ for any $y\in\unbN$.
\begin{itemize}
\item[(v)]
$\Eeeim f(\ast)[d]$ is right exact, i.e.\ for any $n\in\Z$ one has
\[
\Eeeim f \dere^{\leq n}(\bM) \subset \dere^{\leq n+d}(\bN).
\]
\item[(vi)]
$\Eepb f(\ast)[-d]$ is left exact, i.e.\ for any $n\in\Z$ one has
\[
\Eepb f \dere^{\geq n}(\bN) \subset \dere^{\geq n-d}(\bM).
\]
\end{itemize}
\end{proposition}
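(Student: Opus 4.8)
The plan is to reduce everything to the bordered-space statements already available. By construction the $t$-structure on $\dere(\bM)$ is the one induced from the standard $t$-structure on $\derdR{\bM}$ through the fully faithful functor $\LE$, whose essential image is ${}^\bot\catn$; and the six operations on $\dere(\bM)$ are defined by applying $\quot_\bM$ to the corresponding operations on $\derdR{}$. So the proposition should follow from Lemma~\ref{lem:tC-ctenscihom} (for $\ctens$, $\cihom$), from Proposition~\ref{pro:t-board} applied to $f_\bR=f\times\id_\bR$ (using that $\opb{f_\bR}(y,t)=\opb f(y)\times\{t\}$ has soft-dimension $\leq d$ whenever $\opb f(y)$ does), and from Lemma~\ref{lem:dereQ}, which relates $\quot_\bM$ to the two $t$-structures. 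The one subtlety is that Lemma~\ref{lem:dereQ} gives an equality $\quot_\bM\derdR[\geq n]{\bM}=\dere^{\geq n}(\bM)$ but only the shifted inclusion $\quot_\bM\derdR[\leq n]{\bM}\subset\dere^{\leq n+1}(\bM)$; hence the statements involving $\geq$ transport directly, while for the upper bounds in (i), (iv), (v) one must instead show that $\LE$ of the enhanced operation agrees with the bordered operation applied to $\LE$, so that no degree is lost.

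To that end I would first record two auxiliary facts. (A) For $K,K'\in\dere(\bM)$ one has $\LE(K\ctens K')\simeq\LE K\ctens\LE K'$: indeed $K\ctens K'\simeq\quot_\bM(\LE K\ctens\LE K')$ by definition, and applying $\LE$, using $\LE\quot_\bM F\simeq(\field_{\{t\geq0\}}\dsum\field_{\{t\leq0\}})\ctens F$ together with associativity of $\ctens$ and $\LE K\in{}^\bot\catn$, the convolution factor $(\field_{\{t\geq0\}}\dsum\field_{\{t\leq0\}})$ is absorbed. (B) For $f\colon\bM\to\bN$, the functors $\reeim{f_\bR}$ and $\opb{f_\bR}$ preserve ${}^\bot\catn$: for $\reeim{f_\bR}$ use ${}^\bot\catn=\{G\semicolon\reeim\pi G\simeq0\}$ and functoriality ($\reeim\pi\reeim{f_\bR}\simeq\reeim f\reeim\pi$, since $\pi f_\bR=f\pi$); for $\opb{f_\bR}$ use instead ${}^\bot\catn=\{G\semicolon(\field_{\{t\geq0\}}\dsum\field_{\{t\leq0\}})\ctens G\isoto G\}$ together with the facts that $\opb{f_\bR}$ commutes with $\ctens$ and that $\opb{f_\bR}\field_{\{\pm t\geq0\}}\simeq\field_{\{\pm t\geq0\}}$. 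Combining (B) with $\LE\quot_\bM F\simeq(\field_{\{t\geq0\}}\dsum\field_{\{t\leq0\}})\ctens F$ yields $\LE\Eeeim f\simeq\reeim{f_\bR}\LE$ and $\LE\Eopb f\simeq\opb{f_\bR}\LE$.

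With these in hand the six items are short. Item (i) is Lemma~\ref{lem:tC-ctenscihom}(i) read through (A). For (ii): $\cihom(K,K')\simeq\quot_\bM\cihom(\LE K,\LE K')$, which by Lemma~\ref{lem:tC-ctenscihom}(ii) lies in $\quot_\bM\derdR[\geq n'-n-1]{\bM}=\dere^{\geq n'-n-1}(\bM)$. For (iii): $\Eeeim f K\simeq\quot_\bN\reeim{f_\bR}(\LE K)$ and $\Eoim f K\simeq\quot_\bN\roim{f_\bR}(\LE K)$, so left exactness of $\reeim{f_\bR}$, $\roim{f_\bR}$ (Proposition~\ref{pro:t-board}(iii)) together with $\quot_\bN\derdR[\geq n]{\bN}=\dere^{\geq n}(\bN)$ suffices. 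For (iv): $\LE\Eopb f\simeq\opb{f_\bR}\LE$ by (B), so exactness transports verbatim from Proposition~\ref{pro:t-board}(iv). For (v): $\LE\Eeeim f\simeq\reeim{f_\bR}\LE$ by (B), so right exactness of $\reeim{f_\bR}(\ast)[d]$ (Proposition~\ref{pro:t-board}(v) applied to $f_\bR$) gives $\Eeeim f K\in\dere^{\leq n+d}(\bN)$. Finally (vi) follows from (v) by the adjunction $(\Eeeim f,\Eepb f)$, exactly as (vi) is deduced from (v) in the proof of Proposition~\ref{pro:t-board}. I expect the only genuine work to lie in fact (B) — hence in (iv) and (v) — everything else being a formal transport along $\LE$ and $\quot_\bM$.
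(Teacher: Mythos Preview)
Your proposal is correct and follows essentially the same route as the paper: the key isomorphisms $\LE(K\ctens K')\simeq\LE K\ctens\LE K'$, $\LE\circ\Eeeim f\simeq\reeim{f_\bR}\circ\LE$, and $\LE\circ\Eopb f\simeq\opb{f_\bR}\circ\LE$ are exactly what the paper invokes (without proof) to reduce to Lemma~\ref{lem:tC-ctenscihom} and Proposition~\ref{pro:t-board}. The only cosmetic differences are that the paper handles (ii) by adjunction from (i) rather than via $\quot_\bM$ and Lemma~\ref{lem:dereQ}, and obtains left exactness of $\Eoim f$ by adjunction from (iv) rather than directly; your justification of fact (B) is more detailed than what the paper records.
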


\begin{proof}
(i) For $K\in\dere(\bM)$ and $K'\in\dere(\bM)$ one has
\[
\LE(K\ctens K') \simeq \LE K \ctens \LE K'.
\]
Then the statement follows from Lemma~\ref{lem:tC-ctenscihom}.

\smallskip\noi
(ii) follows from (i) by adjunction. As we deal here with bifunctors, let us spell out the proof.
Let $K\in\dere^{\leq n}(\bM)$, $K'\in\dere^{\geq n'}(\bM)$, and $L\in\dere^{< n'-n-1}(\bM)$.
Then one has
\begin{align*}
\Hom[\dere(\bM)](L,\cihom(K,K'))
&\simeq \Hom[\dere(\bM)](L \ctens K ,K') \\
&\in \Hom[\dere(\bM)](\dere^{< n'}(\bM), \dere^{\geq n'}(\bM)) =0.
\end{align*}
Then $\cihom(K,K') \in \dere^{< n'-n-1}(\bN)^\bot = \dere^{\geq n'-n-1}(\bN)$.

\smallskip\noi
(iii-1) The fact that $\Eeeim f$ is left exact follows from Proposition~\ref{pro:t-board}, since one has
\[
\LE \comp \Eeeim f \simeq \roimv{f_{\R_\infty\sep!!}} \comp \LE,
\]
where we recall that $f_\bR \defeq f \times \id_\bR$.

\smallskip\noi
(iv) also follows from Proposition~\ref{pro:t-board}, since one has
\[
\LE \comp \Eopb f \simeq \opb{f_\bR} \comp \LE.
\]

\smallskip\noi
(iii-2) The fact that $\Eoim f$ is left exact follows from (iv) by adjunction.

\noi
(v) has a proof similar to (iii-1).

\noi
(vi) follows from (v) by adjunction.
\end{proof}

\begin{proposition}\label{pro:EopbVan}
Let $f\cl\bM\to \bN$ be a morphism of bordered spaces.
Let $n\in\Z$ and $L\in\dere(\bN)$.
Assume
\begin{itemize}
\item[(a)]
$f$ is semi-proper,
\item[(b)]
$\unb f\colon \unbM \to \unbN$ is surjective.
\end{itemize}
Then
\begin{itemize}
\item[(i)]
$\opb f L\in \dere^{\geq n}(\bM)$ implies $L\in \dere^{\geq n}(\bN)$,
\item[(ii)]
$\opb f L\in \dere^{\leq n}(\bM)$ implies $L\in \dere^{\leq n}(\bN)$.
\end{itemize}
\end{proposition}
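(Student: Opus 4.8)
The plan is to deduce the statement from its ind-sheaf analogue, Proposition~\ref{pro:opbVan}, applied to the morphism of bordered spaces $f_\bR = f\times\id_\bR\colon \bM\times\bR\to\bN\times\bR$. Recall that, by definition of the t-structure on $\dere$, an object $L\in\dere(\bN)$ lies in $\dere^{\geq n}(\bN)$ (resp.\ in $\dere^{\leq n}(\bN)$) if and only if $\LE L\in\derdR[\geq n]{\bN}$ (resp.\ $\LE L\in\derdR[\leq n]{\bN}$), and likewise over $\bM$; and recall from the proof of Proposition~\ref{pro:t-Eop}~(iv) the isomorphism of functors $\LE\comp\Eopb f\simeq\opb{f_\bR}\comp\LE$. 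Consequently, the hypothesis $\Eopb f L\in\dere^{\geq n}(\bM)$ amounts to $\opb{f_\bR}(\LE L)\in\derdR[\geq n]{\bM}$, and it suffices to show that this forces $\LE L\in\derdR[\geq n]{\bN}$, and symmetrically with $\leq n$.

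To invoke Proposition~\ref{pro:opbVan} for $f_\bR$, I would first check that $f_\bR$ inherits hypotheses (a) and (b). Surjectivity of $\unb{f_\bR}=\unb f\times\id_\oR$ is immediate from that of $\unb f$. For semi-properness, write $\bM\times\bR=(\oM\times\oR,\cM\times\cR)$ and $\bN\times\bR=(\oN\times\oR,\cN\times\cR)$, and use the canonical homeomorphism $(\cM\times\cR)\times(\cN\times\cR)\simeq(\cM\times\cN)\times(\cR\times\cR)$ exchanging the two inner factors. Under it, the graph $\Gamma_{f_\bR}$ corresponds to $\Gamma_f\times\Delta_\oR$, with $\Delta_\oR\subset\oR\times\oR$ the diagonal; taking closures, and using that the closure of a product is the product of closures, $\overline{\Gamma_{f_\bR}}$ corresponds to $\olG\times\Delta_\cR$, since $\oR$ is dense in the compact space $\cR$. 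The projection $q_2$ onto $\cN\times\cR$ then becomes $\bl q_2|_{\olG}\br\times\bl\Delta_\cR\isoto\cR\br$, a product of proper maps, hence proper; so $f_\bR$ is semi-proper.

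Finally I would conclude: applying Proposition~\ref{pro:opbVan}~(i) with $f$ replaced by $f_\bR$ and $G=\LE L\in\derdR{\bN}$, the membership $\opb{f_\bR}(\LE L)\in\derdR[\geq n]{\bM}$ yields $\LE L\in\derdR[\geq n]{\bN}$, that is, $L\in\dere^{\geq n}(\bN)$, which is assertion~(i). Assertion~(ii) follows in the same way from Proposition~\ref{pro:opbVan}~(ii), replacing $\geq n$ by $\leq n$ throughout.

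The only step that is not pure bookkeeping is the stability of semi-properness under $-\times\id_\bR$, namely the identification of $\overline{\Gamma_{f_\bR}}$ with $\olG\times\Delta_\cR$ above; everything else is a direct transcription through the definitions of $\dere^{\leq n}$, $\dere^{\geq n}$ and $\Eopb f$ together with the already-established compatibility $\LE\comp\Eopb f\simeq\opb{f_\bR}\comp\LE$. I expect no genuine obstacle.
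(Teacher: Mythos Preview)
Your proof is correct and follows exactly the same approach as the paper: reduce to Proposition~\ref{pro:opbVan} applied to $f_\bR$ and $G=\LE L$, using the compatibility $\LE\comp\Eopb f\simeq\opb{f_\bR}\comp\LE$. The paper's proof is a one-line reference to this reduction; you have additionally spelled out the verification that $f_\bR$ inherits semi-properness and surjectivity, which the paper leaves implicit.
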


\begin{proof}
It is enough to apply Proposition~\ref{pro:opbVan} to the morphism $f_{\R_\infty}\colon \bM\times\bR \to \bN\times\bR$ and the object $G=\LE L \in\derd(\bN\times\bR)$.
\end{proof}

The bifunctors
\begin{align*}
\opb\pi(\ast)\tens(\ast) &\colon \derd(\bM) \times \dere(\bM) \to \dere(\bM), \\
\rihom(\opb\pi(\ast),\ast) &\colon \derd(\bM)^\op \times \dere(\bM) \to \dere(\bM)
\end{align*}
are defined as follows, for $L\in\derd(\bM)$ and $F\in\derdR{\bM}$,
\begin{align*}
\opb\pi L\ctens \quot_\bM F &\defeq \quot_\bM(\opb\pi L \tens F), \\
\rihom(\opb\pi L, \quot_\bM F) &\defeq \quot_\bM\rihom(\opb\pi L, F).
\end{align*}

\begin{lemma}\label{lem:Etpitens}\label{lem:Etpiihom}
Let $\bM$ be a bordered space.
\begin{itemize}
\item[(i)]
The bifunctor $\opb\pi(\ast) \tens (\ast)$ is exact, i.e.\ for $n,n'\in\Z$ one has
\begin{align*}
\opb\pi\derd^{\leq n}(\bM) \tens \dere^{\leq n'}(\bM) &\subset \dere^{\leq n+n'}(\bM), \\
\opb\pi\derd^{\geq n}(\bM) \tens \dere^{\geq n'}(\bM) &\subset \dere^{\geq n+n'}(\bM).
\end{align*}
In particular, the functor $\epsilon$ from \eqref{eq:epsilon} is exact.
\item[(ii)]
The bifunctor $\rihom(\opb\pi(\ast), \ast)$ is left exact, i.e.\ for $n,n'\in\Z$ one has
\[
\rihom(\opb\pi\derd^{\leq n}(\bM), \dere^{\geq n'}(\bM)) \subset \dere^{\geq n'-n}(\bM).
\]
\end{itemize}
\end{lemma}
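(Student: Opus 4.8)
The plan is to transfer everything to $\derdR{\bM}$ by means of the functor $\LE$, using that $K\in\dere^{\leq n}(\bM)$ (resp.\ $K\in\dere^{\geq n}(\bM)$) precisely when $\LE K\in\derdR[\leq n]{\bM}$ (resp.\ $\LE K\in\derdR[\geq n]{\bM}$). The key point is the identification, for $L\in\derd(\bM)$ and $K\in\dere(\bM)$,
\[
\LE(\opb\pi L\tens K)\simeq\opb\pi L\tens\LE K
\]
in $\derdR{\bM}$, the tensor on the right being that of ind-sheaves on $\bM\times\bR$. To obtain it, set $F\defeq\LE K$; then $F\in{}^\bot\catn$, $K\simeq\quot_\bM F$, and by definition $\opb\pi L\tens K\simeq\quot_\bM(\opb\pi L\tens F)$. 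By the projection formula $\reeim\pi(\opb\pi L\tens F)\simeq L\tens\reeim\pi F$, together with ${}^\bot\catn=\{G\in\derdR{\bM}\semicolon\reeim\pi G\simeq0\}$, one gets $\reeim\pi(\opb\pi L\tens F)\simeq0$, i.e.\ $\opb\pi L\tens F\in{}^\bot\catn$; since $\LE\quot_\bM G\simeq(\field_{\{t\geq 0\}}\dsum\field_{\{t\leq 0\}})\ctens G\simeq G$ for $G\in{}^\bot\catn$, this yields $\LE(\opb\pi L\tens K)\simeq\opb\pi L\tens F$, as wanted.

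Granting this, (i) is immediate: if $L\in\derd^{\leq n}(\bM)$ then $\opb\pi L\in\derdR[\leq n]{\bM}$ by exactness of $\opb\pi$ (Proposition~\ref{pro:t-board}(iv)), and if moreover $K\in\dere^{\leq n'}(\bM)$, so that $\LE K\in\derdR[\leq n']{\bM}$, then $\opb\pi L\tens\LE K\in\derdR[\leq n+n']{\bM}$ by exactness of $\tens$ (Proposition~\ref{pro:t-board}(i)); hence $\opb\pi L\tens K\in\dere^{\leq n+n'}(\bM)$, and the case of $\geq$ is entirely analogous. For the last assertion of (i), recall $\epsilon(F)\simeq\opb\pi F\tens\Qfield_{\{t=0\}}$; since $\field_{\{t=0\}}$ is the unit of the convolution $\ctens$ on $\derdR{\bM}$, one has $\LE\Qfield_{\{t=0\}}\simeq(\field_{\{t\geq 0\}}\dsum\field_{\{t\leq 0\}})\ctens\field_{\{t=0\}}\simeq\field_{\{t\geq 0\}}\dsum\field_{\{t\leq 0\}}\in\derdR[0]{\bM}$, whence $\Qfield_{\{t=0\}}\in\dere^0(\bM)$, and exactness of $\epsilon$ follows from (i) taken with $n'=0$.

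Finally, (ii) follows from (i) by adjunction, in the same way as Proposition~\ref{pro:t-Eop}(ii): for fixed $L\in\derd(\bM)$, the functors $\opb\pi L\tens(-)$ and $\rihom(\opb\pi L,-)$ form an adjoint pair of endofunctors of $\dere(\bM)$, lifting the tensor--hom adjunction on $\derdR{\bM}$, so that for $L\in\derd^{\leq n}(\bM)$, $K\in\dere^{\geq n'}(\bM)$ and $K''\in\dere^{\leq n'-n-1}(\bM)$ part (i) gives $\opb\pi L\tens K''\in\dere^{\leq n'-1}(\bM)$ and hence
\[
\Hom[\dere(\bM)](K'',\rihom(\opb\pi L,K))\simeq\Hom[\dere(\bM)](\opb\pi L\tens K'',K)=0,
\]
so that $\rihom(\opb\pi L,K)\in\dere^{\leq n'-n-1}(\bM)^\bot=\dere^{\geq n'-n}(\bM)$. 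The only real work is the crux identification $\LE(\opb\pi L\tens K)\simeq\opb\pi L\tens\LE K$, i.e.\ keeping the interplay of $\quot_\bM$, $\LE$ and $\catn$ under control; once it is in hand, both statements are formal consequences of the exactness of $\opb\pi$ and $\tens$ on $\derdR{\bM}$ recorded in Proposition~\ref{pro:t-board} and of the orthogonality formalism of t-structures. A minor additional point to spell out is that the tensor--hom adjunction indeed descends to an adjoint pair of endofunctors of $\dere(\bM)$, which is what makes the argument for (ii) run.
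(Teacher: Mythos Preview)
Your proof is correct and follows essentially the same approach as the paper: both hinge on the identification $\LE(\opb\pi L\tens K)\simeq\opb\pi L\tens\LE K$ and then invoke Proposition~\ref{pro:t-board}, with (ii) following by adjunction. You supply considerably more detail than the paper (which simply asserts the key identification and leaves the adjunction step as a one-liner), in particular your justification of the identification via the projection formula and the characterization ${}^\bot\catn=\{G\semicolon\reeim\pi G\simeq0\}$ is a nice explicit verification of what the paper leaves implicit.
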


\begin{proof}
(i) For $F\in \derd(\bM)$ and $K\in\dere(\bM)$ one has
\[
\LE(\opb\pi F \tens K) \simeq \opb\pi F \tens \LE K.
\]
Hence the statement follows from Proposition~\ref{pro:t-board}.

(ii) follows by adjunction from (i).
\end{proof}

Let us end this section stating some facts related to Notation~\ref{not:bZ}.

\begin{lemma}\label{lem:Eeimtensanddual}
Let $Z$ be a locally closed subset of $\bM$, and $K\in\dere(\bM)$.
One has
\begin{align*}
\opb\pi\field_{Z} \tens K &\simeq \Eeeim{i_\bZ}\Eopb i_\bZ K, \\
\rihom( \opb\pi\field_Z, K) &\simeq \Eoim{i_\bZ}\Eepb i_\bZ K.
\end{align*}
\end{lemma}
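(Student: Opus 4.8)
The plan is to reduce both isomorphisms to their non-enhanced counterparts in Lemma~\ref{lem:reimtensanddual}, applied to the bordered space $\bM\times\bR$, via the quotient functor $\quot_\bM\colon\derdR{\bM}\to\dere(\bM)$.

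First I would write $K\simeq\quot_\bM F$ with $F\in\derdR{\bM}$ (possible since $\quot_\bM$ is essentially surjective), and set $g\defeq i_\bZ\times\id_\bR\colon\bZ\times\bR\to\bM\times\bR$; this is precisely the morphism $(i_\bZ)_\bR$ entering the definitions of the enhanced operations $\Eopb{i_\bZ}$, $\Eeeim{i_\bZ}$, $\Eoim{i_\bZ}$, $\Eepb{i_\bZ}$. Two preliminary identifications are needed. On the one hand, since the closure of $Z\times\oR$ in $\cM\times\cR$ equals $\overline Z\times\cR$, the bordered space $\bZ\times\bR$ coincides with the one, denote it $\inbordered{(Z\times\oR)}$, attached by Notation~\ref{not:bZ} to the locally closed subset $Z\times\oR$ of $\oM\times\oR$, and under this identification $g$ is the corresponding morphism $i_{\inbordered{(Z\times\oR)}}$. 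On the other hand, since $\iota$ commutes with inverse images and $\opb\pi$ carries the extension by zero of the constant sheaf on $Z$ to that on $\pi^{-1}(Z)=Z\times\oR$, one has $\opb\pi\field_Z\simeq\field_{Z\times\oR}$ in $\derd(\bM\times\bR)$.

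Then, unwinding the definitions of the enhanced operations in terms of $\quot_\bM$ and the six operations on $\derd(\bM\times\bR)$, I would obtain
\[
\opb\pi\field_Z\tens K\simeq\quot_\bM\bl\field_{Z\times\oR}\tens F\br,\qquad \Eeeim{i_\bZ}\Eopb{i_\bZ}K\simeq\quot_\bM\bl\reeim g\,\opb g F\br,
\]
and likewise $\rihom(\opb\pi\field_Z,K)\simeq\quot_\bM\rihom(\field_{Z\times\oR},F)$ and $\Eoim{i_\bZ}\Eepb{i_\bZ}K\simeq\quot_\bM\bl\roim g\,\epb g F\br$. Now Lemma~\ref{lem:reimtensanddual}, applied to $\bM\times\bR$ and to the locally closed subset $Z\times\oR$, gives $\field_{Z\times\oR}\tens F\simeq\reeim g\,\opb g F$ and $\rihom(\field_{Z\times\oR},F)\simeq\roim g\,\epb g F$; applying $\quot_\bM$ to these two isomorphisms yields the two formulas in the statement, and functoriality of all the isomorphisms involved makes the result canonical.

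The only step requiring some care is the preliminary bookkeeping: verifying that $\bZ\times\bR=\inbordered{(Z\times\oR)}$ as bordered spaces and that $g=i_{\inbordered{(Z\times\oR)}}$, so that Lemma~\ref{lem:reimtensanddual} applies verbatim to $\bM\times\bR$. Everything else is a formal manipulation of the definitions of $\Eopb f$, $\Eeeim f$, $\Eoim f$, $\Eepb f$ and of the bifunctors $\opb\pi(\ast)\tens(\ast)$ and $\rihom(\opb\pi(\ast),\ast)$.
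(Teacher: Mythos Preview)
Your proposal is correct and follows exactly the same approach as the paper's proof. The paper's argument is the one-line observation that $\inbordered{(Z\times\R)}=\bZ\times\bR$ and $i_\bZ\times\id_\bR=i_{\inbordered{(Z\times\R)}}$, whence the statement follows from Lemma~\ref{lem:reimtensanddual}; you have simply written out the bookkeeping (choosing a lift $F$ for $K$, identifying $\opb\pi\field_Z\simeq\field_{Z\times\oR}$, and applying $\quot_\bM$) in full detail.
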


\begin{proof}
Note that $\inbordered{(Z\times\R)}=\bZ\times\bR$ and
$i_\bZ \times \id_\bR = i_{\inbordered{(Z\times\R)}}$.
Hence the statement follows from Lemma~\ref{lem:reimtensanddual}.
\end{proof}

\begin{lemma}\label{lem:EiZZ'}
Let $Z$ be a locally closed subset of $\bM$, and $Z'\subset Z$ a closed subset.
For $K\in\dere(\bM)$, there are distinguished triangles in $\dere(\bZ)$
\begin{align*}
\Eeeim i \Eopb i_{\inbordered{(Z\setminus Z')}} K \to
\Eopb i_\bZ K \to \Eeeim i' \Eopb i_{\inbordered{Z'}} K \tone, \\
\Eoim i' \Eepb i_{\inbordered{Z'}} K \to
\Eepb i_\bZ K \to \Eoim i \Eepb i_{\inbordered{(Z\setminus Z')}} K \tone,
\end{align*}
where $i\colon \inbordered{(Z\setminus Z')} \to \bZ$ and $i'\colon \inbordered{Z'} \to \bZ$ are the natural morphisms.
\end{lemma}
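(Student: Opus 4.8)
The plan is to derive both triangles from the standard open--closed excision triangle for sheaves on $\unb\bZ$, rewriting every functorial term by means of Lemma~\ref{lem:Eeimtensanddual}.

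First I would reduce to the bordered space $\bZ$ itself. The subsets $Z\setminus Z'$ and $Z'$ are locally closed in $\bZ$, and (see the lemma and the remark following Notation~\ref{not:bZ}) the morphisms $i,i'$ of the statement fit into factorizations $i_{\inbordered{(Z\setminus Z')}}=i_\bZ\circ i$ and $i_{\inbordered{Z'}}=i_\bZ\circ i'$. Setting $F\defeq\Eopb{i_\bZ}K$ and $G\defeq\Eepb{i_\bZ}K$ in $\dere(\bZ)$ and using the pseudo-functoriality of the inverse and extraordinary inverse image functors — so that $\Eopb{i_{\inbordered{(Z\setminus Z')}}}K\simeq\Eopb i F$, $\Eepb{i_{\inbordered{Z'}}}K\simeq\Eepb{i'}G$, and similarly for the remaining terms — the two triangles to be constructed become
\[
\Eeeim i\,\Eopb i F\to F\to\Eeeim{i'}\,\Eopb{i'}F\tone
\qtq
\Eoim{i'}\,\Eepb{i'}G\to G\to\Eoim i\,\Eepb i G\tone
\]
in $\dere(\bZ)$. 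Since these involve only $F$, resp.\ $G$, it suffices to produce such triangles for an arbitrary object of $\dere(\bZ)$; in particular we may assume that $U\defeq Z\setminus Z'$ is open in $\unb\bZ=Z$ with closed complement $Z'$, and that $i,i'$ are the associated inclusion morphisms into $\bZ$.

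Next I would invoke the excision distinguished triangle $\field_U\to\field_Z\to\field_{Z'}\tone$ in $\BDC(\field_Z)$, arising from the short exact sequence $0\to\field_U\to\field_Z\to\field_{Z'}\to0$ of sheaves on $Z$ (see \cite{KS90}), where $\field_Z$ is the constant sheaf, and regard it in $\derd(\bZ)$ through $\iota_\bZ$. Applying the triangulated functor $\opb\pi(\ast)\tens F\colon\derd(\bZ)\to\dere(\bZ)$ and using that $\field_Z$ is the unit of $\derd(\bZ)$ and $\opb\pi$ is monoidal, so that $\opb\pi\field_Z\tens F\simeq F$, I get a distinguished triangle
\[
\opb\pi\field_U\tens F\to F\to\opb\pi\field_{Z'}\tens F\tone
\]
in $\dere(\bZ)$; applying instead the contravariant triangulated functor $\rihom(\opb\pi(\ast),G)\colon\derd(\bZ)^\op\to\dere(\bZ)$ and using $\rihom(\opb\pi\field_Z,G)\simeq G$, I get
\[
\rihom(\opb\pi\field_{Z'},G)\to G\to\rihom(\opb\pi\field_U,G)\tone.
\]
Finally, Lemma~\ref{lem:Eeimtensanddual}, applied to the locally closed subsets $U$ and $Z'$ of $\bZ$ with inclusion morphisms $i$ and $i'$, identifies $\opb\pi\field_U\tens F\simeq\Eeeim i\,\Eopb i F$, $\opb\pi\field_{Z'}\tens F\simeq\Eeeim{i'}\,\Eopb{i'}F$, $\rihom(\opb\pi\field_{Z'},G)\simeq\Eoim{i'}\,\Eepb{i'}G$ and $\rihom(\opb\pi\field_U,G)\simeq\Eoim i\,\Eepb i G$; substituting, the two triangles above become the two triangles from the reduction step, which is the assertion.

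The single point I expect to require care is to check that, under the isomorphisms supplied by Lemma~\ref{lem:Eeimtensanddual}, the connecting morphisms of the triangles just built coincide with the canonical adjunction morphisms $\Eeeim i\,\Eopb i F\to F$ and $F\to\Eeeim{i'}\,\Eopb{i'}F$ (and their counterparts for $\Eoim i$ and $\Eepb i$). This is a bookkeeping matter: unwinding the proof of Lemma~\ref{lem:Eeimtensanddual}, which via \eqref{eq:iotaeim} rests on the sheaf identity of Lemma~\ref{lem:reimtensanddual}, one sees that the sheaf morphisms $\field_U\to\field_Z$ and $\field_Z\to\field_{Z'}$ are carried precisely to the relevant units and counits. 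Everything else being formal, I would regard this diagram chase as the only real obstacle.
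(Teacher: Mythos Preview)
Your proof is correct and follows essentially the same approach as the paper: both rest on applying $\opb\pi(\ast)\tens(\ast)$ (resp.\ $\rihom(\opb\pi(\ast),\ast)$) to the excision triangle $\field_{Z\setminus Z'}\to\field_Z\to\field_{Z'}\tone$ and identifying the resulting terms via Lemma~\ref{lem:Eeimtensanddual}. The only cosmetic difference is the order of operations: the paper carries out the construction on $\bM$ (tensoring with $K$ there) and then applies $\Eopb{i_\bZ}$ at the end, whereas you first pull back to $\bZ$ and work with $F=\Eopb{i_\bZ}K$, $G=\Eepb{i_\bZ}K$ throughout; the paper also does not spell out the check that the resulting maps are the adjunction morphisms, simply taking the triangle as produced.
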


\begin{proof}
Since the proofs are similar, we shall only construct the first distinguished triangle.
By Lemma~\ref{lem:Eeimtensanddual}, applying the functor $\opb\pi(\ast)\tens K$ to the distinguished triangle
\[
\field_{Z\setminus Z'} \to \field_Z \to \field_{Z'} \tone,
\]
one gets the distinguished triangle
\[
\Eeeim{i_{\inbordered{(Z\setminus Z')}}} \Eopb i_{\inbordered{(Z\setminus Z')}} K \to
\Eeeim{i_\bZ} \Eopb i_\bZ K \to \Eeeim{i_{\inbordered{Z'}}}\Eopb i_{\inbordered{Z'}} K \tone.
\]
Since $i_{\inbordered{Z'}} = i_\bZ \circ i'$ and $i_{\inbordered{(Z\setminus Z')}} = i_\bZ \circ i$, the distinguished triangle in the statement is obtained by applying the functor $\Eopb i_\bZ$ to the above distinguished triangle.
\end{proof}

\begin{lemma}\label{lem:ibZfieldoZ}
Let $c\in\R$ and $Z$ a locally closed subset of $\bM$.
\bnum
\item
The following conditions are equivalent:
\bna
\item
$\Eopb i_\bZ K \in \dere^{\leq c}(\bZ)$,
\item
$\opb\pi\field_{\oZ} \tens K \in \dere^{\leq c}(\bM)$.
\ee
\item
The following conditions are equivalent:
\bna
\item
$\Eepb i_\bZ K \in \dere^{\geq c}(\bZ)$,
\item
$\rihom( \opb\pi\field_\oZ, K) \in \dere^{\geq c}(\bM)$.
\ee
\ee
\end{lemma}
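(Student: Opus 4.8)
The plan is to reduce the two equivalences to the identifications furnished by Lemma~\ref{lem:Eeimtensanddual},
\[
\opb\pi\field_\oZ\tens K \simeq \Eeeim{i_\bZ}\Eopb i_\bZ K,
\qquad
\rihom(\opb\pi\field_\oZ, K)\simeq\Eoim{i_\bZ}\Eepb i_\bZ K,
\]
and then to invoke the exactness statements of Proposition~\ref{pro:t-Eop}. Since $\unb i_\bZ\colon Z\to\oM$ is a locally closed embedding, its fibers are points or empty, hence of soft-dimension $\leq 0$; so Proposition~\ref{pro:t-Eop}~(iii) and~(v) (the latter with $d=0$) give that $\Eeeim{i_\bZ}$ is \emph{exact}, Proposition~\ref{pro:t-Eop}~(iii) gives that $\Eoim{i_\bZ}$ is left exact, and Proposition~\ref{pro:t-Eop}~(iv) gives that $\Eopb i_\bZ$ is exact. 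As $\dere^{\leq c}=\dere^{\leq\lfloor c\rfloor}$ and $\dere^{\geq c}=\dere^{\geq\lceil c\rceil}$, one may also assume $c\in\Z$ throughout.

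With this in hand, the implication \emph{(a)$\Rightarrow$(b)} in both parts is immediate: if $\Eopb i_\bZ K\in\dere^{\leq c}(\bZ)$, applying the exact functor $\Eeeim{i_\bZ}$ and the first isomorphism above gives $\opb\pi\field_\oZ\tens K\in\dere^{\leq c}(\bM)$; if $\Eepb i_\bZ K\in\dere^{\geq c}(\bZ)$, applying the left-exact functor $\Eoim{i_\bZ}$ and the second isomorphism gives $\rihom(\opb\pi\field_\oZ, K)\in\dere^{\geq c}(\bM)$.

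For the converse \emph{(b)$\Rightarrow$(a)}, I would apply the exact functor $\Eopb i_\bZ$ to the two isomorphisms above and use that, $i_\bZ$ being a locally closed embedding, $\Eopb i_\bZ\Eeeim{i_\bZ}\simeq\id$ and $\Eopb i_\bZ\Eoim{i_\bZ}\simeq\id$ (these descend from the analogous facts for ind-sheaves, which hold because the underlying map is a monomorphism). This yields $\Eopb i_\bZ(\opb\pi\field_\oZ\tens K)\simeq\Eopb i_\bZ K$ and $\Eopb i_\bZ\rihom(\opb\pi\field_\oZ, K)\simeq\Eepb i_\bZ K$, and the desired conclusions follow since $\Eopb i_\bZ$ is exact. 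For part~(i) one can alternatively bypass the identity $\Eopb i_\bZ\Eeeim{i_\bZ}\simeq\id$ by noting that $\Eopb i_\bZ$ commutes with the bifunctor $\opb\pi(\ast)\tens(\ast)$ (because $\pi_\bM\circ(i_\bZ\times\id_\bR)=i_\bZ\circ\pi_\bZ$) and that $\opb\pi\field_{\unb\bZ}\tens(\ast)$ is the identity on $\dere(\bZ)$ (Lemma~\ref{lem:Eeimtensanddual} with $Z=\oM$), whence $\Eopb i_\bZ(\opb\pi\field_\oZ\tens K)\simeq\opb\pi\field_{\unb\bZ}\tens\Eopb i_\bZ K\simeq\Eopb i_\bZ K$.

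I do not expect a genuine obstacle here; the argument is short. The two points I would make sure are properly backed by \cite{DK13} are the right-exactness of $\Eeeim{i_\bZ}$ — i.e.\ that the hypothesis of Proposition~\ref{pro:t-Eop}~(v) is met with $d=0$ for a locally closed embedding — and the base-change isomorphisms $\Eopb i_\bZ\Eeeim{i_\bZ}\simeq\id\simeq\Eopb i_\bZ\Eoim{i_\bZ}$ in the setting of enhanced ind-sheaves on bordered spaces.
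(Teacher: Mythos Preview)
Your proposal is correct and follows essentially the same approach as the paper: both use the identifications of Lemma~\ref{lem:Eeimtensanddual} together with the exactness properties of Proposition~\ref{pro:t-Eop} for the embedding $i_\bZ$ (with $d=0$). The paper records the second isomorphism in part~(i) directly as $\Eopb i_\bZ K \simeq \Eopb i_\bZ(\opb\pi\field_\oZ\tens K)$, and for part~(ii) the ``similarly'' uses $\Eepb i_\bZ$ (left exact with $d=0$) together with $\Eepb i_\bZ\Eoim{i_\bZ}\simeq\id$ rather than your $\Eopb i_\bZ\Eoim{i_\bZ}\simeq\id$, but this is an inessential variation.
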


\begin{proof}
(i) By Lemma~\ref{lem:Eeimtensanddual}, one has
\begin{align*}
\opb\pi\field_{\oZ} \tens K &\simeq \Eeeim{i_\bZ}\Eopb i_\bZ K, \\
\Eopb i_\bZ K &\simeq \Eopb i_\bZ(\opb\pi\field_{\oZ} \tens K).
\end{align*}
The statement follows, since the functors $\Eeeim{i_\bZ}$ and $\Eopb i_\bZ$ are exact
by Proposition~\ref{pro:t-Eop}.
(It follows that (a) and (b) remain equivalent when replacing $\leq c$ by $\geq c$.)

\smallskip\noi
(ii) is proved similarly.
\end{proof}

\subsection{Stable objects}\label{sse:stable}

Setting
\begin{align*}
\field_{\{t\gg0\}} &\defeq \indlim[a\rightarrow+\infty] \field_{\{t\geq a\}}, &
\field_{\{t<\ast\}} &\defeq \indlim[a\rightarrow+\infty] \field_{\{t< a\}}, \\
\field_{\{0\leq t<\ast\}} &\defeq \indlim[a\rightarrow+\infty] \field_{\{0\leq t< a\}},
\end{align*}
there are distinguished triangles in $\derd(\bM\times\R_\infty)$
\begin{align*}
&\field_{\{t\gg0\}} \to \field_{\{t<\ast\}}[1] \to \field_{M\times\R}[1] \tone, \\
&\field_{\{0\leq t<\ast\}} \to \field_{\{t\geq 0\}} \to \field_{\{t\gg0\}} \tone.
\end{align*}
The objects of $\dere(\bM)$
\begin{align*}
\Efield_\bM&\defeq \quot_\bM(\field_{\{t\gg0\}}) \simeq \quot_\bM(\field_{\{t<\ast\}}[1])\qtq \\
\field_\bM^\tor &\defeq \quot_\bM(\field_{\{0\leq t<\ast\}})
\end{align*}
enter the distinguished triangle
\begin{equation}
\label{eq:tor}
\field_\bM^\tor \to \field_{\{t\geq 0\}} \to \field_\bM^\enh \tone.
\end{equation}
Note that we have
$$\field_\bM^\tor \ctens \field_\bM^\tor \simeq \field_\bM^\tor ,\quad
\field_\bM^\enh\ctens\field_\bM^\enh\simeq\field_\bM^\enh
\qtq\field_\bM^\tor \ctens \field_\bM^\enh\simeq0.$$

\begin{definition}
The category $\dere_\st(\bM)$ of stable enhanced ind-sheaves is the full subcategory of $\dere_+(\bM)$ given by
\begin{align*}
\dere_\st(\bM)
&\defeq \{K\in\dere_+(\bM) \semicolon \field_\bM^\tor\ctens K \simeq 0 \} \\
&= \{K\in\dere_+(\bM) \semicolon K\isoto \field_\bM^\enh\ctens K \} \\
&= \{K\in\dere_+(\bM) \semicolon
\text{$K\simeq \field_\bM^\enh\ctens L$ for some $L\in \BECp M$} \} \\
&= \{K\in\dere_+(\bM) \semicolon
\text{$K\isoto \Qfield_{\{t\geq a\}}\ctens K$ for any $a\geq 0 $}\},
\end{align*}
where the equivalences follow from \eqref{eq:tor} and \cite[Proposition 4.7.5]{DK13}. Similar equivalences hold by replacing $\ctens$ with $\cihom$.
\end{definition}

The embedding $\dere_\st(\bM) \to \dere(\bM)$ has a left adjoint $\field_\bM^\enh\ctens\ast$, and a right adjoint $\cihom(\field_\bM^\enh,\ast)$.
There is an embedding
\begin{equation}
\label{eq:e}
e\colon \derd(\bM) \hookrightarrow \dere_\st(\bM), \quad F\mapsto \Efield_\bM \tens \opb\pi F.
\end{equation}
Note that $e(F) \simeq \Efield_\bM \ctens \epsilon(F)$.

For a locally closed subset $Z\subset \unbM\times\oR$, we set
\begin{equation}
\label{eq:Efield}
\Efield_Z \defeq \Efield_\bM \ctens \Qfield_Z \in \dere_\st(\bM).
\end{equation}

\begin{lemma}
\label{lem:t-enh}\hfill
\begin{itemize}
\item[(i)]
The embedding $e$ from \eqref{eq:e} is fully faithful and exact.
\item[(ii)]
The functor $\Efield_\bM \ctens (\ast)$ is exact.
\end{itemize}
\end{lemma}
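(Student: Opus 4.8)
The plan is to prove (ii) first and then deduce (i): since $e(F)\simeq\Efield_\bM\ctens\epsilon(F)$, once (ii) is known $e$ is the composition of the exact functor $\epsilon$ (Lemma~\ref{lem:Etpitens}\,(i)) with the exact functor $\Efield_\bM\ctens(\ast)$, hence exact; the full faithfulness of $e$ is that of \cite{DK13}.

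So I concentrate on (ii), reducing it to an estimate in $\derdR{\bM}$. Since $\LE$ commutes with $\ctens$ (as in the proof of Proposition~\ref{pro:t-Eop}\,(i)) and $\LE\Efield_\bM\simeq\field_{\{t\gg0\}}$ --- this follows from Example~\ref{ex:LERE} by passing to the colimit over $a$ --- one has $\LE(\Efield_\bM\ctens K)\simeq\field_{\{t\gg0\}}\ctens\LE K$ for every $K\in\dere(\bM)$. Left exactness is then immediate: $\field_{\{t\gg0\}}$ is an ind-sheaf, so $\field_{\{t\gg0\}}\in\derdR[\geq0]{\bM}$, and $\field_{\{t\gg0\}}\ctens\derdR[\geq n]{\bM}\subset\derdR[\geq n]{\bM}$ by Lemma~\ref{lem:tC-ctenscihom}\,(i).

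Right exactness is the delicate point, because the estimate $\derdR[\leq0]{\bM}\ctens\derdR[\leq0]{\bM}\subset\derdR[\leq1]{\bM}$ of Lemma~\ref{lem:tC-ctenscihom}\,(i) loses one cohomological degree --- and, by Remark~\ref{rem:cihomderdR}, this loss cannot be avoided for a general object of $\derdR[\leq0]{\bM}$. Here one exploits that $F\defeq\LE K$, lying in the image of $\LE$, belongs to ${}^\bot\catn$, so that $(\field_{\{t\geq0\}}\dsum\field_{\{t\leq0\}})\ctens F\isoto F$; in particular $\field_{\{t\geq0\}}\ctens F$ is a direct summand of $F$ and therefore lies in $\derdR[\leq n]{\bM}$ when $K\in\dere^{\leq n}(\bM)$. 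On the other hand, $\field_{\{t\gg0\}}=\indlim[a\rightarrow+\infty]\field_{\{t\geq a\}}$ and $\ctens$ commutes with filtered ind-colimits, so
\[
\field_{\{t\gg0\}}\ctens F\simeq\indlim[a\rightarrow+\infty]\bl\field_{\{t\geq a\}}\ctens F\br\simeq\indlim[a\rightarrow+\infty]T_{a*}\bl\field_{\{t\geq0\}}\ctens F\br,
\]
where $T_a\colon\bM\times\bR\to\bM\times\bR$, $(x,t)\mapsto(x,t+a)$, is an isomorphism of bordered spaces (because $t\mapsto t+a$ extends to a homeomorphism of $\cR$), and the second isomorphism is the relation $\field_{\{t\geq a\}}\ctens G\simeq T_{a*}(\field_{\{t\geq0\}}\ctens G)$, obtained from the definition of $\ctens$ by the change of variable $t_1\mapsto t_1-a$. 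Since $T_{a*}$ is exact, each term $T_{a*}(\field_{\{t\geq0\}}\ctens F)$ lies in $\derdR[\leq n]{\bM}$; as $\derdR[\leq n]{\bM}$ is stable under filtered ind-colimits, so is $\field_{\{t\gg0\}}\ctens F$, i.e.\ $\Efield_\bM\ctens K\in\dere^{\leq n}(\bM)$.

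Thus the main obstacle is exactly this right exactness: it does not follow from the formal estimates on $\ctens$, and the argument has to combine the presentation of $\field_{\{t\gg0\}}$ as a filtered colimit of the translates $\field_{\{t\geq a\}}$ with the ``stability'' identity $(\field_{\{t\geq0\}}\dsum\field_{\{t\leq0\}})\ctens\LE K\isoto\LE K$ characterizing the objects in the image of $\LE$. The auxiliary facts --- the behaviour of $\ctens$ under the bordered automorphisms $T_a$, and the stability of $\derdR[\leq n]{\bM}$ under filtered ind-colimits --- are routine, and I would dispatch them briefly.
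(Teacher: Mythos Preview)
Your argument is correct. The paper itself does not give a proof here: it simply invokes \cite[Proposition~4.7.15]{DK13} and Lemma~\ref{lem:Etpitens} for (i), and \cite[Lemma~4.7.4]{DK13} for (ii). What you have written is essentially a self-contained substitute for those citations.

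A few remarks on the comparison. For (i), the paper's route is slightly more direct than yours: writing $e(F)=\opb\pi F\tens\Efield_\bM$, exactness follows immediately from Lemma~\ref{lem:Etpitens}\,(i) once one knows $\Efield_\bM\in\dere^0(\bM)$, i.e.\ $\LE\Efield_\bM\simeq\field_{\{t\gg0\}}$ sits in degree~$0$; there is no need to pass through (ii). Your factorisation $e\simeq(\Efield_\bM\ctens\,\cdot\,)\circ\epsilon$ is of course equally valid. For (ii), your argument via the presentation $\field_{\{t\gg0\}}=\indlim_a\field_{\{t\geq a\}}$, the translation identity $\field_{\{t\geq a\}}\ctens F\simeq T_{a*}(\field_{\{t\geq0\}}\ctens F)$, and the observation that $\field_{\{t\geq0\}}\ctens\LE K$ is a direct summand of $\LE K$, is exactly the kind of reasoning behind \cite[Lemma~4.7.4]{DK13}. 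The only point where you are a bit casual is the phrase ``$\derdR[\leq n]{\bM}$ is stable under filtered ind-colimits'': what you actually need (and what holds) is that $H^m(\field_{\{t\gg0\}}\ctens F)\simeq\indlim_a H^m(\field_{\{t\geq a\}}\ctens F)$ in $\ind(\field_{\bM\times\bR})$, which follows from the compatibility of $\opb{q_i}$, $\tens$ and $\reeim\mu$ with filtrant ind-limits (cf.\ the proof of Proposition~\ref{pro:t-board}\,(v)). Similarly, your justification of $\LE\Efield_\bM\simeq\field_{\{t\gg0\}}$ is better phrased as the direct computation $(\field_{\{t\geq0\}}\dsum\field_{\{t\leq0\}})\ctens\field_{\{t\gg0\}}\simeq\field_{\{t\gg0\}}$ rather than as ``passing to the colimit in Example~\ref{ex:LERE}''. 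These are cosmetic issues; the substance of your proof is sound.
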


\begin{proof}
(i) follows from \cite[Proposition 4.7.15]{DK13}
and Lemma~\ref{lem:Etpitens}, and (ii) from \cite[Lemma 4.7.4]{DK13}.
\end{proof}

The duality functor for stable enhanced ind-sheaves is defined by
\[
\Edual_\bM\colon\dere(\bM)\to \dere_\st(\bM)^\op, \qquad K \mapsto \cihom(K,\omega^\enh_\bM),
\]
where we set $\omega^\enh_\bM \defeq e(\omega_\bM)$.

\begin{lemma}[{\cite[Proposition~4.8.3]{DK13}}]
\label{lem:Edual}
Let $\bM = (\oM,\cM)$.
For $F\in\BDC(\field_{\oM\times\oR})$, one has
\[
\Edual_\bM(\Efield_\bM \ctens \quot_\bM F) \simeq
\Efield_\bM \ctens (\Qdual_\bM\quot_\bM F) \simeq
\Efield_\bM \ctens \quot_\bM(\opb a \dual_{\oM\times\oR} F),
\]
where $a$ is the involution of $M\times\R$ defined by $a(x,t) = (x,-t)$.
\end{lemma}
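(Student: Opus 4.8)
The second isomorphism is immediate: it is obtained by applying the functor $\Efield_\bM\ctens(\ast)$ to the isomorphism of Lemma~\ref{lem:Qdual}. So the content lies in the first isomorphism, and the plan is to reduce it to Lemma~\ref{lem:Qdual} together with a projection formula relating $\Efield_\bM\ctens(\ast)$ and $\cihom$.

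First I would recall that $\omega^\enh_\bM=e(\omega_\bM)\simeq\Efield_\bM\ctens\epsilon(\omega_\bM)=\Efield_\bM\ctens\omega^\quot_\bM$, and that $\omega^\enh_\bM$, lying in the essential image of $e$, is a stable object. Since the embedding $\dere_\st(\bM)\hookrightarrow\dere(\bM)$ is fully faithful with right adjoint $\cihom(\Efield_\bM,\ast)$, this yields $\cihom(\Efield_\bM,\omega^\enh_\bM)\simeq\omega^\enh_\bM$. Using the tensor--hom adjunction for $\ctens$ and the commutativity of $\ctens$, one then computes
\[
\Edual_\bM(\Efield_\bM\ctens\quot_\bM F)
=\cihom(\Efield_\bM\ctens\quot_\bM F,\;\omega^\enh_\bM)
\simeq\cihom\bl\quot_\bM F,\;\cihom(\Efield_\bM,\omega^\enh_\bM)\br
\simeq\cihom(\quot_\bM F,\;\Efield_\bM\ctens\omega^\quot_\bM).
\]
Since $\Efield_\bM\ctens\Qdual_\bM(\quot_\bM F)=\Efield_\bM\ctens\cihom(\quot_\bM F,\omega^\quot_\bM)$, the first isomorphism thus reduces to the projection formula
\begin{equation}
\label{eq:plan-pf}
\cihom(\quot_\bM F,\;\Efield_\bM\ctens\omega^\quot_\bM)\;\simeq\;\Efield_\bM\ctens\cihom(\quot_\bM F,\;\omega^\quot_\bM).
\end{equation}

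To prove \eqref{eq:plan-pf} I would pass to $\derdR{\bM}$. Using $\cihom(\quot_\bM A,\quot_\bM B)\simeq\quot_\bM\cihom(A,B)$, the identity $\Efield_\bM\ctens\quot_\bM(\ast)\simeq\quot_\bM(\field_{\{t\gg0\}}\ctens(\ast))$ with $\field_{\{t\gg0\}}=\indlim[a\rightarrow+\infty]\field_{\{t\geq a\}}$, and the presentations $\omega^\quot_\bM\simeq\quot_\bM(\opb\pi\omega_\bM\tens\field_{\{t=0\}})$ and $\omega^\enh_\bM\simeq\quot_\bM(\opb\pi\omega_\bM\tens\field_{\{t\gg0\}})$, one reduces \eqref{eq:plan-pf} to an isomorphism in $\derdR{\bM}$, modulo $\catn$, between $\cihom(F,\;\opb\pi\omega_\bM\tens\field_{\{t\gg0\}})$ and $\field_{\{t\gg0\}}\ctens\cihom(F,\;\opb\pi\omega_\bM\tens\field_{\{t=0\}})$. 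Here one uses that $F$ is a genuine complex of sheaves, so that $\cihom(F,\ast)$ commutes with the filtrant colimit defining $\field_{\{t\gg0\}}$, together with the fact that convolution with a skyscraper $\field_{\{t=a\}}$ is an invertible shift compatible with $\cihom(F,\ast)$; this brings \eqref{eq:plan-pf} down to a bookkeeping of these shifts, in the spirit of \cite[\S4.7--4.8]{DK13}. Alternatively, \eqref{eq:plan-pf} --- hence the whole statement --- may simply be quoted from \cite[Proposition~4.8.3]{DK13}.

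The step I expect to be the main obstacle is precisely \eqref{eq:plan-pf}. The structural ingredients --- the adjunctions, the stability of $\omega^\enh_\bM$, and the elementary properties of convolution --- are formal; what requires care is commuting $\cihom(F,\ast)$ past the filtrant colimit and keeping track of the convolution shifts. In particular it is essential here that $F$ be a genuine complex of sheaves, ensuring that $\cihom(F,\ast)$ commutes with the relevant filtrant colimit, whereas the analogous assertion fails for an arbitrary ind-sheaf argument.
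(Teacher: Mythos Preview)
The paper does not prove this lemma at all: it is stated with a bare citation to \cite[Proposition~4.8.3]{DK13}. Your closing alternative of simply quoting that reference is therefore exactly what the paper does.

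Your sketch goes further, and the structural reduction is correct: the second isomorphism is immediate from Lemma~\ref{lem:Qdual}, and your use of the stability of $\omega^\enh_\bM$ together with the $\ctens$/$\cihom$ adjunction cleanly reduces the first isomorphism to the projection formula \eqref{eq:plan-pf}. The one point to be careful about is your justification of \eqref{eq:plan-pf} via ``$\cihom(F,\ast)$ commutes with the filtrant colimit defining $\field_{\{t\gg0\}}$ because $F$ is a genuine complex of sheaves''. Recall that $\cihom(F,\ast)=\roimv{q_{1\sep*}}\rihom(\opb{q_2}F,\epb\mu(\ast))$; while $\epb\mu\simeq\opb\mu[1]$ here (since $\mu$ is a submersion) does commute with $\indlim$, neither $\roimv{q_{1\sep*}}$ nor $\rihom(\opb{q_2}F,\ast)$ commutes with filtrant ind-limits for an arbitrary sheaf $F$ without a compactness hypothesis. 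So ``$F$ is a sheaf'' alone is not the right hypothesis; one needs either a compactness/constructibility reduction on $F$, or a different argument for \eqref{eq:plan-pf} (as in \cite{DK13}). Your instinct that this is precisely the non-formal step is correct.
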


\section{Enhanced perverse ind-sheaves}\label{se:enhp}

As we recalled in Section~\ref{sec:perv},
a perversity endows
the triangulated category of $\R$-constructible sheaves
on a subanalytic space with a t-structure.
Here, we extend this result to the triangulated category of $\R$-constructible enhanced ind-sheaves. We allow the subanalytic space to be bordered,
and we also discuss exactness of the six Grothendieck operations.

\subsection{Subanalytic bordered spaces}
Recall Notation~\ref{not:Gammaf}.

\begin{definition}\label{def:bMsubsets}
\bnum
\item
A \emph{subanalytic bordered space} $\bM = (\oM,\cM)$ is a bordered space such that $\cM$ is a subanalytic space and $\oM$ is an open subanalytic subset of $\cM$.
\item
A morphism $f\colon \bM \to \bN = (\oN,\cN)$ of subanalytic bordered spaces is a morphism $f\colon\oM\to\oN$ of subanalytic spaces such that its graph $\Gamma_f$ is a subanalytic subset of $\cM\times\cN$, and $q_1|_{\olG}$ is proper.
In particular, $f\colon \bM \to \bN$ is a morphism of bordered spaces.
\item
$\bM$ is \emph{smooth} of dimension $d$ if $M$ is locally
 isomorphic to $\R^d$ as a subanalytic space.
\item
A subset $S$ of $\bM$ (see Definition~\ref{def:subset})
is called subanalytic if it is subanalytic in
$\cM$.
\item
A morphism $f\cl \bM\to\bN$ of subanalytic bordered spaces is
{\em submersive} if the continuous map
$\unb{f}\cl\unbM\to\unbN$ is locally (in $\unbM$)
isomorphic to the projection
$\unbN\times \R^d\to \unbN$ for some $d$.
\ee
\end{definition}

Let $\bM = (\oM,\cM)$ be a subanalytic bordered space, and consider the
embedding $j_\oM\colon \oM \to \cM$.

\begin{definition}
$\BDC_\Rc(\field_\bM)$ is the full subcategory of $\BDC(\field_\oM)$ whose objects $F$ are such that $\reimv{j_{\oM\sep!}}F$ is an $\R$-constructible object of $\BDC(\field_\cM)$.
We regard $\BDC_\Rc(\field_\bM)$ as a full subcategory of $\derd(\bM)$.
\end{definition}

\begin{proposition}
\label{pro:Rcfunctorial}
Let $f\colon \bM\to \bN$ be a morphism of subanalytic bordered spaces.
\bnum
\item
The functors $\opb f$ and $\epb f$ send $\BDC_\Rc(\field_\bN)$ to $\BDC_\Rc(\field_\bM)$.
\item
If $f$ is semi-proper, then the functors $\reeim f$ and $\roim f$ send $\BDC_\Rc(\field_\bM)$ to $\BDC_\Rc(\field_\bN)$.
\ee
\end{proposition}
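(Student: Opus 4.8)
The plan is to reduce all four assertions to the classical stability facts for subanalytic spaces: for a morphism $g$ of subanalytic spaces, $\opb g$ and $\epb g$ preserve bounded $\R$-constructibility, $\reim g\simeq\roim g$ does so when $g$ is proper, and $\tens$, $\rhom$ and $\dual$ preserve it as well (see \cite[Ch.~8]{KS90}). The bridge is the graph description of the six operations for ind-sheaves on bordered spaces (Notation~\ref{not:Gammaf} and the formulas following it). First I would fix notation: let $\Gamma_f\subset\cM\times\cN$ be the graph of the underlying map $\oM\to\oN$, $\olG$ its closure, and $q_1\colon\cM\times\cN\to\cM$, $q_2\colon\cM\times\cN\to\cN$ the projections. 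By definition of a morphism of subanalytic bordered spaces, $\Gamma_f$ is subanalytic in $\cM\times\cN$ — so $\field_{\Gamma_f}$ is $\R$-constructible there, with support the closed set $\olG$ — and $q_1|_{\olG}$ is proper; moreover $q_2|_{\olG}$ is proper precisely when $f$ is semi-proper. Recall also that for $G_0\in\BDC_\Rc(\field_\bN)$ one has $G_0\simeq\derived q_\bN\bl\reimv{j_{\oN!}}G_0\br$ in $\derd(\bN)$, with $\reimv{j_{\oN!}}G_0$ $\R$-constructible on $\cN$, and likewise for $\bM$.

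For part (i), take $G_0\in\BDC_\Rc(\field_\bN)$. The graph formula gives $\opb f G_0\simeq\derived q_\bM\,\roimv{q_{1!!}}\bl\field_{\Gamma_f}\tens\opb{q_2}\reimv{j_{\oN!}}G_0\br$. The object $\field_{\Gamma_f}\tens\opb{q_2}\reimv{j_{\oN!}}G_0$ is $\R$-constructible on $\cM\times\cN$ and supported on $\olG$; since $\olG$ is closed and $q_1|_{\olG}$ is proper and subanalytic, $\roimv{q_{1!!}}$ applied to it is $\iota_\cM$ of the ordinary proper direct image $\roim{(q_1|_{\olG})}$ of its restriction to $\olG$, hence an $\R$-constructible sheaf $H$ on $\cM$. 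Then $\opb f G_0$ is represented by $\opb{j_\oM}H\in\BDC(\field_\oM)$, and $\reimv{j_{\oM!}}\opb{j_\oM}H\simeq\field_\oM\tens H$ is $\R$-constructible on $\cM$ (here $\field_\oM$ is $\R$-constructible since $\oM$ is open subanalytic in $\cM$); thus $\opb f G_0\in\BDC_\Rc(\field_\bM)$. The case of $\epb f$ is identical, using $\epb f G_0\simeq\derived q_\bM\,\roimv{q_{1*}}\rihom\bl\field_{\Gamma_f},\epb{q_2}\reimv{j_{\oN!}}G_0\br$, the facts that $\epb{q_2}$ and $\rhom(\field_{\Gamma_f},\ast)$ preserve $\R$-constructibility and that $\rhom(\field_{\Gamma_f},\ast)$ is supported on $\olG$, and that $\roimv{q_{1*}}$ of such a sheaf is $\iota_\cM\roim{(q_1|_{\olG})}$ of its restriction. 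Note that (i) needs no hypothesis on $f$, since $q_1|_{\olG}$ is proper by the very definition of a morphism.

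Part (ii) is proved identically, now with the final pushforward along $q_2$ rather than $q_1$: since $f$ is semi-proper, $q_2|_{\olG}$ is proper, so in the graph formulas $\reeim f F_0\simeq\derived q_\bN\,\roimv{q_{2!!}}\bl\field_{\Gamma_f}\tens\opb{q_1}\reimv{j_{\oM!}}F_0\br$ and $\roim f F_0\simeq\derived q_\bN\,\roimv{q_{2*}}\rihom\bl\field_{\Gamma_f},\epb{q_1}\reimv{j_{\oM!}}F_0\br$ (for $F_0\in\BDC_\Rc(\field_\bM)$) the object inside is $\R$-constructible on $\cM\times\cN$ and supported on $\olG$, which is proper over $\cN$; its $q_2$-pushforward is therefore $\iota_\cN$ of an $\R$-constructible sheaf on $\cN$, and applying $\derived q_\bN$, then $\reimv{j_{\oN!}}$, then tensoring with $\field_\oN$ places $\reeim f F_0$ and $\roim f F_0$ in $\BDC_\Rc(\field_\bN)$.

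I do not expect a genuinely hard step. The one point that truly matters — and the reason semi-properness is needed in (ii) — is the identification repeatedly used above: $\roimv{q_{i!!}}$ or $\roimv{q_{i*}}$ applied to a sheaf supported on $\olG$ is again an honest, $\R$-constructible sheaf, namely an ordinary proper direct image along $q_i|_{\olG}$. This holds precisely because $\olG$ is proper over the relevant factor — automatically over $\cM$ via $q_1$, but over $\cN$ via $q_2$ only under the semi-properness assumption. Everything else is routine bookkeeping between ind-sheaves on the bordered spaces and ordinary sheaves on $\cM\times\cN$ (via $\derived q_{\bullet}$, $\iota$, and the elementary identity $\reimv{j_{U!}}\opb{j_U}(\ast)\simeq\field_U\tens(\ast)$ for an open subanalytic embedding $j_U$), together with the classical $\R$-constructibility stability of the honest-space operations \cite[Ch.~8]{KS90}.
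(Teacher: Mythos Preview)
The paper states Proposition~\ref{pro:Rcfunctorial} without proof. Your argument via the graph formulas is correct and is the natural route: the essential point, as you identify, is that an $\R$-constructible sheaf on $\cM\times\cN$ supported on $\olG$ has its ind-sheaf pushforward along $q_i$ equal to $\iota$ of the ordinary proper direct image along $q_i|_{\olG}$ precisely when $q_i|_{\olG}$ is proper --- which holds for $q_1$ by the definition of a morphism of bordered spaces, and for $q_2$ under the semi-properness hypothesis.
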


In particular, the category $\BDC_\Rc(\field_\bM)$ only depends on $\bM$.

\begin{notation}\label{not:bMLCS}
For $\bM$ a subanalytic bordered space, set
\begin{align*}
\CS_\bM &\defeq \{\text{closed subanalytic subsets of $\bM$}\}, \\
\LCS_\bM &\defeq \{\text{locally closed subanalytic subsets of $\bM$}\}.
\end{align*}
For $Z\in\LCS_\bM$, denote by
\[
i_\bZ\colon \bZ \to \bM
\]
the morphism induced by the embedding $Z\subset\unbM$
(see Notation~\ref{not:bZ}).
For $k\in\Z$, set
\begin{align*}
\CS^{< k}_\bM &\defeq \{Z\in \CS_\bM \semicolon d_Z< k\}, \\
\CS^{\leq k}_\bM &\defeq \{Z\in \CS_\bM \semicolon d_Z\leq k\},
\end{align*}
and similarly for $\LCS^{< k}_\bM$ and $\LCS^{\leq k}_\bM$.
\end{notation}

\begin{definition}
Let $p$ be a perversity, $c\in\R$ and $k\in\Z_{\geq 0}$. Consider the following conditions
for $F\in\derd(\bM)$
\begin{align*}
(\ind p_k^{\leq c}) &\colon
\opb i_{\inbordered{(M\setminus Z)}} F\in \derd^{\leq c+p(k)}(\inbordered{(M\setminus Z)}) \text{ for some } Z\in\CS_\bM^{<k}, \\
(\ind p_k^{\geq c}) &\colon
\epb{i_\bZ} F \in \derd^{\geq c+p(k)}(\bZ) \text{ for any } Z\in\CS_\bM^{\leq k}.
\end{align*}
Consider the following strictly full subcategories of $\derd(\bM)$
\begin{align*}
\Dp{\leq c}(\bM)
&\defeq \{ F\in\derd(\bM) \semicolon (\ind p_k^{\leq c})\text{ holds for any $k\in\Z_{\geq 0}$} \}, \\
\Dp{\geq c}(\bM)
&\defeq \{ F\in\derd(\bM) \semicolon (\ind p_k^{\geq c})\text{ holds for any $k\in\Z_{\geq 0}$} \}.
\end{align*}
Let us also set
\begin{align*}
\Dprc{\leq c}(\field_\bM) &\defeq \Dp{\leq c}(\bM) \cap \BDC_\Rc(\field_\bM), \\
\Dprc{\geq c}(\field_\bM) &\defeq \Dp{\geq c}(\bM) \cap \BDC_\Rc(\field_\bM).
\end{align*}
\end{definition}

It is easy to check that $\bl \Dprc{\leq c}(\field_\bM), \Dprc{\geq c}(\field_\bM) \br_{c\in\R}$ satisfies the analogue of Proposition~\ref{pro:t-pre} (i) and (ii).

Note that $\bl \Dp{\leq c}(\bM), \Dp{\geq c}(\bM) \br_{c\in\R}$
is not a t-structure if $\dim M>0$.

\begin{lemma}\label{lem:alD}
For any $c\in\R$ one has
\[
\alpha_\bM \bl \Dp{\leq c}(\bM) \br \subset \Dp{\leq c}(\field_{\unb \bM}).
\]
\end{lemma}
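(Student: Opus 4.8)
The plan is to deduce this from the fact that $\alpha_\bM$ is exact for the standard classical t-structures and is compatible with inverse images. Recall from Section~\ref{se:enh} that $\alpha_\bM$ is the composition $\derd(\bM)\to\derd(\unbM)\To[\al_{\sunbM}]\BDC(\field_{\unbM})$, where the first functor is $\opb{}$ along the canonical morphism $\unbM\to\bM$ of \eqref{eq:joM}. The first functor is exact by Proposition~\ref{pro:t-board} (iv), and the second is exact since $\alpha\colon\ind(\field_{\unbM})\to\Mod(\field_{\unbM})$ is an exact functor of abelian categories; hence $\alpha_\bM$ is exact. Moreover, for any morphism $g\colon\bM'\to\bM$ of bordered spaces the square formed by $g$, $\unb g$, and the canonical morphisms $\unbM'\to\bM'$ and $\unbM\to\bM$ commutes, so combining functoriality of $\opb{}$ with the analogous commutation of $\alpha$ with inverse images for topological spaces one obtains a natural isomorphism $\alpha_{\bM'}\comp\opb g\simeq\opb{\unb g}\comp\alpha_\bM$.

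Granting this, I would argue as follows. Let $F\in\Dp{\leq c}(\bM)$ and fix $k\in\Z_{\geq 0}$. By the condition defining $\Dp{\leq c}(\bM)$ there is $Z\in\CS_\bM^{<k}$ with $\opb i_{\inbordered{(M\setminus Z)}}F\in\derd^{\leq c+p(k)}(\inbordered{(M\setminus Z)})$. Since $M\setminus Z$ is open in $\oM=\unbM$, the underlying map of $i_{\inbordered{(M\setminus Z)}}$ is the open embedding $i_{M\setminus Z}\colon M\setminus Z\hookrightarrow\unbM$, so applying the exact functor $\alpha_{\inbordered{(M\setminus Z)}}$ together with the commutation above yields
\[
\opb i_{M\setminus Z}(\alpha_\bM F)
\simeq \alpha_{\inbordered{(M\setminus Z)}}\bl\opb i_{\inbordered{(M\setminus Z)}}F\br
\in \derd^{\leq c+p(k)}(\field_{M\setminus Z}).
\]
Finally, $Z$ is closed in $\oM$ and subanalytic in $\cM$, hence a closed subanalytic subset of the subanalytic space $\unbM$ of dimension $<k$, i.e.\ $Z\in\CS_{\unbM}^{<k}$. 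Thus $\alpha_\bM F$ satisfies condition $(p_k^{\leq c})$ of Definition~\ref{def:midd}; since $k$ is arbitrary, $\alpha_\bM F\in\Dp{\leq c}(\field_{\unbM})$.

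I do not expect a real obstacle here: the argument is essentially bookkeeping with the definitions. The two points needing a moment's care are that $\alpha_\bM$ is exact and commutes with $\opb{}$ (routine from its description as $\al_{\sunbM}\comp\opb{(\unbM\to\bM)}$), and the inclusion $\CS_\bM^{<k}\subseteq\CS_{\unbM}^{<k}$, which reflects that being subanalytic in the larger space $\cM$ is the stronger requirement. One should also note that $c+p(k)$ need not be an integer, but exactness of $\alpha_\bM$ for the integer-indexed classical t-structures passes automatically to the real-index truncations, so this causes no trouble.
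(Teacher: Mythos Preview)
Your proof is correct and follows the same approach as the paper, which records only the one-line justification ``this follows from the fact that $\alpha$ commutes with $\opb{i}$.'' You have simply spelled out the bookkeeping (exactness of $\alpha_\bM$, the inclusion $\CS_\bM^{<k}\subseteq\CS_{\unbM}^{<k}$) that the paper leaves implicit.
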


\begin{proof}
This follows from the fact that $\alpha$ commutes with $\opb{i}$.
\end{proof}

\begin{remark}\label{rem:t-betak0}
Since $\alpha$ does not commute with the functors $\epb{i}$, the statement
\[
\alpha_\bM \bl \Dp{\geq c}(\bM) \br \subset \Dp{\geq c}(\field_\unb\bM)
\]
does not hold in general.
For example, as in \cite[Exercise 5.1]{KS01}, let
\[
\bM=\R, \quad S=\{0\}, \quad
F=\indlim[\varepsilon \to 0+]\field_{[-\varepsilon,\varepsilon]}.
\]
Then $\alpha_\bM F \simeq \field_S\in\Dp[1/2]{0}(\field_\oM)$
and $\epb{i_{S}}F\simeq\field_S[-1]$, $\epb{i_{S}}\al_\bM F\simeq\field_S$.
Hence
\[
F\in\Dp[1/2]{\geq 1/2}(\bM)
\qtq[but]
\alpha_\bM F\notin\Dp[1/2]{\geq 1/2}(\field_\oM).
\]
\end{remark}

\subsection{Intermediate enhanced perversities}

Let $\bM = (\oM,\cM)$ be a subanalytic bordered space.

\begin{definition}
\label{def:Etp}
Let $p$ be a perversity, $c\in\R$ and $k\in\Z_{\geq 0}$. Consider the following conditions
for $K\in\dere(\bM)$:
\begin{align*}
(\enh p_k^{\leq c}) &\colon
\Eopb i_{\inbordered{(M\setminus Z)}} K\in \dere^{\leq c+p(k)}(\inbordered{(M\setminus Z)})
\text{ for some } Z\in\CS_\bM^{<k}, \\
(\enh p_k^{\geq c}) &\colon
\Eepb i_\bZ K \in \dere^{\geq c+p(k)}(\bZ) \text{ for any } Z\in\CS_\bM^{\leq k}.
\end{align*}
Consider the following strictly full subcategories of $\dere(\bM)$
\begin{align*}
\tEp{\leq c}(\bM)
&\defeq \{ K\in\dere(\bM) \semicolon (\enh p_k^{\leq c})\text{ holds for any $k\in\Z_{\geq 0}$} \}, \\
\tEp{\geq c}(\bM)
&\defeq \{ K\in\dere(\bM) \semicolon (\enh p_k^{\geq c})\text{ holds for any $k\in\Z_{\geq 0}$} \}.
\end{align*}
\end{definition}

Note that $\bl \tEp{\leq c}(\bM), \tEp{\geq c}(\bM) \br_{c\in\R}$ is not a t-structures if $\dim M>0$.
However, we write
\[
\tEp{<c}(\bM)\seteq\Union_{c'<c}\;\tEp{\leq c'}(\bM), \quad
\tEp{c}(\bM)\seteq\tEp{\leq c}(\bM)\cap\tEp{\ge c}(\bM), \quad\text{etc.}
\]
\begin{remark}\label{rem:Ipk}\hfill
\bnum\item
Conditions $(\enh p_k^{\leq c})$ and $(\enh p_k^{\geq c})$ can be rewritten using the equivalences
\begin{align*}
\Eopb i_{\inbordered{(\oM\setminus Z)}} K \in \dere^{\leq c}(\inbordered{(\oM\setminus Z)})
&\iff \opb\pi\field_{\oM\setminus Z} \tens K \in \dere^{\leq c}(\bM), \\
\Eepb i_\bZ K \in \dere^{\geq c}(\bZ)
&\iff \rihom(\opb\pi\field_Z, K) \in \dere^{\geq c}(\bM),
\end{align*}
which follow from Lemma~\ref{lem:ibZfieldoZ}.
\item One has
$$\Eopb i_{\inbordered{(M\setminus Z)}} K\in
\dere^{\leq c}(\inbordered{(M\setminus Z)})\Rightarrow
\Eopb i_{\inbordered{(M\setminus Z')}} K\in \dere^{\leq c}(\inbordered{(M\setminus Z')})$$
for any $Z$, $Z'\in\CS_\bM$ such that $Z\subset Z'$.

\noi
Similarly,
$$\Eepb i_\bZ K \in \dere^{\geq c}(\bZ)\Rightarrow
\Eepb i_{\inb{Z'}} K \in \dere^{\geq c}(\inb{Z'})$$
for any $Z\in\CS_\bM$
and any locally closed subanalytic subset $Z'$ of $Z$.
Indeed, one has
$$\Eopb i_{\inbordered{(M\setminus Z')}}\simeq
\Eopb j\circ\Eopb i_{\inbordered{(M\setminus Z)}}\qtq
\Eepb i_{\inb{Z'}}\simeq \Eepb {{j'}}\circ\Eepb i_\bZ,$$ 
and $\Eopb j$ is exact and
$ \Eepb {{j'}}$ is left exact for the standard t-structure.
Here, $j\cl\inb{(M\setminus Z')}\to \inbordered{(M\setminus Z)}$
and $j'\cl\inbordered{Z'}\to \inbordered{Z}$ are the canonical morphisms.
\ee
\end{remark}

The following lemma is obvious.

\Lemma\label{lem: stp}
For any $c\in\R$, one has
\eqn
\dere^{\le c+p(d_\oM)}(\bM)\subset& \tEp{\leq c}(\bM)&
\subset \dere^{\le c+p(0)}(\bM), \\
\dere^{\ge c+p(0)}(\bM) \subset&\tEp{\ge c}(\bM)&
\subset \dere^{\ge c+p(d_\oM)}(\bM).
\eneqn
\enlemma

Note that the following lemma is a particular case of
Proposition~\ref{pro:tpt-roeimeopb} below.

\begin{lemma}
\label{lem:ibZ}
For any $c\in\R$ and any $Z\in\LCS_\bM$, one has
\begin{align*}
\Eoimv{i_{\inbordered{Z}}^{\ms{10mu}-1}}\bl\tEp{\leq c}(\bM)\br&
\subset \tEp{\leq c}(\inbordered{Z}),\\
\Eoimv{i_{\inbordered{Z}}^{\ms{10mu}!}}\bl\tEp{\geq c}(\bM)\br&
\subset \tEp{\geq c}(\inbordered{Z}), \\
\Eoimv{i_{\inbordered{Z}\sep*}}\bl \tEp{\geq c}(\inbordered{Z}) \br
&\subset \tEp{\geq c}(\bM),\\
\Eoimv{i_{\inbordered{Z}\sep!!}}\bl \tEp{\leq c}(\inbordered{Z}) \br
&\subset \tEp{\leq c}(\bM).
\end{align*}
\end{lemma}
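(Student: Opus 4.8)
The plan is to prove the four inclusions separately, in each case reducing the defining condition $(\enh p_k^{\leq c})$ or $(\enh p_k^{\geq c})$ for the image object to the corresponding condition for the source. The ingredients will be the exactness statements of Proposition~\ref{pro:t-Eop}, the base change isomorphisms for the six operations along cartesian squares of embeddings of subanalytic bordered spaces, and, for the two $(\enh p_k^{\geq c})$ inclusions, Remark~\ref{rem:Ipk}~(ii). The recurring observation is that $i_\bZ$ is a semi-proper embedding whose fibres are $0$-dimensional, so $\Eopb{i_\bZ}$ is exact by Proposition~\ref{pro:t-Eop}~(iv) and $\Eeeim{i_\bZ}$ is exact as well (left exact by (iii), right exact by (v) with $d=0$), while $\Eoim{i_\bZ}$ and $\Eepb{i_\bZ}$ are left exact; the same applies to any embedding, in particular to the auxiliary ones appearing below.

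For $\Eopb{i_\bZ}\bl\tEp{\leq c}(\bM)\br\subset\tEp{\leq c}(\bZ)$: given $K$ in the source and $k\in\Z_{\geq0}$, I would choose $W\in\CS_\bM^{<k}$ witnessing $(\enh p_k^{\leq c})$ for $K$ and set $W'\defeq Z\cap W$, which lies in $\CS_\bZ^{<k}$. Since $Z\setminus W'=Z\cap(M\setminus W)$, the embedding $i_{\inbordered{(Z\setminus W')}}$ followed by $i_\bZ$ equals $i_{\inbordered{(M\setminus W)}}\circ j$ for an embedding $j$, so $\Eopb{i_{\inbordered{(Z\setminus W')}}}\Eopb{i_\bZ}K\simeq\Eopb j\,\Eopb{i_{\inbordered{(M\setminus W)}}}K$ lies in $\dere^{\leq c+p(k)}$ by exactness of $\Eopb j$. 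For $\Eepb{i_\bZ}\bl\tEp{\geq c}(\bM)\br\subset\tEp{\geq c}(\bZ)$: given $K$ in the source, $k$, and $W'\in\CS_\bZ^{\leq k}$, I would take $W$ to be the closure of $W'$ in $\oM$; then $W\in\CS_\bM^{\leq k}$ (same dimension) and $W'$ is a locally closed subanalytic subset of $W$, so $\Eepb{i_{\inbordered{W}}}K\in\dere^{\geq c+p(k)}$ by hypothesis, and Remark~\ref{rem:Ipk}~(ii) upgrades this to $\Eepb{i_{\inbordered{W'}}}K\in\dere^{\geq c+p(k)}$; since $i_{\inbordered{W'}}=i_\bZ\circ(\text{embedding of }W'\text{ in }Z)$, this is $(\enh p_k^{\geq c})$ for $\Eepb{i_\bZ}K$.

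For $\Eoim{i_\bZ}\bl\tEp{\geq c}(\bZ)\br\subset\tEp{\geq c}(\bM)$ and $\Eeeim{i_\bZ}\bl\tEp{\leq c}(\bZ)\br\subset\tEp{\leq c}(\bM)$ I would invoke base change. In the first, for $K$ in the source and $W\in\CS_\bM^{\leq k}$, pulling $i_\bZ$ back along $i_{\inbordered{W}}$ yields a cartesian square with top vertex $\inbordered{(Z\cap W)}$, the map to $\bZ$ being the embedding of the closed subanalytic set $Z\cap W\in\CS_\bZ^{\leq k}$; base change gives $\Eepb{i_{\inbordered{W}}}\Eoim{i_\bZ}K\simeq\Eoim{}\,\Eepb{i_{\inbordered{(Z\cap W)}}}K$, where the inner term lies in $\dere^{\geq c+p(k)}$ by $(\enh p_k^{\geq c})$ for $K$ and the outer (left exact) $\Eoim{}$ preserves this bound. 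In the second, for $K$ in the source I would pick a witness $W'\in\CS_\bZ^{<k}$ of $(\enh p_k^{\leq c})$ and set $W\defeq\overline{W'}$ (closure in $\oM$), so $W\in\CS_\bM^{<k}$ and $Z\setminus W=Z\setminus W'$; pulling $i_\bZ$ back along $i_{\inbordered{(M\setminus W)}}$ yields a cartesian square with top vertex $\inbordered{(Z\setminus W)}$, so base change gives $\Eopb{i_{\inbordered{(M\setminus W)}}}\Eeeim{i_\bZ}K\simeq\Eeeim{}\,\Eopb{i_{\inbordered{(Z\setminus W')}}}K\in\dere^{\leq c+p(k)}$, the membership because $\Eeeim{}$ along an embedding preserves $\dere^{\leq c+p(k)}$.

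The main obstacle is not the homological algebra but the combinatorics of subsets: one must check at each step that the auxiliary sets $Z\cap W$, $\overline{W'}$, $Z\setminus W$ are subanalytic and are closed, respectively locally closed, in the appropriate ambient bordered space, using that subanalyticity is preserved and dimension is not increased under taking closures, and that the squares of embeddings assembled above are genuinely cartesian in the category of subanalytic bordered spaces, so that the base change isomorphisms for the operations may be applied. Once this bookkeeping is in place the argument is formal, all the analytic input being already packaged in Proposition~\ref{pro:t-Eop} and Remark~\ref{rem:Ipk}.
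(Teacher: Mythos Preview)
Your proposal is correct and follows essentially the same approach as the paper: base change along a cartesian square of embeddings combined with the exactness properties from Proposition~\ref{pro:t-Eop}. The paper only writes out the third inclusion ($\Eoim{i_\bZ}$) explicitly, noting that the other three are similar, and your argument for that case is identical to theirs; your treatments of the remaining three cases are exactly the ``similar'' arguments the paper has in mind.
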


\begin{proof}
Since the proofs are similar, let us only discuss the third inclusion.
Let $K\in\tEp{\geq c}(\inbordered{Z})$.
For $W\in\CS_\bM^{\leq k}$, consider the Cartesian diagram of bordered spaces
\[
\xymatrix{
\inbordered{(Z\cap W)} \ar[d]^i \ar[r]^-{i'} & \inbordered{W} \ar[d]^{i_{\inbordered{W}}} \\
\inbordered{Z} \ar[r]^{i_{\inbordered{Z}}} & \bM .
}
\]
Noticing that $Z\cap W \in\CS_{\inbordered{Z}}^{\leq k}$ and
that $\Eoim i'$ is left exact by Proposition~\ref{pro:t-Eop}, one has
\begin{align*}
\Eepb i_{\inbordered{W}} \Eoimv{i_{\inbordered{Z}\sep*}} K
&\simeq
\Eoim i' \Eepb i K \\
&\in \Eoim i' \bl\dere^{\geq c+p(k)}(\inbordered{(Z\cap W)})\br \\
&\subset \dere^{\geq c+p(k)}(\inbordered{W}).
\end{align*}
\end{proof}

\begin{lemma}\label{lem:crg}
For any $c\in\R$ and $K\in\dere(\bM)$, the following conditions are equivalent:
\bnum
\item
$K\in\tEp{\geq c}(\bM)$,
\item
$\Eepb i_{\bS}K \in \dere^{\geq c+p(k)}(\bS)$ for any $k\in\Z_{\ge0}$ and
any $S\in \LCS_{\bM}^{\leq k}$,
\item
$\Eepb i_{\bS} K \in \dere^{\geq c+p(k)}(\bS)$ for any $k\in\Z_{\ge0}$
and any smooth $ S\in \LCS_\bM^{\leq k}$,
\item
for any $k\in\Z_{\ge0}$ and any $Z\in \CS_\bM^{\leq k}$, there exists
an open subanalytic subset $Z_0$ of $\inbordered{Z}$
such that $\dim(Z\setminus Z_0) < k$ and
$\Eepb i_{\inbordered{(Z_0)}}K \in \dere^{\geq c+p(k)}(\inbordered{(Z_0)})$,
\item
for any $k\in\Z_{\ge0}$ and any $S\in \LCS_{\bM}^{\leq k}$, there exists
an open subanalytic subset $S_0$ of $\inbordered{S}$
such that $\dim(S\setminus S_0)< k$ and
$\Eepb i_{\inbordered{(S_0)}}K \in \dere^{\geq c+p(k)}(\inbordered{(S_0)})$.
\ee
\end{lemma}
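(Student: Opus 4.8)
The plan is to prove the cycle of implications $(\mathrm{i})\Rightarrow(\mathrm{ii})\Rightarrow(\mathrm{iii})\Rightarrow(\mathrm{v})\Rightarrow(\mathrm{iv})\Rightarrow(\mathrm{i})$. Two of these cost nothing: $(\mathrm{ii})\Rightarrow(\mathrm{iii})$ and $(\mathrm{v})\Rightarrow(\mathrm{iv})$, since the conditions in (iii) and (iv) are just the conditions in (ii) and (v) restricted to smaller classes of test subsets (smooth locally closed subanalytic subsets, resp.\ closed subanalytic subsets). So the real work is in the remaining three implications, and only the last is substantial.

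For $(\mathrm{i})\Rightarrow(\mathrm{ii})$, given $S\in\LCS_\bM^{\le k}$ I would let $Z$ be the closure of $S$ in $\oM$; this is a closed subanalytic subset of $\bM$ (the intersection of the closure of $S$ in $\cM$ with $\oM$) with $\dim Z=\dim S\le k$, so $Z\in\CS_\bM^{\le k}$, and $S$ is open — a fortiori locally closed subanalytic — in $Z$. Applying (i) to $Z$ and transporting the resulting estimate along Remark~\ref{rem:Ipk}~(ii) yields $\Eepb i_{\inbordered S}K\in\dere^{\ge c+p(k)}(\inbordered S)$, which is (ii). For $(\mathrm{iii})\Rightarrow(\mathrm{v})$, given $S\in\LCS_\bM^{\le k}$ with $d\defeq\dim S$ (take $S_0=\emptyset$ if $S=\emptyset$), I would take $S_0$ to be the smooth locus of $S$, i.e.\ the open dense subanalytic subset of points where $S$ is a submanifold of dimension $d$; then $\dim(S\setminus S_0)<d\le k$ and $\inbordered{S_0}$ is smooth of dimension $d$, so (iii) applied with $d$ in place of $k$ gives $\Eepb i_{\inbordered{S_0}}K\in\dere^{\ge c+p(d)}(\inbordered{S_0})$, which lies in $\dere^{\ge c+p(k)}(\inbordered{S_0})$ because $p$ is decreasing and $d\le k$. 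This is (v).

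The substantial step is $(\mathrm{iv})\Rightarrow(\mathrm{i})$, which I would prove by induction on $d\in\{-\infty\}\cup\Z_{\ge0}$ in the form: $\Eepb i_\bZ K\in\dere^{\ge c+p(d)}(\bZ)$ for every $Z\in\CS_\bM$ with $\dim Z=d$. Because $p$ is decreasing, applying this with $d=\dim Z\le k$ recovers condition $(\enh p_k^{\ge c})$ for all $k$, i.e.\ (i). The case $d=-\infty$ is trivial. For the step, apply (iv) with $k=d$ to get an open subanalytic $Z_0\subset\inbordered Z$ such that $Z'\defeq Z\setminus Z_0$ has $\dim Z'<d$ and $\Eepb i_{\inbordered{Z_0}}K\in\dere^{\ge c+p(d)}(\inbordered{Z_0})$; note $Z'\in\CS_\bM$, so the induction hypothesis plus monotonicity of $p$ give $\Eepb i_{\inbordered{Z'}}K\in\dere^{\ge c+p(\dim Z')}(\inbordered{Z'})\subset\dere^{\ge c+p(d)}(\inbordered{Z'})$. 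Feeding $Z$ and its closed subset $Z'$ into Lemma~\ref{lem:EiZZ'} produces a distinguished triangle in $\dere(\bZ)$
\[
\Eoim{i'}\Eepb i_{\inbordered{Z'}}K\to\Eepb i_\bZ K\to\Eoim{i}\Eepb i_{\inbordered{Z_0}}K\tone
\]
(with $i,i'$ the natural morphisms and $Z\setminus Z'=Z_0$); since $\Eoim i$ and $\Eoim{i'}$ are left exact by Proposition~\ref{pro:t-Eop}~(iii), both outer terms lie in $\dere^{\ge c+p(d)}(\bZ)$, and since that subcategory is closed under extensions, so does $\Eepb i_\bZ K$, closing the induction.

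The hard part is essentially the bookkeeping in this last dévissage: one must run the induction on dimension rather than on $k$, so that the singular stratum $Z'$ (controlled by the induction hypothesis) and the leftover $Z_0$ (controlled directly by (iv)) both live in strictly lower dimension, and then combine the two estimates through the triangle, using throughout that $p$ is decreasing. Everything else — passing to closures and to smooth loci, and the reformulations of Remark~\ref{rem:Ipk}~(ii) — relies only on standard facts about subanalytic sets and on the exactness statements already collected in Proposition~\ref{pro:t-Eop}.
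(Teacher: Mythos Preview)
Your proposal is correct and follows essentially the same approach as the paper. The only cosmetic differences are that the paper's implication diagram routes through $(\mathrm{ii})\Rightarrow(\mathrm{v})$ and $(\mathrm{iii})\Rightarrow(\mathrm{iv})$ rather than your $(\mathrm{iii})\Rightarrow(\mathrm{v})$, and in the key step $(\mathrm{iv})\Rightarrow(\mathrm{i})$ the paper runs the induction on $k$ and phrases the d\'evissage via $\rihom(\opb\pi\field_Z,K)$ in $\dere(\bM)$ instead of via Lemma~\ref{lem:EiZZ'} in $\dere(\bZ)$; these are equivalent by Remark~\ref{rem:Ipk}~(i).
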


\begin{proof}
The implications in the following diagram are clear
\[
\xymatrix@R=-1ex{
& & {\rm(iii)} \ar@{=>}[dr] \\
{\rm(i)} \ar@{=>}[r] &{\rm(ii)} \ar@{=>}[rd] \ar@{=>}[ur] & & {\rm(iv)}.\\
&&{\rm(v)} \ar@{=>}[ru]
}
\]
Here the less trivial implication (i)$\Rightarrow$(ii) follows from
Remark~\ref{rem:Ipk}~(ii).

It remains to show that (iv)$\implies$(i).
That is, we have to show that for any $Z\in\CS_\bM^{\leq k}$ one has
\begin{equation}
\label{eq:Ztemp}
\rihom(\opb\pi\field_Z, K) \in \dere^{\geq c+p(k)}(\bM).
\end{equation}
We shall prove it by induction on $k\in\Z_{\geq 0}$.
When $k=0$, \eqref{eq:Ztemp} is true,
because $Z_0$ in (iv) coincides with $Z$. Assume that $k>0$.
Let $Z_0\subset Z$ be an open subanalytic subset as in (iv), so that
\[
\rihom(\opb\pi\field_{Z_0},K) \in \dere^{\geq c+p(k)}(\bM).
\]
Since $Z\setminus Z_0\in \CS_\bM^{\leq k-1}$, the induction hypothesis
implies
\begin{align*}
\rihom(\opb\pi\field_{Z\setminus Z_0},K)
&\in \dere^{\geq c+p(k-1)}(\bM) \subset \dere^{\geq c+p(k)}(\bM).
\end{align*}
Then \eqref{eq:Ztemp} follows from the distinguished triangle
\begin{align*}
\rihom(\opb\pi\field_{Z\setminus Z_0},K)&\to \rihom(\opb\pi\field_Z,K) \\
&\to \rihom(\opb\pi\field_{Z_0},K) \tone.
\end{align*}
\end{proof}

\begin{proposition}\label{pro:tE-fihom}
For any $c,c'\in\R$, one has:
\begin{align*}
\fihom(\tEp{\leq c}(\bM), \tEp{\geq c'}(\bM)) &\subset \derd^{\geq c'-c}(\bM), \\
\fhom(\tEp{\leq c}(\bM), \tEp{\geq c'}(\bM))
&\subset \derd^{\geq c'-c}(\field_\unb\bM).
\end{align*}
In particular, $\Hom[\dere(\bM)](\tEp{\leq c}(\bM), \tEp{\geq c'}(\bM))=0$
if $c'>c$.
\end{proposition}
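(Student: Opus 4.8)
The plan is to deduce the estimate from Proposition~\ref{pro:t-homE}(i) by a dévissage of $K$ along closed subanalytic subsets, exploiting the cancellation of the perversity shifts $p(k)$ on the graded pieces.

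Let $K\in\tEp{\leq c}(\bM)$, $K'\in\tEp{\geq c'}(\bM)$, and put $d\defeq d_\oM$. First I would build an increasing chain $\emptyset=Z_0\subset Z_1\subset\cdots\subset Z_d\subset Z_{d+1}=\oM$ in $\CS_\bM$ with $d_{Z_k}<k$ and $\opb\pi\field_{\oM\setminus Z_k}\tens K\in\dere^{\leq c+p(k)}(\bM)$ for every $k$. For $k\le d$ such $Z_k$ exist by $(\enh p_k^{\le c})$ and Lemma~\ref{lem:ibZfieldoZ} (cf.\ Remark~\ref{rem:Ipk}); replacing $Z_k$ by $Z_0\cup\dots\cup Z_k$ preserves the condition by Remark~\ref{rem:Ipk}(ii), and $Z_{d+1}=\oM$ is admissible since $d_\oM=d<d+1$ and $\opb\pi\field_\emptyset\tens K=0$. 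Applying the triangulated functor $F\mapsto\opb\pi F\tens K$ to the triangles $\field_{\oM\setminus Z_{k+1}}\to\field_{\oM\setminus Z_k}\to\field_{Z_{k+1}\setminus Z_k}\tone$ exhibits $K\simeq\opb\pi\field_\oM\tens K$ as a finite successive extension of the objects $\opb\pi\field_S\tens K$, where $S\defeq Z_{k+1}\setminus Z_k\in\LCS_\bM^{\leq k}$, $k=0,\dots,d$. Since $\fihom(\ast,K')$ is cohomological and $\derd^{\geq c'-c}(\bM)$ is closed under extensions, it is enough to prove $\fihom(\opb\pi\field_S\tens K,K')\in\derd^{\geq c'-c}(\bM)$ for each such $S$.

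Fix one such $S$ (with its $k$). By Lemma~\ref{lem:Eeimtensanddual}, $\opb\pi\field_S\tens K\simeq\Eeeim{i_\bS}\Eopb{i_\bS}K$, so the crux is the identity
\[
\fihom(\Eeeim{i_\bS}L,K')\simeq\roim{i_\bS}\,\fihom(L,\Eepb{i_\bS}K')\qquad(L\in\dere(\bS)).
\]
This compatibility of $\fihom$ with $\Eeeim{i_\bS}$ and $\Eepb{i_\bS}$ is the step I expect to require the most care, though it is purely formal; I would prove it by Yoneda, chaining the adjunction $\Hom[\derd(\bN)](G,\fihom(A,B))\simeq\Hom[\dere(\bN)](\opb\pi G\tens A,B)$ (which follows from $\fihom=\roim\pi\rihom(\LE\ast,\LE\ast)$, $\LE(\opb\pi G\tens A)\simeq\opb\pi G\tens\LE A$, and full faithfulness of $\LE$), the projection formula $\opb\pi G\tens\Eeeim{i_\bS}L\simeq\Eeeim{i_\bS}(\opb\pi\opb{i_\bS}G\tens L)$, and the adjunctions $(\Eeeim{i_\bS},\Eepb{i_\bS})$ and $(\opb{i_\bS},\roim{i_\bS})$.

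Granting the identity, on $\bS$ one has $\Eopb{i_\bS}K\in\dere^{\leq c+p(k)}(\bS)$ (since $S\subset\oM\setminus Z_k$ and $\Eopb{}$ is exact, Proposition~\ref{pro:t-Eop}(iv)) and $\Eepb{i_\bS}K'\in\dere^{\geq c'+p(k)}(\bS)$ by Lemma~\ref{lem:crg}(ii), as $S\in\LCS_\bM^{\leq k}$. Hence, by Proposition~\ref{pro:t-homE}(i),
\[
\fihom(\Eopb{i_\bS}K,\Eepb{i_\bS}K')\in\derd^{\geq(c'+p(k))-(c+p(k))}(\bS)=\derd^{\geq c'-c}(\bS)
\]
(reading $\derd^{\geq r}$, $r\in\R$, via the $\Z$-indexed t-structure, one checks $\lceil c'+p(k)\rceil-\lfloor c+p(k)\rfloor\geq\lceil c'-c\rceil$). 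As $\roim{i_\bS}$ is left exact by Proposition~\ref{pro:t-board}(iii), this yields $\fihom(\opb\pi\field_S\tens K,K')\in\derd^{\geq c'-c}(\bM)$ and finishes the dévissage; the statement for $\fhom=\alpha_\bM\circ\fihom$ is then immediate since $\alpha_\bM$ is exact. Finally, when $c'>c$ the bound just proved gives $\fihom(K,K')\in\derd^{\geq 1}(\bM)$, and since $\cor_M\in\derd^{\leq 0}(\bM)$ the isomorphism \eqref{eq:Hfhom} gives $\Hom[\dere(\bM)](K,K')\simeq\Hom[\derd(\bM)](\cor_M,\fihom(K,K'))=0$.
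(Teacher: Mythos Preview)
Your proof is correct and follows essentially the same d\'evissage as the paper's: filter $\bM$ by closed subanalytic subsets $Z_k$ of dimension $<k$ coming from the conditions $(\enh p_k^{\le c})$, and on each graded piece use that the shifts $p(k)$ cancel so that Proposition~\ref{pro:t-homE} gives the bound $\derd^{\ge c'-c}$. The only cosmetic difference is that the paper runs a decreasing induction on $k$ and computes the graded pieces on $\bM$ via the identity $\rihom(\field_{Z_k\setminus W_{k-1}},\fihom(K,K'))\simeq\fihom(\opb\pi\field_{M\setminus W_{k-1}}\tens K,\rihom(\opb\pi\field_{Z_k},K'))$, whereas you build the whole chain first and phrase the same computation via the push--pull identity $\fihom(\Eeeim{i_\bS}L,K')\simeq\roim{i_\bS}\fihom(L,\Eepb{i_\bS}K')$; by Lemma~\ref{lem:Eeimtensanddual} these are equivalent manipulations.
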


\begin{proof}
(i) Let $K\in\tEp{\leq c}(\bM)$ and $K'\in\tEp{\geq c'}(\bM)$.
Reasoning by decreasing induction on $k\in\Z_{\geq -1}$, let us show that
\begin{itemize}
\item[(i)$_k$] there exists $Z_k\in\CS_{\bM}^{\leq k}$ such that
\[
\rihom(\field_{M\setminus Z_k}, \fihom(K,K'))\in\derd^{\geq c'-c}(\bM).
\]
\end{itemize}
The above statement is obvious for $k\geq d_M$. Assuming that (i)$_k$
holds true for $k\geq 0$, let us prove (i)$_{k-1}$.
Since $K'\in\tEp{\geq c'}(\bM)$, one has
\[
\rihom(\opb{\pi}\field_{Z_k},K')\in \dere^{\geq c'+p(k)}(\bM).
\]
Moreover, since $K\in\tEp{\leq c}(\bM)$, there exists $W_{k-1}\in\CS_{\bM}^{\leq k-1}$ with
\[
\opb{\pi}\field_{M\setminus W_{k-1}}\tens K \in \dere^{\leq c+p(k)}(\bM).
\]
Then
\begin{align*}
\rihom&(\field_{Z_k\setminus W_{k-1}}, \fihom(K,K')) \\
&\simeq \rihom(\field_{M\setminus W_{k-1}}\tens\field_{Z_k}, \fihom(K,K')) \\
&\simeq \fihom(\opb{\pi}\field_{M\setminus W_{k-1}} \tens K,\rihom(\opb{\pi}\field_{Z_k},K')) \\
&\in \fihom \bl \dere^{\leq c+p(k)}(\bM), \dere^{\geq c'+p(k)}(\bM) \br \\
&\subset \derd^{\geq c'-c}(\bM),
\end{align*}
where the last inclusion follows from Proposition~\ref{pro:t-homE}.

Considering the distinguished triangle
\eqn
&&\rihom(\field_{Z_k\setminus W_{k-1}},{} \fihom(K,K')) \\
&&\hs{12ex}\to \rihom(\field_{M\setminus (Z_k\cap W_{k-1})}, \fihom(K,K')) \\
&&\hs{24ex}\to \rihom(\field_{M\setminus Z_k}, \fihom(K,K')) \tone,
\eneqn
we deduce (i)$_{k-1}$ for $Z_{k-1} = Z_k\cap W_{k-1}$.

\medskip
\noi
(ii)\ The second inclusion follows from the first
since $\fhom \simeq \alpha_\bM\;\fihom$.

\medskip
\noi
(iii)\ The last assertion follows from \eqref{eq:Hfhom}.
\end{proof}

\begin{lemma}\label{lem:t-fihomEE'}
For any $c,c'\in\R$, one has:
\[
\fihom(\dere^{\leq c}(\bM), \tEp{\geq c'}(\bM)) \subset \Dp{\geq c'-c}(\bM),
\]
and in particular,
\[
\fihom(\Qfield_\bM, \tEp{\geq c}(\bM)) \subset \Dp{\geq c}(\bM).
\]
\end{lemma}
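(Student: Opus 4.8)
The plan is to translate the conditions defining $\tEp{\ge c}(\bM)$ and $\Dp{\ge c}(\bM)$ into conditions on the functors $\rihom(\field_Z,\ast)$ and $\rihom(\opb\pi\field_Z,\ast)$, and then to invoke a projection formula for $\fihom$ of the same type as the one used in the proof of Proposition~\ref{pro:tE-fihom}. For $Z\in\CS_\bM$ the morphism $i_\bZ\colon\bZ\to\bM$ is a proper closed embedding, so $\roimv{i_{\bZ\sep*}}\simeq\roimv{i_{\bZ\sep!!}}$ is t-exact for the standard t-structures (Proposition~\ref{pro:t-board}~(iii), and (v) with $d=0$) and fully faithful, hence reflects $\derd^{\ge n}$; combined with Lemma~\ref{lem:reimtensanddual}, which gives $\rihom(\field_Z,F)\simeq\roimv{i_{\bZ\sep*}}\epb{i_\bZ}F$, this shows that for $F\in\derd(\bM)$ the condition $(\ind p_k^{\ge c})$ holds for $F$ if and only if $\rihom(\field_Z,F)\in\derd^{\ge c+p(k)}(\bM)$ for every $Z\in\CS_\bM^{\le k}$. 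Recalling the definition of $\Dp{\ge c}(\bM)$ together with Definition~\ref{def:Etp}, the first inclusion thus reduces to the following: for $K\in\dere^{\le c}(\bM)$, $K'\in\tEp{\ge c'}(\bM)$, $k\in\Z_{\ge0}$ and $Z\in\CS_\bM^{\le k}$, one has $\rihom(\field_Z,\fihom(K,K'))\in\derd^{\ge(c'-c)+p(k)}(\bM)$.

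The core of the argument is the projection formula
\[
\rihom(\field_Z,\fihom(K,K'))\simeq\fihom\bl K,\rihom(\opb\pi\field_Z,K')\br,
\]
which I would establish exactly as in the proof of Proposition~\ref{pro:tE-fihom}: starting from $\fihom(K,K')\simeq\roim\pi\rihom(\LE K,\RE K')$, move $\rihom(\field_Z,\ast)$ inside $\roim\pi$ via $\rihom(\field_Z,\roim\pi G)\simeq\roim\pi\rihom(\opb\pi\field_Z,G)$, rewrite $\rihom(\opb\pi\field_Z,\rihom(\LE K,\RE K'))\simeq\rihom(\LE K,\rihom(\opb\pi\field_Z,\RE K'))$, and use that $\rihom(\opb\pi(\ast),\ast)$ commutes with $\RE$ — equivalently, that $\opb\pi\field_Z$, being constant in the $\bR$-direction ($\pi\comp\mu=\pi\comp q_1=\pi\comp q_2$), commutes with the convolution operations — so that $\rihom(\opb\pi\field_Z,\RE K')\simeq\RE\rihom(\opb\pi\field_Z,K')$. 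Since $K'\in\tEp{\ge c'}(\bM)$ and $Z\in\CS_\bM^{\le k}$, condition $(\enh p_k^{\ge c'})$ and Remark~\ref{rem:Ipk}~(i) give $\rihom(\opb\pi\field_Z,K')\in\dere^{\ge c'+p(k)}(\bM)$; then Proposition~\ref{pro:t-homE}~(i), applied with integer bounds (using $\dere^{\le c}=\dere^{\le\lfloor c\rfloor}$, $\derd^{\ge d}=\derd^{\ge\lceil d\rceil}$ and $\lceil c'+p(k)\rceil-\lfloor c\rfloor\ge\lceil(c'-c)+p(k)\rceil$), yields $\fihom(K,\rihom(\opb\pi\field_Z,K'))\in\derd^{\ge(c'-c)+p(k)}(\bM)$, which is the required estimate. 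Letting $k$ and $Z$ vary gives $\fihom(K,K')\in\Dp{\ge c'-c}(\bM)$. The last assertion follows by taking $c=0$ and $K=\Qfield_\bM\in\dere^{\le0}(\bM)$ in the first inclusion (after renaming $c'$ as $c$).

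The step I expect to be delicate is the projection formula, and specifically the claim that $\opb\pi\field_Z$ commutes with $\RE$, i.e.\ with $\cihom(\field_{\{t\ge0\}}\dsum\field_{\{t\le0\}},\ast)$: this is where constancy of $\opb\pi\field_Z$ in the extra variable is essential. An alternative route bypasses the reduction to $\bM$ and argues directly on $\bZ$ through the base-change isomorphism $\epb{i_\bZ}\fihom(K,K')\simeq\fihom(\Eopb i_\bZ K,\Eepb i_\bZ K')$ — itself deduced from base change for $\roim\pi$ and the commutations of $\LE$, $\RE$ with $\Eopb i_\bZ$, $\Eepb i_\bZ$ — and then combines $\Eopb i_\bZ K\in\dere^{\le c}(\bZ)$ (Proposition~\ref{pro:t-Eop}~(iv)), $\Eepb i_\bZ K'\in\dere^{\ge c'+p(k)}(\bZ)$ (condition $(\enh p_k^{\ge c'})$) and Proposition~\ref{pro:t-homE}~(i) over $\bZ$. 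Apart from this, the proof is a routine assembly of adjunctions with the exactness statements of Propositions~\ref{pro:t-board}, \ref{pro:t-homE} and \ref{pro:t-Eop}, plus the minor bookkeeping forced by $p$ being real-valued.
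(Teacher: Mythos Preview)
Your proposal is correct and follows essentially the same route as the paper: the paper's proof is precisely the projection formula $\rihom(\field_Z,\fihom(K,K'))\simeq\fihom(K,\rihom(\opb\pi\field_Z,K'))$, followed by $\rihom(\opb\pi\field_Z,K')\in\dere^{\ge c'+p(k)}(\bM)$ and Proposition~\ref{pro:t-homE}. Your write-up simply supplies more justification --- for the reduction of $(\ind p_k^{\ge c})$ to a condition on $\rihom(\field_Z,\ast)$, for the projection formula itself, and for the $\lfloor\cdot\rfloor/\lceil\cdot\rceil$ bookkeeping --- than the paper's three-line argument, which states the formula without proof.
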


\begin{proof}
Let $k\in\Z_{\ge0}$, $Z\in\CS^{\leq k}_\bM$, $K\in\dere^{\leq c}(\bM)$ and $K'\in\tEp{\geq c'}(\bM)$.
One has
\begin{align*}
\rihom(\field_Z,\fihom(K,K') )
&\simeq \fihom(K, \rihom(\opb\pi\field_Z,K') ) \\
&\in \fihom(\dere^{\leq c}(\bM), \dere^{\geq c'+p(k)}(\bM)) \\
&\subset \derd^{\geq c'-c+p(k)}(\bM),
\end{align*}
where the last inclusion follows from Proposition~\ref{pro:t-homE}.
\end{proof}

\begin{remark}\label{rem:Ehomcounter}
For $c,c'\in\R$, the inclusion
\[
\fhom(\dere^{\leq c}(\bM), \tEp{\geq c'}(\bM)) \subset \Dp{\geq c'-c}(\field_\unb\bM)
\]
does not hold in general. For example, with notations as in Remark~\ref{rem:t-betak0}, let $\bM=\R$, $K=\Efield_\bM$ and $K' = \Efield_\bM \tens \opb\pi F$.
Then $K\in\dere^{0}(\bM)$, $K'\in{}_{1/2}\dere^{\geq 1/2}(\bM)$
and
\[
\fhom( K,K') \simeq \alpha_\bM F \notin \Dp[1/2]{\geq 1/2}(\field_\bM).
\]
Here, ${}_{1/2}\dere\seteq{}_{\mathsf m}\dere$
and $\Dp[1/2]{}\seteq\Dp[{\mathsf m}]{}$
for $\mathsf m(n) \defeq -n/2$ the middle perversity.
\end{remark}

\begin{proposition}\label{pro:Eprightorth}
For $c\in\R$ one has
\[
\bl\tEp{< c}(\bM)\br^\bot = \tEp{\geq c}(\bM).
\]
\end{proposition}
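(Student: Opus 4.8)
The plan is to prove the two inclusions $\tEp{\geq c}(\bM)\subset\bl\tEp{<c}(\bM)\br^\bot$ and $\bl\tEp{<c}(\bM)\br^\bot\subset\tEp{\geq c}(\bM)$ separately. For the first, I would note that any $K\in\tEp{<c}(\bM)$ lies in $\tEp{\leq c'}(\bM)$ for some $c'<c$, and then invoke Proposition~\ref{pro:tE-fihom} with the strict inequality $c'<c$ to conclude $\Hom[\dere(\bM)](K,K')=0$ for every $K'\in\tEp{\geq c}(\bM)$; hence $\tEp{\geq c}(\bM)\subset\bl\tEp{<c}(\bM)\br^\bot$.

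For the reverse inclusion I would take $K'\in\bl\tEp{<c}(\bM)\br^\bot$ and verify the conditions $(\enh p_k^{\geq c})$, that is $\Eepb i_\bZ K'\in\dere^{\geq c+p(k)}(\bZ)$ for all $k\in\Z_{\geq0}$ and all $Z\in\CS^{\leq k}_\bM$ (the case $Z=\emptyset$ being trivial, so assume $Z\neq\emptyset$). Using that $\catt^{\geq d}=(\catt^{<d})^\bot$ holds for the standard t-structure on $\dere(\bZ)$, this reduces to showing $\Hom[\dere(\bZ)](L,\Eepb i_\bZ K')=0$ for every $L\in\dere^{<c+p(k)}(\bZ)$. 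By the adjoint pair $(\Eeeim{i_\bZ},\Eepb i_\bZ)$ this Hom-group equals $\Hom[\dere(\bM)](\Eeeim{i_\bZ}L,K')$, so the problem becomes the claim
\[
\Eeeim{i_\bZ}\bl\dere^{<c+p(k)}(\bZ)\br\subset\tEp{<c}(\bM)\qquad\text{for every }Z\in\CS^{\leq k}_\bM.
\]

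To prove this claim I would argue as follows. Given $L\in\dere^{<c+p(k)}(\bZ)$, write $L\in\dere^{\leq c'+p(k)}(\bZ)$ for some $c'<c$. Since $\dim Z\leq k$ and $p$ is decreasing, $p(k)\leq p(\dim Z)$, so $L\in\dere^{\leq c'+p(\dim Z)}(\bZ)$, and Lemma~\ref{lem: stp} applied to the subanalytic bordered space $\bZ$ gives $L\in\tEp{\leq c'}(\bZ)$. The fourth inclusion of Lemma~\ref{lem:ibZ} then yields $\Eeeim{i_\bZ}L\in\tEp{\leq c'}(\bM)\subset\tEp{<c}(\bM)$, which completes the argument. (If one prefers to avoid those two lemmas, the claim can be checked directly against the definition of $\tEp{<c}$: for $m>k$ one takes the closed set $W=Z\in\CS^{<m}_\bM$, using that $\Eeeim{i_\bZ}L$ is supported in $Z$ so its restriction to $\inbordered{(M\setminus Z)}$ vanishes; for $m\leq k$ one takes $W=\emptyset$ and uses the exactness of $\Eeeim{i_\bZ}$ for the closed embedding $i_\bZ$ — guaranteed by Proposition~\ref{pro:t-Eop}~(iii) and (v) with $d=0$ — together with $\dere^{\leq c'+p(k)}(\bM)\subset\dere^{\leq c'+p(m)}(\bM)$.)

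I expect the only real content to be this last claim; the rest is formal manipulation of adjunctions and of the definitions of $\catt^{<c}$ and $(\dummy)^\bot$. The crux of the claim is the bookkeeping that lets one trade the ambient index $k$ (which records the closed stratum $Z$) for the intrinsic dimension of $Z$: this is legitimate precisely because $\dim Z\leq k$ forces $p(\dim Z)\geq p(k)$, and because pushforward along the closed embedding $i_\bZ$ requires no cohomological shift.
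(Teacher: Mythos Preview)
Your proof is correct and follows essentially the same approach as the paper: both directions are handled identically, invoking Proposition~\ref{pro:tE-fihom} for the inclusion $\tEp{\geq c}(\bM)\subset\bl\tEp{<c}(\bM)\br^\bot$, and for the reverse inclusion reducing via adjunction to the claim $\Eeeim{i_\bZ}\bl\dere^{<c+p(k)}(\bZ)\br\subset\tEp{<c}(\bM)$, which is obtained by combining Lemma~\ref{lem: stp} (on $\bZ$, using $p(d_Z)\geq p(k)$) with Lemma~\ref{lem:ibZ}. The only difference is cosmetic: you spell out the passage through $p(\dim Z)\geq p(k)$ and offer a parenthetical direct verification, whereas the paper states the inclusion $\dere^{<c+p(k)}(\bZ)\subset\tEp{<c}(\bZ)$ as an immediate consequence of Lemma~\ref{lem: stp}.
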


\begin{proof}
One has $\tEp{\geq c}(\bM) \subset \bl\tEp{< c}(\bM)\br^\bot$ by Proposition~\ref{pro:tE-fihom}.

Let $K\in\bl\tEp{< c}(\bM)\br^\bot$. We have to show that for any $Z\in\CS_\bM^{\leq k}$ one has
\[
\Eepb i_{\bZ} K \in \dere^{\geq c+p(k)}(\bZ).
\]
Since $\dere^{\geq c+p(k)}(\bZ) = \bl \dere^{< c+p(k)}(\bZ) \br^\bot$, this is equivalent to show that for any $L\in \dere^{< c+p(k)}(\bZ)$ one has
\[
\Hom[\dere(\bZ)](L,\Eepb i_{\bZ} K) \simeq 0.
\]
By Lemma~\ref{lem: stp}, one has
$\dere^{< c+p(k)}(\bZ) \subset \tEp{< c}(\bZ)$.
Then Lemma~\ref{lem:ibZ} implies
$\Eoimv{i_{\bZ\sep!!}} L \in \tEp{< c}(\bM)$, so that
\[
\Hom[\dere(\bZ)](L,\Eepb i_{\bZ} K)
\simeq \Hom[\dere(\bM)](\Eoimv{i_{\bZ\sep!!}} L,K)
\simeq 0.
\]
\end{proof}

\Prop\label{prop:stack}
Let $M$ be a subanalytic space.
For any interval $I\subset\R$ such that $I\to\R/\Z$ is injective,
the prestack on $M$
\[
U\mapsto \tEp{I}(U)
\]
is a stack.
\enprop

\begin{proof}
(i) Let $K,L\in\tEp{I}(M)$. By Proposition~\ref{pro:tE-fihom}, one has
\[
\fhom(K,L)\in\derd^{>-1}(M)
=\derd^{\geq 0}(M).
\]
Hence
the presheaf
\[
U\mapsto\Hom[\tEp{I}(U)](\Eopb i_U K,\Eopb i_UL)\simeq
\sect\bl U;H^0(\fhom(K,L))\br
\]
is a sheaf.
Thus $U\mapsto \tEp{I}(U)$ is a separated prestack on $M$.

\medskip\noi
(ii) Let $M=\Union\nolimits_{a\in A}U_a$ be an open cover.
Let $K_a \in\tEp{I}(U_a)$ and let $u_{ab}\colon K_b|_{U_a\cap U_b} \isoto K_a|_{U_a\cap U_b}$ be isomorphisms such that $u_{ab}\circ u_{bc} = u_{ac}$ on $U_a\cap U_b\cap U_c$ ($a,b,c\in A$).
 We have to show that there exist $K \in\tEp{I}(M)$ and isomorphisms $u_{a}\colon K|_{U_a} \isoto K_a$ such that $u_{ab}\circ u_b = u_a$ on $U_a\cap U_b$ ($a,b\in A$).
This follows from Proposition~\ref{prop:patch}
by applying it to $\reeim{{j_a}}\LE K_a\in\derd(U_a\times\cR)$,
where $j_a\cl U_a\times\bR\to U_a\times\cR$ is the canonical morphism.
\end{proof}

\begin{lemma}\label{lem:recoll}
Let $\bM$ be a bordered space.
Let $c\in\R$, $Z\in\CS_\bM$ and $K\in\dere(\bM)$.
Set $U=\unbM\setminus Z$.
Then, considering the morphisms
\[
\xymatrix{
\inbordered{Z} \ar[r]^-i & \bM & \inbordered{U}\ar[l]_-j,
}
\]
one has:
\begin{itemize}
\item[(i)]
$K\in\tEp{\leq c}(\bM)$ if and only if
\[
\Eopb i K\in\tEp{\leq c}(\inbordered{Z}) \text{ and }
\Eopb j K\in\tEp{\leq c}(\inbordered{U} ),
\]
\item[(ii)]
$K\in\tEp{\geq c}(\bM)$ if and only if
\[
\Eepb i K\in\tEp{\geq c}(\inbordered{Z}) \text{ and }
\Eepb j K\in\tEp{\geq c}(\inbordered{U}).
\]
\end{itemize}
\end{lemma}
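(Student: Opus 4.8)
The plan is to deduce both equivalences from the restriction and corestriction inclusions of Lemma~\ref{lem:ibZ} together with the recollement triangles of Lemma~\ref{lem:EiZZ'}, after first checking that the classes $\tEp{\leq c}(\bM)$ and $\tEp{\geq c}(\bM)$ are closed under extensions.

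The \emph{only if} directions are immediate. Since $Z\in\CS_\bM$ and $U=\unbM\setminus Z\in\LCS_\bM$, the first two inclusions of Lemma~\ref{lem:ibZ}, applied to $\inbordered Z$ and to $\inbordered U$ (and recalling that the morphisms $i$, $j$ in the statement are $i_{\inbordered Z}$, $i_{\inbordered U}$), give $\Eopb i\bl\tEp{\leq c}(\bM)\br\subset\tEp{\leq c}(\inbordered Z)$, $\Eopb j\bl\tEp{\leq c}(\bM)\br\subset\tEp{\leq c}(\inbordered U)$, $\Eepb i\bl\tEp{\geq c}(\bM)\br\subset\tEp{\geq c}(\inbordered Z)$ and $\Eepb j\bl\tEp{\geq c}(\bM)\br\subset\tEp{\geq c}(\inbordered U)$.

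For the \emph{if} directions I would first observe that $\tEp{\geq c}(\bM)$ is closed under extensions, being a right orthogonal by Proposition~\ref{pro:Eprightorth}, and then prove the same for $\tEp{\leq c}(\bM)$. Using Remark~\ref{rem:Ipk}~(i), the category $\tEp{\leq c}(\bM)$ is the intersection over $k\in\Z_{\ge0}$ of the classes of $K\in\dere(\bM)$ satisfying $(\enh p_k^{\leq c})$, so it is enough to see that each such class is closed under extensions. Given a distinguished triangle $A\to B\to C\tone$ with $A$ and $C$ satisfying $(\enh p_k^{\leq c})$, choose $W_A,W_C\in\CS_\bM^{<k}$ witnessing this and set $W=W_A\cup W_C\in\CS_\bM^{<k}$. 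Using $\field_{M\setminus W}\simeq\field_{M\setminus W_A}\tens\field_{M\setminus W_C}$ and the fact that $\opb\pi\field_{M\setminus W_C}\tens(\ast)$ preserves $\dere^{\le c+p(k)}(\bM)$ (Lemma~\ref{lem:Etpitens}), one gets $\opb\pi\field_{M\setminus W}\tens A,\ \opb\pi\field_{M\setminus W}\tens C\in\dere^{\le c+p(k)}(\bM)$, hence also $\opb\pi\field_{M\setminus W}\tens B$ since the aisle of a classical t-structure is closed under extensions; thus $B$ satisfies $(\enh p_k^{\leq c})$.

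Finally, I would apply Lemma~\ref{lem:EiZZ'} with the ambient locally closed subset taken to be $M$ itself and the closed subset $Z$ (so that $\inbordered M=\bM$ and $i_{\inbordered M}=\id$), obtaining distinguished triangles in $\dere(\bM)$
\[
\Eeeim j\,\Eopb j K\to K\to\Eeeim i\,\Eopb i K\tone,\qquad
\Eoim i\,\Eepb i K\to K\to\Eoim j\,\Eepb j K\tone.
\]
If $\Eopb i K\in\tEp{\leq c}(\inbordered Z)$ and $\Eopb j K\in\tEp{\leq c}(\inbordered U)$, the last inclusion of Lemma~\ref{lem:ibZ}, applied to $\inbordered Z$ and to $\inbordered U$, puts $\Eeeim i\,\Eopb i K$ and $\Eeeim j\,\Eopb j K$ in $\tEp{\leq c}(\bM)$; the first triangle together with the extension-closedness established above then yields $K\in\tEp{\leq c}(\bM)$, which is (i). The proof of (ii) is identical, using the third inclusion of Lemma~\ref{lem:ibZ} and the second triangle. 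The main obstacle here is the extension-closedness of $\tEp{\leq c}(\bM)$; everything else is formal once Lemmas~\ref{lem:ibZ} and~\ref{lem:EiZZ'} are available.
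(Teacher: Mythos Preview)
Your proof is correct and takes a genuinely different route from the paper. The paper argues directly from the definition: for the ``if'' direction of (i), given witnesses $S_U\in\CS^{<k}_{\inbordered U}$ and $S_Z\in\CS^{<k}_{\inbordered Z}$ for the conditions $(\enh p_k^{\leq c})$ on $\Eopb j K$ and $\Eopb i K$, it builds a single witness $S=S_Z\cup\overline{S_U}\in\CS^{<k}_\bM$ (taking care with the closure of $S_U$ in $\unbM$) and then uses the distinguished triangle $\opb\pi\field_{U\setminus S_U}\tens K\to\opb\pi\field_{M\setminus S}\tens K\to\opb\pi\field_{Z\setminus S_Z'}\tens K\tone$ together with extension-closedness of $\dere^{\leq c+p(k)}(\bM)$. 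Your argument instead packages extension-closedness of the full classes $\tEp{\leq c}(\bM)$ and $\tEp{\geq c}(\bM)$ as a separate step, and then invokes the recollement triangles $\Eeeim j\Eopb j K\to K\to\Eeeim i\Eopb i K\tone$ and $\Eoim i\Eepb i K\to K\to\Eoim j\Eepb j K\tone$ together with Lemma~\ref{lem:ibZ}. This is cleaner and more conceptual, and in particular avoids the bookkeeping with closures. One minor caveat: you cite Proposition~\ref{pro:Eprightorth} for extension-closedness of $\tEp{\geq c}(\bM)$, but that proposition appears after the present lemma in the paper; there is no logical circularity (its proof does not use Lemma~\ref{lem:recoll}), but you could equally well argue directly that $\Eepb{i_\bZ}$ applied to a triangle yields a triangle with outer terms in $\dere^{\geq c+p(k)}(\bZ)$, which is extension-closed as the coaisle of a classical t-structure.
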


\begin{proof}
Since the proofs are similar, let us only discuss (i).

If $K\in\tEp{\leq c}(\bM)$, then $\Eopb i K$ and $\Eopb j K$ satisfy the required conditions since the functors $\Eopb i$ and $\Eopb j$ are right exact
by Lemma~\ref{lem:ibZ}.

Conversely, assume that $\Eopb i K\in\tEp{\leq c}(\inbordered{Z})$ and
$\Eopb j K\in\tEp{\leq c}(\inbordered{U})$. For $k\in\Z_{\geq 0}$,
let $S_U\in\CS^{<k}_{\inbordered{U}}$ be such that
\[
\opb\pi\field_{U\setminus S_U} \tens \Eopb j K \in \dere^{\leq c +p(k)}(\inbordered{U}),
\]
and $S_Z\in\CS^{<k}_{\inbordered{Z}}$ be such that
\[
\opb\pi\field_{Z\setminus S_Z} \tens \Eopb i K \in \dere^{\leq c +p(k)}(\inbordered{Z}).
\]
Set $S=S_Z\cup \overline{S_U}\in\CS^{<k}_\bM$ and $S_Z' = S_Z\cup(Z\cap\overline{S_U}) \in \CS^{<k}_{\inbordered{Z}}$. (Here the closure of $S_U$ is taken in $\unbM$.)
Then $ S\cap U=S_U$ and $S\cap Z=S'_Z$.
 Since
\[
\opb\pi\field_{U\setminus S_U} \tens K \in \dere^{\leq c +p(k)}(\bM), \quad
\opb\pi\field_{Z\setminus S_Z'} \tens K \in \dere^{\leq c +p(k)}(\bM),
\]
one concludes that
$\opb\pi\field_{M\setminus S} \tens K\in \dere^{\leq c+p(k)}(\bM)$
by considering the distinguished triangle
\[
\opb\pi\field_{U\setminus S_U} \tens K \to \opb\pi\field_{M\setminus S} \tens K
\to \opb\pi\field_{Z\setminus S_Z'} \tens K \tone.
\]
\end{proof}

A \emph{subanalytic stratification} $\{M_\alpha\}_{\alpha\in A}$ of $\bM\seteq(\oM,\cM)$ is a locally finite (in $\cM$) family
of smooth $M_\alpha\in\LCS_\bM$ such that $\oM=\DUnion\nolimits_{\alpha\in A} M_\alpha$ and $\overline M_\alpha \cap M_\beta \neq 0$ implies $\overline M_\alpha \supset M_\beta$.

\begin{proposition}
\label{pro:tE-stratnoRc}
Let $\{M_\alpha\}_{\alpha\in A}$ be a subanalytic stratification of $\bM$,
and set $\bM_\alpha = \inbordered{(M_\alpha)}$.
Let $K\in\dere(\bM)$.
\bnum
\item
$K\in\tEp{\leq c}(\bM)$ if and only if
$\Eopb i_{\bM_\alpha}K \in \tEp{\leq c}(\bM_\alpha)$ for any $\alpha\in A$,
\item $K\in\tEp{\geq c}(\bM)$ if and only if
$\Eepb i_{\bM_\alpha}K \in \tEp{\geq c}(\bM_\alpha)$ for any $\alpha\in A$.
\ee
\end{proposition}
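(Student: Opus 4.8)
The plan is to deduce both equivalences from the recollement of Lemma~\ref{lem:recoll} by induction on $d\defeq d_\oM$. The ``only if'' directions are immediate: each stratum $M_\alpha$ lies in $\LCS_\bM$, so Lemma~\ref{lem:ibZ} gives $\Eopb i_{\bM_\alpha}\bl\tEp{\leq c}(\bM)\br\subset\tEp{\leq c}(\bM_\alpha)$ and $\Eepb i_{\bM_\alpha}\bl\tEp{\geq c}(\bM)\br\subset\tEp{\geq c}(\bM_\alpha)$, which is exactly the forward implication in (i) and (ii).

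For the ``if'' directions, for $-1\le j\le d$ set $M^{(j)}\defeq\bigcup\{M_\alpha\semicolon\dim M_\alpha\le j\}$. Using the frontier condition of the stratification together with the elementary fact that a locally closed subanalytic set $M_\alpha$ has frontier $\overline{M_\alpha}\setminus M_\alpha$ of dimension $<\dim M_\alpha$, one checks that each $M^{(j)}$ is a closed subanalytic subset of $\oM$ with $\dim M^{(j)}\le j$, that distinct strata of dimension $d$ are incomparable for inclusion of closures, hence that $U\defeq\oM\setminus M^{(d-1)}$ is, as a topological space, the (locally finite) disjoint union of the strata $M_\alpha$ with $\dim M_\alpha=d$, each of which is clopen in $U$, and finally that $\{M_\alpha\semicolon\dim M_\alpha\le d-1\}$ is a subanalytic stratification of $\inbordered{(M^{(d-1)})}$. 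The induction is on $d$, the case $\oM=\emptyset$ being trivial; I treat (i), the argument for (ii) being parallel.

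For the inductive step, fix $K\in\dere(\bM)$ with $\Eopb i_{\bM_\alpha}K\in\tEp{\leq c}(\bM_\alpha)$ for all $\alpha$, put $Z\defeq M^{(d-1)}\in\CS_\bM$, and let $i\colon\inbordered{Z}\to\bM$ and $j\colon\inbordered{U}\to\bM$ be the inclusions as in Lemma~\ref{lem:recoll}. On the closed part, for $\dim M_\alpha\le d-1$ the morphism $i_{\bM_\alpha}$ factors through $i$, so $\Eopb i_{\bM_\alpha}(\Eopb iK)\simeq\Eopb i_{\bM_\alpha}K\in\tEp{\leq c}(\bM_\alpha)$, and the induction hypothesis applied to $\inbordered{Z}$ with its induced stratification yields $\Eopb iK\in\tEp{\leq c}(\inbordered{Z})$. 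On the open part, I would verify the defining conditions of $\tEp{\leq c}(\inbordered{U})$ one value of $k\in\Z_{\ge0}$ at a time: for each $\alpha$ with $\dim M_\alpha=d$ choose, using $(\enh p_k^{\leq c})$ for $\Eopb i_{\bM_\alpha}K$ and Lemma~\ref{lem:ibZfieldoZ}, a set $Z_\alpha\in\CS^{<k}_{\bM_\alpha}$ with $\opb\pi\field_{M_\alpha\setminus Z_\alpha}\tens\Eopb i_{\bM_\alpha}K\in\dere^{\le c+p(k)}(\bM_\alpha)$; then $Z'\defeq\bigcup_\alpha Z_\alpha$ lies in $\CS^{<k}_{\inbordered U}$ by local finiteness of $\{M_\alpha\}$ in $\cM$, and since the $M_\alpha$ form an open cover of $U$ and the standard classical t-structure on enhanced ind-sheaves is local (the relevant cohomology ind-sheaves being a local notion), $\opb\pi\field_{U\setminus Z'}\tens\Eopb jK\in\dere^{\le c+p(k)}(\inbordered{U})$; letting $k$ vary gives $\Eopb jK\in\tEp{\leq c}(\inbordered{U})$. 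Lemma~\ref{lem:recoll}~(i) then gives $K\in\tEp{\leq c}(\bM)$. For (ii) one argues identically, with $\Eepb i$ in place of $\Eopb i$ and $\rihom(\opb\pi\field_Z,K)$ in place of $\opb\pi\field_Z\tens K$, testing the condition $(\enh p_k^{\ge c})$ on $Z\cap M_\alpha\in\CS^{\le k}_{\bM_\alpha}$ for $Z\in\CS^{\le k}_{\inbordered U}$, using locality of $\dere^{\ge n}$, and concluding via Lemma~\ref{lem:recoll}~(ii).

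I expect the main obstacle to be the ``open part'': one must make sure that the \emph{global} conditions defining $\tEp{\leq c}$ and $\tEp{\geq c}$ genuinely decompose over the clopen partition of $U$ by the top-dimensional strata. This rests on local finiteness of $\{M_\alpha\}$ in $\cM$ (so that the unions $Z'$ of the auxiliary closed subanalytic sets are again closed, subanalytic, and of dimension $<k$) and on the locality of the standard classical t-structure on $\dere(\inbordered{U})$; it is also here, through the frontier-dimension estimate, that the verification that the skeleta $M^{(j)}$ are closed subanalytic — needed to make Lemma~\ref{lem:recoll} applicable — enters.
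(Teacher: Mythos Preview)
Your proposal is correct and follows essentially the same approach as the paper: the paper's entire proof is the one sentence ``The statement follows from Lemma~\ref{lem:recoll}'', and your argument is precisely a detailed unpacking of this, proceeding by induction on $d_{\oM}$ via the skeletal filtration and handling the open complement (a locally finite disjoint union of clopen top-dimensional strata) through locality of the standard classical t-structure on $\dere$. Your write-up is considerably more explicit than the paper's, but the route is the same.
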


\begin{proof}
The statement follows from Lemma~\ref{lem:recoll}.
\end{proof}

\subsection{$\R$-constructible enhanced ind-sheaves}
Here, we extend the definition of $\R$-constructible enhanced ind-sheaves from \cite[\S4.9]{DK13} to the case of subanalytic bordered spaces.

Let $\bM=(\oM,\cM)$ be a subanalytic bordered space.

\begin{definition}
\begin{itemize}
\item[(i)]
An object $K\in\dere(\bM)$ is \emph{$\R$-constructible} if for any
relatively compact subanalytic open subset $U$ of $\bM$,
one has
\[
\Eopb i_{\bU} K \simeq \Efield_\bU \ctens \quot_\bU F \
\text{ in $\dere(\inb U)$ for some $F\in \BDC_\Rc(\field_{\bU\times\bR})$.}
\]
In particular, $K$ is stable.
\item[(ii)]
$\Erc(\bM)$ is the strictly full subcategory of $\dere(\bM)$ whose objects are $\R$-constructible.
\end{itemize}
\end{definition}

Recall the morphism $j_\bM\colon\bM\to\cM$.

\begin{lemma}
Let $K\in\dere(\bM)$. Then $K\in\Erc(\bM)$ if and only if $\Eoimv{j_{\bM\sep!!}} K\in\Erc(\cM)$.
\end{lemma}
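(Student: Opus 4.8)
The plan is to exploit two facts: that extension by zero along the bordering morphism is fully faithful, i.e.\ $\Eopb{j_\bM}\circ\Eeeim{j_\bM}\simeq\id_{\dere(\bM)}$, and that $\R$-constructibility is tested on relatively compact subanalytic open subsets. The basic observation is that a relatively compact subanalytic open subset $U$ of $\bM$ is also one of $\cM$ (regarded as the bordered space $(\cM,\cM)$), that the bordered space $\bU$ is the same in both pictures, and that the composition of $i_\bU\colon\bU\to\bM$ with $j_\bM$ is the morphism $\bU\to\cM$ induced by $U\subset\cM$.

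For the ``if'' part, assume $\Eeeim{j_\bM}K\in\Erc(\cM)$ and let $U$ be a relatively compact subanalytic open subset of $\bM$. Then
\[
\Eopb{i_\bU}K\simeq\Eopb{i_\bU}\Eopb{j_\bM}\Eeeim{j_\bM}K\simeq\Eopb{i_\bU}\bl\Eeeim{j_\bM}K\br ,
\]
where in the last term $i_\bU$ denotes the morphism $\bU\to\cM$; by $\R$-constructibility of $\Eeeim{j_\bM}K$ this object is of the form $\Efield_\bU\ctens\quot_\bU F$ with $F\in\BDC_\Rc(\field_{\bU\times\bR})$, so $K\in\Erc(\bM)$.

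For the ``only if'' part, assume $K\in\Erc(\bM)$, let $V$ be a relatively compact subanalytic open subset of $\cM$, and set $U\defeq V\cap\oM$, which is a relatively compact subanalytic open subset of $\bM$ contained in $V$; let $j'\colon\bU\to\inb V$ be the (semi-proper) morphism induced by $U\subset V$, so that $j_\bM\circ i_\bU=i_{\inb V}\circ j'$. The main point is the base-change isomorphism
\[
\Eopb{i_{\inb V}}\Eeeim{j_\bM}K\simeq\Eeeim{j'}\Eopb{i_\bU}K .
\]
Granting it, write $\Eopb{i_\bU}K\simeq\Efield_\bU\ctens\quot_\bU F$ with $F\in\BDC_\Rc(\field_{\bU\times\bR})$; since $\Eopb{j'}\Efield_{\inb V}\simeq\Efield_\bU$, the projection formula for $\Eeeim{j'}$ gives
\[
\Eeeim{j'}\bl\Efield_\bU\ctens\quot_\bU F\br\simeq\Efield_{\inb V}\ctens\Eeeim{j'}\quot_\bU F\simeq\Efield_{\inb V}\ctens\quot_{\inb V}\reeim{(j'\times\id_\bR)}F ,
\]
and $\reeim{(j'\times\id_\bR)}F\in\BDC_\Rc(\field_{\inb V\times\bR})$ because $j'\times\id_\bR$ is semi-proper, so that Proposition~\ref{pro:Rcfunctorial}~(ii) applies. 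Hence $\Eeeim{j_\bM}K\in\Erc(\cM)$.

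The main obstacle is the base-change isomorphism, the delicate point being that $j_\bM$ is the bordering morphism; it follows from base change for the Grothendieck operations on enhanced ind-sheaves on bordered spaces, but can also be obtained from the material already recorded. Indeed, Lemma~\ref{lem:Eeimtensanddual} applied to the open subsets $V\subset\cM$ and $U\subset\oM$ yields
\[
\opb\pi\field_V\tens\Eeeim{j_\bM}K\simeq\Eeeim{i_{\inb V}}\Eopb{i_{\inb V}}\Eeeim{j_\bM}K ,\qquad
\opb\pi\field_U\tens K\simeq\Eeeim{i_\bU}\Eopb{i_\bU}K ,
\]
while the projection formula together with $\opb{j_\bM}\field_V\simeq\field_U$ identifies the left-hand object of the first isomorphism with $\Eeeim{j_\bM}\bl\opb\pi\field_U\tens K\br\simeq\Eeeim{j_\bM}\Eeeim{i_\bU}\Eopb{i_\bU}K\simeq\Eeeim{i_{\inb V}}\Eeeim{j'}\Eopb{i_\bU}K$, using $j_\bM\circ i_\bU=i_{\inb V}\circ j'$. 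Since $\Eeeim{i_{\inb V}}$ is fully faithful (extension by zero along the open embedding $V\subset\cM$), hence conservative, the desired isomorphism follows. All the remaining verifications are routine bookkeeping with the Grothendieck operations.
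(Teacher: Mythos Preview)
The paper states this lemma without proof, so there is nothing to compare against; your argument stands on its own and is correct. You use exactly the right ingredients: the identification $\Eopb{j_\bM}\circ\Eeeim{j_\bM}\simeq\id$ for the ``if'' direction, and for the converse a base-change isomorphism along the square $\bU\to\bM\to\cM$ versus $\bU\to\inb V\to\cM$, which you derive cleanly from Lemma~\ref{lem:Eeimtensanddual} together with the projection formula and full faithfulness of $\Eeeim{i_{\inb V}}$. The semi-properness of $j'\colon\bU\to\inb V$ (needed to invoke Proposition~\ref{pro:Rcfunctorial}~(ii) for $j'\times\id_\bR$) is clear since $\overline U\subset\overline V$ are both compact.

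One small remark: in the ``if'' direction you implicitly use that a relatively compact subanalytic open subset $U$ of $\bM$ is also open in $\cM$ (not just in $\oM$); this is immediate since $\oM$ is open in $\cM$, but it is the point that makes the two notions of $i_\bU$ match up.
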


\Prop[\cite{DK13}]\label{prop:dualRc}
Let $f\cl\bM\to\bN$ a morphism of
subanalytic bordered spaces.
\bnum
\item
$\Erc(\bM)$ is a triangulated subcategory of $\dere(\bM)$.
\item
The duality functor $\Edual_\bM$ gives an equivalence $\Erc(\bM)^\op\isoto\Erc(\bM)$,
and there is a canonical isomorphism of functors
$\id_{\Erc(\bM)}\isoto \Edual_\bM\circ\Edual_\bM$.
\item The functors
$\Eopb f$ and $\Eepb f$ send $\Erc(\bN)$ to $\Erc(\bM)$, and
$$\Edual_\bM\circ\Eopb f\simeq \Eepb f\circ\Edual_\bN
\qtq \Edual_\bM\circ\Eepb f\simeq \Eopb f\circ\Edual_\bN.$$
\item Assume that $f$ is semi-proper. Then
the functors
$\Eoim f$ and $\Eeeim f$ send $\Erc(\bM)$ to $\Erc(\bN)$, and
$$\Edual_\bN\circ\Eoim f\simeq \Eeeim f\circ\Edual_\bM
\qtq \Edual_\bN\circ\Eeeim f\simeq \Eoim f\circ\Edual_\bM.$$
\ee
\enprop

See \cite[Corollary 4.9.4, Theorem 4.9.12, Propositions 4.9.14, 4.8.2]{DK13}.

\begin{definition}
\begin{itemize}
\item[(i)]
An \emph{$\enh$-type} on $\bM$ is the datum
\begin{equation}
\label{eq:typeT}
\shl = (\varphi_a, m_a, \psi^\pm_b, n_b)_{a\in A,\ b\in B}
\end{equation}
consisting of
\begin{itemize}
\item[(a)] finite sets $A, B$,
\item[(b)] integers $m_a$ and $n_b$ for any $a\in A$ and $b\in B$,
\item[(c)] morphisms of subanalytic bordered spaces
\[
\varphi_a,\psi^\pm_b\colon \bM\to\bR
\]
for any $a\in A$ and $b\in B$,
such that $\psi^-_b(x) < \psi^+_b(x)$ for any $x\in \oM$.
\end{itemize}
\item[(ii)]
An $\enh$-type $\shl$ as in \eqref{eq:typeT} is called \emph{stable} if for any $b\in B$
\begin{equation}
\label{eq:PsiHyp}
\overline{ \{(x,t) \in \oM\times\oR \semicolon t= \psi^+_b(x) - \psi^-_b(x) \} }
\cap (\cM\times\{+\infty\}) \neq \emptyset,
\end{equation}
where $\ol{\ast}$ denotes the closure in $\cM\times\cR$.
\end{itemize}
\end{definition}

\begin{notation}\label{not:PhiaPsib}
For an $\enh$-type $\shl$ on $\bM$ as in \eqref{eq:typeT}, set
\begin{align*}
\Phi_a &\defeq \{(x,t)\in \oM\times\R \semicolon t \geq \varphi_a(x)\}, \\
\Psi_b &\defeq \{(x,t)\in \oM\times\R \semicolon \psi^-_b(x) \leq t < \psi^+_b(x)\},
\end{align*}
and
\begin{align*}
\Qfield_\shl &\defeq \bl \soplus_{a\in A} \Qfield_{\Phi_a}[-m_a] \br
\oplus
\bl \soplus_{b\in B} \Qfield_{\Psi_b}[-n_b] \br
\in\dere(\bM), \\
\Efield_\shl &\defeq \bl \soplus_{a\in A} \Efield_{\Phi_a}[-m_a] \br
\dsum
\bl \soplus_{b\in B} \Efield_{\Psi_b}[-n_b] \br \\
&\simeq \Efield_\bM \ctens \Qfield_\shl \in\Erc(\bM).
\end{align*}
\end{notation}

Note that $\Efield_{\Psi_b}\not\simeq 0$
if and only if \eqref{eq:PsiHyp} holds true.

\begin{definition}
One says that $K\in\dere(\bM)$ is \emph{free} (resp.\ \emph{stably free}) on $\bM$ if, for any connected component $S$ of $\unbM$, there exists an $\enh$-type $\shl$ on $\inbordered S$ such that $\Eopb i_{\inbordered S} K \simeq \Qfield_\shl$ (resp.\ $\Eopb i_{\inbordered S} K \simeq \Efield_\shl$).
(Note that $\Eopb i_{\inbordered S} \simeq \Eepb i_{\inbordered S}$.)
\end{definition}

If $K\in\dere(\bM)$ is stably free, then it is $\R$-constructible.
If $K$ is free, then it is constructible in the sense of Remark~\ref{rem:TRc} below.

A \emph{regular filtration} $(M_k)_{k\in\Z}$ of $\bM$
is an increasing sequence of closed subanalytic subsets
$M_k$ of $\bM$ such that $M_k=\emptyset$ for $k\leq -1$, $M_k=\unbM$ for
$k\geq d_\unbM$, and $M_k \setminus M_{k-1}$ is smooth of dimension $k$. In particular,
\[
\emptyset = M_{-1} \subset M_0 \subset \cdots \subset M_{d_M-1} \subset M_{d_M} = \unbM.
\]

\begin{lemma}[{\cite[Lemma 4.9.9]{DK13}}]\label{lem:Rcstrat}
For any $K\in\Erc(\bM)$ there exists
a regular filtration $(M_k)_{k\in\Z}$ of $\bM$ such that
both $\Eopbv {i_{\inbordered{(M_k \setminus M_{k-1})}}^{\hs{.8ex}-1}} K$ and
$\Eepbv{i_{\inbordered{(M_k \setminus M_{k-1})}}^{\hs{.8ex}!}} K$ are stably free.
\end{lemma}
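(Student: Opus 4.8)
The plan is to construct a subanalytic stratification of $\bM$ adapted to $K$ in the strong sense that, for each stratum $S$, both the restriction $\Eopb i_{\inbordered S}K$ and the corestriction $\Eepb i_{\inbordered S}K$ are stably free, and then to reorganize it into a regular filtration. This reorganization is purely formal: refine the stratification to a subanalytic triangulation of $\bM$, so that every stratum $S$ is a smooth contractible cell and the frontier condition holds, and set $M_k\defeq\bigcup_{\dim S\leq k}\overline S$. Then $M_k$ is a closed subanalytic subset of $\bM$, the sequence is increasing with $M_k=\emptyset$ for $k<0$ and $M_k=\unbM$ for $k\geq d_\unbM$, and $M_k\setminus M_{k-1}$ is the disjoint union of the $k$-dimensional cells, hence (empty or) smooth of pure dimension $k$, with those cells as its connected components. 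Since stable freeness is a property of connected components and is inherited by further restriction to smaller smooth subanalytic strata — one checks $\Eopb{i_{\inbordered{S'}}}\Efield_\shl\simeq\Efield_{\shl|_{S'}}$ directly, and the analogue for $\Eepb$ follows from it via Proposition~\ref{prop:dualRc} — the filtration $(M_k)_{k\in\Z}$ has the required properties. So it suffices to produce the adapted stratification; and, since one may cover $\unbM$ by a locally finite family of relatively compact subanalytic open subsets, solve the problem on each, and take a common subanalytic refinement, it suffices to do this locally.

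Fix a relatively compact subanalytic open $U\subset\bM$. By definition of $\R$-constructibility, $\Eopb i_{\bU}K\simeq\Efield_\bU\ctens\quot_\bU F$ with $F\in\BDC_\Rc(\field_{\bU\times\bR})$, that is, the extension by zero of $F$ to the compact subanalytic space $\cU\times\cR$ is $\R$-constructible. Apply the subanalytic local triviality theorem to the projection $\cU\times\cR\to\cU$ together with a stratification adapted to this extension: after refining, one obtains a subanalytic triangulation $\{S_\alpha\}$ of $\cU$ (so each cell $S_\alpha$ in $\oU$ is smooth and contractible) over which the ambient stratification is product-like — the strata meeting $S_\alpha\times\cR$ are the graphs $\{t=\varphi(x)\}$ of finitely many non-crossing continuous subanalytic functions $\varphi\colon S_\alpha\to\cR$, together with the open bands between consecutive graphs — and $F$ is constant on each of them. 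Over the contractible $S_\alpha$, the classification of constructible sheaves on the line (whose indecomposables are constant sheaves on intervals), transported along the trivialization, yields $F|_{S_\alpha\times\oR}\simeq\bigoplus_i\field_{I_i}[-n_i]$, where each $I_i$ is a region between two of these graphs, i.e.\ of the form $\{\varphi_-(x)\bowtie_1 t\bowtie_2\varphi_+(x)\}$ with $\bowtie_1,\bowtie_2\in\{<,\leq\}$ and $\varphi_\pm$ either finite continuous subanalytic functions (hence morphisms $\inbordered{(S_\alpha)}\to\bR$) or $\pm\infty$.

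Now apply $\Efield_{\inbordered{(S_\alpha)}}\ctens\quot_{\inbordered{(S_\alpha)}}(\ast)$, so that $\Eopb i_{\inbordered{(S_\alpha)}}K\simeq\bigoplus_i\bigl(\Efield_{\inbordered{(S_\alpha)}}\ctens\Qfield_{I_i}\bigr)[-n_i]$. A direct convolution computation — which on each fibre amounts to computing $R\Gamma_c$ of an interval and then passing to an inductive limit sliding to $+\infty$ — shows that, whatever the type of $I_i$, the object $\Efield_{\inbordered{(S_\alpha)}}\ctens\Qfield_{I_i}$ is either $0$, or a shift of $\Efield_{\{t\geq\varphi\}}$ for one of the relevant $\varphi$, or a shift of $\Efield_{\{\varphi_-\leq t<\varphi_+\}}$ — the last vanishing unless the stability condition \eqref{eq:PsiHyp} holds. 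Collecting the nonzero contributions gives $\Eopb i_{\inbordered{(S_\alpha)}}K\simeq\Efield_\shl$ for an $\enh$-type $\shl$ on $\inbordered{(S_\alpha)}$; that is, $\Eopb i_{\inbordered{(S_\alpha)}}K$ is stably free. For the corestriction, use that $\Edual_\bM K\in\Erc(\bM)$ and $\Eepb i_{\inbordered{(S_\alpha)}}K\simeq\Edual_{\inbordered{(S_\alpha)}}\bigl(\Eopb i_{\inbordered{(S_\alpha)}}\Edual_\bM K\bigr)$ (Proposition~\ref{prop:dualRc}(ii),(iii) and $\Edual_\bM\Edual_\bM\simeq\id$); running the previous argument for $\Edual_\bM K$ as well and taking a common refinement of the two triangulations, $\Eopb i_{\inbordered{(S_\alpha)}}\Edual_\bM K$ is stably free, hence so is its dual — because on a smooth contractible $d$-dimensional cell one computes $\Edual\Efield_{\{t\geq\varphi\}}\simeq\Efield_{\{t\geq-\varphi\}}[d]$ and $\Edual\Efield_{\{\varphi_-\leq t<\varphi_+\}}\simeq\Efield_{\{-\varphi_+\leq t<-\varphi_-\}}[d+1]$ (no orientation twist, the cell being orientable), so $\Edual$ preserves stable freeness.

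The substantive input is the second paragraph: producing, over the whole base at once, a single subanalytic stratification of $\cU\times\cR$ that is simultaneously adapted to $F$ and product-like over $\cU$ with graph-and-band fibre strata — a relative subanalytic triviality statement — and deducing from it the fibrewise decomposition of $F$ into interval sheaves, with due care at the border $\cU\times\{\pm\infty\}$ and $(\cU\setminus\oU)\times\cR$. The remaining ingredients — the convolution bookkeeping of the third paragraph, the duality computations, and the assembly of the regular filtration — are essentially formal once this is in place.
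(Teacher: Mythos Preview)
The paper does not supply a proof of this lemma; it is stated with a bare citation to \cite[Lemma~4.9.9]{DK13}, so there is no in-paper argument to compare against. Your sketch is a faithful reconstruction of the strategy used in the cited reference: locally present $K$ as $\Efield_\bU\ctens\quot_\bU F$ for an $\R$-constructible $F$, stratify the base so that $F$ decomposes over each stratum into sheaves on bands between subanalytic graphs, read off the resulting $\enh$-type after convolution with $\Efield$, handle $\Eepb$ by duality and a common refinement, and pass to a regular filtration via skeleta of a triangulation.
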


\begin{definition}\label{def:dualE}
Consider an \emph{$\enh$-type} on $\bM$
\[
\shl = (\varphi_a, m_a, \psi^\pm_b, n_b)_{a\in A,\ b\in B},
\]
and assume that $\bM$ is smooth of dimension $d$.
The dual of $\shl$, denoted by
\[
\shl^* = (\varphi_a^*, m_a^*, \psi^{\pm\sep*}_b, n_b^*)_{a\in A,\ b\in B},
\]
is the $\enh$-type on $\bM$ defined by
\begin{align*}
\varphi_a^* \defeq -\varphi_a, &\quad m_a^* \defeq -m_a-d, \\
\psi^{\pm\sep*}_b \defeq -\psi^\mp_b, &\quad n_b^* \defeq -n_b-d-1.
\end{align*}
Accordingly, we set
\eqn
\Phi^*_a &\defeq& \{(x,t)\in \oM\times\R \semicolon t \geq -\varphi_a(x)\}, \\
\Psi^*_b &\defeq& \{(x,t)\in \oM\times\R \semicolon
-\psi^+_b(x) \leq t < -\psi^-_b(x)\}.
\eneqn
\end{definition}

\begin{lemma}
Let $\shl$ be an $\enh$-type on $\bM$.
Assume that $\oM$ is smooth and equidimensional. Then
\[
\Qdual_\bM \Qfield_\shl \simeq \Qfield_{\shl^*}\qtq
\Edual_\bM \Efield_\shl \simeq \Efield_{\shl^*}\;.
\]
\end{lemma}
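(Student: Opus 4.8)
The plan is to reduce both equivalences, by means of Lemma~\ref{lem:Qdual} and Lemma~\ref{lem:Edual}, to a single Verdier-duality computation for the two model sheaves $\field_{\Phi_a}$ and $\field_{\Psi_b}$ on $\oM\times\oR$. Since $\Qdual_\bM$ is additive and contravariant with $\Qdual_\bM(K[-m])\simeq(\Qdual_\bM K)[m]$, and $\Qfield_\shl$ (resp.\ $\Qfield_{\shl^*}$) is the direct sum of the $\Qfield_{\Phi_a}[-m_a]$ and $\Qfield_{\Psi_b}[-n_b]$ (resp.\ of the $\Qfield_{\Phi^*_a}[-m_a^*]$ and $\Qfield_{\Psi^*_b}[-n_b^*]$), it suffices to prove
\[
\Qdual_\bM\Qfield_{\Phi_a}\simeq\Qfield_{\Phi^*_a}[d], \qquad
\Qdual_\bM\Qfield_{\Psi_b}\simeq\Qfield_{\Psi^*_b}[d+1],
\]
and then to reassemble using $-m_a^*=m_a+d$ and $-n_b^*=n_b+d+1$. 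Granting the statement for $\Qdual_\bM$, the one for $\Edual_\bM$ follows formally: Lemma~\ref{lem:Edual} gives $\Edual_\bM\Efield_\shl\simeq\Efield_\bM\ctens\Qdual_\bM\Qfield_\shl$, and $\Efield_{\shl^*}=\Efield_\bM\ctens\Qfield_{\shl^*}$ by the definition of $\Efield_{(-)}$.

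For the two model computations I would argue as follows. By Lemma~\ref{lem:Qdual}, $\Qdual_\bM\Qfield_{\Phi_a}\simeq\quot_\bM(\opb a\dual_{\oM\times\oR}\field_{\Phi_a})$, where $a(x,t)=(x,-t)$, and likewise for $\Psi_b$. Starting from the elementary one-variable identity $\dual_\oR\field_{\{t\geq0\}}\simeq\field_{\{t>0\}}[1]$, combined with the Künneth formula for Verdier duality and the shift homeomorphism $(x,t)\mapsto(x,t-\varphi_a(x))$, one obtains $\dual_{\oM\times\oR}\field_{\Phi_a}\simeq\field_{\Phi_a^\circ}[d+1]$, where $\Phi_a^\circ=\{t>\varphi_a\}$ is the interior of $\Phi_a$; the case of $\Psi_b$ reduces to it through the distinguished triangle $\field_{\Psi_b}\to\field_{\{t\geq\psi^-_b\}}\to\field_{\{t\geq\psi^+_b\}}\tone$, giving $\dual_{\oM\times\oR}\field_{\Psi_b}\simeq\field_{\Psi_b'}[d+1]$ with $\Psi_b'=\{\psi^-_b<t\leq\psi^+_b\}$. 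Applying $\opb a$ carries $\Phi_a^\circ$ to the open complement of $\Phi^*_a$ and $\Psi_b'$ to $\Psi^*_b$. Finally, applying $\quot_\bM$ and using that it annihilates $\opb\pi\derd(\bM)$ — in particular $\quot_\bM\field_{\oM\times\oR}\simeq0$ — the triangle $\field_{(\oM\times\oR)\setminus\Phi^*_a}\to\field_{\oM\times\oR}\to\field_{\Phi^*_a}\tone$ yields $\quot_\bM\field_{(\oM\times\oR)\setminus\Phi^*_a}\simeq\Qfield_{\Phi^*_a}[-1]$, hence $\Qdual_\bM\Qfield_{\Phi_a}\simeq\Qfield_{\Phi^*_a}[d]$; as $\Psi^*_b$ is already locally closed, no such complementation is needed and $\Qdual_\bM\Qfield_{\Psi_b}\simeq\Qfield_{\Psi^*_b}[d+1]$.

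The step requiring real care — the main obstacle — is the orientation-sheaf bookkeeping. For $\oM$ smooth of dimension $d$ one has $\omega_\oM\simeq\ori_\oM[d]$, so each Verdier dual above actually carries a twist by $\opb\pi\ori_\oM$, which propagates unchanged through the whole argument; it must be checked that this twist behaves correctly under the Künneth identification, the shift homeomorphism, and the action of $\opb a$ (which is orientation-preserving along $\oM$). In the orientable case this twist is trivial and the equivalences hold as stated; in general one reads them with the factor $\opb\pi\ori_\oM$ understood. Apart from this, the proof is a direct unwinding of the definitions of $\Qfield_\shl$, $\Efield_\shl$ and $\shl^*$ together with Lemmas~\ref{lem:Qdual} and~\ref{lem:Edual}.
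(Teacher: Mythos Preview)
Your approach is essentially the same as the paper's: reduce by additivity to the two building blocks $\Qfield_{\Phi_a}$ and $\Qfield_{\Psi_b}$, compute their $\Qdual_\bM$ via Lemma~\ref{lem:Qdual} (Verdier duality on $\oM\times\oR$ followed by $\opb a$ and $\quot_\bM$), and deduce the $\Edual_\bM$ statement from Lemma~\ref{lem:Edual}. The paper packages exactly this computation as Lemma~\ref{lem:PhiPsiD}, writing directly $\Qdual_\bM\Qfield_{\Phi_a}\simeq\Qfield_{\{t<-\varphi_a(x)\}}[d+1]\simeq\Qfield_{\Phi^*_a}[d]$ and similarly for $\Psi_b$, without spelling out the intermediate Verdier-dual step; your route through the shift homeomorphism, the triangle $\field_{\Psi_b}\to\field_{\{t\geq\psi^-_b\}}\to\field_{\{t\geq\psi^+_b\}}\tone$, and the complementation trick $\quot_\bM\field_{\oM\times\oR}\simeq0$ is a correct and more explicit way to arrive at the same formulas.

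Your point about the orientation sheaf is well taken and is genuinely glossed over in the paper: since $\omega_{\oM\times\oR}\simeq\opb\pi\ori_\oM[d+1]$, the literal output of $\opb a\,\dual_{\oM\times\oR}$ on $\field_{\Phi_a}$ and $\field_{\Psi_b}$ carries a twist by $\opb\pi\ori_\oM$, so the isomorphisms $\Qdual_\bM\Qfield_\shl\simeq\Qfield_{\shl^*}$ and $\Edual_\bM\Efield_\shl\simeq\Efield_{\shl^*}$ hold as written only up to this rank-one local system in degree~$0$. This does not affect any of the subsequent uses in the paper (Lemmas~\ref{lem:t-PhiPsi} and~\ref{lem:t-L}), which only concern membership in the various $\tEprc{c}$, $\dEprc{c}$, $\Eprc{c}$; tensoring with $\opb\pi\ori_\oM$ leaves those unchanged. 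So your caveat is the right level of precision, and otherwise your argument is complete.
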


\begin{proof}
This follows from Lemma~\ref{lem:PhiPsiD} below.
\end{proof}

\begin{lemma}\label{lem:PhiPsiD}
Recall {\rm Notation~\ref{not:PhiaPsib}} and
{\rm Definition~\ref{def:dualE}}.
If $\bM$ is smooth of dimension $d$, one has
\begin{align*}
\Qdual_{\bM}(\Qfield_{\Phi_a}) &\simeq \Qfield_{\Phi^*_a}[d], &
\Edual_{\bM}(\Efield_{\Phi_a}) &\simeq \Efield_{\Phi^*_a}[d], \\
\Qdual_{\bM}(\Qfield_{\Psi_b}) &\simeq \Qfield_{\Psi^*_b}[d+1], &
\Edual_{\bM}(\Efield_{\Psi_b}) &\simeq \Efield_{\Psi^*_b}[d+1].
\end{align*}
\end{lemma}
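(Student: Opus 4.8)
The plan is to reduce the four asserted isomorphisms to two statements about the non-enhanced duality functor $\Qdual_\bM$, and then to compute the relevant duals of constant sheaves on the genuine manifold $\oM\times\oR$.

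First I would dispose of the $\Efield$-statements. By Lemma~\ref{lem:Edual} and the definition $\Efield_Z = \Efield_\bM\ctens\Qfield_Z$, one has $\Edual_\bM\Efield_{\Phi_a}\simeq\Efield_\bM\ctens\Qdual_\bM\Qfield_{\Phi_a}$, and likewise with $\Psi_b$ in place of $\Phi_a$; hence it suffices to prove $\Qdual_\bM\Qfield_{\Phi_a}\simeq\Qfield_{\Phi^*_a}[d]$ and $\Qdual_\bM\Qfield_{\Psi_b}\simeq\Qfield_{\Psi^*_b}[d+1]$. By Lemma~\ref{lem:Qdual}, $\Qdual_\bM\Qfield_{\Phi_a}\simeq\quot_\bM(\opb a\,\dual_{\oM\times\oR}\field_{\Phi_a})$ with $a(x,t)=(x,-t)$, and similarly for $\Psi_b$, so the whole problem is moved into $\BDC(\field_{\oM\times\oR})$.

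For the sheaf computation, set $N=\oM\times\oR$, a manifold of dimension $d+1$. The map $(x,t)\mapsto(x,\,t-\varphi_a(x))$ is an orientation-preserving homeomorphism of $N$ carrying $\Phi_a$ onto $\oM\times\{t\geq 0\}$; since $\dual_N$ commutes with it, and since, by compatibility of $\dual$ with pullback along the projection $N\to\oR$ together with the elementary computation $\dual_\oR\field_{\{t\geq 0\}}\simeq\field_{\{t>0\}}[1]$ (obtained from the triangle $\field_{\{t>0\}}\to\field_\oR\to\field_{\{t\leq 0\}}\tone$), one finds $\dual_N\field_{\Phi_a}\simeq\field_{\{t>\varphi_a(x)\}}[d+1]$, up to the orientation sheaf of $\oM$ which I suppress (it is trivial whenever $\oM$ is orientable). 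An entirely analogous argument with the orientation-preserving, affine-in-$t$ homeomorphism straightening $\Psi_b$ to $\oM\times\{0\leq t<1\}$, using $\dual_\oR\field_{\{0\leq t<1\}}\simeq\field_{\{0<t\leq 1\}}[1]$, gives $\dual_N\field_{\Psi_b}\simeq\field_{\{\psi^-_b(x)<t\leq\psi^+_b(x)\}}[d+1]$.

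It then remains to apply $\opb a$ and pass to the quotient. Since $\opb a$ interchanges $\{t>\varphi_a(x)\}$ with $\{t<-\varphi_a(x)\}=N\setminus\Phi^*_a$ and $\{\psi^-_b(x)<t\leq\psi^+_b(x)\}$ with $\Psi^*_b$, one gets $\opb a\,\dual_N\field_{\Psi_b}\simeq\field_{\Psi^*_b}[d+1]$, hence $\Qdual_\bM\Qfield_{\Psi_b}\simeq\Qfield_{\Psi^*_b}[d+1]$ directly. For $\Phi_a$ I would apply $\quot_\bM$ to the triangle $\field_{N\setminus\Phi^*_a}\to\field_N\to\field_{\Phi^*_a}\tone$: as $\field_N=\opb\pi\field_\oM$ lies in $\catn$, its image under $\quot_\bM$ vanishes, so $\quot_\bM\field_{N\setminus\Phi^*_a}[d+1]\simeq\quot_\bM\field_{\Phi^*_a}[d]$, which is the desired $\Qdual_\bM\Qfield_{\Phi_a}\simeq\Qfield_{\Phi^*_a}[d]$. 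Once the reduction to sheaves on $N$ is made, the only real work is the careful bookkeeping of the degree shifts and of the orientation sheaf of $\oM$, plus the one genuinely necessary sheaf-theoretic input that $\quot_\bM$ annihilates $\field_{\oM\times\oR}$ — which is exactly what turns the ``open complementary half-space'' dual of $\Phi_a$ into the closed region $\Phi^*_a$ with the predicted shift, and is the point most likely to hide a sign or shift error.
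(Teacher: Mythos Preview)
Your proof is correct and follows essentially the same route as the paper's: both reduce via Lemma~\ref{lem:Qdual} to computing $\opb a\,\dual_{\oM\times\oR}$ on $\field_{\Phi_a}$ and $\field_{\Psi_b}$, and both use the identification $\Qfield_{\{t<-\varphi_a(x)\}}[1]\simeq\Qfield_{\Phi^*_a}$ (your triangle argument with $\quot_\bM\field_N=0$ is exactly what underlies the paper's displayed line $\Qfield_{\{t<-\varphi_a(x)\}}[d+1]\simeq\Qfield_{\Phi^*_a}[d]$). The paper's proof is simply terser, writing the outcome in two lines without spelling out the straightening homeomorphisms or the one-variable duals $\dual_\oR\field_{\{t\geq 0\}}\simeq\field_{\{t>0\}}[1]$ and $\dual_\oR\field_{\{0\leq t<1\}}\simeq\field_{\{0<t\leq 1\}}[1]$ that you make explicit; your honest remark about the orientation sheaf of $\oM$ is a point the paper also leaves implicit.
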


\begin{proof}
By Lemma~\ref{lem:Edual},
 one has
\begin{align*}
\Qdual_{\bM}(\Qfield_{\Phi_a})
&\simeq \Qfield_{\{t<-\varphi_a(x)\}}[d+1] \simeq \Qfield_{\Phi^*_a}[d], \\
\Qdual_{\bM}(\Qfield_{\Psi_b})
&\simeq \Qfield_{\{-\psi^+_b(x) \leq t < -\psi^-_b(x) \}}[d+1] = \Qfield_{\Psi^*_b}[d+1].
\end{align*}
The other statements also follow from Lemma~\ref{lem:Edual}.
\end{proof}

\begin{definition}
For $p$ a perversity and $c\in\R$, we set
\begin{align*}
\tEprc{\leq c}(\bM) &\defeq \tEp{\leq c}(\bM) \cap \Erc(\bM), \\
\tEprc{\geq c}(\bM) &\defeq \tEp{\geq c}(\bM) \cap \Erc(\bM).
\end{align*}
\end{definition}

\begin{proposition}
\label{pro:t-pre1}
The following properties hold.
\begin{itemize}
\item[(i)]
$\bl \tEprc{\leq c}(\bM), \tEprc{\geq c}(\bM) \br_{c\in\R}$
is a t-structure on $\Erc(\bM)$.
\item[(ii)]
Assume that $\bM=M$ is a subanalytic space.
For any interval $I\subset\R$ such that $I\to\R/\Z$ is injective,
the prestack on $M$
\[
U\mapsto \tEprc{I}(U)
\]
is a stack of quasi-abelian categories.
\end{itemize}
\end{proposition}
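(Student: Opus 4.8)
Part~(ii) will be deduced from part~(i): once $\bl\tEprc{\leq c}(U),\tEprc{\geq c}(U)\br_{c\in\R}$ is a t-structure on the triangulated category $\Erc(U)$ (triangulated by Proposition~\ref{prop:dualRc}~(i)) for every open subanalytic $U\subset M$, each slice $\tEprc{I}(U)$ is quasi-abelian by Proposition~\ref{pro:qabelian}~(i) when $I\to\R/\Z$ is injective, with exact restriction functors since $\Eopb{i_U}=\Eepb{i_U}$ is exact for the $p$-perverse t-structure by Lemma~\ref{lem:ibZ}; and $U\mapsto\tEprc{I}(U)$ is a stack because it is the intersection, inside the stack $U\mapsto\dere(U)$, of the stack $U\mapsto\tEp{I}(U)$ of Proposition~\ref{prop:stack} with the substack $U\mapsto\Erc(U)$ (the latter a substack because $\R$-constructibility is a local condition), so the patching argument of Proposition~\ref{prop:stack} produces an object which is automatically $\R$-constructible. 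Thus everything reduces to part~(i). Of the axioms of Definition~\ref{def:gent}, (c) is immediate from Proposition~\ref{pro:tE-fihom}; (b) is immediate from the definition, $K\mapsto K[-1]$ turning $(\enh p_k^{\leq c})$ into $(\enh p_k^{\leq c+1})$; and (a) holds because the standard t-structure on each $\dere(\inbordered Z)$ is indexed by $\Z$, so $\dere^{\leq c+p(k)}(\dummy)$ is right-continuous and $\dere^{\geq c+p(k)}(\dummy)$ left-continuous in $c$, whence for a fixed $c$ one may, separately for each $k$, choose $c'$ near $c$ and a subanalytic set witnessing $(\enh p_k^{\leq c'})$ (resp.\ $(\enh p_k^{\geq c'})$) that already witnesses $(\enh p_k^{\leq c})$ (resp.\ $(\enh p_k^{\geq c})$).

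The substance is axiom~(d). As in the proof of Proposition~\ref{pro:t-pre}, a short computation with $\lfloor\dummy\rfloor$ and $\lceil\dummy\rceil$ identifies, for every $c\in\R$, the pairs
\[
\bl\tEprc{\leq c}(\bM),\tEprc{>c-1}(\bM)\br\qtq\bl\tEprc{<c}(\bM),\tEprc{\geq c-1}(\bM)\br
\]
with the pairs $\bl\tEprc[q_i]{\leq 0}(\bM),\tEprc[q_i]{\geq 0}(\bM)\br$ attached to the classical perversities $q_1(n)=\lfloor c+p(n)\rfloor$ and $q_2(n)=\lceil c-1+p(n)\rceil$; once these are known to be classical t-structures, the truncation triangles $X_{\leq c}\to X\to X_{>c}\tone$ and $X_{<c}\to X\to X_{\geq c}\tone$ of Definition~\ref{def:gent}~(d) are their classical truncation triangles. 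So it suffices to prove the following \emph{claim}: for a classical perversity $q$, the pair $\bl\tEprc[q]{\leq 0}(\bM),\tEprc[q]{\geq 0}(\bM)\br$ is a classical t-structure on $\Erc(\bM)$.

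The nesting $\tEprc[q]{\leq0}(\bM)\subset\tEprc[q]{\leq1}(\bM)$, $\tEprc[q]{\geq1}(\bM)\subset\tEprc[q]{\geq0}(\bM)$ and the vanishing $\Hom[\Erc(\bM)](\tEprc[q]{\leq0}(\bM),\tEprc[q]{\geq1}(\bM))=0$ (Proposition~\ref{pro:tE-fihom}) are formal, so the claim amounts to constructing, for each $K\in\Erc(\bM)$, a distinguished triangle $K'\to K\to K''\tone$ in $\Erc(\bM)$ with $K'\in\tEprc[q]{\leq0}(\bM)$ and $K''\in\tEprc[q]{\geq1}(\bM)$. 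I would do this by induction on $d_\unbM$. If $d_\unbM\leq 0$, then $\tEprc[q]{\leq c}(\bM)=\dere^{\leq c+q(0)}(\bM)\cap\Erc(\bM)$ and $\tEprc[q]{\geq c}(\bM)=\dere^{\geq c+q(0)}(\bM)\cap\Erc(\bM)$ — a shift of the standard classical t-structure — so there is nothing to prove. For $d_\unbM>0$, Lemma~\ref{lem:Rcstrat} provides a closed subanalytic $Z\subset\unbM$ with $d_Z<d_\unbM$ such that, writing $U=\unbM\setminus Z$ (smooth of pure dimension $d_\unbM$), $i\colon\inbordered Z\to\bM$ and $j\colon\inbordered U\to\bM$, the restriction $\Eopb j K\;(=\Eepb j K)$ is stably free on $\inbordered U$. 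On $\inbordered Z$ the inductive hypothesis yields a $q$-truncation triangle for $\Eopb i K$ and for $\Eepb i K$; on $\inbordered U$ one truncates the stably free object $\Eopb j K$ by the ordinary truncation functors shifted by $q(d_\unbM)$. One then glues these partial triangles through the recollement triangles $\Eeeim j\Eopb j K\to K\to\Eoim i\Eopb i K\tone$ and $\Eoim i\Eepb i K\to K\to\Eoim j\Eepb j K\tone$ of Lemma~\ref{lem:Eeimtensanddual}, in the standard way (the gluing data being uniquely determined thanks to the orthogonality of Proposition~\ref{pro:tE-fihom}); Lemma~\ref{lem:recoll} then shows $K'\in\tEprc[q]{\leq0}(\bM)$ and $K''\in\tEprc[q]{\geq1}(\bM)$, and these objects are $\R$-constructible since this may be checked after restriction to $\inbordered Z$ and $\inbordered U$.

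The only non-formal ingredient — and the main obstacle — is the comparison on the smooth top stratum: one must check that for a \emph{stably free} object on a smooth bordered space $\bN$ of pure dimension $e$, the condition ``$(\enh q_k^{\leq c})$ for all $k$'' (resp.\ ``$(\enh q_k^{\geq c})$ for all $k$'') is equivalent to membership in $\dere^{\leq c+q(e)}(\bN)$ (resp.\ $\dere^{\geq c+q(e)}(\bN)$). One uses here that each $\Efield_{\Phi_a}$ and each $\Efield_{\Psi_b}$ is concentrated in a single standard cohomological degree and has full support over $\bN$, so that the closed sets $Z$ occurring in $(\enh q_k^{\leq c})$ cannot help, while in $(\enh q_k^{\geq c})$ the contributions of the $Z$ of dimension $<e$ are automatically dominated by the case $Z=\bN$ precisely because $q$ is a perversity, i.e.\ $0\leq q(m)-q(e)\leq e-m$. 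Granting this comparison, the recollement construction completes the inductive step, hence the claim, hence part~(i); and part~(ii) follows as indicated above.
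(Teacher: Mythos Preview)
Your proposal is correct and follows essentially the same approach as the paper: axiom~(d) is established by a recollement-style gluing over a stratification, the base case being precisely your ``main obstacle'' (the paper's Lemma~\ref{lem:t-PhiPsi} and Corollary~\ref{cor:tEkL}), and part~(ii) is deduced from Proposition~\ref{prop:stack}. The paper organizes the induction slightly differently---as a \emph{decreasing} induction on the dimension of the closed locus (Proposition~\ref{pro:Epdt}), working directly with a general perversity $p$ rather than first reducing to a classical one---and one small imprecision in your outline is that in the gluing one does not truncate $\Eopb i K$ and $\Eepb i K$ separately and then ``glue the triangles'': one truncates $\Eopb j K$ on $U$, forms the cone $L$ of $\Eeeim j K'_U\to K$, and then truncates $\Eepb i L$ on $Z$, which is the octahedral argument the paper spells out.
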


\begin{proof}[Plan of the proof]
(i) We have to prove that the conditions in Definition~\ref{def:gent} are satisfied.
Conditions (a) and (b) are clear. Condition (c) follows from Proposition~\ref{pro:tE-fihom}.
Condition (d) is checked in Proposition~\ref{pro:Epdt} below.

\smallskip\noi
(ii) follows from Proposition~\ref{prop:stack}.
\end{proof}

\begin{notation}
We denote by
\[
\bl \tEprc[1/2]{\leq c}(\bM), \tEprc[1/2]{\geq c}(\bM) \br_{c\in\R}
\]
the t-structure associated with the middle perversity $\mathsf m(n) = -n/2$.
\end{notation}

\begin{remark}
The t-structures $\bl \tEprc{\leq c}(\bM), \tEprc{\geq c}(\bM) \br_{c\in\R}$
are not well behaved with respect to duality, as one observes
in Lemma~\ref{lem:t-PhiPsi} below.
We will come back to this point in \S\,\ref{sse:enhp}.
\end{remark}

\begin{lemma}
\label{lem:t-PhiPsi}
Assume that $\bM$ is smooth of dimension $d$.
Using {\rm Notation~\ref{not:PhiaPsib}}, one has
\begin{itemize}
\item[(i)]
$\Efield_{\Phi_a}, \Efield_{\Psi_b} \in \tEprc{-p(d)}(\bM)$,
\item[(ii)]
$\Edual_\bM\Efield_{\Phi_a} \in \tEprc[p^*]{p(d)}(\bM)$\qtq
$\Edual_\bM\Efield_{\Psi_b} \in \tEprc[p^*]{p(d)-1}(\bM)$.
\end{itemize}
\end{lemma}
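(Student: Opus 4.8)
The plan is to prove (i) directly, and then to deduce (ii) from (i) applied to the perversity $p^*$ (note that $(p^*)^*=p$) together with Lemma~\ref{lem:PhiPsiD}.

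The first point, used repeatedly below, is that on \emph{any} smooth bordered space the objects $\Efield_{\Phi_a}$ and $\Efield_{\Psi_b}$ lie in the heart $\dere^0$ of the standard t-structure. Indeed, by \eqref{eq:Efield} one has $\Efield_{\Phi_a}\simeq\Efield_\bM\ctens\Qfield_{\Phi_a}$, the functor $\Efield_\bM\ctens(\ast)$ is exact by Lemma~\ref{lem:t-enh}~(ii), and a fiberwise computation on $\bR$ gives $\LE\Qfield_{\Phi_a}\simeq(\field_{\{t\geq0\}}\oplus\field_{\{t\leq0\}})\ctens\field_{\Phi_a}\simeq\field_{\Phi_a}\in\derdR[0]{\bM}$ (using $\field_{\{t\geq0\}}\ctens\field_{\Phi_a}\simeq\field_{\Phi_a}$ and $\field_{\{t\leq0\}}\ctens\field_{\Phi_a}\simeq0$, $\Phi_a$ being upward closed in the $t$-direction); the argument for $\Psi_b$ is identical. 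Granting this, Lemma~\ref{lem: stp} with $c=-p(d)$ and $d_\oM=d$ gives $\dere^{\leq0}(\bM)\subset\tEp{\leq-p(d)}(\bM)$, hence $\Efield_{\Phi_a},\Efield_{\Psi_b}\in\tEp{\leq-p(d)}(\bM)$.

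For the ``$\geq$'' half of (i) I would invoke Lemma~\ref{lem:crg} (implication (iii)$\Rightarrow$(i)): it reduces $\Efield_{\Phi_a}\in\tEp{\geq-p(d)}(\bM)$ to checking $\Eepb i_\bS\Efield_{\Phi_a}\in\dere^{\geq-p(d)+p(k)}(\bS)$ for every $k\in\Z_{\geq0}$ and every smooth $S\in\LCS_\bM^{\leq k}$, of dimension $k'=\dim S$, say (working on connected components we may assume $\bS$ equidimensional). We compute $\Eepb i_\bS$ by duality: on $\Erc$ one has $\Eepb i_\bS\simeq\Edual_\bS\circ\Eopb i_\bS\circ\Edual_\bM$ (from Proposition~\ref{prop:dualRc}~(iii) and $\Edual\circ\Edual\simeq\id$). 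Since $\Eopb i_\bS$ commutes with $\ctens$ and satisfies $\Eopb i_\bS\Efield_\bM\simeq\Efield_\bS$, $\Eopb i_\bS\Qfield_Z\simeq\Qfield_{Z\cap(S\times\R)}$, Lemma~\ref{lem:PhiPsiD} on $\bM$ yields
\[
\Eepb i_\bS\Efield_{\Phi_a}\simeq\Edual_\bS\bl\Eopb i_\bS\Edual_\bM\Efield_{\Phi_a}\br\simeq\Edual_\bS\bl\Efield_{\Phi'}[d]\br\simeq(\Edual_\bS\Efield_{\Phi'})[-d],
\]
where $\Phi'\defeq\{(x,t)\in S\times\R : t\geq-\varphi_a(x)\}$ is again a $\Phi$-type region on the smooth space $\bS$. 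Applying Lemma~\ref{lem:PhiPsiD} on $\bS$ and the first point (on $\bS$), $(\Edual_\bS\Efield_{\Phi'})[-d]\simeq\Efield_{(\Phi')^*}[k'-d]\in\dere^{\geq0}(\bS)[k'-d]=\dere^{\geq d-k'}(\bS)$. The same computation gives $\Eepb i_\bS\Efield_{\Psi_b}\in\dere^{\geq d-k'}(\bS)$ (with the $\Psi$-type region $\{(x,t)\in S\times\R : -\psi_b^+(x)\leq t<-\psi_b^-(x)\}$ in place of $\Phi'$, the corresponding $\Efield$-object being $0$ when \eqref{eq:PsiHyp} fails on $\bS$). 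Finally, since $p^*$ is decreasing and $k'\leq\min(k,d)$, one has $-p(d)+p(k)\leq d-k'$, so $\dere^{\geq d-k'}(\bS)\subset\dere^{\geq-p(d)+p(k)}(\bS)$, which is the desired bound. Together with the ``$\leq$'' half and $\Efield_{\Phi_a},\Efield_{\Psi_b}\in\Erc(\bM)$, this proves (i).

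For (ii): applying (i) to $p^*$ gives $\Efield_{\Phi_a^*},\Efield_{\Psi_b^*}\in\tEprc[p^*]{-p^*(d)}(\bM)$ with $-p^*(d)=p(d)+d$. Combining this with $\Edual_\bM\Efield_{\Phi_a}\simeq\Efield_{\Phi_a^*}[d]$ and $\Edual_\bM\Efield_{\Psi_b}\simeq\Efield_{\Psi_b^*}[d+1]$ (Lemma~\ref{lem:PhiPsiD}) and the evident shift compatibility of the families $\tEp[p^*]{\leq c}(\bM)$, $\tEp[p^*]{\geq c}(\bM)$ (clear from Definition~\ref{def:Etp}), one gets $\Edual_\bM\Efield_{\Phi_a}\in\tEprc[p^*]{p(d)}(\bM)$ and $\Edual_\bM\Efield_{\Psi_b}\in\tEprc[p^*]{p(d)-1}(\bM)$, as claimed. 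The main obstacle is the ``$\geq$'' half of (i): the heart membership $\Efield_{\Phi_a}\in\dere^0(\bM)$ only provides one of the two inequalities, and the codimension shift appearing in $\Eepb i_\bS\Efield_{\Phi_a}$ over a lower-dimensional smooth stratum is precisely what forces the detour through Lemma~\ref{lem:crg} (reduction to smooth strata) and through Lemma~\ref{lem:PhiPsiD} applied twice (to trade the shift for a duality).
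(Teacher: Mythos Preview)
Your proof is correct and follows essentially the same strategy as the paper: part~(ii) is deduced from~(i) via Lemma~\ref{lem:PhiPsiD} exactly as the paper does, and for~(i) both arguments handle the ``$\leq$'' half by the heart membership $\Efield_{\Phi_a},\Efield_{\Psi_b}\in\dere^0(\bM)$ (the paper calls this ``straightforward'') and the ``$\geq$'' half by checking $\Eepb i_\bS$ on smooth strata via Lemma~\ref{lem:crg}.

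The one genuine difference is in how you compute $\Eepb i_\bS\Efield_{\Phi_a}$ for a smooth $S$ of dimension $k'$. The paper does this directly at the sheaf level: since $\field_{\Phi_a}$ is locally constant along the fibers of $i_S\times\id_\R$, one has
\[
\epb{(i_S\times\id)}\field_{\Phi_a}\simeq \opb{(i_S\times\id)}\field_{\Phi_a}\tens \epb{i_S}\field_M,
\]
and locally $\epb{i_S}\field_M\simeq\field_S[k'-d]$, giving $\Eepb i_\bS\Efield_{\Phi_a}\in\dere^{\geq d-k'}(\bS)$ in one step. You instead pass through duality, writing $\Eepb i_\bS\simeq\Edual_\bS\circ\Eopb i_\bS\circ\Edual_\bM$ and applying Lemma~\ref{lem:PhiPsiD} once on $\bM$ and once on $\bS$. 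Both routes land on the same shift $[k'-d]$ (indeed $(\Phi')^*=\Phi_a\cap(S\times\R)$, so your answer agrees with the paper's), and your final inequality $-p(d)+p(k)\leq d-k'$ is exactly the perversity estimate the paper uses. The paper's route is a line shorter and more elementary; yours trades the local computation of $\epb{i_S}\field_M$ for the already-established duality formulas, which is a perfectly reasonable choice.
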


\begin{proof}
(i) As the proofs are similar, let us only discuss $\Efield_{\Phi_a}$.

(i-1) It is straightforward that $\Efield_{\Phi_a}
\in \tEprc{\leq-p(d)}(\bM)$.

(i-2) Let us show that $\Efield_{\Phi_a}
\in \tEprc{\geq-p(d)}(\bM)$. We have to prove that for any smooth
$Z\in \LCS_{\bM}^{\leq k}$ one has
\[
\enh\,i_{\bZ}^{\,!}(\Efield_{\Phi_a}) \in \dere^{\geq -p(d) + p(k)}(\bZ).
\]
We may assume that $k<d$.
Note that
\begin{align*}
\enh\,i_{\bZ}^{\,!}(\Efield_{\Phi_a})
&\simeq \Efield_\bM \ctens \quot_\bM \bl i_{Z}^{\,!}(\field_{\{t\geq\varphi_a(x)\}}) \br \\
&\simeq \Efield_\bM \ctens \quot_\bM \bl
\opb i_Z(\field_{\{t\geq\varphi_a(x)\}}) \tens i_{Z}^{\;!}\field_\oM\br.
\end{align*}
Locally on $Z$, one has
$\ i_{Z}^{\;!}\field_\oM\simeq \field_{Z}[k-d]$.
Hence
\[
\enh\,i_{\bZ}^{\,!}(\Efield_{\Phi_a}) \in \dere^{\geq d-k}(\bZ)
\]
by Lemmas~\ref{lem:dereQ} and \ref{lem:t-enh}.
One concludes since $d-k \geq -p(d) + p(k)$ by perversity.

(ii) Using Lemma~\ref{lem:PhiPsiD} and (i), one has
\begin{align*}
\Edual_\bM\Efield_{\Phi_a}
&\simeq \Efield_{\Phi_a^*}[d] \in \tEprc[p^*]{-p^*(d)-d}(\bM) = \tEprc[p^*]{p(d)}(\bM), \\
\Edual_\bM\Efield_{\Psi_b}
&\simeq \Efield_{\Psi_b^*}[d+1] \in \tEprc[p^*]{-p^*(d)-d-1}(\bM) = \tEprc[p^*]{p(d)-1}(\bM).
\end{align*}
\end{proof}

\begin{lemma}
\label{lem:t-L}
Assume that $\unbM$ is non empty and smooth of dimension $d$.
For
\[
\shl = (\varphi_a, m_a, \psi^\pm_b, n_b)_{a\in A,\ b\in B}
\]
a stable $\enh$-type on $\bM$, and $c\in\R$, one has
\begin{itemize}
\item[(i)]
$\Efield_\shl \in \tEprc{\leq c}(\bM)$ if and only if for any $a\in A$ and $b\in B$
\[
m_a\leq c+p(d), \quad n_b \leq c+p(d),
\]
\item[(ii)]
$\Efield_\shl \in \tEprc{\geq c}(\bM)$ if and only if for any $a\in A$ and $b\in B$
\[
m_a\geq c+p(d), \quad n_b \geq c+p(d),
\]
\item[(iii)]
$\Edual_\bM\Efield_\shl \in \tEprc[p^*]{\geq -c}(\bM)$ if and only if for any $a\in A$ and $b\in B$
\[
m_a\leq c+p(d), \quad n_b \leq c+p(d)-1,
\]
\item[(iv)]
$\Edual_\bM\Efield_\shl \in \tEprc[p^*]{\leq -c}(\bM)$ if and only if for any $a\in A$ and $b\in B$
\[
m_a\geq c+p(d), \quad n_b \geq c+p(d)-1.
\]
\end{itemize}
\end{lemma}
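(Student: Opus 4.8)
Since $A$ and $B$ are finite, $\Efield_\shl$ is the finite direct sum $\bl\bigoplus_{a\in A}\Efield_{\Phi_a}[-m_a]\br\oplus\bl\bigoplus_{b\in B}\Efield_{\Psi_b}[-n_b]\br$, and I would reduce all four equivalences to locating its summands with respect to the families $\tEp{\leq c}$, $\tEp{\geq c}$ and their $p^*$-analogues. Definition~\ref{def:Etp} immediately gives the shift relations $\tEp{\leq c}(\bM)[-m]=\tEp{\leq c+m}(\bM)$ and $\tEp{\geq c}(\bM)[-m]=\tEp{\geq c+m}(\bM)$ for $m\in\Z$; combined with Lemma~\ref{lem:t-PhiPsi}~(i) this places $\Efield_{\Phi_a}[-m_a]$ in the slice $\tEprc{m_a-p(d)}(\bM)$ and $\Efield_{\Psi_b}[-n_b]$ in $\tEprc{n_b-p(d)}(\bM)$. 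Applying the contravariant triangulated equivalence $\Edual_\bM$ (Proposition~\ref{prop:dualRc}), which turns $[-m]$ into $[m]$, and Lemma~\ref{lem:t-PhiPsi}~(ii), the duals $\Edual_\bM(\Efield_{\Phi_a}[-m_a])$ and $\Edual_\bM(\Efield_{\Psi_b}[-n_b])$ land in the slices $\tEprc[p^*]{p(d)-m_a}(\bM)$ and $\tEprc[p^*]{p(d)-1-n_b}(\bM)$; the asymmetric ``$-1$'' here, which is exactly the ``$-1$'' appearing in (iii) and (iv), comes from the shift $[d+1]$ (versus $[d]$) in Lemma~\ref{lem:PhiPsiD}.

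For the ``if'' implications I would use that each of the classes $\tEprc{\leq c}$, $\tEprc{\geq c}$, $\tEprc[p^*]{\leq c}$, $\tEprc[p^*]{\geq c}$ is monotone in $c$ — clear from Definition~\ref{def:Etp}, since $(\enh p_k^{\leq c})$ implies $(\enh p_k^{\leq c'})$ for $c\leq c'$, and dually — and is stable under finite direct sums; for the $\geq$-classes the latter is obvious, and for the $\leq$-classes one replaces the finitely many sets $Z\in\CS_\bM^{<k}$ witnessing $(\enh p_k^{\leq c})$ for the various summands by their union, using Remark~\ref{rem:Ipk}~(ii). Thus, for (i), if $m_a\leq c+p(d)$ and $n_b\leq c+p(d)$ for all $a,b$, then every summand of $\Efield_\shl$ lies in $\tEprc{\leq c}(\bM)$, hence so does $\Efield_\shl$; (ii) is identical with $\geq$ in place of $\leq$, and (iii), (iv) are obtained by reading the numerical conditions off the dual slices above, applied to the summands of $\Edual_\bM\Efield_\shl$.

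For the ``only if'' implications I would use, conversely, that these classes are stable under direct summands — the subcategories $\dere^{\leq n}$ and $\dere^{\geq n}$ of the classical $t$-structure are, and $\Eopb i_\bZ$, $\Eepb i_\bZ$ are additive — so that membership of $\Efield_\shl$ (resp.\ of $\Edual_\bM\Efield_\shl$) forces membership of every summand; and that each summand is nonzero. This is where the hypotheses enter: $\Efield_{\Phi_a}\neq0$ always (e.g.\ $\Eopb i_{x_0}\Efield_{\Phi_a}\simeq\Efield_\point\neq0$ for any point $x_0\in\unbM$, using that $e$ in \eqref{eq:e} is fully faithful), and $\Efield_{\Psi_b}\neq0$ by the stability of $\shl$ together with the remark after Notation~\ref{not:PhiaPsib} that $\Efield_{\Psi_b}\not\simeq0$ exactly when \eqref{eq:PsiHyp} holds; moreover $\Edual_\bM$ is an equivalence, so the dual summands are nonzero too. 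Then, for the first inequality of (i): if $\Efield_{\Phi_a}[-m_a]\in\tEprc{\leq c}(\bM)$ but $m_a-p(d)>c$, the object would lie in both $\tEp{\leq c}(\bM)$ and $\tEp{\geq m_a-p(d)}(\bM)$, and Proposition~\ref{pro:tE-fihom} would force $\Hom[\dere(\bM)](\Efield_{\Phi_a}[-m_a],\Efield_{\Phi_a}[-m_a])=0$, a contradiction; hence $m_a\leq c+p(d)$. The inequality for the $n_b$ in (i), all of (ii), and the dual statements (iii), (iv) — the latter applying Proposition~\ref{pro:tE-fihom} with the perversity $p^*$ to the dual summands — follow verbatim.

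The argument is essentially bookkeeping; the one point I would watch is matching the thresholds produced by the slices in Lemma~\ref{lem:t-PhiPsi} with the stated inequalities — in particular the asymmetric ``$-1$'' in the $\Psi_b$-terms of (iii) and (iv) — and recording that the non-vanishing of the $\Efield_{\Psi_b}$, which the ``only if'' direction genuinely needs, is precisely what the stability hypothesis on $\shl$ supplies.
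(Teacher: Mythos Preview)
Your proposal is correct and follows essentially the same approach as the paper: decompose $\Efield_\shl$ into its summands, locate each summand in a slice via Lemma~\ref{lem:t-PhiPsi}, and for the ``only if'' direction use Proposition~\ref{pro:tE-fihom} to see that a nonzero object of $\tEp{c}(\bM)$ lies in $\tEp{\leq c'}(\bM)$ (resp.\ $\tEp{\geq c'}(\bM)$) exactly when $c\leq c'$ (resp.\ $c\geq c'$). The paper's proof is terser but identical in substance; your elaboration of the role of the stability hypothesis (ensuring $\Efield_{\Psi_b}\not\simeq 0$) and of closure under direct sums and summands is accurate and matches what the paper leaves implicit.
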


\begin{proof}
Since
\[
\Efield_\shl =
\bl \DSum_{a\in A} \Efield_{\Phi_a}[-m_a] \br
\dsum
\bl \DSum_{b\in B} \Efield_{\Psi_b}[-n_b] \br,
\]
the statement follows from Lemma~\ref{lem:t-PhiPsi}.
Note that a non-zero object of $\tEp{c}(\bM)$
belongs to $\tEp{\le c'}(\bM)$ (resp.\ $\tEp{\ge c'}(\bM)$)
if and only if $c\le c'$ (resp.\ $c\ge c'$)
by Proposition~\ref{pro:tE-fihom}.
\end{proof}

\begin{corollary}\label{cor:tEkL}
Assume that $\bM$ is smooth of dimension $d$.
Let $K\in\Erc(\bM)$ be a stably free object.
Then, for $c\in\R$, one has
\begin{itemize}
\item[(i)]
$K\in\tEprc{\leq c}(\bM)$ if and only if $K\in \dere_\Rc^{\leq c+p(d)}(\bM)$,
\item[(ii)]
$K\in \tEprc{\geq c}(\bM)$ if and only if $K\in \dere_\Rc^{\geq c+p(d)}(\bM)$.
\end{itemize}
\end{corollary}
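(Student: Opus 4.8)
The plan is to reduce to the case in which $\unbM$ is connected and smooth of dimension $d$, where $K$ takes the explicit normal form $K\simeq\Efield_\shl$ for a stable $\enh$-type $\shl$, and then to compute both memberships directly in terms of the integers attached to $\shl$, comparing the answer furnished by the standard t-structure with the one furnished by Lemma~\ref{lem:t-L}.

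First I would dispose of the case $\unbM=\emptyset$ and otherwise note $d_{\unbM}=d$. Then I would check that all four conditions in the statement can be tested, after applying $\Eopb i_{\inbordered S}$, on the connected components $S$ of $\unbM$: for $\tEprc{\leq c}(\bM)$ and $\tEprc{\geq c}(\bM)$ this is Proposition~\ref{pro:tE-stratnoRc} (the components being connected smooth $\LCS_\bM$-sets), and for the standard t-structure it follows because $\LE$ commutes with $\Eopb i_{\inbordered S}$ and ind-sheaves form a stack. (In the bordered setting the components of $\unbM$ need not be locally finite in $\cM$, so this reduction needs to be set up with some care, via an exhaustion as in Proposition~\ref{prop:patch}.) Having so reduced, I use that $K$ is stably free to write $K\simeq\Efield_\shl$ for an $\enh$-type $\shl=(\varphi_a,m_a,\psi_b^\pm,n_b)_{a\in A,\,b\in B}$; after dropping the $b\in B$ with $\Efield_{\Psi_b}\simeq0$ (which contribute zero summands) I may assume $\shl$ is stable, so that $\Efield_{\Phi_a}\not\simeq0$ and $\Efield_{\Psi_b}\not\simeq0$ for all $a,b$.

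The key point is the standard amplitude of $\Efield_\shl$. Since $\Efield_\bM\ctens(\ast)$ is exact (Lemma~\ref{lem:t-enh}(ii)) and $\Efield_{\Phi_a}=\Efield_\bM\ctens\Qfield_{\Phi_a}$, $\Efield_{\Psi_b}=\Efield_\bM\ctens\Qfield_{\Psi_b}$, to see $\Efield_{\Phi_a},\Efield_{\Psi_b}\in\dere^0(\bM)$ it suffices to check $\Qfield_{\Phi_a},\Qfield_{\Psi_b}\in\dere^0(\bM)$, i.e.\ that $\LE\Qfield_{\Phi_a}\simeq\field_{\Phi_a}$ and $\LE\Qfield_{\Psi_b}\simeq\field_{\Psi_b}$ are concentrated in degree $0$ --- the relative version of Example~\ref{ex:LERE}. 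As $A$ and $B$ are finite and $\Efield_\shl\simeq\bl\DSum_{a\in A}\Efield_{\Phi_a}[-m_a]\br\dsum\bl\DSum_{b\in B}\Efield_{\Psi_b}[-n_b]\br$ with nonzero summands, it follows that for every $N\in\Z$ one has $\Efield_\shl\in\dere^{\leq N}(\bM)$ if and only if $m_a\leq N$ and $n_b\leq N$ for all $a,b$, and likewise with $\leq$ replaced by $\geq$.

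To conclude, I would quote Lemma~\ref{lem:t-L}(i),(ii): $\Efield_\shl\in\tEprc{\leq c}(\bM)$ iff $m_a\leq c+p(d)$ and $n_b\leq c+p(d)$ for all $a,b$, and $\Efield_\shl\in\tEprc{\geq c}(\bM)$ iff these inequalities are reversed. Because $m_a,n_b\in\Z$, $m_a\leq c+p(d)\iff m_a\leq\lfloor c+p(d)\rfloor$ and $m_a\geq c+p(d)\iff m_a\geq\lceil c+p(d)\rceil$, while $\dere^{\leq c+p(d)}(\bM)=\dere^{\leq\lfloor c+p(d)\rfloor}(\bM)$ and $\dere^{\geq c+p(d)}(\bM)=\dere^{\geq\lceil c+p(d)\rceil}(\bM)$; taking $N=\lfloor c+p(d)\rfloor$ (resp.\ $N=\lceil c+p(d)\rceil$) in the preceding paragraph then identifies the two descriptions and gives $\Efield_\shl\in\tEprc{\leq c}(\bM)\iff\Efield_\shl\in\dere_\Rc^{\leq c+p(d)}(\bM)$, and likewise for $\geq$. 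With the reduction of the second paragraph this proves the corollary. I expect the only genuine obstacle to be that reduction step --- keeping the bordered, possibly infinitely stratified, bookkeeping correct; granted Lemma~\ref{lem:t-L}, the remainder is the degree-$0$ computation together with the rounding identities above.
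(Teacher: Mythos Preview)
Your argument is correct and is precisely the paper's intended one: the corollary is stated without proof immediately after Lemma~\ref{lem:t-L}, the point being that the numerical conditions $m_a,n_b\leq c+p(d)$ (resp.\ $\geq$) of that lemma coincide with those characterizing membership in $\dere^{\leq c+p(d)}(\bM)$ (resp.\ $\dere^{\geq c+p(d)}(\bM)$), since $\Efield_{\Phi_a},\Efield_{\Psi_b}\in\dere^0(\bM)$. Your worry about the reduction to connected components is unnecessary: an open subanalytic subset of $\cM$ has, locally in $\cM$, only finitely many connected components, so the components of $\unbM$ form a locally finite family of smooth subanalytic subsets and Proposition~\ref{pro:tE-stratnoRc} applies directly.
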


\begin{lemma}\label{lem:EpdtFree}
Let $c\in\R$ and $K\in\Erc(\bM)$.
Assume that $\bM$ is smooth and $K$ is stably free on $\bM$.
Then there are distinguished triangles in $\Erc(\bM)$
\[
K_{\leq c} \to K \to K_{> c} \tone\qtq
K_{< c} \to K \to K_{\geq c}\tone
\]
with $K_{L}\in \tEprc{L}(\bM)$ for $L$ equal to $\leq c$, $>c$, $<c$ or $\geq c$.
\end{lemma}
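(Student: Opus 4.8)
The plan is to reduce to the case where $\unbM$ is nonempty and connected, in which the hypothesis means that $K$ is already in a normal form on which the truncations are visible by inspection. If $\unbM=\emptyset$ then $\dere(\bM)=0$ and the triangles $0\to 0\to 0\tone$ do the job. In general the connected components of $\unbM$ form a decomposition of $\bM$ into clopen subanalytic pieces, and $\dere(\bM)$, the triangulated subcategory $\Erc(\bM)$, the subcategories $\tEprc{L}(\bM)$, and the notion of distinguished triangle are all compatible with this decomposition (cf.\ Proposition~\ref{pro:tE-stratnoRc}); so I would produce the triangles componentwise and glue them. I would also discard the data $\Psi_b$ with $\Efield_{\Psi_b}\simeq0$, so that the relevant $\enh$-type is stable. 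Thus I may assume $\unbM$ connected, smooth of dimension $d$, and $K\simeq\Efield_\shl$ for a stable $\enh$-type $\shl=(\varphi_a,m_a,\psi^\pm_b,n_b)_{a\in A,\ b\in B}$, with $K\simeq\bl\DSum_{a\in A}\Efield_{\Phi_a}[-m_a]\br\dsum\bl\DSum_{b\in B}\Efield_{\Psi_b}[-n_b]\br$.

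The construction is then to split $\shl$ into two sub-$\enh$-types according to an integer threshold. For the first triangle I would take $n_0\defeq\lfloor c+p(d)\rfloor$, put $A'\defeq\{a\in A\semicolon m_a\leq n_0\}$, $B'\defeq\{b\in B\semicolon n_b\leq n_0\}$, and let $A'',B''$ be the complements. Writing $\shl'\defeq(\varphi_a,m_a,\psi^\pm_b,n_b)_{a\in A',\ b\in B'}$ and $\shl''$ for the complementary datum, both $\shl'$ and $\shl''$ are stable $\enh$-types on $\bM$, the objects $K_{\leq c}\defeq\Efield_{\shl'}$ and $K_{>c}\defeq\Efield_{\shl''}$ are stably free (hence $\R$-constructible), and $K\simeq K_{\leq c}\dsum K_{>c}$, so that $K_{\leq c}\to K\to K_{>c}\tone$ is a split distinguished triangle in $\Erc(\bM)$. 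Every index of $\shl'$ satisfies $m_a,n_b\leq n_0\leq c+p(d)$, so Lemma~\ref{lem:t-L}~(i) gives $K_{\leq c}\in\tEprc{\leq c}(\bM)$. Every index of $\shl''$ satisfies $m_a,n_b\geq n_0+1$, and since $n_0+1>c+p(d)$ I may choose a real $c_1$ with $c<c_1$ and $c_1+p(d)\leq n_0+1$; then all those indices are $\geq c_1+p(d)$, so Lemma~\ref{lem:t-L}~(ii) gives $K_{>c}\in\tEprc{\geq c_1}(\bM)\subset\tEprc{>c}(\bM)$.

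For the second triangle I would repeat this with the threshold $n_1\defeq\lceil c+p(d)\rceil$: now $A'\defeq\{a\semicolon m_a\leq n_1-1\}$, $B'\defeq\{b\semicolon n_b\leq n_1-1\}$, complements $A'',B''$, and $K_{<c}\defeq\Efield_{\shl'}$, $K_{\geq c}\defeq\Efield_{\shl''}$, giving the split distinguished triangle $K_{<c}\to K\to K_{\geq c}\tone$ in $\Erc(\bM)$. Since $n_1-1<c+p(d)$ I may pick a real $c_2$ with $c_2<c$ and $c_2+p(d)\geq n_1-1$; then all indices of $\shl'$ satisfy $m_a,n_b\leq n_1-1\leq c_2+p(d)$, so $K_{<c}\in\tEprc{\leq c_2}(\bM)\subset\tEprc{<c}(\bM)$ by Lemma~\ref{lem:t-L}~(i), while all indices of $\shl''$ satisfy $m_a,n_b\geq n_1\geq c+p(d)$, so $K_{\geq c}\in\tEprc{\geq c}(\bM)$ by Lemma~\ref{lem:t-L}~(ii).

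I expect no serious obstacle here: once $K$ is put in the stably free normal form, everything is read off from Lemma~\ref{lem:t-L}. The only points requiring a little care are the floor/ceiling bookkeeping together with the passage to the auxiliary reals $c_1,c_2$ — these are needed because $\tEprc{>c}$ and $\tEprc{<c}$ are defined by open conditions, so one must land in some $\tEprc{\geq c_1}$ with $c_1>c$, resp.\ $\tEprc{\leq c_2}$ with $c_2<c$ — and the reduction to the connected case, i.e.\ verifying that gluing the componentwise split triangles along the clopen decomposition of $\bM$ yields an honest distinguished triangle in $\Erc(\bM)$. The latter compatibility is the only slightly delicate bookkeeping point.
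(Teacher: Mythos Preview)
Your proof is correct and follows essentially the same approach as the paper: reduce to a connected smooth $\bM$, write $K$ as a finite direct sum of building blocks, and split the summands according to which side of $c$ they fall. The paper's one-line proof invokes Lemma~\ref{lem:t-PhiPsi} directly (each summand $\Efield_{\Phi_a}[-m_a]$, resp.\ $\Efield_{\Psi_b}[-n_b]$, lies in the single slice $\tEprc{m_a-p(d)}(\bM)$, resp.\ $\tEprc{n_b-p(d)}(\bM)$), whereas you go through Lemma~\ref{lem:t-L} with explicit integer thresholds; but the content is identical.

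One simplification: the floor/ceiling bookkeeping and the auxiliary reals $c_1,c_2$ are unnecessary. Once you know from Lemma~\ref{lem:t-PhiPsi} that a summand lies in $\tEprc{a}(\bM)$ for the specific real $a=m_a-p(d)$ (or $n_b-p(d)$), you can split directly on the condition $a\leq c$ versus $a>c$: a nonzero object of $\tEprc{a}(\bM)$ with $a>c$ already lies in $\tEprc{\geq a}(\bM)\subset\tEprc{>c}(\bM)$, and similarly for $a<c$. This is exactly the paper's shortcut, and it sidesteps the open-condition issue you flagged.
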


\begin{proof}
It is obvious since $K$ is a direct sum of objects
belonging to $\tEprc{a}(\bM)$ for some $a\in\R$
by Lemma~\ref{lem:t-PhiPsi}.
\end{proof}

\begin{proposition}\label{pro:Epdt}
Let $c\in\R$ and $K\in\Erc(\bM)$.
Then there are distinguished triangles in $\Erc(\bM)$
\[
K_{\leq c} \to K \to K_{> c} \tone\qtq
K_{< c} \to K \to K_{\geq c} \tone
\]
with $K_{L}\in \tEprc{L}(\bM)$ for $L$ equal to $\leq c$, $>c$, $<c$ or $\geq c$.
\end{proposition}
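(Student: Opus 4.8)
The plan is to argue by induction on $d\defeq d_\unbM$. If $d<0$ then $\dere(\bM)=0$ and there is nothing to prove, so assume $d\geq 0$ and that the statement holds for every subanalytic bordered space whose underlying space has dimension $<d$.

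By Lemma~\ref{lem:Rcstrat} there is a regular filtration $(M_k)_{k\in\Z}$ of $\bM$; set $U\defeq M_d\setminus M_{d-1}$ and $Z\defeq M_{d-1}$, and write
\[
i\colon\inbordered Z\to\bM,\qquad j\colon\inbordered U\to\bM
\]
for the corresponding closed and open embeddings. Then $\inbordered U$ is smooth of dimension $d$, the object $\Eopb j K\simeq\Eepb j K$ is stably free on $\inbordered U$, and $Z$ is a closed subanalytic subset of $\bM$ with $d_{\inbordered Z}<d$. For the local inputs: on $\inbordered U$, Lemma~\ref{lem:EpdtFree} provides for each $c\in\R$ truncation triangles for $\Eopb j K$ with respect to $\bl\tEprc{\leq c}(\inbordered U),\tEprc{\geq c}(\inbordered U)\br$ (for all four half-lines $\leq c$, $>c$, $<c$, $\geq c$); on $\inbordered Z$, the induction hypothesis provides such triangles for every object of $\Erc(\inbordered Z)$.

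Next I would glue these data by the standard recollement construction of \cite[\S1.4]{BBD82}, which here rests on the identities $\Eopb j\Eoim j\simeq\Eepb j\Eoim j\simeq\id$, $\Eopb i\Eoim i\simeq\Eepb i\Eoim i\simeq\id$ and $\Eopb j\Eeeim i\simeq 0$ (whence also $\Eopb j\Eoim i\simeq 0$, since $\Eoim i\simeq\Eeeim i$ as $i$ is proper, and $\Eepb i\Eoim j\simeq 0$ by adjunction); these are the enhanced analogues of the usual recollement identities and follow from Lemmas~\ref{lem:Eeimtensanddual} and \ref{lem:EiZZ'}. To build $K_{\leq c}\to K\to K_{>c}\tone$: let $K'$ fit in the distinguished triangle $K'\to K\to\Eoim j\bl(\Eopb j K)_{>c}\br\tone$ built on the composite $K\to\Eoim j\Eopb j K\to\Eoim j\bl(\Eopb j K)_{>c}\br$, so that $\Eopb j K'\simeq(\Eopb j K)_{\leq c}$ and $\Eopb i K'\in\Erc(\inbordered Z)$; then, using the truncation of $\Eopb i K'$ furnished by the induction hypothesis, let $K_{\leq c}$ fit in $K_{\leq c}\to K'\to\Eoim i\bl(\Eopb i K')_{>c}\br\tone$, and define $K_{>c}$ by $K_{\leq c}\to K\to K_{>c}\tone$. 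An octahedron yields a distinguished triangle $\Eoim i\bl(\Eopb i K')_{>c}\br\to K_{>c}\to\Eoim j\bl(\Eopb j K)_{>c}\br\tone$. The triangle $K_{<c}\to K\to K_{\geq c}\tone$ is produced by the same construction with $<c$, $\geq c$ in place of $\leq c$, $>c$.

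It remains to verify membership. All the objects above lie in $\Erc(\bM)$ by Proposition~\ref{prop:dualRc}. Applying $\Eopb j$ and $\Eopb i$ to the two defining triangles of $K_{\leq c}$ and using the recollement identities gives $\Eopb j K_{\leq c}\simeq(\Eopb j K)_{\leq c}\in\tEp{\leq c}(\inbordered U)$ and $\Eopb i K_{\leq c}\simeq(\Eopb i K')_{\leq c}\in\tEp{\leq c}(\inbordered Z)$, whence $K_{\leq c}\in\tEp{\leq c}(\bM)$ by Lemma~\ref{lem:recoll}~(i); applying $\Eepb j$ and $\Eepb i$ to the octahedral triangle gives $\Eepb j K_{>c}\simeq(\Eopb j K)_{>c}\in\tEp{>c}(\inbordered U)$ and $\Eepb i K_{>c}\simeq(\Eopb i K')_{>c}\in\tEp{>c}(\inbordered Z)$, whence $K_{>c}\in\tEp{\geq c'}(\bM)\subset\tEp{>c}(\bM)$ for a suitable $c'>c$, again by Lemma~\ref{lem:recoll}; together with $\Erc$-membership this gives $K_{\leq c}\in\tEprc{\leq c}(\bM)$ and $K_{>c}\in\tEprc{>c}(\bM)$. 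I expect the main obstacle to be exactly this last step: keeping track of the two octahedra and matching, through the recollement identities and Lemma~\ref{lem:recoll}, the restrictions to $\inbordered Z$ and $\inbordered U$ of the glued object and of its cone with the local truncations.
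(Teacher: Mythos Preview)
Your proof is correct and follows essentially the same recollement-and-octahedron strategy as the paper: strip off one stratum, truncate there via Lemma~\ref{lem:EpdtFree}, and glue using Lemma~\ref{lem:recoll}. The only cosmetic differences are that the paper organizes this as a decreasing induction on the dimension of a shrinking bad locus $Z_k\subset\bM$ (rather than an induction on $d_{\unbM}$) and glues using $\Eeeim{j}$-pushforwards from the open stratum instead of your $\Eoim{j}$-pushforwards; both are standard dual variants of the BBD construction and lead to the same verification via Lemma~\ref{lem:recoll}.
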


\begin{proof}
Since the proof of the existence of the second distinguished triangle
follows from the first one,
we will construct only the first distinguished triangle.
The arguments we use are standard (see e.g.\ \cite[Lemma 5.8]{Kas15}).

 Let $\bM=(\oM,\cM)$.
Reasoning by decreasing induction on $k\in\Z_{\geq -1}$, let us show that
\begin{itemize}
\item[(dt)$_k$] there exists $Z_k\in\CS_\bM^{\leq k}$ and a distinguished triangle
\[
K'_k \to \Eopb{j_k} K \to K''_k \tone,
\]
with $K'_k\in \tEprc{\leq c}(\inbordered{(\oM\setminus Z_k)})$ and $K''_k\in \tEprc{> c}(\inbordered{(\oM\setminus Z_k)})$.
\end{itemize}
Here, $j_k$ is the morphism indicated in the diagram below, where we picture all the morphisms that will be used in the proof.
\[
\xymatrix{
\inbordered{(Z_k\setminus Z_{k-1})} \ar[r]^-{i'_{k}} \ar[dr]_-{i_k} & \inbordered{(\oM\setminus Z_{k-1})} \ar[d]^-{j_{k-1}} & \inbordered{(M\setminus Z_k)} \ar[l]_-{j'_{k}} \ar[dl]^-{j_k} \\
& \bM
}
\]
The statement (dt)$_k$ is obvious for $k\geq d_M$. Assuming that (dt)$_k$ holds true for some $k\ge0$, let us prove (dt)$_{k-1}$.

The morphism $K'_k \to \Eopb{j_k} K \simeq \Eepb{j_k} K$ induces by adjunction a morphism $\Eoimv{j_{k\sep!!}} K'_k \to K$, that we complete in a distinguished triangle in $\dere_\Rc(\bM)$
\[
\Eoimv{j_{k\sep!!}} K'_k \to K \to L \tone.
\]
Let $Z_{k-1}\in\CS_\bM^{\leq k-1}$ be such that $Z_k\setminus Z_{k-1}$ is smooth and $\Eepb i_k L$ is stably free. By Lemma~\ref{lem:EpdtFree}, there is a distinguished triangle
\begin{equation}
\label{eq:LLL}
L' \to \Eepb i_k L \to L'' \tone,
\end{equation}
with $L'\in \tEprc{\leq c}(\inbordered{(Z_k\setminus Z_{k-1})})$ and $L''\in \tEprc{> c}(\inbordered{(Z_k\setminus Z_{k-1})})$.

The morphism $L' \to \Eepb i_k L \simeq \Eopbv{i_{k}^{\prime\sep!}}\Eepb j_{k-1} L$ induces by adjunction a morphism $\Eoimv{i'_{k\sep!!}} L' \to \Eepb j_{k-1} L \simeq \Eopb j_{k-1} L$, that we complete in a distinguished triangle in $\dere_\Rc(\inbordered{(\oM\setminus Z_{k-1})})$
\begin{equation}
\label{eq:LLK}
\Eoimv{i'_{k\sep!!}} L' \to \Eopb j_{k-1} L \to K''_{k-1} \tone.
\end{equation}
Consider the composite morphism $\Eopb j_{k-1}K \to \Eopb j_{k-1}L \to K''_{k-1}$, and complete it in a distinguished triangle in $\dere_\Rc(\inbordered{(\oM\setminus Z_{k-1})})$
\[
K'_{k-1} \to \Eopb j_{k-1}K \to K''_{k-1} \tone.
\]
We claim that this satisfy (dt)$_{k-1}$.

Note that
\begin{align*}
\Eopbv{j_{k}^{\prime\sep-1}}K''_{k-1} &\simeq \Eopb j_k L \simeq K''_k \in \tEprc{> c}(\inbordered{(\oM\setminus Z_k)}),\\
\Eopbv{j_{k}^{\prime\sep-1}}K'_{k-1} &\simeq K'_k \in \tEprc{\leq c}(\inbordered{(\oM\setminus Z_k)}).
\end{align*}
Hence, by Lemma~\ref{lem:recoll}, we are reduced to prove
\begin{align}
\label{eq:opbiK'}
\Eopbv{i_{k}^{\prime\sep-1}} K'_{k-1} & \in \tEprc{\leq c}(\inbordered{(Z_k\setminus Z_{k-1})}), \\
\label{eq:epbiK'}
\Eopbv{i_{k}^{\prime\sep!}}K''_{k-1} & \in \tEprc{> c}(\inbordered{(Z_k\setminus Z_{k-1})}).
\end{align}

Applying the functor $\Eopbv{i_{k}^{\prime\sep!}}$ to \eqref{eq:LLK}, we get a distinguished triangle
\[
L' \to \Eepb i_k L \to \Eopbv{i_{k}^{\prime\sep!}} K''_{k-1} \tone.
\]
Thus \eqref{eq:LLL} gives $\Eopbv{i_{k}^{\prime\sep!}} K''_{k-1} \simeq L''\in \tEprc{> c}(\inbordered{(Z_k\setminus Z_{k-1})})$, which proves \eqref{eq:epbiK'}.

By the octahedral axiom, there is a diagram in $\dere(\inbordered{(\oM\setminus Z_{k-1})})$
\[
\xymatrix@!0@C=7em@R=3.5em{
& K'_{k-1} \ar@{.>}[dr]\ar[ddl] \\
\Eopb j_{k-1} \Eoimv{j_{k\sep!!}}K'_k \ar@{.>}[ur]\ar[d] && \Eoimv{i'_{k\sep!!}} L' \ar[ll]|-{+1}\ar[ddl] \\
\Eopb j_{k-1}K \ar[dr]\ar[rr] && K''_{k-1}\;, \ar[u]_{+1}\ar[uul]|(.65){+1} \\
& \Eopb j_{k-1} L \ar[ur]\ar[uul]|(.63){+1}
}
\]
and hence a distinguished triangle
\[
\Eopb j_{k-1}\Eoimv{j_{k\sep!!}}K'_k \to K'_{k-1} \to \Eoimv{i'_{k\sep!!}} L' \tone.
\]
Applying the functor $\Eopbv{i_{k}^{\prime\sep-1}}$, we get
\[
\Eopbv{i_{k}^{\prime\sep-1}} K'_{k-1} \simeq L' \in \tEprc{\leq c}(\inbordered{(Z_k\setminus Z_{k-1})}),
\]
which proves \eqref{eq:opbiK'}.
\end{proof}

\begin{definition}
For $p\colon\Z_{\geq 0}\to\R$ a perversity and $d\in\Z_{\geq 0}$, the shifted perversity $p[d]$ is given by
\[
p[d](n) = p(d+n).
\]
\end{definition}

Note that the soft dimension of
a subanalytic space is equal to its dimension.

\begin{proposition}
\label{pro:tpt-roeimeopb}
Let $f\colon \bM \to \bN$ be a morphism of subanalytic bordered spaces, and $d\in\Z_{\geq 0}$.
Assume that $\dim \unb{f}{}^{-1}(y) \leq d$
for any $y\in\unbN$. Then, for any $c\in\R$ one has
\bnum
\item
$\Eopb f \bl\tEp[{p[d]}]{\leq c}(\bN)\br \subset \tEp{\leq c}(\bM)$,
\item
$\Eepb f \bl\tEp[{p[d]}]{\geq c}(\bN)\br \subset \tEp{\geq c-d}(\bM)$.
\item
$\Eoim f \bl\tEp{\geq c}(\bM)\br \subset \tEp[{p[d]}]{\geq c}(\bN)$,
\item
$\dere_\Rc(\bN) \cap \Eeeim f \bl\tEp{\leq c}(\bM)\br \subset \tEp[{p[d]}]{\leq c+d}(\bN)$.
\ee
\end{proposition}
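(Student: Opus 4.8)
The plan is to prove (i) by a direct support‑and‑dimension estimate, to deduce (iii) from (i) by adjunction, to prove (ii) directly --- this is the technical heart --- and to deduce (iv) from (ii) by adjunction.

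For (i), given $K\in\tEp[{p[d]}]{\leq c}(\bN)$ and $k\in\Z_{\geq0}$, I would use Remark~\ref{rem:Ipk}(i) to reduce the goal to producing $Z\in\CS_\bM^{<k}$ with $\opb\pi\field_{M\setminus Z}\tens\Eopb fK\in\dere^{\leq c+p(k)}(\bM)$. Applying the hypothesis on $K$ at level $k'\defeq\max(0,k-d)$ yields $W\in\CS_\bN^{<k'}$ with $\opb\pi\field_{N\setminus W}\tens K\in\dere^{\leq c+p(d+k')}(\bN)$; then $Z\defeq\unb{f}{}^{-1}(W)$ lies in $\CS_\bM$ (its graph part is subanalytic because $q_1|_{\olG}$ is proper) and has $\dim Z\leq\dim W+d<k$, while $\opb\pi\field_{M\setminus Z}\tens\Eopb fK\simeq\Eopb f(\opb\pi\field_{N\setminus W}\tens K)\in\dere^{\leq c+p(d+k')}(\bM)\subset\dere^{\leq c+p(k)}(\bM)$ by exactness of $\Eopb f$ (Proposition~\ref{pro:t-Eop}(iv)) and monotonicity of $p$. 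For (iii): by Proposition~\ref{pro:Eprightorth} one has $\tEp[{p[d]}]{\geq c}(\bN)=(\tEp[{p[d]}]{<c}(\bN))^\bot$, so it suffices to observe that for $K\in\tEp{\geq c}(\bM)$ and $L\in\tEp[{p[d]}]{<c}(\bN)$ (hence $L\in\tEp[{p[d]}]{\leq c'}(\bN)$ with $c'<c$) adjunction identifies $\Hom(L,\Eoim fK)$ with $\Hom(\Eopb fL,K)$, which vanishes by Proposition~\ref{pro:tE-fihom} because $\Eopb fL\in\tEp{\leq c'}(\bM)$ by (i).

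For (ii) the plan is to verify condition (v) of Lemma~\ref{lem:crg} for $\Eepb fK$, with perversity $p$ and bound $c-d$. Fix $k$ and $S\in\LCS_\bM^{\leq k}$; after a routine localization (ensuring the relevant subanalytic images stay subanalytic) I may assume $S$ relatively compact. Passing to the smooth locus of $S$ and then invoking generic smoothness for subanalytic maps, I obtain a dense open subanalytic $S_0\subset S$ with $\dim(S\setminus S_0)<\dim S\leq k$ such that $f$ restricts to a submersion $g_0\colon\inbordered{(S_0)}\to\inbordered{(V_0)}$ onto a smooth locally closed subanalytic $V_0\subset\oN$ of some dimension $\ell_0$, whose fibers, lying inside those of $f$, force the relative dimension $e_0=\dim S_0-\ell_0$ to satisfy $e_0\leq d$. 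By functoriality $\Eepb{i_{\inbordered{(S_0)}}}\Eepb fK\simeq\Eepb{g_0}\Eepb{i_{\inbordered{(V_0)}}}K$; here $\Eepb{i_{\inbordered{(V_0)}}}K\in\dere^{\geq c+p(d+\ell_0)}(\inbordered{(V_0)})$ by Lemma~\ref{lem:crg}((i)$\Rightarrow$(ii)) applied to $K\in\tEp[{p[d]}]{\geq c}(\bN)$, and, $g_0$ being submersive, $\Eepb{g_0}(\ast)\simeq\opb\pi\ori_{S_0/V_0}\tens\Eopb{g_0}(\ast)[e_0]$ with $\Eopb{g_0}$ exact (Proposition~\ref{pro:t-Eop}(iv)), so $\Eepb{i_{\inbordered{(S_0)}}}\Eepb fK\in\dere^{\geq c+p(d+\ell_0)-e_0}(\inbordered{(S_0)})$. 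Since $\dim S_0=\ell_0+e_0\leq k$, the perversity inequality gives $p(k)-p(d+\ell_0)\leq(d+\ell_0)-k\leq d-e_0$ when $k\leq d+\ell_0$, and the desired bound is trivial otherwise; in either case $c+p(d+\ell_0)-e_0\geq(c-d)+p(k)$, which is condition (v). Then Lemma~\ref{lem:crg} gives $\Eepb fK\in\tEp{\geq c-d}(\bM)$.

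Finally, for (iv): since $\Eeeim fK\in\dere_\Rc(\bN)$ and $p[d]$ is again a perversity, $\bl\tEprc[{p[d]}]{\leq c'}(\bN),\tEprc[{p[d]}]{\geq c'}(\bN)\br_{c'\in\R}$ is a t-structure on $\Erc(\bN)$ by Proposition~\ref{pro:t-pre1}(i), so $\tEprc[{p[d]}]{\leq c+d}(\bN)={}^\bot(\tEprc[{p[d]}]{>c+d}(\bN))$ inside $\Erc(\bN)$; for $L\in\tEprc[{p[d]}]{>c+d}(\bN)$ (thus $L\in\tEp[{p[d]}]{\geq c'}(\bN)$ with $c'>c+d$) adjunction identifies $\Hom(\Eeeim fK,L)$ with $\Hom(K,\Eepb fL)$, and by (ii) $\Eepb fL\in\tEp{\geq c'-d}(\bM)$ with $c'-d>c$, so this vanishes by Proposition~\ref{pro:tE-fihom} since $K\in\tEp{\leq c}(\bM)$. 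The formal parts are (iii) and (iv), which run on the orthogonality descriptions of the intermediate and $\R$-constructible perversity categories together with Proposition~\ref{pro:tE-fihom}, and (i) is an easy estimate. The main obstacle will be (ii): reducing the behaviour of $\Eepb f$, via generic smoothness, to that of a submersion, and then carrying out the perversity bookkeeping --- tracking fiber dimension, the dimension of the test sets, and the shift by the relative dimension --- that turns $\tEp{\geq c}$ into $\tEp{\geq c-d}$ and which relies on both $p$ and $p^*$ being decreasing.
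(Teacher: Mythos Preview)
Your proof is correct and follows essentially the same approach as the paper: (i) and (ii) are proved directly---the latter by generic smoothness, reducing $\Eepb f$ along a test stratum to a submersion onto a smooth locally closed subset of $\bN$, then using $\Eepb{g_0}\simeq\opb\pi\ori_{S_0/V_0}\tens\Eopb{g_0}[e_0]$ and the perversity inequalities---while (iii) and (iv) follow by adjunction via Proposition~\ref{pro:Eprightorth} and Proposition~\ref{pro:t-pre1}(i), respectively. The only minor sloppiness is in (ii): you treat $S_0\to V_0$ as a single submersion with fixed $\ell_0$ and $e_0$, but different connected components of $S_0$ may map to images of different dimensions; as the paper does, you should either work componentwise or further shrink $S_0$ so that these dimensions are constant.
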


\begin{proof}
Let $\bM=(\oM,\cM)$ and $\bN=(\oN,\cN)$.

\noi
(i) Let $L\in\tEp[{p[d]}]{\leq c}(\bN)$. We have to prove that,
for any $k\in\Z_{\ge0}$,
there exists $Z\in\CS_\bM^{<k}$ such that $\Eopb i_{\inbordered{(M\setminus Z)}} \Eopb f L\in
 \dere^{\leq c+ p(k)}(\inbordered{(M\setminus Z)})$. Let $W\in\CS_\bN^{<k-d}$ be such that $\Eopb i_{\inbordered{(N\setminus W)}} L\in \dere^{\leq c+ p(k)}(\inbordered{(N\setminus W)})$. Note that if $0\le k<d$, then $W=\emptyset$ will do
because $L\in \dere^{\leq c+ p[d](0)}(\bN)\subset\dere^{\leq c+ p(k)}(\bN)$.

Then $Z\seteq f^{-1}(W)\in\CS_\bM^{<k}$ satisfies the desired condition.
Indeed, denoting $f_0\colon \inbordered{(M\setminus Z)} \to \inbordered{(N\setminus W)}$ the morphism induced by $f|_{M\setminus Z}$, one has
\begin{align*}
\Eopb i_{\inbordered{(M\setminus Z)}} \Eopb f L
&\simeq \Eopb f_0 \Eopb i_{\inbordered{(N\setminus W)}} L \\
&\in \Eopb f_0 \dere^{\leq c+ p(k)}(\inbordered{(N\setminus W)}) \\
&\subset \dere^{\leq c+ p(k)}(\inbordered{(M\setminus Z)}),
\end{align*}
where the last inclusion follows from Proposition~\ref{pro:t-Eop}.

\smallskip\noi
(ii) Let $L\in\tEp[{p[d]}]{\geq c}(\bN)$.
We have to show that for any $Z\in\CS_\bM^{\leq k}$ there exists
an open subanalytic subset $Z_0$ of $\inbordered{Z}$
such that $\dim(Z\setminus Z_0) < k$ and
\begin{equation}
\label{eq:tempiZ0}
\Eepb i_{\inbordered{(Z_0)}} \Eepb f L \in \dere^{\geq c+p(k)-d}(\inbordered{(Z_0)}).
\end{equation}
Recall Notation~\ref{not:Gammaf}.
Replacing $\cM$ with $\olG$, we may assume that $f$ extends to a morphism of subanalytic spaces $\bclose f\colon\cM\to\cN$.

Since \eqref{eq:tempiZ0} is local on $\cM$, we may assume that $Z$ is
relatively compact in $\cM$.
Then, there exists an open subanalytic subset $Z_0$ of
$Z$ satisfying the following properties:
\bna
\item $\dim(Z\setminus Z_0)<k$,
\item $Z_0=\DUnion_{i\in I}S_i$, where
$\{S_i\}_{i\in I}$ is a family of subanalytic
smooth subsets of dimension $k$,
\item $T_i\seteq f(S_i)$ is a smooth equidimensional subset of $\bN$
for any $i\in I$,
\item $f$ induces a submersive morphism
$f_i\cl \inb{(S_i)}\to\inb{(T_i)}$ for any $i\in I$.
\ee
We claim that $Z_0$ satisfies \eqref{eq:tempiZ0}.
In fact, for any $i\in I$, one has
\begin{align*}
\Eepb i_{\inbordered{(S_i)}} \Eepb f L
&\simeq \Eepb f_i \Eepb i_{\inbordered{(T_i)}} L \\
&\in \Eepb f_i\, \dere^{\ge c+p(d_{T_i}+d)}(\inbordered{(T_i)}).
\end{align*}
Since $f_i$ is submersive, we have
$\Eepb f_i \simeq\ori_{S_i/T_i}\tens \Eopb f_i [d_{S_i}-d_{T_i}]$,
where $\ori_{S_i/T_i}$ is the relative orientation sheaf
(see \S\,\ref{subsec:sheaf}).
Hence we have
\eqn
\Eepb f_i\, \dere^{\ge c+p(d_{T_i} +d)}(\inbordered{(T_i)})
&\subset& \dere^{\ge c+p(d_{T_i} +d)+d_{T_i}-d_{S_i}}(\inbordered{(S_i)})\\
&\subset &\dere^{\ge c+p(d_{S_i})-d}(\inbordered{(S_i)}).
\eneqn
Here, the last inclusion follows from
$d_{T_i}+d\ge d_{S_i}$ and $p(d_{T_i} +d)+d_{T_i}+d\ge p( d_{S_i})+d_{S_i}$
by perversity.

Thus we obtain
$\Eepb i_{\inbordered{(S_i)}} \Eepb f L\in \dere^{\ge c+p(k)-d}(\inbordered{(S_i)})$
for any $i\in I$,
which implies \eqref{eq:tempiZ0}.

\medskip\noi
(iii) and (iv) follow from (i) and (ii) by adjunction
using Proposition~\ref{pro:Eprightorth} and Proposition~\ref{pro:t-pre1} (i),
respectively.
\end{proof}

\begin{remark}
Concerning (iv) above, the inclusion
\[
\Eeeim f \bl\tEp{\leq c}(\bM)\br \subset \tEp[{p[d]}]{\leq c+d}(\bN)
\]
does not hold in general,
since $\tEp{\leq c}(\bM)$ is not stable by $\indsum$.
For example, let $M=\R\setminus\{0\}$, $N=\R$
and let $f\cl M\to N$ be the inclusion map.
Let $x_n=1/n$ and set
$F_n=\opb{\pi}\cor_{\{x_n\}}\tens \field_{\{t\ge0\}}\in\Mod(\cor_{M\times\bR})$.
Let $K=\quot_M\bl\DSum_{n\ge1}\;F_n\br\in\dere(M)$.
Then $K\in\tEp[1/2]{0}(M)$ but
$\Eeeim f K \simeq\quot_N\bl \indsum\limits_{n\ge1}\eeim {{f_\R}} F_n\br\in\dere(N)$ does not belong to $\tEp[1/2]{\le 0}(N)$. Here
$f_\R\seteq f\times\id_\R\cl M\times\R\to N\times\R$.
Indeed, there is no $Z\in\CS^{<1}( N)$ such that
$\Eopb i_{\inbordered{(N\setminus Z)}} \Eeeim f K\in
\dere^{\le -1/2}(\inbordered{(N\setminus Z)})$, i.e.\
such that $\Eopb i_{\inbordered{(N\setminus Z)}} \Eeeim f K\simeq 0$.
\end{remark}

\subsection{Dual intermediate enhanced perversity}
Let $p$ be a perversity and let $\bM$ be a subanalytic bordered space.
Since the t-structure $\bl \tEprc{\leq c}(\bM), \tEprc{\geq c}(\bM) \br_{c\in\R}$ is not well behaved with respect to duality, we consider also its dual t-structure.

\begin{notation}
For $c\in\R$, set
\begin{align*}
\dEprc{\leq c}(\bM) &\defeq \{ K\in\Erc(\bM) \semicolon \Edual_\bM K \in \tEprc[p^*]{\geq -c}(\bM) \}, \\
\dEprc{\geq c}(\bM) &\defeq \{ K\in\Erc(\bM) \semicolon \Edual_\bM K \in \tEprc[p^*]{\leq -c}(\bM) \}.
\end{align*}
\end{notation}

The following result is a consequence of Proposition~\ref{pro:t-pre1}.

\begin{proposition}
\label{pro:t-dpre1}
$\bl \dEprc{\leq c}(\bM), \dEprc{\geq c}(\bM) \br_{c\in\R}$
is a t-structure on $\Erc(\bM)$.
\end{proposition}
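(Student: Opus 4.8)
The plan is to deduce the proposition from Proposition~\ref{pro:t-pre1}(i) by transporting the intermediate t-structure through the duality functor. Since $*$ is an involution on perversities, $p^*$ is again a perversity and $(p^*)^*=p$, so Proposition~\ref{pro:t-pre1}(i) provides a t-structure $\bl \tEprc[p^*]{\leq c}(\bM), \tEprc[p^*]{\geq c}(\bM) \br_{c\in\R}$ on $\Erc(\bM)$. By Proposition~\ref{prop:dualRc}, $\Edual_\bM$ restricts to a contravariant triangulated auto-equivalence of $\Erc(\bM)$ with $\Edual_\bM\circ\Edual_\bM\simeq\id_{\Erc(\bM)}$; in particular $\Edual_\bM(K[n])\simeq(\Edual_\bM K)[-n]$, $\Edual_\bM$ induces bijections $\Hom[\Erc(\bM)](K,K')\simeq\Hom[\Erc(\bM)](\Edual_\bM K',\Edual_\bM K)$, and it carries any distinguished triangle $X\to Y\to Z\tone$ to a distinguished triangle $\Edual_\bM Z\to\Edual_\bM Y\to\Edual_\bM X\tone$. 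Now $\dEprc{\leq c}(\bM)$ and $\dEprc{\geq c}(\bM)$ are by definition the preimages under $\Edual_\bM$ of $\tEprc[p^*]{\geq -c}(\bM)$ and $\tEprc[p^*]{\leq -c}(\bM)$, and unwinding the definitions of $\catt^{<c}$ and $\catt^{>c}$ from Definition~\ref{def:gent} gives likewise $\dEprc{<c}(\bM)=\{K\in\Erc(\bM) : \Edual_\bM K\in\tEprc[p^*]{>-c}(\bM)\}$ and $\dEprc{>c}(\bM)=\{K\in\Erc(\bM) : \Edual_\bM K\in\tEprc[p^*]{<-c}(\bM)\}$.

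Granting these descriptions, conditions (a) and (b) of Definition~\ref{def:gent} for $\bl \dEprc{\leq c}(\bM), \dEprc{\geq c}(\bM) \br_{c\in\R}$ would follow mechanically from conditions (a) and (b) for the $p^*$-intermediate t-structure: one only uses $\Edual_\bM(K[n])\simeq(\Edual_\bM K)[-n]$ together with the fact that $c\mapsto -c$ reverses order and sends $c+1$ to $(-c)-1$. Condition (c), $\Hom[\Erc(\bM)](\dEprc{<c}(\bM),\dEprc{>c}(\bM))=0$, would follow from condition (c) for the $p^*$-intermediate t-structure via the Hom-bijection: if $L\in\dEprc{<c}(\bM)$ and $L'\in\dEprc{>c}(\bM)$, then $\Edual_\bM L\in\tEprc[p^*]{>-c}(\bM)$ and $\Edual_\bM L'\in\tEprc[p^*]{<-c}(\bM)$, whence $\Hom[\Erc(\bM)](L,L')\simeq\Hom[\Erc(\bM)](\Edual_\bM L',\Edual_\bM L)=0$.

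For condition (d), given $K\in\Erc(\bM)$ and $c\in\R$ I would apply condition (d) of the $p^*$-intermediate t-structure to the object $\Edual_\bM K\in\Erc(\bM)$ at the real number $-c$, obtaining distinguished triangles $(\Edual_\bM K)_{\leq -c}\to\Edual_\bM K\to(\Edual_\bM K)_{>-c}\tone$ and $(\Edual_\bM K)_{<-c}\to\Edual_\bM K\to(\Edual_\bM K)_{\geq -c}\tone$ with terms in the appropriate subcategories of that t-structure. Applying $\Edual_\bM$ and using $\Edual_\bM\circ\Edual_\bM\simeq\id$ turns these into distinguished triangles $\Edual_\bM\bl(\Edual_\bM K)_{\geq -c}\br\to K\to\Edual_\bM\bl(\Edual_\bM K)_{<-c}\br\tone$ and $\Edual_\bM\bl(\Edual_\bM K)_{>-c}\br\to K\to\Edual_\bM\bl(\Edual_\bM K)_{\leq -c}\br\tone$; applying $\Edual_\bM$ once more to each outer term and invoking $\Edual_\bM\circ\Edual_\bM\simeq\id$ shows, through the descriptions recorded in the first paragraph, that these outer terms lie in $\dEprc{\leq c}(\bM)$ and $\dEprc{>c}(\bM)$, respectively $\dEprc{<c}(\bM)$ and $\dEprc{\geq c}(\bM)$, which is exactly condition (d).

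No step here is genuinely hard: this is the general fact that an anti-auto-equivalence whose square is the identity transports a t-structure to a t-structure. The only points requiring care are the bookkeeping of shifts when applying the contravariant exact functor $\Edual_\bM$ to distinguished triangles, and the appeal to Proposition~\ref{prop:dualRc} to know that $\Edual_\bM$ really restricts to an anti-auto-equivalence of $\Erc(\bM)$ squaring to the identity, rather than being merely the contravariant functor $\dere(\bM)\to\dere_\st(\bM)^\op$ used to define it.
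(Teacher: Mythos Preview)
Your proposal is correct and takes essentially the same approach as the paper: the paper simply states that the result ``is a consequence of Proposition~\ref{pro:t-pre1}'' without further details, and what you have written is precisely the unwinding of that consequence via the duality anti-auto-equivalence $\Edual_\bM$ of Proposition~\ref{prop:dualRc}.
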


Note that, by the definition, for any $c\in\R$ the duality functor $\Edual_\bM$ interchanges $\tEprc{\leq c}(\bM)$ and $\dEprc[p^*]{\geq -c}(\bM)$, as well as $\tEprc{\geq c}(\bM)$ and $\dEprc[p^*]{\leq -c}(\bM)$.

\begin{lemma}\label{lem:recollD}
Let $\bM$ be a bordered space.
Let $c\in\R$, $Z\in\CS_\bM$, and $K\in\Erc(\bM)$.
Set $U=\unbM\setminus Z$.
Then, considering the morphisms
\[
\xymatrix{
\inbordered{Z} \ar[r]^-i & \bM & \inbordered{U} \ar[l]_-j,
}
\]
one has:
\begin{itemize}
\item[(i)]
$K\in\dEprc{\leq c}(\bM)$ if and only if
\[
\Eopb i K\in\dEprc{\leq c}(\inbordered{Z}) \text{ and }
\Eopb j K\in\dEprc{\leq c}(\inbordered{U}),
\]
\item[(ii)]
$K\in\dEprc{\geq c}(\bM)$ if and only if
\[
\Eepb i K\in\dEprc{\geq c}(\inbordered{Z}) \text{ and }
\Eepb j K\in\dEprc{\geq c}(\inbordered{U}).
\]
\end{itemize}
\end{lemma}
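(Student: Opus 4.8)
The plan is to deduce the statement from Lemma~\ref{lem:recoll} by transporting everything through the duality functor. Since the two assertions are exchanged by $\Edual$, I would treat only (i) in detail and indicate the modifications for (ii). Note first that all objects in sight lie in the relevant categories of $\R$-constructible enhanced ind-sheaves: $\Edual_\bM K\in\Erc(\bM)$ by Proposition~\ref{prop:dualRc}(ii), while $\Eopb i K$ and $\Eopb j K$ (resp.\ $\Eepb i K$, $\Eepb j K$) are $\R$-constructible by Proposition~\ref{prop:dualRc}(iii).

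For (i), unwinding the definition of $\dEprc{\leq c}$, the condition $K\in\dEprc{\leq c}(\bM)$ reads $\Edual_\bM K\in\tEprc[p^*]{\geq -c}(\bM)$, the condition $\Eopb i K\in\dEprc{\leq c}(\inbordered Z)$ reads $\Edual_{\inbordered Z}(\Eopb i K)\in\tEprc[p^*]{\geq -c}(\inbordered Z)$, and the condition $\Eopb j K\in\dEprc{\leq c}(\inbordered U)$ reads $\Edual_{\inbordered U}(\Eopb j K)\in\tEprc[p^*]{\geq -c}(\inbordered U)$. By Proposition~\ref{prop:dualRc}(iii), applied to the closed embedding $i$ and the open embedding $j$, one has $\Edual_{\inbordered Z}\circ\Eopb i\simeq\Eepb i\circ\Edual_\bM$ and $\Edual_{\inbordered U}\circ\Eopb j\simeq\Eepb j\circ\Edual_\bM$, so the last two conditions become $\Eepb i(\Edual_\bM K)\in\tEprc[p^*]{\geq -c}(\inbordered Z)$ and $\Eepb j(\Edual_\bM K)\in\tEprc[p^*]{\geq -c}(\inbordered U)$. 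Now I would apply Lemma~\ref{lem:recoll}(ii) to the perversity $p^*$, the object $\Edual_\bM K$, and the value $-c$ in place of $c$: this gives the equivalence between $\Edual_\bM K\in\tEp[p^*]{\geq -c}(\bM)$ and the conjunction of $\Eepb i(\Edual_\bM K)\in\tEp[p^*]{\geq -c}(\inbordered Z)$ and $\Eepb j(\Edual_\bM K)\in\tEp[p^*]{\geq -c}(\inbordered U)$. Since $\tEprc[p^*]{\geq -c}=\tEp[p^*]{\geq -c}\cap\Erc$ and $\Erc$ is preserved by $\Eepb i$, $\Eepb j$, the $\R$-constructibility side conditions hold automatically on both sides, so the same equivalence holds with the $\Rc$ subscripts; translating back via the definition of $\dEprc{\leq c}$ proves (i).

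For (ii) I would argue symmetrically: $K\in\dEprc{\geq c}(\bM)$ means $\Edual_\bM K\in\tEprc[p^*]{\leq -c}(\bM)$, and using instead the isomorphisms $\Edual_{\inbordered Z}\circ\Eepb i\simeq\Eopb i\circ\Edual_\bM$, $\Edual_{\inbordered U}\circ\Eepb j\simeq\Eopb j\circ\Edual_\bM$ from Proposition~\ref{prop:dualRc}(iii), the conditions $\Eepb i K\in\dEprc{\geq c}(\inbordered Z)$ and $\Eepb j K\in\dEprc{\geq c}(\inbordered U)$ translate into $\Eopb i(\Edual_\bM K)\in\tEprc[p^*]{\leq -c}(\inbordered Z)$ and $\Eopb j(\Edual_\bM K)\in\tEprc[p^*]{\leq -c}(\inbordered U)$; now Lemma~\ref{lem:recoll}(i), with perversity $p^*$ and value $-c$, yields the desired equivalence. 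I do not expect a real obstacle here: the proof is entirely formal, and the only points that need care are the bookkeeping of the substitutions $p\rightsquigarrow p^*$ and $c\rightsquigarrow -c$, the exchange of $\Eopb$ with $\Eepb$ under $\Edual$, and the remark that the $\R$-constructibility conditions match up so that Lemma~\ref{lem:recoll} (stated without the $\Rc$ subscript) applies verbatim.
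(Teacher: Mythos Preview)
Your proof is correct and follows essentially the same approach as the paper: reduce to Lemma~\ref{lem:recoll} applied to $\Edual_\bM K$ with perversity $p^*$ and value $-c$, using the commutation of $\Edual$ with $\Eopb/\Eepb$ from Proposition~\ref{prop:dualRc}. The paper's proof is simply a terser version of what you wrote.
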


\begin{proof}
The statement follows from Lemma~\ref{lem:recoll}, noticing that
\begin{align*}
\Edual_{\inbordered{Z}}\Eopb i K \simeq \Eepb i \Edual_\bM K, &\quad
\Edual_{\inbordered{U}}\Eopb j K \simeq \Eepb j \Edual_\bM K, \\
\Edual_{\inbordered{Z}}\Eepb i K \simeq \Eopb i \Edual_\bM K, &\quad
\Edual_{\inbordered{U}}\Eepb j K \simeq \Eopb j \Edual_\bM K,
\end{align*}
which is a consequence of Proposition~\ref{prop:dualRc}.
\end{proof}

\begin{lemma}\label{lem:E'E''}
For any $c\in\R$ one has:
\begin{align*}
\dEprc{\leq c}(\bM) &\subset \tEprc{\leq c}(\bM) \subset \dEprc{\leq c+1}(\bM), \\
\tEprc{\geq c}(\bM) &\subset \dEprc{\geq c}(\bM) \subset \tEprc{\geq c-1}(\bM) .
\end{align*}
\end{lemma}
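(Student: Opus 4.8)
The plan is to use the duality functor to reduce to the second line of inclusions, then to establish that line when $K$ is a stably free object on a smooth bordered space by a direct computation, and finally to deduce the general case by a stratification argument. For the first reduction, recall from Proposition~\ref{prop:dualRc} that $\Edual_\bM$ is an equivalence $\Erc(\bM)^\op\isoto\Erc(\bM)$ with $\Edual_\bM\circ\Edual_\bM\simeq\id$, hence a bijection on isomorphism classes of objects; by the definition of the dual t-structure it interchanges $\tEprc{\leq c}(\bM)$ with $\dEprc[p^*]{\geq -c}(\bM)$ and $\tEprc{\geq c}(\bM)$ with $\dEprc[p^*]{\leq -c}(\bM)$, hence also (applying $\Edual_\bM$ once more) $\dEprc{\leq c}(\bM)$ with $\tEprc[p^*]{\geq -c}(\bM)$ and $\dEprc{\geq c}(\bM)$ with $\tEprc[p^*]{\leq -c}(\bM)$. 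Applying $\Edual_\bM$ to the two inclusions of the first line for the perversity $p$ turns them, respectively, into the two inclusions of the second line for the perversity $p^*$; since $*$ is an involution on perversities, it is enough to prove that for every perversity $p$ and every $c\in\R$
\[
\tEprc{\geq c}(\bM)\subset\dEprc{\geq c}(\bM)\subset\tEprc{\geq c-1}(\bM).
\]

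Next I would treat the case in which $\bM$ is smooth of dimension $d$ and $K\in\Erc(\bM)$ is stably free. Since each of the three membership relations above may be checked on the connected components of $\unbM$ separately, we may assume that $K\simeq\Efield_\shl$ for a stable $\enh$-type $\shl=(\varphi_a,m_a,\psi^\pm_b,n_b)_{a\in A,\ b\in B}$ on $\bM$ (after discarding the $\Psi_b$ with $\Efield_{\Psi_b}\simeq 0$). By Lemma~\ref{lem:t-L}~(ii), $K\in\tEprc{\geq c}(\bM)$ iff $m_a\geq c+p(d)$ and $n_b\geq c+p(d)$ for all $a,b$; by Lemma~\ref{lem:t-L}~(iv) together with the definition of $\dEprc{\geq c}(\bM)$, $K\in\dEprc{\geq c}(\bM)$ iff $m_a\geq c+p(d)$ and $n_b\geq c+p(d)-1$ for all $a,b$; and by Lemma~\ref{lem:t-L}~(ii) with $c$ replaced by $c-1$, $K\in\tEprc{\geq c-1}(\bM)$ iff $m_a\geq c+p(d)-1$ and $n_b\geq c+p(d)-1$ for all $a,b$. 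Comparing these three systems of inequalities proves the two inclusions in this case.

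For the general case I would use Lemma~\ref{lem:Rcstrat} to fix a regular filtration $\emptyset=M_{-1}\subset M_0\subset\dots\subset M_{d_M}=\unbM$ of $\bM$ such that, setting $Z_k\seteq M_k\setminus M_{k-1}$, both $\Eopb{i_{\inbordered{(Z_k)}}}K$ and $K_k\seteq\Eepb{i_{\inbordered{(Z_k)}}}K$ are stably free for every $k$. Iterating the recollement Lemma~\ref{lem:recoll}~(ii) along this filtration — at the $k$-th stage applied to the bordered space $\inbordered{(\unbM\setminus M_{k-1})}$, whose closed subset $Z_k$ has open complement $\unbM\setminus M_k$, and using the compatibility of exceptional inverse images with composition together with the fact that $\inbordered{(Z_k)}$ does not depend on the ambient space, so that the closed-stratum contribution at that stage is $K_k$ — one obtains that $K\in\tEprc{\geq c}(\bM)$ if and only if $K_k\in\tEprc{\geq c}(\inbordered{(Z_k)})$ for all $k$; iterating Lemma~\ref{lem:recollD}~(ii) in the same way yields that $K\in\dEprc{\geq c}(\bM)$ if and only if $K_k\in\dEprc{\geq c}(\inbordered{(Z_k)})$ for all $k$. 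Since each $\inbordered{(Z_k)}$ is smooth of dimension $k$ (the condition being vacuous when $Z_k=\emptyset$) and $K_k\in\Erc(\inbordered{(Z_k)})$ is stably free, the smooth case gives, for each $k$, the implications $K_k\in\tEprc{\geq c}\Rightarrow K_k\in\dEprc{\geq c}\Rightarrow K_k\in\tEprc{\geq c-1}$ over $\inbordered{(Z_k)}$; combined with the two iterated recollement characterizations, this yields $\tEprc{\geq c}(\bM)\subset\dEprc{\geq c}(\bM)\subset\tEprc{\geq c-1}(\bM)$, and hence the lemma.

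I expect the point requiring the most care to be the bookkeeping of this last induction: that the exceptional-inverse-image restriction produced at each stage of the iterated recollement really is $K_k$ (so that the stably free conclusion of Lemma~\ref{lem:Rcstrat} is available precisely where it is needed), and that membership in $\tEprc{\geq c}$ and $\dEprc{\geq c}$ over a possibly disconnected smooth bordered space can be tested componentwise. Both are routine — the former from the compatibility of exceptional inverse images with composition and the fact from \S\ref{sse:bord} that the bordered space $\bZ$ depends only on $\bM$, the latter from the decomposition of closed subanalytic subsets along connected components. The smooth case itself and the duality reduction are then pure bookkeeping.
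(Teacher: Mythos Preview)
Your proposal is correct and follows essentially the same approach as the paper: reduce via Lemmas~\ref{lem:recoll} and \ref{lem:recollD} along a regular filtration from Lemma~\ref{lem:Rcstrat} to the case where $\bM$ is smooth equidimensional and $K$ is stably free, then conclude by the explicit inequalities of Lemma~\ref{lem:t-L}. The only difference is that you first pass to the second line by duality, whereas the paper treats all four inclusions simultaneously (using parts (i)--(iv) of Lemma~\ref{lem:t-L} and both recollement lemmas); this extra reduction is harmless but unnecessary.
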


\begin{proof}
Let $K\in\dere(\bM)$.
By Lemma~\ref{lem:Rcstrat}, there exists
a regular filtration $(M_k)_{k\in\Z}$ of $\bM$ such that
both $\Eopbv {i_{\inbordered{(M_k \setminus M_{k-1})}}^{\hs{.8ex}-1}} K$ and
$\Eepbv{i_{\inbordered{(M_k \setminus M_{k-1})}}^{\hs{.8ex}!}} K$ are stably free.
In order to check the inclusions in the statement,
by Lemmas~\ref{lem:recoll} and \ref{lem:recollD}, we may assume that $\bM$ is smooth equidimensional, and that $K$ is stably free.
Then one concludes using Lemma~\ref{lem:t-L}.
\end{proof}

\begin{proposition}
\label{pro:dt-roeimeopb}
Let $f\colon \bM \to \bN$ be a morphism of subanalytic bordered spaces, and $d\in\Z_{\geq 0}$.
Assume that $\dim f^{-1}(y) \leq d$ for any $y\in\unbN$. Then, for any $c\in\R$ one has
\bnum
\item
$\Eopb f \bl\dEprc[{p[d]}]{\leq c}(\bN)\br \subset \dEprc{\leq c}(\bM)$,
\item
$\Eepb f \bl\dEprc[{p[d]}]{\geq c}(\bN)\br \subset \dEprc{\geq c-d}(\bM)$,
\item
$\dere_\Rc(\bN)\cap\Eoim f \bl\dEprc{\geq c}(\bM)\br \subset \dEprc[{p[d]}]{\geq c}(\bN)$,
\item
$\dere_\Rc(\bN)\cap\Eeeim f \bl\dEprc{\leq c}(\bM)\br \subset \dEprc[{p[d]}]{\leq c+d}(\bN)$.
\ee
\end{proposition}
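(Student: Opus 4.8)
The plan is to deduce the whole statement from Proposition~\ref{pro:tpt-roeimeopb}, applied to the perversity $p^*$, by transporting that result through the duality functor $\Edual$. First I would record the elementary identity
\[
(p[d])^*(n) = -p(d+n)-n = \bl -p(d+n)-(d+n) \br + d = (p^*)[d](n) + d
\qquad (n\in\Z_{\geq0}),
\]
together with its consequence that, since adding a constant to a perversity merely translates the index of the associated subcategories by the same amount,
\[
\tEprc[{(p[d])^*}]{\geq c}(\bN) = \tEprc[{(p^*)[d]}]{\geq c+d}(\bN), \qquad
\tEprc[{(p[d])^*}]{\leq c}(\bN) = \tEprc[{(p^*)[d]}]{\leq c+d}(\bN)
\]
for every $c\in\R$.

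For (i) and (ii) I would argue as follows. Fix $c\in\R$. If $K\in\dEprc[{p[d]}]{\leq c}(\bN)$, then by definition $\Edual_\bN K\in\tEprc[{(p[d])^*}]{\geq -c}(\bN)=\tEprc[{(p^*)[d]}]{\geq -c+d}(\bN)$; applying Proposition~\ref{pro:tpt-roeimeopb}(ii) with the perversity $p^*$ and with $c$ replaced by $-c+d$, and noting that $\Eepb f$ preserves $\R$-constructibility by Proposition~\ref{prop:dualRc}(iii), we get $\Eepb f\Edual_\bN K\in\tEprc[{p^*}]{\geq -c}(\bM)$. Since $\Edual_\bM\Eopb f K\simeq\Eepb f\Edual_\bN K$ (Proposition~\ref{prop:dualRc}(iii)), this is exactly the assertion $\Eopb f K\in\dEprc{\leq c}(\bM)$. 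Symmetrically, if $K\in\dEprc[{p[d]}]{\geq c}(\bN)$ then $\Edual_\bN K\in\tEprc[{(p^*)[d]}]{\leq -c+d}(\bN)$, Proposition~\ref{pro:tpt-roeimeopb}(i) gives $\Eopb f\Edual_\bN K\in\tEprc[{p^*}]{\leq -c+d}(\bM)$, and the isomorphism $\Edual_\bM\Eepb f K\simeq\Eopb f\Edual_\bN K$ yields $\Edual_\bM\Eepb f K\in\tEprc[{p^*}]{\leq -(c-d)}(\bM)$, i.e.\ $\Eepb f K\in\dEprc{\geq c-d}(\bM)$.

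Statements (iii) and (iv) I would then obtain from (i) and (ii) by adjunction, exactly as (iii)--(iv) follow from (i)--(ii) in Proposition~\ref{pro:tpt-roeimeopb}. By Proposition~\ref{pro:t-dpre1}, $\bl\dEprc[{p[d]}]{\leq c}(\bN),\dEprc[{p[d]}]{\geq c}(\bN)\br_{c\in\R}$ is a t-structure on $\Erc(\bN)$, so its truncation subcategories are the left/right orthogonals of the complementary ones. For (iii), given $K\in\dEprc{\geq c}(\bM)$ with $\Eoim f K\in\Erc(\bN)$, and any $L\in\dEprc[{p[d]}]{<c}(\bN)$, the adjunction $(\Eopb f,\Eoim f)$ and part (i) give
\[
\Hom[\dere(\bN)](L,\Eoim f K)\simeq\Hom[\dere(\bM)](\Eopb f L, K)\in\Hom[\dere(\bM)](\dEprc{<c}(\bM),\dEprc{\geq c}(\bM))=0,
\]
whence $\Eoim f K\in\bl\dEprc[{p[d]}]{<c}(\bN)\br^\bot=\dEprc[{p[d]}]{\geq c}(\bN)$. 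For (iv), given $K\in\dEprc{\leq c}(\bM)$ with $\Eeeim f K\in\Erc(\bN)$, and any $L\in\dEprc[{p[d]}]{>c+d}(\bN)$, part (ii) places $\Eepb f L$ in $\dEprc{>c}(\bM)$, so the adjunction $(\Eeeim f,\Eepb f)$ gives
\[
\Hom[\dere(\bN)](\Eeeim f K, L)\simeq\Hom[\dere(\bM)](K,\Eepb f L)\in\Hom[\dere(\bM)](\dEprc{\leq c}(\bM),\dEprc{>c}(\bM))=0,
\]
and hence $\Eeeim f K\in{}^\bot\bl\dEprc[{p[d]}]{>c+d}(\bN)\br=\dEprc[{p[d]}]{\leq c+d}(\bN)$.

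Each step is a direct invocation of Propositions~\ref{pro:tpt-roeimeopb}, \ref{prop:dualRc} and \ref{pro:t-dpre1}, so the main obstacle is purely bookkeeping: one must keep straight that $\Edual$ replaces the perversity by $p^*$ and reverses the inequalities, and that the additive constant $d$ in $(p[d])^*=(p^*)[d]+d$ is precisely what produces the index shift by $d$ on the right-hand sides of (ii) and (iv), rather than being absorbed into the perversity. I also expect to have to point out, in (iii) and (iv), why the intersection with $\Erc(\bN)$ cannot be dropped: it is forced both because $\dEprc[{p[d]}]{\leq c}(\bN)$ and $\dEprc[{p[d]}]{\geq c}(\bN)$ are by definition subcategories of $\Erc(\bN)$, and because $\Eoim f$ and $\Eeeim f$ do not preserve $\R$-constructibility in general when $f$ is not semi-proper.
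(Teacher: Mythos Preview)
Your proof is correct and follows essentially the same route as the paper: record the identity $(p[d])^* = (p^*)[d] + d$, use it together with the duality relations $\Edual_\bM\Eopb f \simeq \Eepb f\Edual_\bN$ and $\Edual_\bM\Eepb f \simeq \Eopb f\Edual_\bN$ from Proposition~\ref{prop:dualRc} to reduce (i)--(ii) to Proposition~\ref{pro:tpt-roeimeopb}, and then obtain (iii)--(iv) by adjunction. The paper's proof is terser (it just writes ``(ii) is proved similarly'' and ``(iii) and (iv) follow from (i) and (ii) by adjunction''), but the content is the same.
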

\begin{proof}

(i) Let $K\in\dEprc[{p[d]}]{\leq c}(\bN)$, that is,
$\Edual_\bN K\in\tEprc[{p[d]^*}]{\geq -c}(\bM)$.
Since $p[d]^*(n) = p^*[d](n)+d$,
Proposition~\ref{pro:tpt-roeimeopb} implies
\[
\Edual_\bM \Eopb f K \simeq \Eepb f \Edual_\bN K \in
\tEprc[{p^*}]{\geq -c}(\bM).
\]
Hence
\[
\Eopb f K \in \dEprc[{p}]{\leq c}(\bM).
\]

\smallskip \noi
(ii) is proved similarly.

\smallskip \noi
(iii) and (iv) follows from (i) and (ii) by adjunction.
\end{proof}

\subsection{Enhanced perversity}\label{sse:enhp}
Let $p$ be a perversity and $\bM$ a subanalytic bordered space.

\begin{definition}
For $c\in\R$, consider the strictly full subcategories of $\Erc(\bM)$
given by
\begin{align*}
\Eprc{\leq c}(\bM) &\defeq \tEprc{\leq c}(\bM) \cap \dEprc{\leq c+1/2}(\bM) \\
&= \{K\in\Erc(\bM) \semicolon K\in\tEprc{\leq c}(\bM),\ \Edual_\bM K \in \tEprc[p^*]{\geq -c-1/2}(\bM) \}, \\
\Eprc{\geq c}(\bM) &\seteq \tEprc{\geq c-1/2}(\bM) \cap \dEprc{\geq c}(\bM)\\
&=\{K\in\Erc(\bM) \semicolon \Edual_\bM K \in \Eprc{\leq -c}(\bM) \} \\
&= \{K\in\Erc(\bM) \semicolon K\in\tEprc{\geq c-1/2}(\bM),\ \Edual_\bM K \in \tEprc[p^*]{\leq -c}(\bM) \}.
\end{align*}
\end{definition}

By Lemma~\ref{lem:E'E''} one has
\begin{equation}
\label{eq:E'E''E}
\ba{lll}
\dEprc{\leq c}(\bM) &\subset \Eprc{\leq c}(\bM)
&\subset \tEprc{\leq c}(\bM)\qtq\\
\tEprc{\geq c}(\bM) &\subset \Eprc{\geq c}(\bM)
&\subset\dEprc{\geq c}(\bM).
\ea
\end{equation}

In the rest of this section, we will give a proof of the following result.
\begin{theorem}
\label{thm:mide}
Let $\bM$ be a subanalytic bordered space.
\begin{itemize}
\item[(i)]
$\bl \Eprc{\leq c}(\bM), \Eprc{\geq c}(\bM) \br_{c\in\R}$
is a t-structure on $\Erc(\bM)$.
\item[(ii)]
For any $c\in\R$, the duality functor $\Edual_M$ interchanges $\Eprc{\leq c}(\bM)$ and $\Eprc[p^*]{\geq -c}(\bM)$.
\item[(iii)]
Assume that $\bM=M$ is a subanalytic space.
For any interval $I\subset\R$ such that $I\to\R/\Z$ is injective, the prestack on $M$
\[
U\mapsto \Eprc{I}(U)
\]
is a stack of quasi-abelian categories.
\end{itemize}
\end{theorem}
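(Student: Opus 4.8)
Throughout, write $T^{\le c}=\tEprc{\le c}(\bM)$, $T^{\ge c}=\tEprc{\ge c}(\bM)$, $D^{\le c}=\dEprc{\le c}(\bM)$, $D^{\ge c}=\dEprc{\ge c}(\bM)$; these are t-structures on $\Erc(\bM)$ by Proposition~\ref{pro:t-pre1}\,(i) and Proposition~\ref{pro:t-dpre1}, and by definition $\Eprc{\le c}(\bM)=T^{\le c}\cap D^{\le c+1/2}$ and $\Eprc{\ge c}(\bM)=T^{\ge c-1/2}\cap D^{\ge c}$. I shall use repeatedly that $\Edual_\bM$ is a contravariant autoequivalence of $\Erc(\bM)$ with $\Edual_\bM\circ\Edual_\bM\simeq\id$ (Proposition~\ref{prop:dualRc}\,(ii)), that it carries $T^{\le c}$ onto $\dEprc[p^*]{\ge -c}(\bM)$ (the assertion following Proposition~\ref{pro:t-dpre1}) and $D^{\le c}$ onto $\tEprc[p^*]{\ge -c}(\bM)$ (the definition of $D$), and hence that it carries intersections onto intersections. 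Part (ii) follows at once: applying $\Edual_\bM$ to $\Eprc{\le c}(\bM)=T^{\le c}\cap D^{\le c+1/2}$ gives $\dEprc[p^*]{\ge -c}(\bM)\cap\tEprc[p^*]{\ge -c-1/2}(\bM)=\Eprc[p^*]{\ge -c}(\bM)$ by the definition of the latter, and substituting $(p^*,-c)$ for $(p,c)$ and using $(p^*)^*=p$ and $\Edual_\bM^2\simeq\id$ then yields $\Edual_\bM\bl\Eprc{\ge c}(\bM)\br=\Eprc[p^*]{\le -c}(\bM)$.

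For part (i) I check conditions (a)--(d) of Definition~\ref{def:gent}. Conditions (a) and (b) hold for $T$ and for $D$, and the operations $\bigcap_{c'>c}$, $\bigcup_{c'<c}$, $[-1]$ entering those conditions commute with intersection, so (a) and (b) hold for $\bl\Eprc{\le c}(\bM),\Eprc{\ge c}(\bM)\br_{c\in\R}$. Using the definitions and \eqref{eq:E'E''E}, condition (c) reduces to showing $\Hom[\dere(\bM)](K,K')=0$ whenever $K\in\Eprc{\le c_1}(\bM)$, $K'\in\Eprc{\ge c_2}(\bM)$ and $c_1<c_2$. If $c_2-c_1>1/2$, this is contained in Proposition~\ref{pro:tE-fihom}, since then $K\in T^{\le c_1}$, $K'\in T^{\ge c_2-1/2}$ and $c_1<c_2-1/2$. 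The range $0<c_2-c_1\le1/2$ is the crux, and there I reduce to the stably free case. By Lemma~\ref{lem:Rcstrat} one may choose a common regular filtration of $\bM$ along which the successive $\Eopb i$ of $K$ and $\Eepb i$ of $K'$ are stably free; the recollement Lemmas~\ref{lem:recoll} and \ref{lem:recollD}, combined, show that these functors preserve $\Eprc{\le c_1}$ and $\Eprc{\ge c_2}$ and, via the associated long exact sequences, reduce the vanishing to the case of $\bM$ smooth of some dimension $d$ with $K$, $K'$ stably free. There Lemma~\ref{lem:t-L}, applied to $p$ and --- after $\Edual_\bM$ --- to $p^*$, forces the $\Phi$-levels of $K$ to be $\le c_1+p(d)$ and its $\Psi$-levels $\le c_1+p(d)-\tfrac12$, and the $\Phi$-levels of $K'$ to be $\ge c_2+p(d)$ and its $\Psi$-levels $\ge c_2+p(d)-\tfrac12$. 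Since these levels are integers and $c_1<c_2$, the $\Phi/\Phi$, $\Psi/\Psi$ and $\Psi$-of-$K$ versus $\Phi$-of-$K'$ comparisons present a gap of at least one and contribute nothing to $\Hom[\dere(\bM)](K,K')$ for degree reasons; and the single borderline comparison, $\Phi$-of-$K$ versus $\Psi$-of-$K'$, also contributes nothing, because after $\Edual_\bM$ the object $\Edual_\bM\Efield_{\Psi_b}$ lies one level below $\Edual_\bM\Efield_{\Phi_a}$ (Lemma~\ref{lem:t-PhiPsi}), so Proposition~\ref{pro:tE-fihom} for $p^*$ applies. This proves (c).

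Condition (d) --- the existence, for each $K\in\Erc(\bM)$ and $c\in\R$, of distinguished triangles $K_{\le c}\to K\to K_{>c}\tone$ and $K_{<c}\to K\to K_{\ge c}\tone$ with vertices in $\Eprc{\le c}(\bM)$, $\Eprc{>c}(\bM)$, $\Eprc{<c}(\bM)$, $\Eprc{\ge c}(\bM)$ --- is the analogue of Proposition~\ref{pro:Epdt} for the present t-structure, and I would prove it by the same scheme: a decreasing induction on $\dim$ along a regular filtration, using the recollement Lemmas~\ref{lem:recoll}, \ref{lem:recollD} and the octahedral axiom, with base case the stably free one, where $K$ is a finite direct sum of shifted copies of $\Efield_{\Phi_a}$ and $\Efield_{\Psi_b}$, each lying in some $\Eprc{a}(\bM)$ by Lemma~\ref{lem:t-PhiPsi}, so that the two truncation triangles are got by splitting off summands (the $E$-analogue of Lemma~\ref{lem:EpdtFree}). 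Running this induction --- in particular, checking at each step that the vertices produced lie simultaneously in the correct $T$- and $D$-subcategories --- is the step I expect to be the main difficulty, as the two t-structures interact and neither truncation alone delivers the $E$-truncation.

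Finally, part (iii). By part (i) and Proposition~\ref{pro:qabelian}\,(i), for $I\to\R/\Z$ injective the slice $\Eprc{I}(M)$ is quasi-abelian, and its strict short exact sequences correspond to distinguished triangles in $\Erc(M)$ with all vertices in $\Eprc{I}(M)$. For the stack property I argue as in Proposition~\ref{prop:stack} and Proposition~\ref{pro:t-pre1}\,(ii): the sheaf-valued refinement $\fhom(\Eprc{\le c}(\bM),\Eprc{\ge c'}(\bM))\subset\derd^{\ge\lceil c'-c\rceil}(\field_{\unb\bM})$ of condition (c) --- proved by the same reduction to stably free objects --- shows that $U\mapsto\Hom[\Eprc{I}(U)](\Eopb i_UK,\Eopb i_UK')$ is a sheaf, so $U\mapsto\Eprc{I}(U)$ is a separated prestack; and descent of objects follows from Proposition~\ref{prop:patch}, since an object glued from local sections of $\Eprc{I}$ is $\R$-constructible and membership in $\Eprc{\le c}$ and $\Eprc{\ge c}$ is a local condition --- for $T$ by Proposition~\ref{pro:t-pre1}\,(ii), and for $D$ after applying $\Edual_\bM$, which commutes with restriction to open subsets.
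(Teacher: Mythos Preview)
Your plan is essentially the same as the paper's: part (ii) is definitional, part (i)(a)(b) are formal, and for (i)(d) the paper indeed runs the induction of Proposition~\ref{pro:Epdt} verbatim, using Lemmas~\ref{lem:recoll} and \ref{lem:recollD} together and the stably free base case via Lemma~\ref{lem:pt-L}. Your worry about ``checking the $T$- and $D$-conditions simultaneously'' is not a difficulty: once the recollement lemmas hold for both $T$ and $D$, they hold for the intersection, and the octahedral argument goes through unchanged.

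The one place where you diverge from the paper is the borderline $\Phi$-of-$K$ versus $\Psi$-of-$K'$ case in condition (c). Your duality trick is correct for the vanishing of $\Hom$: since $\Edual_\bM$ shifts $\Efield_\Psi$ by $d+1$ but $\Efield_\Phi$ only by $d$, the dualized pair acquires a strict level separation and Proposition~\ref{pro:tE-fihom} (or even just the standard t-structure) kills the $\Hom$. This is slicker than the paper's route, which instead proves the stronger sheaf-level estimate of Proposition~\ref{pro:Eprcfihom} by an explicit computation showing $H^0\fihom(\Efield_{\Phi_a},\Efield_{\Psi_{b'}})\simeq 0$.

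However, this is also where your argument for (iii) has a gap. For the separated-prestack step you need $\fhom(K,K')\in\derd^{\ge0}(\field_{\unb\bM})$, not merely $\Hom(K,K')=0$. Your duality trick gives only the global $\Hom$-vanishing: the isomorphism $\Hom(K,K')\simeq\Hom(\Edual_\bM K',\Edual_\bM K)$ is a statement about groups, and there is no evident isomorphism $\fihom(K,K')\simeq\fihom(\Edual_\bM K',\Edual_\bM K)$ in $\derd(\bM)$ to carry the level separation back to the sheaf level. Thus ``proved by the same reduction to stably free objects'' is not justified: on smooth strata the cases $\Phi/\Phi$, $\Psi/\Psi$, $\Psi/\Phi$ are handled by Proposition~\ref{pro:t-homE}, but the borderline $\Phi/\Psi$ case still requires showing $H^0\fihom(\Efield_{\Phi_a},\Efield_{\Psi_{b'}})=0$, which is exactly the explicit computation the paper performs in the proof of Proposition~\ref{pro:Eprcfihom}. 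You will need either that computation or a genuinely sheaf-level form of the duality argument to close (iii).
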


\begin{proof}[Plan of the proof]
(i) As in the proof of Proposition~\ref{pro:t-pre1}, the statement follows from Propositions~\ref{pro:Eprcfihom} and \ref{pro:Eprcdt} below.

(ii) is clear from the definitions.

(iii) has a proof analogous to that of Proposition~\ref{prop:stack}.
\end{proof}

\begin{lemma}
\label{lem:pt-L}
Assume that $\unbM$ is non empty and smooth of dimension $d$.
For $c\in\R$ and $\shl$ a stable $\enh$-type on $\bM$ as in \eqref{eq:typeT}, one has
\begin{itemize}
\item[(i)]
$\Efield_\shl \in \Eprc{\leq c}(\bM)$ if and only if for any $a\in A$ and $b\in B$
\[
m_a\leq c+p(d), \quad n_b \leq c+p(d)-1/2,
\]
\item[(ii)]
$\Efield_\shl \in \Eprc{\geq c}(\bM)$ if and only if for any $a\in A$ and $b\in B$
\[
m_a\geq c+p(d), \quad n_b \geq c+p(d)-1/2.
\]
\end{itemize}
\end{lemma}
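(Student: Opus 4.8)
\smallskip\noindent\textbf{Plan of the proof.}
The plan is to deduce both statements mechanically from Lemma~\ref{lem:t-L} combined with the definitions of $\Eprc{\leq c}(\bM)$ and $\Eprc{\geq c}(\bM)$; no new argument is needed. Since $\bM$ is assumed non empty and smooth of dimension $d$, one has $d=d_\oM$, so Lemma~\ref{lem:t-L} applies directly and its right-hand sides are expressed through $p(d)$.

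For (i), I would start from the definition $\Eprc{\leq c}(\bM)=\tEprc{\leq c}(\bM)\cap\dEprc{\leq c+1/2}(\bM)$, that is, $\Efield_\shl\in\Eprc{\leq c}(\bM)$ if and only if $\Efield_\shl\in\tEprc{\leq c}(\bM)$ and $\Edual_\bM\Efield_\shl\in\tEprc[p^*]{\geq -c-1/2}(\bM)$. By Lemma~\ref{lem:t-L}~(i) the first membership is equivalent to $m_a\leq c+p(d)$ and $n_b\leq c+p(d)$ for all $a\in A$, $b\in B$; by Lemma~\ref{lem:t-L}~(iii), applied with $c$ replaced by $c+1/2$, the second is equivalent to $m_a\leq c+p(d)+1/2$ and $n_b\leq c+p(d)-1/2$ for all $a,b$. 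Intersecting the two systems, the binding inequalities are $m_a\leq c+p(d)$ and $n_b\leq c+p(d)-1/2$, which is exactly the claim.

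For (ii), similarly, $\Efield_\shl\in\Eprc{\geq c}(\bM)$ if and only if $\Efield_\shl\in\tEprc{\geq c-1/2}(\bM)$ and $\Edual_\bM\Efield_\shl\in\tEprc[p^*]{\leq -c}(\bM)$. By Lemma~\ref{lem:t-L}~(ii), applied with $c$ replaced by $c-1/2$, the first is equivalent to $m_a\geq c+p(d)-1/2$ and $n_b\geq c+p(d)-1/2$ for all $a,b$, and by Lemma~\ref{lem:t-L}~(iv) the second is equivalent to $m_a\geq c+p(d)$ and $n_b\geq c+p(d)-1$ for all $a,b$. Intersecting, the binding inequalities are $m_a\geq c+p(d)$ and $n_b\geq c+p(d)-1/2$, as claimed.

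There is no genuine obstacle here: once Lemma~\ref{lem:t-L} is in hand, the whole proof is the bookkeeping of these four pairs of inequalities and of which one dominates (for the $\varphi_a$-part the $\tEprc{}$-condition is binding, for the $\psi^\pm_b$-part the $\dEprc{}$-condition is). The only place where I would be careful is to substitute the shift in $\dEprc{\leq c+1/2}$ (respectively in $\tEprc{\geq c-1/2}$) into the correct slot of Lemma~\ref{lem:t-L}~(iii) (respectively (ii)): a misplaced sign or $1/2$ there would propagate into a wrong threshold for the $n_b$.
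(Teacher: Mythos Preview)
Your proposal is correct and follows exactly the approach of the paper, which simply records that the statement follows from Lemma~\ref{lem:t-L}; you have merely made the bookkeeping explicit, and your substitutions and intersections of the four inequality pairs are all accurate.
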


\begin{proof}
The statement follows from Lemma~\ref{lem:t-L}.
\end{proof}

\begin{proposition}
\label{pro:Eprcfihom}
The bifunctors $\fihom$ and $\fhom$ are left exact, i.e., for any $c,c'\in\R$ one has:
\begin{align*}
\fihom(\Eprc{\leq c}(\bM), \Eprc{\geq c'}(\bM)) &\subset \derd^{\geq c'-c}(\bM), \\
\fhom(\Eprc{\leq c}(\bM), \Eprc{\geq c'}(\bM))
&\subset\derd^{\geq c'-c}(\field_\unb\bM).
\end{align*}
In particular,
$\Hom[\dere_\Rc(\bM)]\bl\Eprc{\leq c}(\bM), \Eprc{\geq c'}(\bM)\br=0$
if $c<c'$.
\end{proposition}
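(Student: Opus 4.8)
The plan is to deduce Proposition~\ref{pro:Eprcfihom} from the analogous statement for intermediate enhanced perversities, namely Proposition~\ref{pro:tE-fihom}, by exploiting the inclusions \eqref{eq:E'E''E}. Indeed, by \eqref{eq:E'E''E} one has $\Eprc{\leq c}(\bM) \subset \tEprc{\leq c}(\bM)$ and $\Eprc{\geq c'}(\bM) \subset \dEprc{\geq c'}(\bM)$. The first inclusion is exactly what is needed on the left slot; the issue is the right slot, where we only get $\dEprc{\geq c'}(\bM)$ rather than $\tEprc{\geq c'}(\bM)$, and $\fihom$ is covariant (hence not obviously monotone) in the second variable.

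To handle the second variable, I would invoke duality. By definition, $K'\in\Eprc{\geq c'}(\bM)$ means $\Edual_\bM K' \in \tEprc[p^*]{\leq -c'}(\bM)$ (this is the $\dEprc{\geq c'}$ condition). Now one uses the self-duality of $\fihom$: there should be an isomorphism of the form $\fihom(K,K') \simeq \fihom(\Edual_\bM K', \Edual_\bM K)$ (or, working within stable objects and using that $\Edual_\bM$ is an involution on $\Erc(\bM)$ by Proposition~\ref{prop:dualRc}~(ii), an isomorphism relating $\fihom(K,K')$ to $\fihom$ of the duals with slots swapped). Applying this, $\fihom(K,K') \simeq \fihom(\Edual_\bM K', \Edual_\bM K)$ with $\Edual_\bM K' \in \tEprc[p^*]{\leq -c'}(\bM)$ and, since $K\in\Eprc{\leq c}(\bM)$ forces $\Edual_\bM K \in \tEprc[p^*]{\geq -c-1/2}(\bM) \subset \tEprc[p^*]{\geq -c-1}(\bM)$, both arguments lie in the categories to which Proposition~\ref{pro:tE-fihom} (applied to the perversity $p^*$) applies. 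That proposition then gives $\fihom(\Edual_\bM K', \Edual_\bM K) \in \derd^{\geq (-c-1)-(-c')}(\bM) = \derd^{\geq c'-c-1}(\bM)$. A direct substitution of the bounds I have available would only yield $\derd^{\geq c'-c-1}$; to reach $\derd^{\geq c'-c}$ I must instead use the sharper bound $\Edual_\bM K \in \tEprc[p^*]{\geq -c-1/2}(\bM)$ together with $\Edual_\bM K'\in\tEprc[p^*]{\leq -c'}$ and the analogous half-integer-shifted form of Proposition~\ref{pro:tE-fihom}, or else feed in both the $\tEprc{}$-membership of $K$ and the dual membership and take the better of the two estimates. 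The cleanest route is: apply Proposition~\ref{pro:tE-fihom} once with $(K,K')$ and the $\tEprc{}$ bounds ($K\in\tEprc{\leq c}$, but we lack $K'\in\tEprc{\geq c'}$), and once with the dualized pair; combining gives the required $\derd^{\geq c'-c}(\bM)$.

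The second inclusion, concerning $\fhom$, is then immediate since $\fhom \simeq \alpha_\bM \circ \fihom$ and $\alpha_\bM$ is exact, exactly as in the proof of Proposition~\ref{pro:tE-fihom}. The final assertion, $\Hom[\dere_\Rc(\bM)](\Eprc{\leq c}(\bM), \Eprc{\geq c'}(\bM)) = 0$ for $c<c'$, follows from \eqref{eq:Hfhom}: one has $\Hom[\dere(\bM)](K,K') \simeq \Hom[\derd(\bM)](\cor_M, \fihom(K,K'))$, and for $c<c'$ we have shown $\fihom(K,K')\in\derd^{\geq c'-c}(\bM)$ with $c'-c>0$, hence $H^0$ of it vanishes and there are no nonzero morphisms. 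The main obstacle is the bookkeeping of the half-integer shift: one must be careful that the $1/2$ discrepancy between $\Eprc{\leq c}$ and $\Eprc{\geq c}$ (via the $\tEprc{\leq c}$ versus $\tEprc{\geq c-1/2}$ and the dual conditions) is arranged so that the two halves of the estimate — the one using the $t$-condition on $K$ and the one using the dual condition on $K'$ — overlap to give precisely $c'-c$ rather than $c'-c-1$. One should also verify that the self-duality isomorphism $\fihom(K,K')\simeq\fihom(\Edual_\bM K',\Edual_\bM K)$ is legitimate on $\Erc(\bM)$, which follows from $\Edual_\bM$ being an involutive equivalence there (Proposition~\ref{prop:dualRc}) together with the definition $\Edual_\bM = \cihom(\ast,\omega^\enh_\bM)$ and the standard adjunction $\fihom(K,\cihom(K',\omega))\simeq\fihom(K',\cihom(K,\omega))$.
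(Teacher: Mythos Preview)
Your approach has a genuine gap precisely at the point you flag as ``the main obstacle'': the half-integer bookkeeping does not close. From $K\in\Eprc{\leq c}(\bM)$ you only know $K\in\tEprc{\leq c}(\bM)$ and $\Edual_\bM K\in\tEprc[p^*]{\geq -c-1/2}(\bM)$; from $K'\in\Eprc{\geq c'}(\bM)$ you only know $K'\in\tEprc{\geq c'-1/2}(\bM)$ and $\Edual_\bM K'\in\tEprc[p^*]{\leq -c'}(\bM)$. Feeding these into Proposition~\ref{pro:tE-fihom} directly gives $\fihom(K,K')\in\derd^{\geq (c'-1/2)-c}=\derd^{\geq c'-c-1/2}(\bM)$; feeding the duals in (granting your duality swap) gives $\fihom(\Edual_\bM K',\Edual_\bM K)\in\derd^{\geq (-c-1/2)-(-c')}=\derd^{\geq c'-c-1/2}(\bM)$. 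Both routes land at $c'-c-1/2$, and two such estimates cannot be ``combined'' to produce $c'-c$: the intersection of $\derd^{\geq c'-c-1/2}$ with itself is still $\derd^{\geq c'-c-1/2}$. The missing half-integer is not a bookkeeping artifact; it reflects real content that Proposition~\ref{pro:tE-fihom} alone does not see.

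The paper's proof is accordingly quite different. It runs the same descending induction on strata as in Proposition~\ref{pro:tE-fihom}, but at each step chooses the stratum so that both $\Eopb i K$ and $\Eepb i K'$ are stably free (Lemma~\ref{lem:Rcstrat}), reducing to an explicit estimate for $\fihom(\Efield_\shl,\Efield_{\shl'})$. Using Lemma~\ref{lem:pt-L}, three of the four cross terms already lie in $\derd^{\geq c'-c}$; the dangerous one is $\fihom(\Efield_{\Phi_a}[-m_a],\Efield_{\Psi_{b'}}[-n_{b'}])$, which a priori only sits in $\derd^{\geq c'-c-1/2}$. The paper then shows by a direct computation that the single possibly nonzero cohomology $H^0\fihom(\Efield_{\Phi_a},\Efield_{\Psi_{b'}})$ actually vanishes (an ind-limit argument with the explicit kernels $\field_{\{t\geq\varphi_a\}}$ and $\field_{\{\psi_{b'}^-\leq t<\psi_{b'}^+\}}$). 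This concrete vanishing is what buys the extra $1/2$, and there is no substitute for it in a purely formal duality argument.
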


\begin{proof}
The second inclusion follows from the first one, since $\fhom \simeq \alpha_\bM\fihom$.
Let us prove the first inclusion.

Let $K\in\Eprc{\leq c}(\bM)$ and $K'\in\Eprc{\geq c'}(\bM)$.
As in the proof of Proposition~\ref{pro:tE-fihom}, reasoning by decreasing induction on $k\in\Z_{\geq -1}$, let us show that
\begin{itemize}
\item[(i)$_k$] there exists $Z_k\in\CS_\bM^{\leq k}$ such that
\[
\rihom(\field_{M\setminus Z_k}, \fihom(K,K'))\in\derd^{\geq c'-c}(\bM).
\]
\end{itemize}
The above statement is obvious for $k\geq d_M$. Assuming that (i)$_k$ holds true for some $k$, let us prove (i)$_{k-1}$.
There exists $Z_{k-1}\in\CS_\bM^{\leq k-1}$ such that
$Z_{k-1}\subset Z_k$, $Z_k\setminus Z_{k-1}$ is smooth of dimension $k$, and
$\Eopb i_{\inbordered{(Z_k\setminus Z_{k-1})}} K$ and
$\Eepb i_{\inbordered{(Z_k\setminus Z_{k-1})}} K$
are stably free.
Consider the distinguished triangle
\eqn
&&\rihom(\field_{Z_k\setminus Z_{k-1}}, \fihom(K,K')) \\
&&\hs{15ex}\to \rihom(\field_{M\setminus Z_{k-1}}, \fihom(K,K')) \\
&&\hs{27ex}\to \rihom(\field_{M\setminus Z_k}, \fihom(K,K')) \tone.
\eneqn
Then (i)$_{k-1}$ will follow if we show that
\[
\rihom(\field_{Z_k\setminus Z_{k-1}}, \fihom(K,K')) \in \derd^{\geq c'-c}(\bM).
\]
This is equivalent to
\[
\epb {i_{\inbordered S}} \fihom(K,K') \in \derd^{\geq c'-c}(\inbordered S)
\]
for any connected component $S$ of $Z_k\setminus Z_{k-1}$.
One has
\begin{align*}
\epb {i_{\inbordered S}} \fihom(K,K')
&\simeq \fihom(\Eopb i_{\inbordered S} K,\Eepb i_{\inbordered S} K').
\end{align*}
By the assumption,
$\Eopb i_{\inbordered S}K \simeq \Efield_\shl$ and $\Eepb i_{\inbordered S} K'
\simeq \Efield_{\shl'}$ for some stable $\enh$-types
\[
\shl = (\varphi_a, m_a, \psi^\pm_b, n_b)_{a\in A,\ b\in B}\qtq
\shl' = (\varphi_{a'}, m_{a'}, \psi^\pm_{b'}, n_{b'})_{a'\in A',\ b'\in B'}.
\]
Then we are reduced to prove
\begin{equation}
\label{eq:fiomLL'}
\fihom(\Efield_\shl,\Efield_{\shl'}) \in \derd^{\geq c'-c}(\inbordered S).
\end{equation}
Recall that
\begin{align*}
\Efield_\shl &= \bl \DSum_{a\in A} \Efield_{\Phi_a}[-m_a] \br
\dsum
\bl \DSum_{b\in B} \Efield_{\Psi_b}[-n_b] \br\in\Eprc{\le c}(\inb S), \\
\Efield_{\shl'} &= \bl \DSum_{a'\in A'} \Efield_{\Phi_{a'}}[-m_{a'}] \br
\dsum
\bl \DSum_{b'\in B'} \Efield_{\Psi_{b'}}[-n_{b'}] \br\in\Eprc{\ge c'}(\inb S).
\end{align*}
By Lemma~\ref{lem:pt-L} and Proposition~\ref{pro:t-homE}, one has
\begin{align*}
\fihom(\Efield_{\Psi_b}[-n_b], {}& \Efield_{\Psi_{b'}}[-n_{b'}]) \\
&\in \fihom(\dere^{\leq c+p(k)-1/2}(\inbordered S),\dere^{\geq c'+p(k)-1/2}(\inbordered S)) \\
&\subset \derd^{\geq c'-c}(\inbordered S).
\end{align*}
Similarly, one has
\begin{align*}
\fihom(\Efield_{\Phi_a}[-m_a], \Efield_{\Psi_{b'}}[-n_{b'}])
&\in \derd^{\geq c'-c-1/2}(\inbordered S), \\
\fihom(\Efield_{\Psi_b}[-n_b], \Efield_{\Phi_{a'}}[-m_{a'}])
&\in\derd^{\geq c'-c+1/2}(\inbordered S), \\
\fihom(\Efield_{\Phi_a}[-m_a], \Efield_{\Phi_{a'}}[-m_{a'}])
&\in\derd^{\geq c'-c}(\inbordered S).
\end{align*}
Hence \eqref{eq:fiomLL'} reduces to show that for any $a\in A$ and $b'\in B'$
\[
H^{m}\fihom(\Efield_{\Phi_a}[-m_a],\Efield_{\Psi_{b'}}[-n_{b'}]) \simeq 0
\]
for any $m\in\Z$ such that $c'-c-1/2\le m<c'-c$.
Since we have
$$H^{m}\fihom(\Efield_{\Phi_a}[-m_a],\Efield_{\Psi_{b'}}[-n_{b'}])
\simeq H^{m+m_a-n_{b'}}\fihom(\Efield_{\Phi_a},\Efield_{\Psi_{b'}}),$$
we may assume that $m+m_a-n_{b'}\ge0$.
Since $m_a\leq c+p(k)$ and $n_{b'}\geq c'+p(k)-1/2$,
one has $m+m_a-n_{b'}\le m-c'+c + 1/2<1/2$.
Then, we have $m+m_a-n_{b'}=0$.

Let $\pi\cl \inb S\times\bR\to \inb S$ and
$\ol\pi\cl\inb S\times \cR\to\inb S$ be the projections.
Then one concludes by noticing that
\begin{align*}
H^{m}&\fihom(\Efield_{\Phi_a}[-m_a],\Efield_{\Psi_{b'}}[-n_{b'}]) \\
&\simeq H^0\fihom(\Efield_{\Phi_a},\Efield_{\Psi_{b'}})
\simeq H^0\fihom(\Qfield_{\Phi_a},\Efield_{\Psi_{b'}}) \\
&\simeq H^0\roim\pi\rihom(\field_{\{t \geq \varphi_a(x)\}},\field_{\{t\gg 0\}} \ctens \field_{\{\psi^-_{b'}(x) \leq t < \psi^+_{b'}(x)\}}) \\
&\simeq H^0\reeim{\ol{\pi}}\rihom(\field_{\{t \geq \varphi_a(x)\}},\indlim[s\to+\infty] \field_{\{\psi^-_{b'}(x)+s \leq t < \psi^+_{b'}(x)+s\}}) \\
&\simeq \eeim{\ol{\pi}}\ihom(\field_{\{t \geq \varphi_a(x)\}},\indlim[s\to+\infty] \field_{\{\psi^-_{b'}(x)+s \leq t < \psi^+_{b'}(x)+s\}}) \\
&\underset{(*)}\simeq \indlim[s\to+\infty] \eeim{\ol{\pi}}\ihom(\field_{\{t \geq \varphi_a(x)\}},\field_{\{\psi^-_{b'}(x)+s \leq t < \psi^+_{b'}(x)+s\}}) \\
&\simeq \indlim[s\to+\infty] \oim\pi\hom(\field_{\{t \geq \varphi_a(x)\}},\field_{\{\psi^-_{b'}(x)+s \leq t < \psi^+_{b'}(x)+s\}}) \simeq 0,
\end{align*}
where $(*)$ holds because $\eeim{\ol{\pi}}$ and $\ihom(\field_{\{t \geq \varphi_a(x)\}},\scbul)$ commute with inductive limits.
\end{proof}

\begin{proposition}
\label{pro:Eprcdt}
For any $c\in\R$ and $K\in \Erc(\bM)$ there are distinguished triangles in $\Erc(\bM)$
\[
K_{\leq c} \to K \to K_{> c} \tone, \quad
K_{< c} \to K \to K_{\geq c}\tone,
\]
with $K_{L}\in \Eprc{L}(\bM)$ for $L$ equal to $\leq c$, $>c$, $<c$ or $\geq c$.
\end{proposition}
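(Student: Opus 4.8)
As in the proof of Proposition~\ref{pro:Epdt}, it suffices to construct the first distinguished triangle; the second then follows by applying the duality functor $\Edual_\bM$ and invoking Theorem~\ref{thm:mide}~(ii). The plan is to repeat the argument of Proposition~\ref{pro:Epdt} step by step, with the intermediate perversity replaced everywhere by the enhanced perversity; three auxiliary facts make this go through, and only the last of them is genuinely new.

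First, a \emph{recollement} statement: for $\bM=(\oM,\cM)$, $Z\in\CS_\bM$, $U=\unbM\setminus Z$, and $i\colon\inbordered Z\to\bM$, $j\colon\inbordered U\to\bM$ the natural morphisms, one has $K\in\Eprc{\leq c}(\bM)$ if and only if $\Eopb i K\in\Eprc{\leq c}(\inbordered Z)$ and $\Eopb j K\in\Eprc{\leq c}(\inbordered U)$, and symmetrically for $\Eprc{\geq c}$. This is immediate by intersecting Lemma~\ref{lem:recoll} with Lemma~\ref{lem:recollD}, since $\Eprc{}$ is by definition such an intersection. Second, \emph{exactness for immersions}: since an immersion $f$ has $d=0$ and $p[0]=p$, Propositions~\ref{pro:tpt-roeimeopb} and \ref{pro:dt-roeimeopb} show that $\Eopb f$ preserves $\Eprc{\leq c}$, that $\Eepb f$ and $\Eoim f$ preserve $\Eprc{\geq c}$, and that $\Eeeim f$ (for $f$ a locally closed immersion, hence semi-proper) preserves $\Eprc{\leq c}$, these being obtained by intersecting the corresponding statements for the two intermediate t-structures. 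Third, the \emph{stably free case}: if $\bM$ is smooth of dimension $d$ and $K\in\Erc(\bM)$ is stably free, then $K$ is a finite direct sum of objects $\Efield_{\Phi_a}[-m_a]$ and $\Efield_{\Psi_b}[-n_b]$, each of which lies in a single slice $\Eprc{a}(\bM)$ by Lemma~\ref{lem:pt-L} (namely $a=m_a-p(d)$, resp.\ $a=n_b-p(d)+1/2$); splitting this sum according to whether the slice is $\leq c$ or $>c$ produces the required triangles for $K$. This is the $\Eprc{}$-analogue of Lemma~\ref{lem:EpdtFree}.

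Granting these, I would run the decreasing induction on $k\in\Z_{\geq-1}$, exactly as in Proposition~\ref{pro:Epdt}, on the assertion $(\mathrm{dt})_k$: there exist $Z_k\in\CS_\bM^{\leq k}$ and a distinguished triangle $K'_k\to\Eopb{j_k}K\to K''_k\tone$ over $\inbordered{(\oM\setminus Z_k)}$ with $K'_k\in\Eprc{\leq c}$ and $K''_k\in\Eprc{>c}$. For $k\geq d_M$ this is trivial (take $Z_k=\unbM$). For the passage from $k$ to $k-1$ one completes the adjunction morphism $\Eoimv{j_{k\sep!!}}K'_k\to K$ to a triangle $\Eoimv{j_{k\sep!!}}K'_k\to K\to L\tone$ in $\Erc(\bM)$, chooses $Z_{k-1}\in\CS_\bM^{\leq k-1}$ so that $Z_k\setminus Z_{k-1}$ is smooth and $\Eepb{i_k}L$ is stably free, applies the third fact to $\Eepb{i_k}L$, and reassembles everything via the octahedral axiom precisely as in Proposition~\ref{pro:Epdt}; that the pieces so obtained lie in $\Eprc{\leq c}$, resp.\ $\Eprc{>c}$, is then checked using the recollement and exactness facts above. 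For $k=-1$ one has $Z_{-1}=\emptyset$, and $(\mathrm{dt})_{-1}$ is the sought distinguished triangle over $\bM$.

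The point worth stressing is why one cannot argue more cheaply, e.g.\ by first truncating with respect to $\dEprc{}$ at level $c+1/2$ and then with respect to $\tEprc{}$ at level $c$ (or in the opposite order): by Lemma~\ref{lem:E'E''} the two intermediate t-structures overlap in a range of width $1$, so truncating with respect to one destroys the sharper bound coming from the other, and the two resulting pieces need not lie in the intersection $\tEprc{}\cap\dEprc{}$ that defines $\Eprc{}$. The stratification induction circumvents this by reducing to the stably free case, where $K$ is already a direct sum of slice-pure objects and no truncation is performed at all; all the remaining work is the octahedral bookkeeping of Proposition~\ref{pro:Epdt}, which is unchanged. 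Thus the only genuinely new input is the third fact above, which is essentially Lemma~\ref{lem:pt-L}.
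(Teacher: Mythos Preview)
Your proposal is correct and follows essentially the same approach as the paper: reduce, by the stratification induction of Proposition~\ref{pro:Epdt}, to the case where $\bM$ is smooth and connected and $K$ is stably free, and then invoke Lemma~\ref{lem:pt-L} to split $K$ into slice-pure summands. The paper's proof says exactly this in three lines; you have simply spelled out the ingredients (the recollement statement obtained by intersecting Lemmas~\ref{lem:recoll} and \ref{lem:recollD}, and the exactness of immersions from Propositions~\ref{pro:tpt-roeimeopb} and \ref{pro:dt-roeimeopb}) that make the induction go through verbatim, and added a useful remark on why naive iterated truncation fails.

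One small point: you derive the second distinguished triangle from the first via duality and Theorem~\ref{thm:mide}~(ii). This is valid since (ii) is immediate from the definitions and does not rely on (i), so there is no circularity; the paper instead just says ``the proofs are similar'' and leaves the second triangle to the reader. Either route is fine.
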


\begin{proof}
Since the proofs are similar, we will construct only the first distinguished triangle.

As in the proof of Proposition~\ref{pro:Epdt}, one reduces to the case where
$\bM$ is smooth and connected, and $K$ is stably free.
Then $K$ is a direct sum of objects in $\Eprc{a}(\bM)$ for some $a\in\R$
by Lemma~\ref{lem:pt-L}.
\end{proof}

As a corollary of Propositions~\ref{pro:tpt-roeimeopb} and \ref{pro:dt-roeimeopb}, one has

\begin{proposition}
\label{pro:pt-roeimeopb}
Let $f\colon \bM \to \bN$ be a morphism of subanalytic bordered spaces, and $d\in\Z_{\geq 0}$.
Assume that $\dim {\unb f}{}^{-1}(y)\leq d$
for any $y\in\unbN$. Then, for any $c\in\R$ one has
\bnum
\item
$\Eopb f \bl\Eprc[{p[d]}]{\leq c}(\bN)\br \subset \Eprc{\leq c}(\bM)$,
\item
$\Eepb f \bl\Eprc[{p[d]}]{\geq c}(\bN)\br \subset \Eprc{\geq c-d}(\bM)$,
\item
$\dere_\Rc(\bN)\cap\Eoim f \bl\Eprc{\geq c}(\bM)\br \subset \Eprc[{p[d]}]{\geq c}(\bN)$,
\item
$ \dere_\Rc(\bN)\cap \Eeeim f\bl\Eprc{\leq c}(\bM)\br \subset \Eprc[{p[d]}]{\leq c+d}(\bN)$.
\ee
\end{proposition}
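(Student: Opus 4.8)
The plan is to deduce all four inclusions by intersecting the conclusions of Proposition~\ref{pro:tpt-roeimeopb}, which concerns the intermediate families $\bl\tEprc{\leq c}(\bM),\tEprc{\geq c}(\bM)\br_{c\in\R}$, with the conclusions of Proposition~\ref{pro:dt-roeimeopb}, which concerns the dual families $\bl\dEprc{\leq c}(\bM),\dEprc{\geq c}(\bM)\br_{c\in\R}$, using the defining relations
\[
\Eprc{\leq c}(\bM) = \tEprc{\leq c}(\bM) \cap \dEprc{\leq c+1/2}(\bM), \qquad
\Eprc{\geq c}(\bM) = \tEprc{\geq c-1/2}(\bM) \cap \dEprc{\geq c}(\bM).
\]
Two preliminary observations make this possible. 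First, the shifted function $p[d]$ is again a perversity, so Propositions~\ref{pro:tpt-roeimeopb} and \ref{pro:dt-roeimeopb} apply to it, and it is compatible with the dual involution through the identity $p[d]^*(n)=p^*[d](n)+d$ already used in the proof of Proposition~\ref{pro:dt-roeimeopb}. Second, by Proposition~\ref{prop:dualRc}~(iii) the functors $\Eopb f$ and $\Eepb f$ preserve $\R$-constructibility, so in (i) and (ii) the output automatically lies in $\Erc(\bM)$ (hence one may pass freely between $\tEp{}$ and $\tEprc{}$), while in (iii) and (iv) $\R$-constructibility of the output is built into the hypothesis through the intersection with $\dere_\Rc(\bN)$.

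Granting this, for (i) I would take $K\in\Eprc[{p[d]}]{\leq c}(\bN)$, so that $K\in\tEprc[{p[d]}]{\leq c}(\bN)$ and $K\in\dEprc[{p[d]}]{\leq c+1/2}(\bN)$; then Proposition~\ref{pro:tpt-roeimeopb}~(i) gives $\Eopb f K\in\tEp{\leq c}(\bM)$, hence $\Eopb f K\in\tEprc{\leq c}(\bM)$, and Proposition~\ref{pro:dt-roeimeopb}~(i), applied with $c$ replaced by $c+1/2$, gives $\Eopb f K\in\dEprc{\leq c+1/2}(\bM)$; intersecting, $\Eopb f K\in\Eprc{\leq c}(\bM)$. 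For (ii), for $K\in\Eprc[{p[d]}]{\geq c}(\bN)=\tEprc[{p[d]}]{\geq c-1/2}(\bN)\cap\dEprc[{p[d]}]{\geq c}(\bN)$, Proposition~\ref{pro:tpt-roeimeopb}~(ii) applied with $c$ replaced by $c-1/2$ yields $\Eepb f K\in\tEprc{\geq(c-d)-1/2}(\bM)$, and Proposition~\ref{pro:dt-roeimeopb}~(ii) yields $\Eepb f K\in\dEprc{\geq c-d}(\bM)$, whence $\Eepb f K\in\Eprc{\geq c-d}(\bM)$. For (iii), for $K\in\Eprc{\geq c}(\bM)$ with $\Eoim f K\in\dere_\Rc(\bN)$, Propositions~\ref{pro:tpt-roeimeopb}~(iii) and \ref{pro:dt-roeimeopb}~(iii) give, respectively, $\Eoim f K\in\tEprc[{p[d]}]{\geq c-1/2}(\bN)$ and $\Eoim f K\in\dEprc[{p[d]}]{\geq c}(\bN)$, hence $\Eoim f K\in\Eprc[{p[d]}]{\geq c}(\bN)$. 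Finally (iv) is handled the same way, using Proposition~\ref{pro:tpt-roeimeopb}~(iv) together with Proposition~\ref{pro:dt-roeimeopb}~(iv) applied with $c$ replaced by $c+1/2$, to conclude $\Eeeim f K\in\tEprc[{p[d]}]{\leq c+d}(\bN)\cap\dEprc[{p[d]}]{\leq c+d+1/2}(\bN)=\Eprc[{p[d]}]{\leq c+d}(\bN)$.

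No essential difficulty arises; the proof is pure bookkeeping. The one point that demands attention is keeping the $1/2$-shift of the two halves in the definition of $\Eprc{}$ aligned through the applications of Propositions~\ref{pro:tpt-roeimeopb} and \ref{pro:dt-roeimeopb}, which is precisely what forces the replacements of $c$ by $c\pm1/2$ recorded above, together with checking that shifting the perversity by $d$ interacts with $*$ as stated. Once these are in place, the four inclusions follow at once.
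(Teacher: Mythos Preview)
Your proof is correct and, for (i) and (ii), identical to the paper's: one intersects the conclusions of Propositions~\ref{pro:tpt-roeimeopb} and \ref{pro:dt-roeimeopb} using the defining decomposition $\Eprc{\leq c}=\tEprc{\leq c}\cap\dEprc{\leq c+1/2}$ (and its $\geq$ analogue). For (iii) and (iv) the paper instead deduces them from (i) and (ii) by adjunction, using the orthogonality relations of the t-structure from Theorem~\ref{thm:mide}; your direct combination of parts (iii) and (iv) of the two cited propositions is an equally valid and arguably more transparent route.
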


\Proof (i) and (ii) follow from Propositions~\ref{pro:tpt-roeimeopb}
and \ref{pro:dt-roeimeopb},
and (iii) and (iv) follow from them by adjunction. \QED

\begin{proposition}
Let $\bM$ be a subanalytic bordered space.
The embedding
\[
e\colon \BDC_\Rc(\field_\bM) \hookrightarrow \dere_\Rc(\bM)
\]
induced by \eqref{eq:e} is exact, i.e.,\ for any $c\in\R$ one has
\begin{align*}
e\bl \Dprc{\leq c}(\field_\bM)\br &\subset \Eprc{\leq c}(\bM), \\
e\bl \Dprc{\geq c}(\field_\bM)\br &\subset \Eprc{\geq c}(\bM).
\end{align*}
\end{proposition}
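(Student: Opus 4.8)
The plan is to derive the statement from three commutation properties of the functor $e$, combined with the exactness of $e$ for the \emph{standard} $t$-structures (Lemma~\ref{lem:t-enh}~(i)), the fact that $\dual_\bM$ interchanges $\Dprc{\leq c}(\field_\bM)$ and $\Dprc[p^*]{\geq -c}(\field_\bM)$ (the bordered analogue of Proposition~\ref{pro:t-pre}~(ii)), and the inclusions of Lemma~\ref{lem:E'E''}. The three commutations to establish are
\[
\Eopb f\,e(G)\simeq e(\opb f G),\qquad
\Edual_\bM\,e(F)\simeq e(\dual_\bM F),\qquad
\Eepb{i_\bZ}\,e(F)\simeq e(\epb{i_\bZ}F),
\]
for any morphism $f$ of subanalytic bordered spaces, any $Z\in\LCS_\bM$, any $G\in\derd(\bM)$, and any $F\in\BDC_\Rc(\field_\bM)$ (recall that $e(F)\in\Erc(\bM)$, so that $\Edual_\bM$ is an involution on $e(F)$ and on its iterated images).

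The first is immediate: $\Eopb f\Efield_\bN\simeq\Efield_\bM$ since $\opb{(f\times\id_\bR)}\field_{\{t\gg0\}}\simeq\field_{\{t\gg0\}}$, and $\Eopb f$ commutes with the bifunctor $\opb\pi(\ast)\tens(\ast)$, whence $\Eopb f(\Efield_\bN\tens\opb\pi G)\simeq\Efield_\bM\tens\opb\pi\opb f G$. For the second, write $e(F)\simeq\Efield_\bM\ctens\quot_\bM(\roimv{i_{0\sep*}}F)$ with $i_0\colon\oM\to\oM\times\oR$, $x\mapsto(x,0)$; then Lemma~\ref{lem:Edual} gives $\Edual_\bM e(F)\simeq\Efield_\bM\ctens\quot_\bM(\opb a\,\dual_{\oM\times\oR}\roimv{i_{0\sep*}}F)$, and since $i_0$ is a proper closed embedding ($\epb{i_0}\omega_{\oM\times\oR}\simeq\omega_\bM$) one has $\dual_{\oM\times\oR}\roimv{i_{0\sep*}}F\simeq\roimv{i_{0\sep*}}\dual_\bM F$, which is fixed by $\opb a$ because $a\circ i_0=i_0$; thus $\Edual_\bM e(F)\simeq\Efield_\bM\ctens\quot_\bM\roimv{i_{0\sep*}}\dual_\bM F\simeq e(\dual_\bM F)$. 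The third follows by combining the first two with Proposition~\ref{prop:dualRc}~(ii), (iii) and the sheaf identity $\epb{i_\bZ}\simeq\dual_\bZ\,\opb{i_\bZ}\,\dual_\bM$: indeed $\Eepb{i_\bZ}e(F)\simeq\Edual_\bZ\Eopb{i_\bZ}\Edual_\bM e(F)\simeq\Edual_\bZ\Eopb{i_\bZ}e(\dual_\bM F)\simeq\Edual_\bZ e(\opb{i_\bZ}\dual_\bM F)\simeq e(\epb{i_\bZ}F)$.

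Granting these, the conclusion comes quickly. If $F\in\Dprc{\leq c}(\field_\bM)$, then for each $k\in\Z_{\geq0}$ there is $Z\in\CS_\bM^{<k}$ with $\opb i_{\inbordered{(M\setminus Z)}}F\in\derd^{\leq c+p(k)}(\inbordered{(M\setminus Z)})$; applying $e$ and using the first commutation and the exactness of $e$, $\Eopb i_{\inbordered{(M\setminus Z)}}e(F)\simeq e(\opb i_{\inbordered{(M\setminus Z)}}F)\in\dere^{\leq c+p(k)}(\inbordered{(M\setminus Z)})$, so $e(F)\in\tEprc{\leq c}(\bM)$. Likewise, using the third commutation and the condition defining $\Dprc{\geq c}$, $e(F)\in\tEprc{\geq c}(\bM)$ whenever $F\in\Dprc{\geq c}(\field_\bM)$; call these facts (A) and (C), stated for an arbitrary perversity. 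Now take $F\in\Dprc{\leq c}(\field_\bM)$: by (A) $e(F)\in\tEprc{\leq c}(\bM)$; moreover $\dual_\bM F\in\Dprc[p^*]{\geq -c}(\field_\bM)$, so by (C) for $p^*$ and the second commutation $\Edual_\bM e(F)\simeq e(\dual_\bM F)\in\tEprc[p^*]{\geq -c}(\bM)\subset\tEprc[p^*]{\geq -c-1/2}(\bM)$, i.e.\ $e(F)\in\dEprc{\leq c+1/2}(\bM)$; hence $e(F)\in\tEprc{\leq c}(\bM)\cap\dEprc{\leq c+1/2}(\bM)=\Eprc{\leq c}(\bM)$. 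Symmetrically, for $F\in\Dprc{\geq c}(\field_\bM)$, (C) gives $e(F)\in\tEprc{\geq c}(\bM)\subset\tEprc{\geq c-1/2}(\bM)$, and Lemma~\ref{lem:E'E''} gives $e(F)\in\dEprc{\geq c}(\bM)$, so $e(F)\in\Eprc{\geq c}(\bM)$.

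The delicate point, and the expected main obstacle, is the duality compatibility $\Edual_\bM\circ e\simeq e\circ\dual_\bM$ (and, through it, the one for $\Eepb{i_\bZ}$): exactness of $e$ is available only for the standard $t$-structures, whereas $\Eprc{}$ is defined through a duality constraint, so the whole argument hinges on transporting sheaf duality through $e$ with the correct shifts. The key is to feed Lemma~\ref{lem:Edual} the object $\roimv{i_{0\sep*}}F$ rather than $\opb\pi F$ — for the latter the shift in $\omega_{\oM\times\oR}\simeq\opb\pi\omega_\bM[1]$ would have to be reconciled against the conventions built into $e$ — after which the remaining verifications are routine.
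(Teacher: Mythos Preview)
Your proof is correct and follows essentially the same strategy as the paper's: both rest on the exactness of $e$ for the standard $t$-structures together with commutation identities between $e$ and the operations entering the definition of the perverse conditions. The paper's proof is much terser --- it cites \cite[Corollary~4.7.11]{DK13} for the identities $\opb\pi\field_S\tens e(F)\simeq e(\field_S\tens F)$ and $\rihom(\opb\pi\field_S,e(F))\simeq e(\rihom(\field_S,F))$, which via Remark~\ref{rem:Ipk}~(i) immediately give your facts (A) and (C), and it leaves the passage from $\tEprc{}$ to $\Eprc{}$ implicit. You instead derive the commutations from scratch and make the duality compatibility $\Edual_\bM\circ e\simeq e\circ\dual_\bM$ (and its role in bridging $\tEprc{}$ and $\Eprc{}$) fully explicit via Lemma~\ref{lem:Edual}. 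One small remark: your route to (C), namely deducing $\Eepb{i_\bZ}\circ e\simeq e\circ\epb{i_\bZ}$ through duality, is slightly more roundabout than the paper's direct use of the $\rihom(\opb\pi\field_S,\,\cdot\,)$ commutation, and requires $F\in\BDC_\Rc(\field_\bM)$ --- harmless here, but the paper's identity holds for arbitrary $F$.
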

\Proof
It follows from the exactness of $e$
with respect to the standard t-structures and
\begin{align*}
\opb{\pi}\cor_S\tens e(F)&\simeq e(\cor_S\tens F), \\
\rihom(\opb{\pi}\cor_S, e(F))&\simeq e(\rihom(\cor_S, F))
\end{align*}
for any $F\in \BDC(\cor_\bM)$ and $S\in\LCS(\bM)$,
by \cite[Corollary 4.7.11]{DK13}.
\QED

\begin{definition}\label{def:Emid}
The \emph{enhanced middle perversity t-structure}
\[
\bl \Emid{\leq c}(\bM), \Emid{\geq c}(\bM) \br_{c\in\R}
\]
is the one associated with the middle perversity $\mathsf m(n) = -n/2$.
It is a self-dual t-structure indexed by $\frac12\Z$
\end{definition}

\begin{example}
Let $\bM=\point$. Note that one has:
\begin{itemize}
\item[(i)]
$\Efield_{\{a\leq t < b\}}\simeq 0$ for $a,b\in\R$ with $a<b$,
\item[(ii)]
$\Efield_{\{t \geq a\}} \simeq \Efield_\bM$ for $a\in\R$,
\item[(iii)]
$\Edual\Efield_\bM \simeq \Efield_\bM$.
\end{itemize}
Hence $\Efield_\bM \in \Emid{0}(\point)$,
and any object of $\Erc(\point)$
is a finite direct sum of shifts of copies of $\Efield_\bM$.
\end{example}

\begin{example}\label{ex:1/x}
Let $\bM=M=\R$ and let
\[
K= \Efield_{\{x>0,\ 0\leq t < 1/x\}\cup\{x=0,\ t\geq 0\}},
\]
so that
\[
\Edual_M K \simeq \Efield_{\{x>0,\ -1/x \leq t <0 \}} [2].
\]
Noticing that
\begin{align*}
\Eepb i_{\{0\}} K &\simeq \Edual_{\{0\}} \Eopb i_{\{0\}} \Edual_M K
\simeq 0, \\
\Eepb i_{\{0\}} \Edual_M K &\simeq \Edual_{\{0\}} \Eopb i_{\{0\}} K
\simeq \Edual_{\{0\}}\Efield_{\{0\}} \simeq \Efield_{\{0\}},
\end{align*}
one has $K\in\tEprc[1/2]{1/2}(\R)$ and $\Edual K\in\tEprc[1/2]{-3/2}(\R)$,
so that $K\in\dEprc[1/2]{3/2}(\R)$.
Hence $K \in \Emid{1}(\R)$.
\end{example}

\begin{example} Let $\{ M_\al\}_\al$ be a subanalytic stratification of $\bM$,
and set $\bM_\al\seteq\inb{(M_\al)}$. Let $K\in\dere_\Rc(\bM)$.
Assume that $\Eopb i_{\bM_\al} K$ and $\Eepb i_{\bM_\al} K$ are stably free.
Recall Notation~\ref{not:PhiaPsib}.
Even if only direct summands containing $\Phi_a$ appear in $\Eopb i_{\bM_\al} K$, direct summands containing $\Psi_b$ can appear in $\Eepb i_{\bM_\al} K$,
as seen in this example.

Let $M = \R^2\times\R_{>0}$ with coordinates $(x,y,z)$,
consider the bordered space $\bM\seteq(M,\R^2\times\cR)$, and set
\begin{align*}
S&\seteq\set{(x,y,z,t)\in M\times\R}{x\geq 0,\ y>0,\ t\geq\dfrac{zx}{x+y}},\\
K &\defeq \Efield_S\in \dere(\bM).
\end{align*}
Set $Z=\{x=y=0\}\subset M$.
Then one has:
\begin{align*}
\Eopb i_{\bZ} K \simeq 0, &\quad
\Eepb i_{\bZ} K \simeq \Efield_{\{0\leq t <z\}}[-1], \\
\Eepb i_{\bZ} \Edual_\bM K \simeq 0, &\quad
\Eopb i_{\bZ} \Edual_\bM K \simeq \Edual_{\bZ} \Eepb i_{\bZ} K
\simeq \Efield_{\{-z\leq t <0\}}[3].
\end{align*}
We deduce that $K\in\tEprc[1/2]{3/2}(\bM)$ and $\Edual_\bM K\in\tEprc[1/2]{-3/2}(\bM)$.
Hence $K\in\dEprc[1/2]{3/2}(\bM)$, so that
\[
K\in \Emid{3/2}(\bM).
\]
\end{example}

\begin{remark}\label{rem:TRc}
Let $M$ be a subanalytic space.
The triangulated category of enhanced sheaves on $M$
(cf.~\cite{Tam08,GS14}) is defined by
\[
\dere^{\mathrm{b}}(\field_M) \defeq \BDC(\field_{M\times\R})/
\opb \pi \BDC(\field_M),
\]
where $\pi\colon M\times\R\to M$ is the projection.
One similarly defines $\dere^{\mathrm{b}}_\pm(\field_M)$,
so that $\dere^{\mathrm{b}}(\field_M) \simeq 
\dere^{\mathrm{b}}_+(\field_M) \dsum \dere^{\mathrm{b}}_-(\field_M)$. 
Note that
\[
\dere^{\mathrm{b}}_\pm(\field_M) \simeq 
\{K\in\dere_\pm(M)\semicolon \LE K\in \BDC(\field_{M\times\bR})\}.
\]
We say that an object $K\in\dere^{\mathrm{b}}_+(\field_M)$ is $\R$-constructible
if so is $\LE K\in \BDC(\field_{M\times\bR})$.
Let $p\colon\Z_{\geq 0}\to \R$ be a perversity. Then, with obvious notations,
\[
\bl \Eprc{\leq c}(\field_M), \Eprc{\geq c}(\field_M) \br_{c\in\R}
\]
satisfies the analogue of Theorem~\ref{thm:mide}.
Moreover, a description analogous to that in Lemma~\ref{lem:Rcstrat} holds, 
replacing ``stably free'' with ``free''.
\end{remark}

\begin{remark}
Let $M$ be a subanalytic space.
It is shown in \cite{KS15} that $\fhom$ induces a functor
\[
\fhom(\Efield_{t\geq 0},\ast) \colon \dere^{\mathrm{b}}_\Rc(M) \to \BDC_\Rc(\field_M).
\]
This is neither left nor right exact with respect to the middle perversity
t-structures.
For example, let
$M=\R^n$ and $K=\Efield_{\{x\not=0,\ t=- 1/|x|\}}$.
Then $K\in\Emid{n/2}(M)$ and
$F\seteq\fhom(\Efield_M,K)\simeq \field_{\{x\not=0\}}$
by \cite[{Corollary 6.6.6.}]{KS15}.
Hence, $\tensor*[^{1/2}]{H}{^{n/2}}(F)\simeq\cor_M$ and
$\tensor*[^{1/2}]{H}{^{1}}(F)\simeq\cor_{\{0\}}$ when $n\ge3$.
Therefore, $\fhom(\Efield_{t\geq 0},\ast)$ is not left exact.
Since $\fhom(\Efield_{t\geq 0},\ast)$ commutes with duality,
$\fhom(\Efield_{t\geq 0},\ast)$ is not right exact either.
\end{remark}

\section{Riemann-Hilbert correspondence}\label{se:RH}

On a complex manifold, the Riemann-Hilbert correspondence embeds the triangulated
category of holonomic $\D$-modules into that of $\R$-constructible enhanced
ind-sheaves. We prove here the exactness of the embedding, when the target category
is endowed with the middle perversity t-structure.

\subsection{Subanalytic ind-sheaves}
For subanalytic sheaves and ind-sheaves we refer to \cite{KS01}
(where subanalytic sheaves are called ind-$\R$-constructible sheaves).

\smallskip

Let $M$ be a subanalytic space.
An ind-sheaf on $M$ is called subanalytic if it is isomorphic to a small filtrant ind-limit of $\R$-constructible sheaves.
Then, being subanalytic is a local property.

Let us denote by $\ind_\suban(\cor_M)$
the category of subanalytic ind-sheaves.
Note that it is
a strictly full subcategory of $\ind(\cor_M)$
stable by kernels, cokernels and extensions.

Let $\operatorname{Op}_{M_\sa}$ be the category of relatively compact subanalytic open subsets of $M$, whose morphisms are inclusions.

\begin{definition}[cf.~\cite{KS96,KS01}]
A \emph{subanalytic sheaf} $F$ is a functor $\operatorname{Op}_{M_\sa}^\op\to\Mod(\cor)$ which satisfies
\bnum
\item
$F(\emptyset) = 0$,
\item
For $U,V\in\operatorname{Op}_{M_\sa}$, the sequence
\[
0 \To F(U\cup V) \To[r_1] F(U) \dsum F(V) \To[r_2] F(U\cap V)
\]
is exact. Here $r_1$ is given by the restriction maps and $r_2$ is given by the restriction $F(U) \to F(U\cap V)$ and the opposite of the restriction $F(V) \to F(U\cap V)$.
\ee
Denote by $\Mod(\cor_{M_\sa})$ the category of subanalytic sheaves.
\end{definition}

The following result is proved in \cite{KS01}.

\begin{proposition}
The category $\ind_\suban(\cor_M)$ of subanalytic ind-sheaves and the category $\Mod(\cor_{M_\sa})$ of subanalytic sheaves are equivalent by the functor associating with $F\in\ind_\suban(\cor_M)$ the subanalytic sheaf
\[
\operatorname{Op}_{M_\sa} \owns U \longmapsto \Hom[\ind(\cor_M)](\cor_U,F).
\]
\end{proposition}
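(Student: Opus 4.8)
The plan is to show that the functor
\[
\Psi\colon F\longmapsto\bl U\mapsto\Hom[\ind(\cor_M)](\cor_U,F)\br
\]
takes values in $\Mod(\cor_{M_\sa})$, that it commutes with small filtrant colimits and preserves limits, and finally that it is fully faithful and essentially surjective; its quasi-inverse $\Phi$ will send a subanalytic sheaf, presented by representable ones, to the corresponding object of $\ind(\cor_M)$.

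First, $\Psi(F)$ is a subanalytic sheaf: condition (i) of the definition holds since $\cor_\emptyset=0$, and condition (ii) is obtained by applying the left exact functor $\Hom[\ind(\cor_M)](\ast,F)$ to the Mayer--Vietoris short exact sequence $0\to\cor_{U\inter V}\to\cor_U\dsum\cor_V\to\cor_{U\union V}\to0$ in $\Mod_c(\cor_M)$, valid for $U,V\in\operatorname{Op}_{M_\sa}$. Since such a $U$ is relatively compact, $\cor_U$ lies in $\Mod_c(\cor_M)$ and hence is a compact object of $\ind(\cor_M)=\Ind(\Mod_c(\cor_M))$; therefore $\Psi$ commutes with small filtrant colimits. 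As $\Psi$ visibly commutes with limits, a standard argument on presentable categories provides a left adjoint $\Phi\colon\Mod(\cor_{M_\sa})\to\ind_\suban(\cor_M)$, and Yoneda applied to the linearized representables $\cor_U^\sa\defeq\Psi(\cor_U)$ shows $\Phi(\cor_U^\sa)\simeq\cor_U$ for any $U\in\operatorname{Op}_{M_\sa}$.

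The core of the argument is a reduction to $\R$-constructible generators. On one side, for any $\R$-constructible sheaf $G$ one has $\iota_M G\simeq\indlim[W](\cor_W\tens G)$ over $W\in\operatorname{Op}_{M_\sa}$, so each object of $\ind_\suban(\cor_M)$ is a small filtrant colimit of objects of the full subcategory $\catc\subset\Mod_c(\cor_M)$ of $\R$-constructible sheaves with relatively compact support, whence $\ind_\suban(\cor_M)\simeq\Ind(\catc)$. On the other side, the family $\{\cor_U^\sa\}_{U\in\operatorname{Op}_{M_\sa}}$ generates $\Mod(\cor_{M_\sa})$, and every object $G\in\catc$ admits a finite presentation $G_1\to G_0\to G\to0$ in which $G_0,G_1$ are finite direct sums of sheaves $\cor_V$, $V\in\operatorname{Op}_{M_\sa}$. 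From the identifications $\Hom[\ind(\cor_M)](\cor_U,\cor_{U'})\simeq\Hom[\Mod(\cor_M)](\cor_U,\cor_{U'})\simeq\Hom[\Mod(\cor_{M_\sa})](\cor_U^\sa,\cor_{U'}^\sa)$ together with a diagram chase on these presentations, one deduces that $\Psi|_\catc$ is fully faithful and carries a presentation of $G$ to a presentation of $\Psi(G)$. The adjunction morphisms $\Phi\Psi\to\id$ and $\id\to\Psi\Phi$ are then isomorphisms on $\catc$ and on the $\cor_U^\sa$'s respectively; since $\Phi$ and $\Psi$ both commute with filtrant colimits, they are isomorphisms everywhere. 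Hence $\Psi$ is an equivalence, with quasi-inverse $\Phi$ sending a subanalytic sheaf expressed as a colimit of $\cor_U^\sa$'s to the corresponding colimit of $\cor_U$'s (which lies in $\ind_\suban(\cor_M)$, this subcategory being stable under cokernels and coproducts).

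The main obstacle is the interplay between sheafification on the subanalytic site and the formation of cokernels in $\ind(\cor_M)$. Since $\Psi$ is only left exact, the claim that it transforms a presentation $G_1\to G_0\to G\to0$ into a presentation of $\Psi(G)$ rests on the fact that, for an $\R$-constructible sheaf $A$, the presheaf $W\mapsto H^1(W;A)$ on $\operatorname{Op}_{M_\sa}$ has zero sheafification --- subanalytic open sets admitting finite subanalytic covers by relatively compact pieces on which $A$ is cohomologically trivial --- together with the existence of the required finite presentations of objects of $\catc$ by constant sheaves on subanalytic open subsets.
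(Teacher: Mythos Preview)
The paper does not prove this proposition; it simply records it with the sentence ``The following result is proved in \cite{KS01}.'' Hence there is no in-paper argument to compare against, and your task was effectively to reconstruct the proof from \cite{KS01}.

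Your plan is sound and is in substance the argument given there: both categories are identified with $\Ind(\catc)$ for $\catc$ the category of $\R$-constructible sheaves with compact support, the functor $\Psi$ being the restriction to $\catc$ followed by the Yoneda extension. The two nontrivial ingredients you isolate are exactly the ones needed: (a) every object of $\catc$ admits a two-step resolution by finite sums of sheaves $\cor_V$ with $V\in\operatorname{Op}_{M_\sa}$, and (b) for $A$ $\R$-constructible the presheaf $W\mapsto H^1(W;A)$ has trivial sheafification on the subanalytic site. Both are consequences of the existence of subanalytic triangulations adapted to a given $\R$-constructible sheaf (so that any $W\in\operatorname{Op}_{M_\sa}$ can be refined, in the subanalytic topology, by opens that are finite unions of open simplices on which $A$ is constant). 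Your last paragraph states this correctly but tersely; in a full write-up you would want to make the cofinality of such refinements explicit, since the subanalytic site does not have enough points and ``sheafification is zero'' must be checked cover-by-cover rather than stalkwise.

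One small correction: you invoke a presentable-category argument to produce the left adjoint $\Phi$, but this is unnecessary and slightly circular (you have not yet shown $\ind_\suban(\cor_M)$ is closed under all colimits in $\ind(\cor_M)$). It is cleaner to construct $\Phi$ directly as the unique colimit-preserving extension of $\cor_U^\sa\mapsto\cor_U$ once you know the $\cor_U^\sa$ generate $\Mod(\cor_{M_\sa})$ under colimits; the adjunction then follows from the full faithfulness on generators that you already establish.
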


\subsection{Enhanced tempered distributions}
{\em Hereafter, we take the complex number field $\C$
as the base field $\cor$.}
\medskip

Let $M$ be a real analytic manifold.
Denote by $\Db_M$ the sheaf of Schwartz's distributions on $M$.
The subanalytic sheaf of tempered distributions on $M$ is defined by
\begin{align*}
\Dbt_M(U) &\defeq \Im(\Db_M(M)\to\Db_M(U))\\
&\simeq \Db_M(M) / \sect_{M\setminus U}(M; \Db_M)
\end{align*}
for any $U\in\operatorname{Op}_{M_\sa}$.
We still denote by $\Dbt_M$ the corresponding subanalytic ind-sheaf.

Denote by $\PR$ the real projective line, and
let $t\in\R\subset\PR$ be the affine coordinate.
Considering the natural morphism of bordered spaces
\[
j\colon M\times\bR \to M\times\PR ,
\]
one sets
\[
\DbT_M \defeq
\epb j \bl \Dbt_{M\times\PR} \To[\partial_t-1] \Dbt_{M\times\PR} \br \in \derdR{M},
\]
where the above complex sits in degrees $-1$ and $0$.

By the results in \cite[\S8.1]{DK13} one has

\begin{proposition}
\label{pro:DbT}\hfill
\begin{itemize}
\item[(i)] There are isomorphisms in $\derdR{M}$
\begin{align*}
\DbT_M
&\isoto \cihom(\Cfield_{\{t\geq0\}}, \DbT_M) \\
&\isofrom \cihom(\Cfield_{\{t\geq a\}}, \DbT_M) \quad \text{for any $a\geq 0$}.
\end{align*}
\item[(ii)] The complex $\DbT_M$ is concentrated in degree $-1$.
\item[(iii)] There are natural monomorphisms in $\ind(\C_{M\times\bR})$
\[
\xymatrix{
\Cfield_{\{t<*\}} \tens \opb\pi\Dbt_M \ar@{ >->}[r] &
H^{-1}\DbT_M \ar@{ >->}[r] &
\opb\pi\Db_M.
}
\]
\end{itemize}
\end{proposition}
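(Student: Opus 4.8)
All three assertions are contained in \cite[\S8.1]{DK13}; the strategy is to reduce everything to one-variable computations with the ordinary differential operator $\partial_t-1$ acting on tempered distributions, the key analytic input being that its coefficient has positive real part, $\Re(1)>0$.

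For (ii), one first records that the subanalytic ind-sheaf $\Dbt_{M\times\PR}$ is concentrated in degree $0$, so that $\DbT_M=\epb j\bl\Dbt_{M\times\PR}\To[\partial_t-1]\Dbt_{M\times\PR}\br$ has cohomology in degrees $-1$ and $0$; since $\epb j$ is left exact for the standard t-structure (Proposition~\ref{pro:t-board}), it then suffices to prove that $\partial_t-1\colon\Dbt_{M\times\PR}\to\Dbt_{M\times\PR}$ is an epimorphism of subanalytic ind-sheaves. I would check this on sections over relatively compact subanalytic open subsets $U$: given a tempered distribution $f$ on $U$, extended to a distribution on $M\times\PR$, the distribution $u(x,t)=-e^t\int_t^{+\infty}e^{-s}f(x,s)\,ds$ solves $(\partial_t-1)u=f$, and it is again tempered because the Green kernel $-e^{t-s}\chi_{\{s>t\}}$ decays exponentially in $s-t$ (this is exactly where $\Re(1)>0$ is used), so that it preserves temperedness both near $t=+\infty$ and near $t=-\infty$; hence the cokernel vanishes and $H^0\DbT_M\simeq0$.

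For (i), the first isomorphism amounts to the statement that $\DbT_M$ is \emph{stable} in the sense of \S\ref{sse:stable}. Unwinding the definition of $\cihom$ in terms of the maps \eqref{eq:q1q2mu} and the definition of $\DbT_M$, the computation of $\cihom(\Cfield_{\{t\geq0\}},\DbT_M)$ again comes down to solving $(\partial_t-1)u=f$ by the same tempered primitive, whose defining integral converges at $t=+\infty$ precisely because $\Re(1)>0$; one then identifies the resulting complex canonically with $\DbT_M$. The second isomorphism should follow from the first together with the stability of $\DbT_M$ under $\Cfield_{\{t\geq0\}}\ctens(\ast)$: for $a\geq 0$ the natural morphism $\cihom(\Cfield_{\{t\geq a\}},\DbT_M)\to\cihom(\Cfield_{\{t\geq0\}},\DbT_M)$ induced by $\Cfield_{\{t\geq0\}}\twoheadrightarrow\Cfield_{\{t\geq a\}}$ becomes an isomorphism on such an object (compare the Example following \eqref{eq:q1q2mu}).

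For (iii), by (i) and (ii) the complex $\DbT_M$ is concentrated in degree $-1$, and the remaining task is to identify the ind-sheaf $H^{-1}\DbT_M$. The homogeneous solutions of $(\partial_t-1)u=0$ are $u=e^t v(x)$ with $v$ a distribution on $M$, so the assignment $u\mapsto e^{-t}u$ yields a natural morphism $H^{-1}\DbT_M\to\opb\pi\Db_M$, refining the inclusion $\Dbt\hookrightarrow\Db$, which one shows to be a monomorphism. On the other hand, for $v$ a \emph{tempered} distribution on $M$ the section $e^t v(x)$, regarded over $M\times\{t<a\}$ where $e^t$ is bounded, is tempered and defines a section of $H^{-1}\DbT_M$; taking the inductive limit over $a\to+\infty$ produces the monomorphism $\Cfield_{\{t<*\}}\tens\opb\pi\Dbt_M\to H^{-1}\DbT_M$. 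I expect this last point to be the main obstacle: unlike (i) and (ii), it requires understanding $H^{-1}\DbT_M$ as a kernel in the category of \emph{ind}-sheaves on $M\times\bR$ rather than of sheaves, and it is this ind-limit, together with the bordered structure near $t=\pm\infty$ encoded in $\epb j$, that places $H^{-1}\DbT_M$ strictly between the two objects above; this is the technical heart of \cite[\S8.1]{DK13}.
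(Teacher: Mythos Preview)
The paper does not give its own proof of this proposition: it is introduced by the sentence ``By the results in \cite[\S8.1]{DK13} one has'' and no proof environment follows. Your proposal correctly identifies this citation as the source and supplies a reasonable outline of the argument from \cite{DK13}: the surjectivity of $\partial_t-1$ on tempered distributions via the exponentially decaying Green kernel (using $\Re(1)>0$) for (ii), the resulting stability for (i), and the identification of the kernel $\ker(\partial_t-1)$ with $e^t\cdot v(x)$ for (iii). So your approach is the same as the paper's, namely to defer to \cite[\S8.1]{DK13}, with the bonus that you actually sketch what is there.
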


The enhanced ind-sheaf of tempered distributions is defined by
\[
\DbE_M \defeq \quot_M(\DbT_M) \in \dere(M).
\]

Part (iii) in the following proposition is new.

\begin{proposition}
\label{pro:DbE}\hfill
\begin{itemize}
\item[(i)] $\DbE_M$ is stable, i.e.\ $\ECfield_M\ctens\DbE_M\simeq\DbE_M$.
\item[(ii)] $\RE\DbE_M \simeq \DbT_M$. In particular, it is concentrated in degree $-1$.
\item[(iii)] $\DbE_M \in \dere^0(M)$. In other words, the complex $\LE\DbE_M$ is concentrated in degree $0$.
\end{itemize}
\end{proposition}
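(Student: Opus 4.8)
Statements (i) and (ii) follow readily from Proposition~\ref{pro:DbT} and the formalism of stable objects recalled in \S\ref{sse:stable} (cf.\ \cite{DK13}). For (i), applying $\quot_M$ to the isomorphisms of Proposition~\ref{pro:DbT}(i) and using $\cihom(\quot_M F,\quot_M F')=\quot_M\cihom(F,F')$, one gets $\DbE_M\isoto\cihom(\Qfield_{\{t\geq a\}},\DbE_M)$ for all $a\geq 0$, the case $a=0$ exhibiting $\DbE_M$ as an object of $\dere_+(M)$; the $\cihom$-version of the characterization of $\dere_\st(M)$ then shows that $\DbE_M$ is stable, in particular $\ECfield_M\ctens\DbE_M\simeq\DbE_M$. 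Moreover, since $\DbE_M\in\dere_+(M)$ one has $\Qfield_{\{t\leq 0\}}\ctens\DbE_M\simeq 0$, i.e.\ $\C_{\{t\leq 0\}}\ctens\DbT_M\in\catn$; as $\LE\DbE_M=(\C_{\{t\geq 0\}}\dsum\C_{\{t\leq 0\}})\ctens\DbT_M$ lies in ${}^\bot\catn$ and ${}^\bot\catn$ is closed under direct summands, we deduce $\C_{\{t\leq 0\}}\ctens\DbT_M\in\catn\cap{}^\bot\catn=0$, hence $\DbT_M\in\catn_+$. For (ii), $\RE\DbE_M\simeq\cihom(\C_{\{t\geq 0\}},\DbT_M)\oplus\cihom(\C_{\{t\leq 0\}},\DbT_M)$; the first summand is $\DbT_M$ by Proposition~\ref{pro:DbT}(i) and the second vanishes since $\DbT_M\in\catn_+$, so $\RE\DbE_M\simeq\DbT_M$, which is concentrated in degree $-1$ by Proposition~\ref{pro:DbT}(ii).

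The new statement is (iii), and the plan is to read off the cohomological amplitude of $\LE\DbE_M$ from the monomorphism $H^{-1}\DbT_M\hookrightarrow\opb\pi\Db_M$ furnished by Proposition~\ref{pro:DbT}(iii). By the above, $\LE\DbE_M\simeq\C_{\{t\geq 0\}}\ctens\DbT_M$, and since $\DbT_M$ is concentrated in degree $-1$ we may write $\DbT_M\simeq G[1]$ with $G\defeq H^{-1}\DbT_M\in\ind(\C_{M\times\bR})$, so that $\LE\DbE_M\simeq(\C_{\{t\geq 0\}}\ctens G)[1]$. It is therefore enough to prove that $\C_{\{t\geq 0\}}\ctens G$ is concentrated in degree $1$. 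Lemma~\ref{lem:tC-ctenscihom}(i) already gives $\C_{\{t\geq 0\}}\ctens G\in\derdR[\leq 1]{M}\cap\derdR[\geq 0]{M}$, so only the vanishing of $H^0$ remains.

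For this, let $Q$ be the cokernel of $G\hookrightarrow\opb\pi\Db_M$ in $\ind(\C_{M\times\bR})$ and apply $\C_{\{t\geq 0\}}\ctens(\ast)$ to the distinguished triangle $G\to\opb\pi\Db_M\to Q\tone$. Since $\opb\pi\Db_M\in\opb\pi\derd(M)=\catn\subset\catn_-=\{K\semicolon\C_{\{t\geq 0\}}\ctens K\simeq 0\}$, the middle term vanishes, so $\C_{\{t\geq 0\}}\ctens G\simeq(\C_{\{t\geq 0\}}\ctens Q)[-1]$; as $Q$ is an ind-sheaf, $\C_{\{t\geq 0\}}\ctens Q\in\derdR[\geq 0]{M}$ by Lemma~\ref{lem:tC-ctenscihom}(i), whence $\C_{\{t\geq 0\}}\ctens G\in\derdR[\geq 1]{M}$. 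Combined with the upper bound, $\C_{\{t\geq 0\}}\ctens G$ is concentrated in degree $1$, so $\LE\DbE_M$ is concentrated in degree $0$, i.e.\ $\DbE_M\in\dere^0(M)$. The only genuinely substantial ingredient is Proposition~\ref{pro:DbT}(iii)---the embedding of the solution complex of $\partial_t-1$ into $\opb\pi\Db_M$ in cohomological degree $-1$, which is the ``new'' part of Proposition~\ref{pro:DbT}; once it is granted, (iii) is a short amplitude count and I anticipate no further difficulty.
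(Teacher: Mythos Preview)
Your proof is correct. Parts (i) and (ii) follow the same ideas as the paper, though your argument that $\DbT_M\in\catn_+$ is a bit roundabout: it is more direct to note that $\DbT_M\isoto\cihom(\Cfield_{\{t\geq 0\}},\DbT_M)$ exhibits $\DbT_M$ as an object of $\catn_-^\bot=\catn_+\cap\catn^\bot$, hence in particular $\DbT_M\in\catn_+$.

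For (iii) your route is genuinely different from the paper's, and shorter. The paper invokes the distinguished triangle $\opb\pi\reeim\pi\DbT_M\to\LE\DbE_M\to\DbT_M\tone$ from \cite[Proposition~4.3.10]{DK13}, derives the exact sequence
\[
0\to H^{-1}\LE\DbE_M\to H^{-1}\DbT_M\to[\gamma]\opb\pi\derr^1\eeim\pi H^{-1}\DbT_M,
\]
and then shows that $\gamma$ is a monomorphism by chasing through a commutative square built from the inclusion $H^{-1}\DbT_M\hookrightarrow\opb\pi\Db_M$. You use that same inclusion more directly: convolving the short exact sequence $0\to G\to\opb\pi\Db_M\to Q\to 0$ with $\Cfield_{\{t\geq 0\}}$ kills the middle term outright because $\opb\pi\Db_M\in\catn\subset\catn_-$, and a single amplitude estimate finishes the job. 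Your argument avoids the external reference to \cite[Proposition~4.3.10]{DK13} and the diagram chase; the paper's approach, on the other hand, makes explicit the connecting map $\gamma$ and its relation to $\reeim\pi$, which may be of independent interest. One minor inaccuracy: Proposition~\ref{pro:DbT}(iii) is not new to this paper---it is attributed to \cite[\S8.1]{DK13}.
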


\begin{proof}
(i) follows from Proposition~\ref{pro:DbT} (i).

(ii) By Proposition~\ref{pro:DbT} (i), one has $\RE\DbE_M \simeq \DbT_M$.
This is concentrated in degree $-1$ by Proposition~\ref{pro:DbT} (ii),

(iii) By (ii), $\RE\DbE_M \simeq \DbT_M$ is concentrated in degree $-1$.
Hence Lemma~\ref{lem:tC-ctenscihom} implies
\[
\LE\DbE_M \simeq \Cfield_{\{t\geq 0\}}\ctens \DbT_M \in\derdR[{[-1,0]}]{M},
\]
and we are reduced to prove that $H^{-1}\LE\DbE_M \simeq 0$.

By \cite[Proposition 4.3.10]{DK13}, there is a distinguished triangle
\[
\opb\pi_M\roimv{\pi_{M\sep!!}}\DbT_M \to \LE\DbE_M \to \DbT_M \tone.
\]
By Proposition~\ref{pro:DbT} (iii),
\[
H^{-1}\roimv{\pi_{M\sep!!}}\DbT_M \simeq \pi_{M\sep!!}H^{-1}\DbT_M \subset \pi_{M\sep!!}\opb\pi\Db_M = 0.
\]
Thus, the above distinguished triangle induces the exact sequence
\[
0 \to H^{-1}\LE\DbE_M \to H^{-1}\DbT_M \to[\gamma] \opb\pi\derr^1\pi_{M\sep!!}H^{-1}\DbT_M .
\]
To conclude, we have to show that $\gamma$ is a monomorphism.

By Proposition~\ref{pro:DbT} (iii), there is a commutative diagram
\[
\xymatrix{
 H^{-1}\DbT_M \ar[r]^-\gamma \ar@{ >->}[d] & \opb\pi_M\derr^1\pi_{M\sep!!}H^{-1}\DbT_M \ar[d] \\
\opb\pi_M\Db_M \ar[r]^-\sim & \opb\pi_M\derr^1\pi_{M\sep!!}\opb\pi_M\Db_M.
}
\]
Hence $\gamma$ is a monomorphism.
\end{proof}

\subsection{$\D$-modules}

Let $X$ be a complex manifold.
We denote by $d_X^\C$ its complex dimension.
Denote by $\O_X$ and $\D_X$ the sheaves of algebras of holomorphic functions and of
differential operators, respectively. Denote by $\Omega_X$ the sheaf of
differential forms of top degree.

Denote by $\Mod(\D_X)$ the category of left $\D_X$-modules, and by $\BDC(\D_X)$ its bounded derived category.
For $f\colon X\to Y$ a morphism of complex manifolds, denote by $\dtens$,
$\dopb f$, $\doim f$ the operations for $\D$-modules.

Consider the dual of $\shm\in\BDC(\D_X)$ given by
\[
\ddual\shm = \rhom[\D_X](\shm,\D_X\tens[\O_X]\Omega_X^{\otimes-1})[d_X^\C].
\]

A $\D_X$-module $\shm$ is called \emph{quasi-good} if, for any relatively compact open subset $U\subset X$, $\shm\vert_U$
is isomorphic (as an $\O_X\vert_U$-module) to a filtrant inductive limit of
coherent $\O_X\vert_U$-modules.
A $\D_X$-module $\shm$ is called \emph{good} if it is quasi-good and coherent.

Recall that to a coherent $\D_X$-module $\shm$ one associates its characteristic variety $\chv(\shm)$, a closed conic involutive subset of the cotangent bundle $T^*X$. If $\chv(\shm)$ is Lagrangian, $\shm$ is called holonomic. For the notion of regular holonomic $\D_X$-module, refer e.g.\ to \cite[\S5.2]{Kas03}.

Denote by $\BDC_{\hol}(\D_X)$ and $\BDC_{\reghol}(\D_X)$ the full
subcategories of $\BDC(\D_X)$ whose objects have holonomic and regular
holonomic cohomologies, respectively. These are triangulated categories.

Let $f\colon X \to Y$ be a morphism of complex manifolds.
For $x_0\in X$ consider
\begin{align*}
&\operatorname{rank}_{x_0}^\C(f) \defeq \operatorname{rank}^\C(T_{x_0}X \to[df(x_0)] T_{f(x_0)}Y)\qtq \\
&\operatorname{flat-dim}_{\D_{X,x_0}}(\D_{X\to Y,x_0}),
\end{align*}
the complex dimension of the image of $df(x_0)$, and the flat dimension of $\D_{X\to Y,x_0}$ as a left $\D_{X,x_0}$-module, respectively.

\begin{proposition}
\label{pro:hdtrans}
Let $f\colon X \to Y$ be a morphism of complex manifolds.
For $x_0\in X$ one has
\[
\operatorname{flat-dim}_{\D_{X,x_0}}(\D_{X\to Y,x_0}) \leq d_X^\C - \operatorname{rank}_{x_0}^\C(f).
\]
\end{proposition}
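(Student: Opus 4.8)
The plan is to establish the estimate locally near $x_0$ by factoring $f$ through a suitable fibre product. Set $r=\operatorname{rank}_{x_0}^\C(f)$. First I would choose local coordinates $(x_1,\dots,x_{d_X^\C})$ on $X$ centered at $x_0$ and $(y_1,\dots,y_{d_Y^\C})$ on $Y$ centered at $f(x_0)$ such that $f_i=x_i$ for $1\le i\le r$ (possible since $df(x_0)$ has rank $r$). Letting $\C^r$ carry coordinates $(z_1,\dots,z_r)$ and $\pi\colon X\to\C^r$, $q\colon Y\to\C^r$ be the projections to the first $r$ coordinates — both submersions, with $q\comp f=\pi$ — I form $W\defeq X\times_{\C^r}Y$, a complex manifold of dimension $d_X^\C+d_Y^\C-r$, together with the closed embedding $\Gamma\colon X\hookrightarrow W$, $x\mapsto(x,f(x))$ (a section of $W\to X$), and the projection $p\colon W\to Y$. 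Then $f=p\comp\Gamma$, and $p$ is a submersion of relative dimension $d_X^\C-r$. It is worth stressing that going through $W$, rather than through some $r$-dimensional manifold, is essential: the image of $f$ near $x_0$ may be more than $r$-dimensional (for instance $\C\ni x\mapsto(x^2,x^3)\in\C^2$ has image a curve while its rank at the origin is $0$), so $f$ cannot in general be written as a submersion onto an $r$-fold followed by a closed embedding.

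Next I would invoke three classical facts about transfer modules. (a) Since $\Gamma$ is a closed embedding, $\D_{X\to W}$ is locally free, hence flat, as a left $\D_X$-module (in coordinates adapted to $\Gamma$ it is $\D_X$-free on the monomials in the normal derivatives). (b) Since $p$ is a submersion of relative dimension $d_X^\C-r$, the relative Spencer complex $\D_W\tens[\O_W]\bigwedge^\bullet\Theta_{W/Y}\to\D_{W\to Y}$ resolves $\D_{W\to Y}$ by locally free left $\D_W$-modules and has length $d_X^\C-r$, so $\operatorname{flat-dim}_{\D_W}(\D_{W\to Y})\le d_X^\C-r$; applying the exact functor $\opb\Gamma$, also $\operatorname{flat-dim}_{\opb\Gamma\D_W}(\opb\Gamma\D_{W\to Y})\le d_X^\C-r$. (c) The composition formula $\D_{X\to Y}\simeq\D_{X\to W}\tens[{\opb\Gamma\D_W}]\opb\Gamma\D_{W\to Y}$ in fact computes a derived tensor product: writing $\D_{X\to W}=\O_X\tens[{\opb\Gamma\O_W}]\opb\Gamma\D_W$ with $\O_X=\opb\Gamma\O_W/\shi$ and $\shi$ locally generated by a regular sequence, a Koszul resolution of $\O_X$ over $\opb\Gamma\O_W$ shows that $\Tor[{\opb\Gamma\D_W}]{n}(\D_{X\to W},\opb\Gamma\D_{W\to Y})$ vanishes for $n>0$, since $\D_{W\to Y}$ is free — hence flat — as a left $\O_W$-module. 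Thus $\D_{X\to Y}\simeq\D_{X\to W}\LTens[{\opb\Gamma\D_W}]\opb\Gamma\D_{W\to Y}$.

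Granting (a)–(c), for any right $\D_X$-module $\shn$ the flatness in (a) and the associativity of the derived tensor product give
\[
\shn\LTens[\D_X]\D_{X\to Y}\;\simeq\;\bl\shn\tens[\D_X]\D_{X\to W}\br\LTens[{\opb\Gamma\D_W}]\opb\Gamma\D_{W\to Y},
\]
and by (b) the right-hand side is concentrated in cohomological degrees $\ge-(d_X^\C-r)$, that is, $\Tor[\D_X]{n}(\shn,\D_{X\to Y})=0$ for $n>d_X^\C-r$. Since $\D_{X,x_0}$ is Noetherian, flat dimension can be tested on coherent modules and $\Tor$ commutes with passage to the stalk at $x_0$; hence $\operatorname{flat-dim}_{\D_{X,x_0}}(\D_{X\to Y,x_0})\le d_X^\C-r=d_X^\C-\operatorname{rank}_{x_0}^\C(f)$.

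The step I expect to require the most care is (c): the $\operatorname{Tor}$-independence underlying the composition formula for transfer modules — although it is classical, being exactly what makes $\doim p\comp\doim\Gamma\simeq\doim f$ hold with no higher corrections. All of the facts used in (a)–(c), as well as the adapted-coordinate choice, are standard; one may refer e.g.\ to \cite{Kas03}.
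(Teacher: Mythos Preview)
Your argument is correct, and it takes a genuinely different route from the paper. The paper works entirely at the stalk: choosing coordinates as you do, it observes directly that $\D_{X\to Y,x_0}$ is free over the subring $\shr=\O_{X,x_0}[\partial_{x_1},\dots,\partial_{x_r}]\subset\D_{X,x_0}$, and then proves a clean algebraic lemma (its Lemma~\ref{lem:flat}): any left $\D_{X,x_0}$-module flat over $\shr$ has flat dimension $\leq d_X^\C-r$, via the Spencer complex in the complementary variables $\partial_{x_{r+1}},\dots,\partial_{x_{d_X^\C}}$. No auxiliary space, no composition formula, no Tor-independence check.

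Your approach trades that algebraic lemma for geometry: you factor $f$ through the fibre product $W=X\times_{\C^r}Y$ as (closed embedding)$\circ$(submersion of relative dimension $d_X^\C-r$), and then combine the standard facts about transfer modules along each piece. The Spencer resolution still appears, but as the relative Spencer complex of $\D_{W\to Y}$ rather than as a stalkwise construction. What this buys you is modularity --- each of (a), (b), (c) is a textbook statement --- at the cost of having to verify the Tor-independence in (c), which you handle correctly (flatness of $\D_{W\to Y}$ over $\O_W$ is the point). The paper's proof is shorter and self-contained; yours makes the geometric content (the ``excess'' $d_X^\C-r$ is the fibre dimension of a submersion) more visible.
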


\begin{proof}
Set $n=d_X^\C$, $m=d_Y^\C$, $d= \operatorname{rank}_{x_0}^\C(f)$, and $y_0=f(x_0)$.

Choose a system of local coordinates $y=(y_1,\dots,y_m)$ of $Y$
on a neighborhood of $y_0$, such that $\partial_{y_1},\dots,\partial_{y_d}$ generate $df(x_0)(T_{x_0}X) \subset T_{f(x_0)}Y$.
Set $x_k = y_k \circ f$ for $k\leq d$, and complete them to a system of local coordinates $x=(x_1,\dots,x_n)$ of $X$ on a neighborhood of $x_0$.

Consider the subring
\[
\shr\defeq \O_{X,\,x_0}[\partial_{x_1},\dots,\partial_{x_d}]\subset\D\defeq\D_{X,\,x_0}.
\]

Then $\D_{X\to Y,\,x_0} \simeq \O_{X,\,x_0}\tens[\O_{Y,\,y_0}]\D_{Y,\,y_0}$ is a free $\shr$-module.
In fact, one has
\[
\D_{X\to Y,\,x_0} \simeq \DSum_{\beta\in \{0\}^d\times\Z^{m-d}_{\geq 0}\subset\Z^m_{\geq 0}}\shr \partial_y^\beta.
\]

The statement follows by Lemma~\ref{lem:flat} below.
\end{proof}

\begin{lemma}
\label{lem:flat}
Use notations as in the proof above. Let $\shm$ be a left $\D$-module.
If $\shm$ is flat as a left $\shr$-module, then
\[
\operatorname{flat-dim}_{\D}(\shm) \leq n - d.
\]
\end{lemma}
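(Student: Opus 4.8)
The plan is to realize $\D$ as an iterated Ore extension of $\shr$ and to invoke the standard fact that adjoining one skew variable of derivation type raises flat dimension by at most one.

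First I would exhibit $\D$ as a tower of skew polynomial rings over $\shr$, obtained by adjoining the remaining derivations $\partial_{x_{d+1}},\dots,\partial_{x_n}$ one at a time. For $d\le j\le n$ set $\shr_j\defeq\shr[\partial_{x_{d+1}};\delta_{d+1}]\cdots[\partial_{x_j};\delta_j]$, where $\delta_j(a)\defeq[\partial_{x_j},a]$. The commutation relations $[\partial_{x_j},f]=\partial f/\partial x_j$ for $f\in\O_{X,x_0}$ and $[\partial_{x_j},\partial_{x_i}]=0$ show that $\delta_j$ maps $\shr_{j-1}$ into itself and is a derivation, so each $\shr_j=\shr_{j-1}[\partial_{x_j};\delta_j]$ is a genuine Ore extension; moreover $\shr_j$ is a free right $\shr_{j-1}$-module on the powers of $\partial_{x_j}$. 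One has $\shr_d=\shr$ and $\shr_n=\D$.

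Next I would establish the reduction lemma: if $S=R[\theta;\delta]$ is an Ore extension and $M$ a left $S$-module, then $\operatorname{flat-dim}_S M\le\operatorname{flat-dim}_R M+1$, where on the right $M$ is viewed as an $R$-module by restriction. The proof rests on the exact sequence of left $S$-modules
\[
0\to S\otimes_R M\to[\phi] S\otimes_R M\to[\mu] M\to 0,
\]
with $\mu(s\otimes m)=sm$ and $\phi(s\otimes m)=s\theta\otimes m-s\otimes\theta m$; well-definedness over $R$ uses $\theta r=r\theta+\delta(r)$, and exactness is read off the filtration of $S\otimes_R M$ by $\theta$-degree. Since $S$ is free, hence flat, as a right $R$-module, the functor $S\otimes_R(\cdot)$ is exact and sends a left $R$-flat module to a left $S$-flat module (because $N\otimes_S(S\otimes_R M)\simeq N\otimes_R M$ for a right $S$-module $N$); therefore it carries an $R$-flat resolution of $M$ to an $S$-flat resolution of $S\otimes_R M$, giving $\operatorname{flat-dim}_S(S\otimes_R M)\le\operatorname{flat-dim}_R M$. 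The displayed short exact sequence then yields $\operatorname{flat-dim}_S M\le\operatorname{flat-dim}_R M+1$.

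Finally I would iterate. By hypothesis $\shm$ is flat over $\shr=\shr_d$, i.e.\ $\operatorname{flat-dim}_{\shr_d}\shm=0$. Applying the lemma with $S=\shr_j$, $R=\shr_{j-1}$, $M=\shm$ for $j=d+1,\dots,n$ gives $\operatorname{flat-dim}_{\shr_j}\shm\le\operatorname{flat-dim}_{\shr_{j-1}}\shm+1$, whence $\operatorname{flat-dim}_{\D}\shm=\operatorname{flat-dim}_{\shr_n}\shm\le n-d$. The step requiring the most care is the reduction lemma: verifying exactness of the Koszul-type sequence and, above all, that $S\otimes_R(\cdot)$ preserves both exactness and flatness because $S$ is free as a right $R$-module; by contrast, identifying $\D$ with the tower $\shr_d\subset\cdots\subset\shr_n$ is routine.
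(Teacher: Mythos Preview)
Your argument is correct and takes a genuinely different route from the paper. The paper writes down the Spencer (Koszul) complex in one stroke: with $K=\C\partial_{x_{d+1}}\oplus\cdots\oplus\C\partial_{x_n}$ it considers
\[
0\to(\D\otimes\extp^{n-d}K)\tens[\shr]\shm\to\cdots\to\D\tens[\shr]\shm\to\shm\to0,
\]
obtained from the Spencer resolution of $\shm$ as a $\D'=\O[\partial_{x_{d+1}},\dots,\partial_{x_n}]$-module by using $\D'\tens[\O]\shr\simeq\D$; since $\shm$ is $\shr$-flat and $\D$ is free over $\shr$, each term is $\D$-flat, giving the bound directly. Your approach instead adjoins the remaining derivations one at a time via Ore extensions and applies the two-term change-of-rings sequence at each step. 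The paper's method is shorter and produces an explicit resolution; your method is more modular and isolates a reusable lemma (the $+1$ bound for a single Ore extension), at the cost of checking well-definedness and exactness of the two-term sequence. Morally the two coincide: iterating your short exact sequence and taking the associated total complex recovers the Spencer complex.
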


\begin{proof}
Set $\O\defeq\O_{X,\,x_0}$ and $\D'\defeq \O[\partial_{x_{d+1}},\dots,\partial_{x_n}]$, so that
$\D \simeq \D'\tens[\O]\shr$.
Set $K\defeq\C\partial_{x_{d+1}}\dsum\cdots\dsum\C\partial_{x_n}$. Then the Spencer resolution of $\shm$, considered as a $\D'$-module, is
\[
0\to (\D'\tens\extp^{n-d} K)\tens[\O]\shm \to \cdots \to
\D'\tens[\O]\shm \to \shm \to 0.
\]
Since $\D'\tens[\O]\shr \simeq \D$, the above resolution reads as
\[
0\to (\D\tens\extp^{n-d} K)\tens[\shr]\shm \to \cdots \to
\D\tens[\shr]\shm \to \shm \to 0.
\]
Since $\shm$ is a flat left $\shr$-module, this is a flat resolution of $\shm$ as a left $\D$-module.
\end{proof}

For a category $\shc$,
let $\operatorname{Pro}(\shc)$ be the category of pro-objects in $\catc$,
and $\prolim$ the projective limit in $\operatorname{Pro}(\shc)$.

\begin{lemma}\label{lem:Mflatprolim}
Let $\shm$ be a quasi-good $\D_X$-module, flat over $\D_X$.
Let $\{\shm_i\}_{i\in I}$ be a filtrant inductive system of coherent $\D_X$-modules
such that $\shm\simeq \colim_i\shm_i$. Then, for any $x\in X$ and any $k\neq 0$ one has
\[
\prolim[i] \ext[\D_X]k(\shm_i,\D_X)_x \simeq 0
\quad\text{in }\operatorname{Pro}(\Mod(\D_{X,x}^\op)).
\]
\end{lemma}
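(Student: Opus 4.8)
The plan is to verify directly the elementary criterion for vanishing of a pro-object: a pro-object represented by a system $(N_i)_{i\in I}$, with transition morphism $N_j\to N_i$ for $i\leq j$, is isomorphic to $0$ in the pro-category if and only if for every $i\in I$ there exists $j\geq i$ such that $N_j\to N_i$ is the zero morphism. Applied with $N_i = \ext[\D_X]{k}(\shm_i,\D_X)_x$, it suffices to show, for each fixed $k\neq 0$, that for every $i\in I$ there is $j\geq i$ such that the transition morphism
$\ext[\D_X]{k}(\shm_j,\D_X)_x \to \ext[\D_X]{k}(\shm_i,\D_X)_x$
vanishes.

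First I would reduce to a statement over the ring $\D_{X,x}$. Since $\D_X$ is a coherent sheaf of rings and each $\shm_i$ is coherent, a local finite free resolution of $\shm_i$ near $x$, together with the exactness of the stalk functor and the fact that $\hom$ out of a finite free module commutes with stalks, provides natural isomorphisms $\ext[\D_X]{k}(\shm_i,\D_X)_x \simeq \Ext[\D_{X,x}]{k}(\shm_{i,x},\D_{X,x})$, compatible with the transition morphisms. Setting $M\defeq\shm_x\simeq\colim_i\shm_{i,x}$, the hypotheses become: $M$ is a flat $\D_{X,x}$-module and each $\shm_{i,x}$ is a finitely presented $\D_{X,x}$-module; and one must produce, for each $i$ and each $k\neq 0$, an index $j\geq i$ with $\Ext[\D_{X,x}]{k}(\shm_{j,x},\D_{X,x})\to\Ext[\D_{X,x}]{k}(\shm_{i,x},\D_{X,x})$ equal to $0$.

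The key input is the relative form of Lazard's description of flat modules (Raynaud--Gruson): if a filtered colimit $M=\colim_i\shm_{i,x}$ of finitely presented $\D_{X,x}$-modules is flat, then for every $i$ there exist $j\geq i$ and a finite free $\D_{X,x}$-module $L$ such that the transition morphism $\shm_{i,x}\to\shm_{j,x}$ factors through $L$. Applying the contravariant functor $\Ext[\D_{X,x}]{k}(-,\D_{X,x})$, the corresponding transition morphism $\Ext[\D_{X,x}]{k}(\shm_{j,x},\D_{X,x})\to\Ext[\D_{X,x}]{k}(\shm_{i,x},\D_{X,x})$ factors through $\Ext[\D_{X,x}]{k}(L,\D_{X,x})$, which vanishes for $k\neq 0$ since $L$ is free of finite rank. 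This yields the required $j$, completing the argument.

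I expect the main obstacle to be the invocation of the relative Lazard--Raynaud--Gruson criterion, that is, the fact that flatness of the colimit forces the transition morphisms to factor, eventually, through finite free modules; by contrast, the pro-vanishing criterion, the stalk identification of $\ext[\D_X]{k}$, and the vanishing of $\Ext[\D_{X,x}]{k}(L,\D_{X,x})$ for $k\neq 0$ are routine.
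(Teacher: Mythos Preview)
Your proof is correct and follows essentially the same approach as the paper's. The paper invokes Lazard's theorem in its original form (a flat module is a filtered colimit of finite free modules) and then uses the ind-category isomorphism $\indlim[i]\shm_{i,x}\simeq\indlim[j]L_j$ to produce, for each $i$, a factorization $\shm_{i,x}\to L_j\to\shm_{i',x}$; you package this same step as the ``relative Lazard--Raynaud--Gruson'' criterion, and both arguments then conclude identically by observing that $\Ext^k$ of a finite free module vanishes for $k\neq 0$.
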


\begin{proof}
There exists a filtrant inductive system $\{L_j\}_{j\in J}$ of free $\D_{X,x}$-modules of finite rank such that
$\shm_{x} \simeq \colim_jL_j$ (see \cite{L69}). It implies that
$\indlim[i]\shm_{i,x}\simeq \indlim[j]L_j$ in $\Ind(\Mod(\D_{X,x}))$.
Hence, for any $i\in I$ there exist $j\in J$,
a morphism $u\colon i\to i'$ in $I$ and a commutative diagram
\[
\xymatrix@R=1ex@C=1em{
\shm_{i,x} \ar[rr] \ar[dr] && \shm_{i',x}\,. \\
 & L_j \ar[ur].
}
\]
It follows that the morphism induced by $u$,
\[
\ext[\D_{X,x}]k(\shm_{i',x},\D_{X,x}) \to \ext[\D_{X,x}]k(\shm_{i,x},\D_{X,x}),
\]
is the zero morphism.
\end{proof}

For a hypersurface $Y$ of $X$,
denote by $\sho_X(*Y)$ the sheaf of meromorphic functions on $X$ with poles in $Y$. We set
$\D_X(*Y)=\sho_X(*Y)\tens_{\sho_X}\D_X\simeq \D_X\tens_{\sho_X}\sho_X(*Y)$.
It is a sheaf of $\C$-algebras on $X$.
For a $\D_X$-module $\shm$, we set $\shm(*Y)\seteq\shd_X(*Y)\tens[\shd_X]\shm$.

\begin{lemma}\label{lem:MflatprolimY}
Let $Y\subset X$ be a closed complex analytic hypersurface, and let $\shm$ be a quasi-good $\D_X$-module.
Assume that $\shm|_{X\setminus Y}$ is flat over $\D_{X\setminus Y}$.
Let $\{\shm_i\}_{i\in I}$ be a filtrant inductive system of coherent $\D_X$-modules
such that $\shm(*Y)\simeq \colim_i\shm_i$. Then, for any $V\ssubset X$ one has
\bnum
\item
$\prolim[i] \ext[\D_X]k(\shm_i,\D_X(*Y))|_V \simeq 0$
in $\operatorname{Pro}\bl\Mod(\D_V^\op)\br$ for any $k\neq 0$,
\item
$\prolim[i] \rhom[\D_X](\shm_i,\D_X(*Y))|_V$\newline
\hs{10ex}$ \simeq
\prolim[i] \hom[\D_X](\shm_i,\D_X(*Y))|_V$
in $\operatorname{Pro}\bl\BDC(\D_V^\op)\br$.
\ee
\end{lemma}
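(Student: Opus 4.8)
The plan is to transcribe the proof of Lemma~\ref{lem:Mflatprolim}, replacing $\D_X$ by $\D_X(*Y)$ and accounting for the fact that here $\colim_i\shm_i$ is $\shm(*Y)$ rather than a module flat over $\D_X$. Since $\D_X$ has finite homological dimension, each coherent $\shm_i$ is a perfect $\D_X$-module, and as $\D_X(*Y)$ is flat over $\D_X$ this gives $\rhom[\D_X](\shm_i,\D_X(*Y))\simeq\rhom[\D_X](\shm_i,\D_X)\tens[\D_X]\D_X(*Y)$; in particular each $\ext[\D_X]{k}(\shm_i,\D_X(*Y))$ is a coherent right $\D_X(*Y)$-module. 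Granting (i), part (ii) follows: the object $\prolim[i]\rhom[\D_X](\shm_i,\D_X(*Y))|_V$ of $\operatorname{Pro}(\BDC(\D_V^\op))$ has $k$-th cohomology $\prolim[i]\ext[\D_X]{k}(\shm_i,\D_X(*Y))|_V$, which vanishes for $k\neq0$ by (i), so a truncation argument in the pro-category identifies it with its degree-$0$ cohomology $\prolim[i]\hom[\D_X](\shm_i,\D_X(*Y))|_V$.

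For (i) the crucial new point is that $\shm(*Y)$ is flat over $\D_X(*Y)$. The idea is that for any right $\D_X(*Y)$-module $R$ the Tor-sheaves $\Tor[\D_X(*Y)]{k}(R,\shm(*Y))$ restrict on $X\setminus Y$ to $\Tor[\D_{X\setminus Y}]{k}(R|_{X\setminus Y},\shm|_{X\setminus Y})$, which vanishes for $k>0$ because $\shm|_{X\setminus Y}$ is flat over $\D_{X\setminus Y}$ by hypothesis; since $\shm(*Y)$ and a free resolution of $R$ over $\D_X(*Y)$ have a local equation of $Y$ acting invertibly, these Tor-sheaves carry no sections supported on $Y$ and hence vanish. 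Equivalently, as $\D_X\to\D_X(*Y)$ is a flat ring epimorphism, $\shm(*Y)$ is flat over $\D_X(*Y)$ if and only if it is flat over $\D_X$, and the latter is a local statement which holds away from $Y$ by hypothesis and along $Y$ using that $\shm$ is quasi-good.

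With flatness in hand, I would run the argument of Lemma~\ref{lem:Mflatprolim} over $\D_X(*Y)$. Fix $x\in X$. By Lazard's theorem (see \cite{L69}) one has $\shm(*Y)_x\simeq\colim_j L_j$ for a filtrant inductive system of finite free $\D_X(*Y)_x$-modules $L_j$; since also $\shm(*Y)_x\simeq\colim_i\bl(\shm_i)_x\tens[\D_{X,x}]\D_X(*Y)_x\br$, the two systems being isomorphic in $\Ind(\Mod(\D_X(*Y)_x))$ provides, for each $i$, an index $j$ and a morphism $i\to i'$ together with a factorization $(\shm_i)_x\tens[\D_{X,x}]\D_X(*Y)_x\to L_j\to(\shm_{i'})_x\tens[\D_{X,x}]\D_X(*Y)_x$. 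Applying $\ext[\D_X(*Y)_x]{k}(\ast,\D_X(*Y)_x)$ and using that $L_j$ is free, the transition morphism $\ext[\D_X]{k}(\shm_{i'},\D_X(*Y))_x\to\ext[\D_X]{k}(\shm_i,\D_X(*Y))_x$ is zero for $k\neq0$; thus $\{\ext[\D_X]{k}(\shm_i,\D_X(*Y))_x\}_i$ vanishes in $\operatorname{Pro}(\Mod(\D_{X,x}^\op))$ for all $x\in X$ and $k\neq0$. To pass from this stalkwise statement to the statement over $V\ssubset X$, note that for $x$ in the compact set $\overline V$ a morphism of coherent $\D_X(*Y)$-modules vanishing at the stalk $x$ vanishes on a neighborhood of $x$; covering $\overline V$ by finitely many such neighborhoods and, by filtrancy, choosing an index $i'$ dominating the finitely many indices so produced, one gets a single transition morphism $\ext[\D_X]{k}(\shm_{i'},\D_X(*Y))|_V\to\ext[\D_X]{k}(\shm_i,\D_X(*Y))|_V$ equal to zero, which proves (i).

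The step I expect to need the most care is establishing that $\shm(*Y)$ is flat over $\D_X(*Y)$, i.e., controlling the Ext-pro-system at the stalks lying over $Y$, which is precisely what Lemma~\ref{lem:Mflatprolim} alone does not supply; I would also be attentive to the base-change isomorphism for $\ext[\D_X]{k}(\shm_i,\D_X(*Y))$, to the coherence of the sheaf of rings $\D_X(*Y)$ (needed so that vanishing of an Ext-morphism at a stalk propagates to a neighborhood), and to the truncation argument in the pro-category used to deduce (ii) from (i).
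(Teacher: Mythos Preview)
Your approach has a genuine gap at the key step: establishing that $\shm(*Y)$ is flat over $\D_X(*Y)$. You argue that the Tor-sheaves $\Tor[\D_X(*Y)]{k}(R,\shm(*Y))$ vanish on $X\setminus Y$ and, since a local equation $f$ of $Y$ acts invertibly, ``carry no sections supported on $Y$''. This inference fails: a sheaf on which $f$ acts invertibly can perfectly well be supported on $Y$. For instance, on $X=\C$ with $Y=\{0\}$, the skyscraper sheaf at $0$ with stalk the fraction field of $\O_{X,0}$ is a nonzero $\O_X(*Y)$-module supported at the origin. Your alternative formulation---that $\shm(*Y)$ is flat over $\D_X$ ``along $Y$ using that $\shm$ is quasi-good''---is unjustified: quasi-goodness says nothing about flatness at points of $Y$. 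In fact the paper establishes flatness of $\shm(*Y)$ over $\D_X$ only afterwards, in Proposition~\ref{pro:MYflat}, \emph{as a consequence} of this very lemma; invoking it here is circular.

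The paper's proof avoids flatness of $\shm(*Y)$ altogether. It works instead with the images $\shn_u\seteq\Im\bl\ext[\D_X]{k}(\shm_{i'},\D_X)\to\ext[\D_X]{k}(\shm_i,\D_X)\br$, with target $\D_X$ rather than $\D_X(*Y)$. These are coherent $\D_X^\op$-modules, so their supports form a decreasing family of closed analytic subsets, which is locally stationary on the relatively compact $V$. By Lemma~\ref{lem:Mflatprolim} the intersection of these supports is contained in $Y$, hence already some $\shn_{u_0}|_V$ is supported in $Y$. Now the crucial point: a \emph{coherent} $\D_X$-module supported on $Y$ is annihilated by $\tens[\D_X]\D_X(*Y)$, so the corresponding transition map on $\ext[\D_X]{k}(\ast,\D_X(*Y))|_V$ vanishes. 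Coherence over $\D_X$ is precisely what makes ``supported on $Y$'' imply ``killed by localization'', and this is what your argument over $\D_X(*Y)$ cannot supply.
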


\begin{proof}
(i)\ For $i\in I$, denote by $I^i$ the category whose objects are morphism $i\to i'$ in $I$ with source $i$, and whose morphisms are commutative diagrams in $I$
\[
\xymatrix@R=1ex@C=1em{
& i \ar[dl] \ar[dr] \\
i' \ar[rr] && i''\,.
}
\]
It is enough to show that for any $i\in I$ there exists $(u_0\colon i\to i_0)\in I^i$ such that the induced morphism
\[
u_0' \colon \ext[\D_X]k(\shm_{i_0},\D_X(*Y))|_V \to \ext[\D_X]k(\shm_i,\D_X(*Y))|_V
\]
is the zero morphism.
For $(u\colon i\to i')\in I^i$, set
\[
\shn_u = \Im \bl \ext[\D_X]k(\shm_{i'},\D_X) \to \ext[\D_X]k(\shm_i,\D_X) \br.
\]
It is a coherent $\D_X^\op$-module.
Since $I^i$ is filtrant by \cite[Corollary 3.2.3]{KS06},
$\{\supp(\shn_u)\}_{u\in I^i}$ is a decreasing family of closed complex analytic subsets. Hence it is locally stationary. Thus, there exists $(u_0\colon i\to i_0)\in I^i$ such that
\[
\supp(\shn_{u_0}|_V) = \Inter_{u\in I^i} \supp(\shn_u|_V).
\]
By Lemma~\ref{lem:Mflatprolim}, one has
\[
\Inter_{u\in I^i} \supp(\shn_u|_V) \subset Y.
\]
Hence, $\supp(\shn_{u_0}|_V)\subset Y$ and one has
\[
0 \simeq (\shn_{u_0} \tens[\D_X] \D_X(*Y))|_V \simeq
\Im(u_0').
\]
Hence we obtain (i).

\smallskip
\noi
(ii) follows from (i).
\end{proof}

\begin{proposition}\label{pro:MYflat}
Let $Y\subset X$ be a closed complex analytic hypersurface, and let $\shm$ be a quasi-good $\D_X$-module.
Assume that $\shm|_{X\setminus Y}$ is flat over $\D_{X\setminus Y}$.
Then $\shm(*Y)$ is a flat $\D_X$-module.
\end{proposition}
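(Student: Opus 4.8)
The plan is to replace $\D_X$ by the localized ring $\A\defeq\D_X(*Y)$ and then to feed the pro-vanishing of Lemma~\ref{lem:MflatprolimY} into a flatness test. Since $\sho_X(nY)$ is an invertible $\sho_X$-module, $\D_X\tens[\sho_X]\sho_X(nY)$ is locally free of rank one as a left $\D_X$-module; hence $\A$, which as a left $\D_X$-module is $\colim_n\bl\D_X\tens[\sho_X]\sho_X(nY)\br$, is flat over $\D_X$. Therefore it is enough to show that $\shm(*Y)$ is flat as an $\A$-module. As $\shm$ is quasi-good, so is $\shm(*Y)\simeq\colim_n\shm(nY)$, and by Lemma~\ref{lem:MflatprolimY} we may fix a filtrant inductive system $\{\shm_i\}_{i\in I}$ of coherent $\D_X$-modules with $\shm(*Y)\simeq\colim_i\shm_i$; setting $\shm_i(*Y)\defeq\A\tens[\D_X]\shm_i$, these are coherent $\A$-modules with $\shm(*Y)\simeq\colim_i\shm_i(*Y)$. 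Flatness being a local property, we fix a relatively compact open $V\ssubset X$.

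Next I would test flatness of $\shm(*Y)$ over $\A$ by showing that $H^{-k}\bl P\ltens[\A]\shm(*Y)\br$ vanishes for every $k>0$ and every coherent right $\A$-module $P$. The ring $\A$ is coherent of finite global dimension, so coherent $\A$-modules are locally perfect; writing $\shm_i(*Y)^*\defeq\rhom[\A](\shm_i(*Y),\A)\simeq\rhom[\D_X](\shm_i,\D_X(*Y))$ (the last isomorphism by extension of scalars), biduality gives $\shm_i(*Y)\simeq\rhom[{\A^{\op}}](\shm_i(*Y)^*,\A)$, whence $P\ltens[\A]\shm_i(*Y)\simeq\rhom[{\A^{\op}}](\shm_i(*Y)^*,P)$. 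Since $\ltens[\A]$ commutes with filtrant colimits in its second argument, for $k>0$ this yields
\[
H^{-k}\bl P\ltens[\A]\shm(*Y)\br|_V\;\simeq\;\colim_i\,\ext[{\A^{\op}}]{-k}\bl\shm_i(*Y)^*,\,P\br|_V .
\]
By Lemma~\ref{lem:MflatprolimY}(ii) the pro-object $\prolim_i\shm_i(*Y)^*|_V$ is isomorphic to $\prolim_i H^0(\shm_i(*Y)^*)|_V$, a pro-object concentrated in degree $0$. Since $\colim_i\ext[{\A^{\op}}]{-k}(X_i,P)$ depends only on the pro-isomorphism class of $\prolim_i X_i$, the right-hand side of the display equals $\colim_i\ext[{\A^{\op}}]{-k}\bl H^0(\shm_i(*Y)^*),P\br|_V$, which is $0$ for $k>0$ because $\ext$ of modules is concentrated in non-negative degrees. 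As $V$ is arbitrary, $P\ltens[\A]\shm(*Y)$ is concentrated in degree $0$ for every coherent right $\A$-module $P$, so $\shm(*Y)$ is flat over $\A$, and therefore over $\D_X$.

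The main obstacle is the pro-object bookkeeping behind the last step. One must check that the isomorphisms $\shm_i(*Y)^*\simeq\rhom[\D_X](\shm_i,\D_X(*Y))$ and $P\ltens[\A]\shm_i(*Y)\simeq\rhom[{\A^{\op}}](\shm_i(*Y)^*,P)$ are natural in $i$; that the pro-isomorphism of Lemma~\ref{lem:MflatprolimY}(ii), stated there for the $\D_V^{\op}$-structure, is in fact an isomorphism of pro-objects of $\BDC(\A^{\op})$ (which is clear from its proof, the transition morphisms all being induced by the $\D_X$-linear maps $\shm_i\to\shm_{i'}$); and that $\colim_i\ext[{\A^{\op}}]{-k}(-,P)$ is invariant under pro-isomorphism. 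The remaining ingredients are standard facts about $\D_X(*Y)$: it is coherent of finite global dimension, it is flat over $\D_X$, and its coherent modules are locally perfect.
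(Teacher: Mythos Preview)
Your proof is correct and follows essentially the same route as the paper: both arguments write $\shm(*Y)$ as a filtered colimit of coherent $\D_X$-modules, use biduality for perfect complexes to rewrite $P\ltens\shm_i$ as $\rhom(\shm_i^*,P)$, and then invoke the pro-isomorphism of Lemma~\ref{lem:MflatprolimY}(ii) to replace the derived dual by its $H^0$, whence the negative $\Ext$'s vanish. The only cosmetic differences are that the paper tests directly against an arbitrary $\shp\in\Mod(\D_X^\op)$ (replacing it by $\shp(*Y)$) rather than first reducing to flatness over $\A=\D_X(*Y)$ and testing against coherent right $\A$-modules, and that the paper writes $\shm_i^*=\hom[\D_X](\shm_i,\D_X)$ rather than the $\A$-dual --- but the underlying mechanism is identical.
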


\begin{proof}
The question being local,
we can write $\shm(*Y)\simeq \colim_i\shm_i$ with $\{\shm_i\}_{i\in I}$ a filtrant inductive system of coherent $\D_X$-modules. Set
\[
\shm_i^* \seteq \hom[\D_X](\shm_i,\D_X).
\]
Then $\hom[\D_X](\shm_i,\D_X(*Y)) \simeq \shm_i^*(*Y)$.
By Lemma~\ref{lem:MflatprolimY},
one has
\[
\prolim[i] \rhom[\D_X](\shm_i,\D_X(*Y))
\simeq \prolim[i]\shm_i^*(*Y)\quad
\text{in $\operatorname{Pro}(\BDC(\D_X^\op))$,}
\]
by shrinking $X$ if necessary.
Let $\shp\in\Mod(\D_X^\op)$. We have to show that, for $k< 0$,
\begin{equation}
\label{eq:tempHkP}
H^k\bl\shp\ltens[\D_X]\shm(*Y)\br \simeq 0.
\end{equation}
One has
\begin{align*}
H^k\bl\shp \ltens[\D_X]\shm(*Y)\br
&\simeq H^k\bl\shp(*Y)\ltens[\D_X]\shm(*Y)\br \\
&\simeq \colim_{i} H^k\bl\shp(*Y)\ltens[\D_X]\shm_i\br.
\end{align*}
Moreover,
\begin{align*}
\indlim[i] \shp(*Y)&{}\ltens[\D_X]\shm_i \\
&\simeq \indlim[i] \rhom[\D_X^\op]\bl\rhom[\D_X](\shm_i,\D_X),\;\shp(*Y)\br \\
&\simeq \indlim[i] \rhom[\D_X(*Y)^\op]\bl\rhom[\D_X]\bl\shm_i,\D_X(*Y)\br,\;\shp(*Y)\br \\
&\simeq \indlim[i] \rhom[\D_X(*Y)^\op]\bl\shm_i^*(*Y),\,\shp(*Y)\br.
\end{align*}
Hence we obtain
$$H^k\bl\shp \ltens[\D_X]\shm(*Y)\br\simeq
\colim_i H^k \rhom[\D_X^\op(*Y)]\bl\shm_i^*(*Y),\shp(*Y)\br,$$
which vanishes for $k<0$.
\end{proof}

Let us denote by $\dere(\D_X^\op)$ the category of
enhanced ind-sheaves on $X$ with $\D_X^\op$-action
(see \cite[{\S\;4.10}]{DK13} where $\dere(\D_X^\op)$
is denoted by $\dere^{\mathrm b}(\mathrm{I}\,\D_X^\op)$).

Consider the forgetful functor
\[
\operatorname{for} \colon \dere(\D_X^\op) \to \dere(X).
\]

\begin{lemma}\label{lem:EMXY0}
Let $c\in\R$, 
$X$ a complex manifold, $Y\subset X$ a complex analytic subset,
$K\in \dere(\D_X^\op)$, and $\shm$ a quasi-good $\D_X$-module.
Set $U=X\setminus Y$.
Assume
\begin{itemize}
\item[(a)]
$K\simeq \rihom(\opb\pi\field_U, K)$,
\item[(b)]
$\operatorname{for}(K) \in \dere^{\geq c}(X)$,
\item[(c)]
$\shm|_U$ is flat over $\D_U$.
\end{itemize}
Then,
\[
K \ltens[\D_X] \shm \in \dere^{\geq c}(X).
\]
\end{lemma}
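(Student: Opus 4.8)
The strategy is to reduce to the case where $\shm$ itself is flat over $\D_X$, and then to use a projective-limit argument exactly as in the proof of Proposition~\ref{pro:MYflat}. First I would observe that the question is local on $X$, so we may assume $Y$ is a hypersurface (cut $Y$ out by a single equation, or rather reduce to a hypersurface containing $Y$; enlarging $Y$ is harmless since assumption (c) only becomes stronger). By assumption (a), the functor $\opb\pi\field_U \tens(\ast)$ acts as the identity on $K$ after applying $\rihom(\opb\pi\field_U,\ast)$, and since $K$ has a $\D_X^\op$-action this forces $K$ to be a module over $\D_X(*Y)^\op$ in the appropriate sense; in particular $K\ltens[\D_X]\shm \simeq K\ltens[\D_X]\shm(*Y)$. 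By Proposition~\ref{pro:MYflat}, $\shm(*Y)$ is a flat $\D_X$-module. So it suffices to treat the case $\shm$ flat over $\D_X$ (and quasi-good), with $K\simeq\rihom(\opb\pi\field_U,K)$ and $\operatorname{for}(K)\in\dere^{\geq c}(X)$, and to conclude $K\ltens[\D_X]\shm\in\dere^{\geq c}(X)$.

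Next, write $\shm\simeq\colim_i\shm_i$ locally, with $\{\shm_i\}$ a filtrant inductive system of coherent $\D_X$-modules, as in Proposition~\ref{pro:MYflat}. Then
\[
K\ltens[\D_X]\shm \simeq \colim_i\bl K\ltens[\D_X]\shm_i\br,
\]
and since the standard t-structure on $\dere(X)$ is compatible with filtrant colimits of ind-sheaves (colimits are exact in $\ind(\C_{\bM\times\bR})$, hence $\LE$ and the truncation functors commute with them), it is enough to show each $K\ltens[\D_X]\shm_i$ is suitably bounded below — more precisely, to produce for each $i$ a morphism $i\to i'$ so that the image of $K\ltens[\D_X]\shm_{i'}\to K\ltens[\D_X]\shm_i$ lies in $\dere^{\geq c}(X)$, using the pro-vanishing $\prolim_i\ext[\D_X]k(\shm_i,\D_X(*Y))\simeq 0$ for $k\neq0$ from Lemma~\ref{lem:MflatprolimY}. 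For a fixed coherent $\shm_i$, resolve $\shm_i$ by free $\D_X$-modules and compute $K\ltens[\D_X]\shm_i$ via $\rihom[\D_X^\op]\bl\rhom[\D_X](\shm_i,\D_X),K\br$; the pro-vanishing of the higher $\ext$'s means that, up to passing to a cofinal $i'$, only $\hom[\D_X](\shm_i,\D_X(*Y))=\shm_i^*(*Y)$ contributes, which is a coherent $\D_X(*Y)^\op$-module, so $K\ltens[\D_X]\shm_i$ is computed (pro-)up to a locally finite resolution by copies of $K$. Since $\operatorname{for}(K)\in\dere^{\geq c}(X)$ and finite direct sums and cones of objects in $\dere^{\geq c}$ stay in $\dere^{\geq c}$ (the t-structure is classical on $\dere(X)$, and left-exactness passes to such finite complexes), one gets $K\ltens[\D_X]\shm_i\in\dere^{\geq c}(X)$ after truncating away the pro-zero part, whence the colimit lies in $\dere^{\geq c}(X)$.

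The main obstacle is the bookkeeping in passing from the pro-object statement of Lemma~\ref{lem:MflatprolimY} to an honest statement about $K\ltens[\D_X]\shm$: one must arrange a cofinal system of indices $i'$ along which the bad $\ext$-terms die, uniformly enough that applying $\rihom[\D_X^\op](\ast,K)$ and then taking $\colim$ is legitimate, and one must check that tensoring $K$ (which carries only a $\D_X(*Y)^\op$-action by (a)) with $\D_X(*Y)$-coefficient modules behaves as expected — i.e.\ that $K\ltens[\D_X]\D_X(*Y)\simeq K$. Once that compatibility is in place, the positivity conclusion is formal from the fact that $\operatorname{for}(K)$ is already in $\dere^{\geq c}(X)$ and that a finite iterated cone of shifts of such an object by non-negative amounts remains in $\dere^{\geq c}(X)$.
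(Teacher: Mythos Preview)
Your reduction to the flat case has two gaps.

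First, enlarging $Y$ to a hypersurface $Y'\supset Y$ does \emph{not} preserve assumption (a): from $K\simeq\rihom(\opb\pi\field_U,K)$ one cannot deduce $K\simeq\rihom(\opb\pi\field_{U'},K)$ for the smaller $U'=X\setminus Y'$. (Assumption (c) survives, as you say, but (a) does not.)

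Second, even when $Y$ \emph{is} a hypersurface, assumption (a) does not force $K$ to carry a $\D_X(*Y)^\op$-action, nor does it yield $K\ltens[\D_X]\shm\simeq K\ltens[\D_X]\shm(*Y)$. The condition $K\simeq\rihom(\opb\pi\field_U,K)\simeq\Eoim{j}\Eepb{j}K$ is about push-forward from the open set $U$, not about meromorphic localization; already for ordinary $\O_X$-modules $\roim{j}\opb{j}\shf$ and $\shf(*Y)$ differ (e.g.\ $\roim{j}\O_U$ contains essential singularities, not just poles). So the isomorphism you need is not available.

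The paper avoids both problems by taking a projective morphism $\varphi\colon X'\to X$ with $Y'\defeq\varphi^{-1}(Y)$ a hypersurface and $\varphi|_{U'}\colon U'\isoto U$, and pulling everything back via the transfer bimodule: setting
\[
K'\defeq\rihom\bl\opb\pi\C_{U'},\,\Eopb\varphi K\ltens[\opb\varphi\D_X]\D_{X\from X'}\br,
\qquad \shm'\defeq(\dopb\varphi\shm)(*Y'),
\]
one has $K\ltens[\D_X]\shm\simeq\Eoim\varphi(K'\ltens[\D_{X'}]\shm')$. Now $\shm'$ is honestly flat over $\D_{X'}$ by Proposition~\ref{pro:MYflat}, and $\operatorname{for}(K')\in\dere^{\geq c}(X')$ because $K'\simeq\Eoim{j'}(K|_U)$ with $\Eoim{j'}$ left exact. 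This is where assumption (a) is actually used.

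Once one is in the flat case, your pro-vanishing argument is essentially the paper's: the paper writes it more directly as
\[
H^k(K\ltens[\D_X]\shm)\simeq\indlim[i] H^k\rhom[\D_X^\op](\shm_i^*,K),
\]
which vanishes for $k<c$ since each $\shm_i^*$ is coherent and $\operatorname{for}(K)\in\dere^{\geq c}(X)$.
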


\begin{proof}
(i) Let $\varphi\colon X' \to X$ be a projective morphism such that $Y' \defeq \varphi^{-1}(Y)$ is a hypersurface, and $\varphi$ induces an isomorphism $
U'\seteq\varphi^{-1}(U) \isoto U$.
Set
\begin{align*}
K' &\defeq \rihom\bl
\pi^{-1}\C_{U'},\Eopb\varphi K \ltens[\opb\varphi\D_X] \D_{X\from X'}\br
\in \dere(\D_{X'}^\op) , \\
\shm' &\defeq (\dopb\varphi\shm)(*Y').
\end{align*}
Then we have $\operatorname{for}(K')\in\dere^{\geq c}(X')$.
Note that $\shm'$ is concentrated in degree zero. Moreover, by Proposition~\ref{pro:MYflat}, $\shm'$ is a flat $\D_{X'}$-module.
Since
\[
K \ltens[\D_X] \shm \simeq \Eoim\varphi(K' \ltens[\D_{X'}] \shm'),
\]
and since $\Eoim\varphi$ is left exact, we reduce to the case where $\shm$ is flat over $\D_X$.

(ii)
 Let $\shm$ be a quasi-good flat $\D_X$-module.
Let $\{\shm_i\}_{i\in I}$ be a filtrant inductive system of coherent $\D_X$-modules
such that $\shm\simeq \colim\limits_i\shm_i$. Set
\[
\shm_i^* \seteq \hom[\D_X](\shm_i,\D_X).
\]
Then Lemma~\ref{lem:MflatprolimY} implies that
\eqn
&&
\prolim[i] \rhom[\D_X](\shm_i,\D_X)
\simeq \prolim[i]\shm_i^*\quad
\text{in $\operatorname{Pro}(\BDC(\D_X^\op))$,}
\eneqn
by shrinking $X$ if necessary.
Hence one has
\begin{align*}
H^k(K \ltens[\D_X] \shm)
&\simeq \indlim[i] H^k(K \ltens[\D_X] \shm_i) \\
&\simeq \indlim[i] H^k\rhom[\D_X^\op](\rhom[\D_X](\shm_i,\D_X),K) \\
&\simeq \indlim[i] H^k\rhom[\D_X^\op](\shm_i^*,K) \simeq 0
\end{align*}
for $k<c$.
\end{proof}

\begin{proposition}\label{pro:EMXY}
Let $\ell\in\Z_{\geq 0}$, $c\in\R$,
$X$ a complex manifold, $Y\subset X$ a complex analytic subset,
$K\in \dere(\D_X^\op)$, and $\shm$ a quasi-good $\D_X$-module.
Set $U=X\setminus Y$.
Assume
\bna
\item
$\operatorname{for}(K) \in \dere^{\geq c}(X)$,
\item
$\operatorname{flat-dim}_{\D_{X,x}}(\shm_x) \leq \ell$ for any $x\in U$.
\ee
Then,
\[
\rihom(\opb \pi \cor_U,K) \ltens[\D_X] \shm \in \dere^{\geq c-\ell}(X).
\]
\end{proposition}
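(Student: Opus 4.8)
The plan is to argue by induction on $\ell$, reducing to Lemma~\ref{lem:EMXY0}, which is the case $\ell=0$. Since $\dere^0(X)$ is a stack (Proposition~\ref{prop:Estack}), membership in $\dere^{\geq n}(X)$ is local on $X$, and the resolution used below is constructed only locally; hence we may work locally on $X$ throughout. We first replace $K$ by $\rihom(\opb\pi\cor_U,K)$, which is harmless: one has $\rihom(\opb\pi\cor_U,\rihom(\opb\pi\cor_U,K))\simeq\rihom(\opb\pi\cor_U,K)$, the forgetful functor $\operatorname{for}$ is compatible with $\rihom(\opb\pi({-}),{-})$, and since $\cor_U\in\derd^{\leq0}(X)$, Lemma~\ref{lem:Etpiihom}~(ii) gives $\operatorname{for}\bl\rihom(\opb\pi\cor_U,K)\br\in\dere^{\geq c}(X)$. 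Thus from now on $K\simeq\rihom(\opb\pi\cor_U,K)$ and $\operatorname{for}(K)\in\dere^{\geq c}(X)$, and the goal is to show $K\ltens[\D_X]\shm\in\dere^{\geq c-\ell}(X)$.

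If $\ell=0$, then $\shm|_U$ is flat over $\D_U$ and the assertion is exactly Lemma~\ref{lem:EMXY0}. Assume $\ell\geq1$ and that the proposition holds for $\ell-1$. Since $\shm$ is quasi-good, locally on $X$ it is the filtrant union of its coherent $\D_X$-submodules; shrinking $X$, each such submodule is a quotient of a free $\D_X$-module of finite rank, and summing these presentations produces a short exact sequence of $\D_X$-modules
\[
0\to\shn\to\shl\to\shm\to0
\]
with $\shl$ flat over $\D_X$, and with $\shl$ and $\shn=\ker(\shl\to\shm)$ quasi-good. Restricting to $U$, the flatness of $\shl|_U$ together with hypothesis (b) give, by a standard dimension-shift argument, $\operatorname{flat-dim}_{\D_{X,x}}(\shn_x)\leq\ell-1$ for every $x\in U$.

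Applying $K\ltens[\D_X]({-})$ to this short exact sequence yields a distinguished triangle in $\dere(X)$
\[
K\ltens[\D_X]\shn\to K\ltens[\D_X]\shl\to K\ltens[\D_X]\shm\tone.
\]
By Lemma~\ref{lem:EMXY0} applied to $\shl$ one gets $K\ltens[\D_X]\shl\in\dere^{\geq c}(X)$, and by the induction hypothesis applied to $\shn$ (recall that $K\simeq\rihom(\opb\pi\cor_U,K)$) one gets $K\ltens[\D_X]\shn\in\dere^{\geq c-\ell+1}(X)$. For any $j<c-\ell$, both $H^j(K\ltens[\D_X]\shl)$ and $H^{j+1}(K\ltens[\D_X]\shn)$ vanish, so the long exact cohomology sequence of the triangle forces $H^j(K\ltens[\D_X]\shm)\simeq0$; that is, $K\ltens[\D_X]\shm\in\dere^{\geq c-\ell}(X)$, as desired.

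The main obstacle is the local construction of the short exact sequence $0\to\shn\to\shl\to\shm\to0$ with $\shl$ flat over $\D_X$ and both $\shl,\shn$ quasi-good: this rests on the structure of quasi-good $\D_X$-modules — namely that they are, locally, filtrant unions of coherent submodules admitting finite free presentations — together with the stability of the class of quasi-good $\D_X$-modules under arbitrary direct sums and under passage to sub-$\D_X$-modules. Everything else is either quoted from earlier results (chiefly Lemma~\ref{lem:EMXY0}) or routine homological bookkeeping.
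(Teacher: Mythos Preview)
Your proof is correct and follows essentially the same route as the paper: after replacing $K$ by $\rihom(\opb\pi\cor_U,K)$, one inducts on $\ell$, with Lemma~\ref{lem:EMXY0} as the base case and a dimension-shift via a short exact sequence $0\to\shn\to\shl\to\shm\to0$ with $\shl$ flat (the paper takes $\shl$ free) for the inductive step. The only cosmetic differences are that you justify the locality and the replacement of $K$ in more detail, and that you invoke Lemma~\ref{lem:EMXY0} for $\shl$ rather than using freeness of $\shl$ directly; both are fine.
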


\begin{proof}
Replacing $K$ with $\rihom(\opb \pi \cor_U,K)$,
we may assume that
$K\simeq \rihom(\opb \pi \cor_U,K)$ from the beginning.
We proceed by induction on $\ell$.
The case $\ell=0$ follows from Lemma~\ref{lem:EMXY0}.
Let $\ell>0$.
Then, there is locally a short exact sequence
\[
0 \to \shn \to \shl \to \shm \to 0,
\]
with a free $\D_X$-module $\shl$.
Hence $\shn$ is a quasi-good $\D_X$-module such that $\operatorname{flat-dim}_{\D_{X,x}}(\shn_x) \leq \ell-1$ for any $x\in U$.
One has $K \ltens[\D_X] \shn \in \dere^{\geq c-\ell+1}(X)$ by the
induction hypothesis.
Moreover, $K \ltens[\D_X] \shl \in \dere^{\geq c}(X)$ since $\shl$ is free.
One concludes by considering the distinguished triangle
\[
K \ltens[\D_X] \shl \to
K \ltens[\D_X] \shm \to
K \ltens[\D_X] \shn[1] \tone.
\]
\end{proof}

\subsection{Enhanced tempered holomorphic functions}

Let $X$ be a complex manifold.

\begin{proposition}\label{pro:t-OE}
One has $\OEn_X \in \tEp[1/2]{\geq d_X^\C}(X)$.
\end{proposition}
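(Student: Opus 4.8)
The statement unwinds, via Remark~\ref{rem:Ipk} and Lemma~\ref{lem:crg}, to: for every $k\in\Z_{\geq0}$ and every $Z\in\CS_X^{\leq k}$ one has $\Eepb{i_\bZ}\OEn_X\in\dere^{\geq d_X^\C-k/2}(\bZ)$, and by Lemma~\ref{lem:crg}~(iv)--(v) it suffices to check, for each $k$, that $\Eepb{i_{\inbordered S}}\OEn_X\in\dere^{\geq d_X^\C-k/2}(\inbordered S)$ for smooth locally closed subanalytic $S\subset X$ of real dimension $\le k$; shrinking $S$ (allowed by the same lemma) I may assume $S$ connected and CR-regular, i.e.\ $T_xS\cap iT_xS$ of locally constant complex dimension. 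The plan is to prove this by induction on $n\seteq d_X^\C$, the case $n=0$ being trivial since then $\OEn_X\in\dere^0(X)$ and $\mathsf m(0)=0$. Throughout I use that $\OEn_X$ is built from $\DbE_{X_\R}$ by solving the antiholomorphic $\overline\partial$-system (the $\D_{\overline X}$-module $\O_{\overline X}$), together with $\DbE_{X_\R}\in\dere^0(X_\R)$ (Proposition~\ref{pro:DbE}~(iii)).

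First I would dispose of the case where $S$ is contained in a proper complex analytic subset of $X$. Resolving singularities and shrinking, $S$ then lies in a complex submanifold $W\subsetneq X$, and Kashiwara's restriction theorem for tempered holomorphic functions in the enhanced setting (\cite{DK13}) gives $\Eepb{i_{\inbordered W}}\OEn_X\simeq\OEn_W[-c]$ with $c=d_X^\C-d_W^\C>0$. By the induction hypothesis $\OEn_W\in\tEp[1/2]{\geq d_W^\C}(W)$, hence $\OEn_W[-c]\in\tEp[1/2]{\geq d_W^\C+c}(W)=\tEp[1/2]{\geq n}(W)$, and then $\Eepb{i_{\inbordered S}}\OEn_X=\Eepb{i_{S\hookrightarrow W}}\Eepb{i_{\inbordered W}}\OEn_X\in\tEp[1/2]{\geq n}(\inbordered S)$ by Proposition~\ref{pro:tpt-roeimeopb}~(ii) (fibres are points, $d=0$). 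Lemma~\ref{lem: stp} then yields $\Eepb{i_{\inbordered S}}\OEn_X\in\dere^{\geq n-k/2}(\inbordered S)$, as wanted.

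The essential case is when $S$ is CR-generic of positive real codimension $c'\seteq 2n-k$ in $X$ (which forces $k\ge n$). Locally $S=g^{-1}(\R^{c'})$ for a holomorphic submersion $g\colon X\to\C^{c'}$ of complex relative dimension $r=n-c'=k-n$. Here I would invoke the $\D$-module machinery assembled just before the statement: using the compatibility of $\OEn$ with $\D$-module inverse image along $g$ together with base change, one reduces to the model case $X=\C^{c'}$, $S=\R^{c'}$, $g=\id$. The role of Proposition~\ref{pro:hdtrans} is that the transfer module $\D_{X\to\C^{c'}}$ has flat dimension $\le d_X^\C-\operatorname{rank}^\C(g)=r$ over $\D_X$, which is exactly the shift $[-\ell]$ entering Proposition~\ref{pro:EMXY}; Proposition~\ref{pro:MYflat} handles the auxiliary complex hypersurfaces introduced by resolutions. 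Applying Proposition~\ref{pro:EMXY} with $K$ built from $\DbE$ ($\in\dere^{\geq0}$ by Proposition~\ref{pro:DbE}~(iii)) and $\shm$ the relevant quasi-good $\D$-module, one obtains $\Eepb{i_{\inbordered{\R^{c'}}}}\OEn_{\C^{c'}}\in\dere^{\geq c'/2}(\inbordered{\R^{c'}})$, which is precisely $\dere^{\geq n-k/2}$ in the model case; the final base case $c'=1$, i.e.\ $\Eepb{i_{\inbordered\R}}\OEn_\C\in\dere^{\geq1}(\inbordered\R)$, is checked by the explicit fact that $\partial_{\overline z}$ is injective on distributions supported on $\R\subset\C$, using Propositions~\ref{pro:DbT} and \ref{pro:DbE}.

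I expect the main obstacle to be the CR-generic case: carrying out the identification of $\Eepb{i_{\inbordered S}}\OEn_X$ in terms of the $\D$-module inverse image along the submersion $g$, and then checking that the perversity defect $\mathsf m(k)=-k/2$ is matched exactly by the flat-dimension drop (half the real codimension) along a CR-generic stratum, all while keeping the bookkeeping consistent through the resolutions of the complex analytic sets and through the iterated use of Propositions~\ref{pro:hdtrans}, \ref{pro:MYflat} and \ref{pro:EMXY}. The other steps (the unwinding, the induction set-up, and the case of $S$ inside a complex submanifold) are comparatively routine given the results already established.
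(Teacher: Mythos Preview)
Your approach differs substantially from the paper's and has a genuine gap in the CR-generic case. The claim that a generic CR submanifold $S$ of real codimension $c'$ is locally $g^{-1}(\R^{c'})$ for a \emph{holomorphic} submersion $g\colon X\to\C^{c'}$ is false: such a description would force $S$ to be foliated by the complex fibres of $g$, hence Levi-flat, whereas a strictly pseudoconvex hypersurface (e.g.\ the unit sphere in $\C^2$) is generic but not Levi-flat. Without this local model your ``reduction to the model case $X=\C^{c'}$, $S=\R^{c'}$'' does not go through, and the flat-dimension bound for $\D_{X\to\C^{c'}}$ has nothing to attach to. (There is also a smaller slip in Case~1: $\Eepb{i_{\inbordered W}}\OEn_X$ is not $\OEn_W[-c]$ but $\D_{X\from W}\ltens[\D_W]\OEn_W[d_W^\C-d_X^\C]$, locally an infinite direct sum of copies of $\OEn_W[d_W^\C-d_X^\C]$; the perversity bound you want survives this, but the formula as written is wrong.)

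The paper avoids both the induction on $d_X^\C$ and the CR case split. For $Z\in\CS_X^{\leq k}$ it uses Lemma~\ref{lem:ZMred} to produce a real analytic manifold $N$ (of some dimension $\ell\ge k$) with a proper real analytic surjection $g\colon N\to Z$, then complexifies $N$ to a complex manifold $Y$ of complex dimension $\ell$ and extends $g$ to a holomorphic $f\colon Y\to X$. The direction is crucial: one works with a map \emph{into} $X$, so the formula $\Eepb f\OEn_X\simeq\D_{X\from Y}\ltens[\D_Y]\OEn_Y[\ell-d_X^\C]$ from \cite{DK13} applies directly, and since $N$ is the real form of $Y$ one has $\Eepb{i_N}\OEn_Y\simeq\ori_{N/Y}\tens\DbE_N[-\ell]$ with $\DbE_N\in\dere^0(N)$ by Proposition~\ref{pro:DbE}. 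The geometric input is that on the locus of $N$ mapping smoothly onto the $k$-dimensional part $Z_0$ one has $\operatorname{rank}^\C_w(f)=\dim_\C(T_xZ_0+iT_xZ_0)\ge k/2$; Proposition~\ref{pro:hdtrans} converts this into $\operatorname{flat-dim}_{\D_Y}\D_{X\from Y}\le\ell-k/2$ on an open set $V\supset W_0$, and Proposition~\ref{pro:EMXY} (with Proposition~\ref{pro:EopbVan} to descend back to $Z_0$) then yields exactly the degree bound $d_X^\C-k/2$. No hypothesis on the CR type of $Z$ is needed, and the argument is uniform in $k$.
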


\begin{proof}
By Lemma~\ref{lem:crg}, it is enough to show
that for any $k\in\Z_{\ge0}$ and any $Z\in \CS_{X_\R}^{\leq k}$ there exists
an open subanalytic subset $Z_0$ of $Z$ such that $\dim(Z\setminus Z_0) < k$ and
\begin{equation}
\label{eq:OEtprimetemp}
\enh i_{\inbordered{(Z_0)}}^{\ms{15mu}!}\OEn_X\in \dere^{\geq d^\C_X-k/2}(\inbordered{(Z_0)}).
\end{equation}
Since the question is local on $X$, we may assume
from the beginning that $Z$ is compact.
Let $Z_0$,
$W_0\subset N$, $\,\ell=d_N$ and $g\cl N\to M$
be as obtained by Lemma~\ref{lem:ZMred} below, for $M=X_\R$ the real analytic manifold underlying $X$.
There exists a complexification $Y$ of $N$
such that $g\cl N\to X$ extends to a holomorphic map $f\cl Y\to X$.

Then, $d_Y^\C = \ell$ and there is a commutative diagram

\[
\xymatrix@C=5em{
\llap{$\inbordered{(W_0)}\seteq$}(W_0,N)\ar[d]_{g_0}\ar[r]_-{i_ {\inbordered{(W_0)}}}
\ar@/^1.5pc/[rr]^j& N
\ar[r]_{i_N} & Y \ar[d]^f \\
\llap{$\inbordered{(Z_0)}\seteq$}(Z_0,Z)\ar[rr]^{i_{\inbordered{(Z_0)}}}&& X.
}
\]

Note that for any $w\in W_0$, setting $x=f(w)\in Z_0$, one has
\begin{equation}
\label{eq:rkf}
\operatorname{rank}_{w}^\C(f) = \dim^\C(T_{x} Z + \sqrt{-1} T_{x}Z)
\ge (\dim T_{x} Z)/2=k/2.
\end{equation}Set
$$ V\seteq\set{y\in Y}{\operatorname{rank}_{y}^\C(f)\ge k/2}.$$
Then $V$ is an open subset of $Y$ such that
$Y\setminus V$ is a closed complex analytic subset. Moreover $W_0\subset V$.
Hence Proposition~\ref{pro:hdtrans} implies
\eq
&&\hs{3ex}\on{flat-dim}_{\D_{Y,y}^\op}(\D_{X\from Y})\le d^\C_Y-k/2=\ell-k/2
\quad\text{for any $y\in V$.}\label{eq:flatdim}
\eneq

By Proposition~\ref{pro:EopbVan}, in order to see \eqref{eq:OEtprimetemp}
it is enough to show
\eq\label{eq:g0O}
\Eopb g_0\enh\, i_{\inbordered{(Z_0)}}^{\ms{15mu}!}\OEn_X
\in\dere^{\geq d^\C_X-k/2}(\inbordered{(W_0)}).
\eneq

Since $W_0\to Z_0$ is smooth, one has
\eqn
\Eopb g_0\enh\, i_{\inbordered{(Z_0)}}^{\ms{15mu}!}\OEn_X
&\simeq&\ori_{W_0/Z_0}\tens\Eepb g_0\enh\, i_{\inbordered{(Z_0)}}^{\ms{15mu}!}\OEn_X[d_{Z_0}-d_N]\\
&\simeq&\ori_{W_0/Z_0}\tens\enh\, i_{\inbordered{(W_0)}}^{\ms{15mu}!}
\Eepb i_N\Eepb f\OEn_X[k-\ell]\\
&\simeq&\ori_{W_0/Z_0}\tens\Eepb j\Eoim{i_N}\Eepb i_N\Eepb f\OEn_X
[k-\ell],
\eneqn
where $\ori_{W_0/Z_0}\seteq H^{k-\ell}(\epb{g_0}\C_{Z_0})$
is the relative orientation sheaf.

By \cite[Theorem 9.1.2]{DK13}, one has
\[
\Eepb f \OEn_X \simeq \D_{X\from Y} \ltens[\D_Y] \OEn_Y [d^\C_Y-d^\C_X].
\]
Moreover, denoting by $\ori_{N/Y} \simeq \epb{i_N}\C_Y[\ell]$
the relative orientation sheaf, one has
\begin{align*}
\epb{i_N}\OEn_Y
&\simeq \ori_{N/Y}\tens\DbE_N[-d_N].
\end{align*}
Thus, we obtain
\eqn
&&\ori_{W_0/Z_0}\tens\Eopb g_0\enh\, i_{\inbordered{(Z_0)}}^{\ms{15mu}!}\OEn_X\\
&&\hs{3ex}\simeq\Eepb j\Eoim{i_N}\Eepb i_N
\bl\D_{X\from Y} \ltens[\D_Y] \OEn_Y\br[k-d^\C_X]\\
&&\hs{3ex}\simeq \Eepb j\bl
\D_{X\from Y} \ltens[\D_Y]
\Eoim{i_N}(\ori_{N/Y}\tens\DbE_N)\br[k-d^\C_X-d_N]\\
&&\hs{3ex}\simeq \Eepb j\bl \D_{X\from Y} \ltens[\D_Y]
\rihom(\opb\pi\Cfield_{ V},\Eoim{i_N}(\ori_{N/Y}\tens\DbE_N))\br\\
&&\hs{52ex}[k-d^\C_X-\ell].
\eneqn
By Proposition~\ref{pro:DbE}, one has
$$\Eoim{i_N}(\ori_{N/Y}\tens\DbE_N)\in \dere^{\geq 0}(Y).$$
Hence Proposition \ref{pro:EMXY} and \eqref{eq:flatdim} implies that
\begin{align*}
\D_{X\from Y} \ltens[\D_Y]
\rihom\bl\opb\pi\Cfield_{ V},\Eoim{i_N}(\ori_{N/Y}\tens\DbE_N)\br
& \in \dere^{\geq k/2-\ell}(Y).
\end{align*}
Finally, we obtain \eqref{eq:g0O}.
\end{proof}

\begin{corollary}
One has $\Ot_X \in \Dp[1/2]{\geq d_X^\C}(X)$.
\end{corollary}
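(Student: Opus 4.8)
The plan is to deduce the statement from the already-proved enhanced version, Proposition~\ref{pro:t-OE}, by ``de-enhancing''. The key input from outside the excerpt is the identification, due to \cite{DK13}, of the tempered holomorphic functions with the $\fihom$ of the stable constant enhanced ind-sheaf against $\OEn_X$: one has a natural isomorphism
\[
\Ot_X \simeq \fihom(\Efield_X, \OEn_X) \quad\text{in } \derd(X),
\]
and $\OEn_X$ is stable. Granting this, the proof amounts to feeding the isomorphism into Lemma~\ref{lem:t-fihomEE'}, applied to the real analytic manifold underlying $X$ and to the middle perversity $\mathsf m(n) = -n/2$.

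The first step is to check that $\Efield_X$ lies in degree $0$ for the standard t-structure, so in particular $\Efield_X \in \dere^{\leq 0}(X)$. This is immediate from $\Efield_X \simeq e(\Cfield_X)$, where $e$ is the fully faithful exact functor of \eqref{eq:e} (Lemma~\ref{lem:t-enh}), together with $\Cfield_X \in \derd^{0}(X)$; alternatively one computes $\LE\Efield_X \simeq \Cfield_{\{t\gg 0\}}$, which is concentrated in degree $0$.

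The second step is to apply Lemma~\ref{lem:t-fihomEE'}. By Proposition~\ref{pro:t-OE} one has $\OEn_X \in \tEp[1/2]{\geq d_X^\C}(X)$, so the lemma, with parameters $c = 0$ and $c' = d_X^\C$, gives
\[
\Ot_X \simeq \fihom(\Efield_X, \OEn_X) \in \Dp[1/2]{\geq d_X^\C}(X),
\]
which is the assertion.

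Since this is a corollary of Proposition~\ref{pro:t-OE}, I do not expect any genuine obstacle: the only ingredient not established in the paper itself is the comparison $\Ot_X \simeq \fihom(\Efield_X, \OEn_X)$, which belongs to the foundational theory of \cite{DK13}. The one point deserving attention is that the perversity indices match with no shift, and this is precisely why it is worth recording that $\Efield_X$ sits in degree $0$ of the standard t-structure: that degree-$0$ placement is what ensures that $\fihom(\Efield_X, -)$ transports the bound $\tEp[1/2]{\geq d_X^\C}(X)$ for $\OEn_X$ to the bound $\Dp[1/2]{\geq d_X^\C}(X)$ for $\Ot_X$, with no degree correction.
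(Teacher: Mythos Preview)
Your proof is correct and follows exactly the paper's approach: the paper's proof is the single sentence ``Since $\Ot_X \simeq \fihom(\ECfield_X,\OEn_X)$, the statement follows from Proposition~\ref{pro:t-OE} and Lemma~\ref{lem:t-fihomEE'},'' and your argument is a fleshed-out version of this, with the helpful extra observation that $\Efield_X\in\dere^{\leq 0}(X)$ makes the degree bookkeeping in Lemma~\ref{lem:t-fihomEE'} come out with no shift.
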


\begin{proof}
Since $\Ot_X \simeq \fihom(\ECfield_X,\OEn_X)$, the statement follows from Proposition~\ref{pro:t-OE} and Lemma~\ref{lem:t-fihomEE'}.
\end{proof}

Here is the lemma which is used in the course of the proof of
Proposition~\ref{pro:t-OE}.

\begin{lemma}
\label{lem:ZMred}
Let $M$ be a real analytic manifold, and let $Z\in \CS_M^{\leq k}$ for $k\in\Z_{\ge0}$.
Assume that $Z$ is compact. Then there exist
\bnum
\item
an open subset $Z_0$ of $Z$ which is a real analytic submanifold of dimension $k$,
\item
a real analytic manifold $N$ of dimension $\ell\ge k$,
\item
a real analytic proper map $g\colon N\to M$,
\item
an open subanalytic subset $W_0$ of $N$
\ee
such that one has
\bna
\item
$\dim(Z\setminus Z_0) < k$,
\item
$g(N)=Z$, $g(W_0)=Z_0 $ and
$g$ induces a smooth morphism $W_0\to Z_0$ of real analytic manifolds.
\ee
\end{lemma}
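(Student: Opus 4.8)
The only substantial input will be the uniformization theorem for subanalytic sets (see \cite{Hi73,BM88}); everything else is elementary. Set $d\defeq\dim Z\le k$. Since $Z$ is a compact subanalytic subset of $M$, uniformization provides a compact real analytic manifold $P$ of dimension $d$ and a real analytic map $h\colon P\to M$ with $h(P)=Z$, and $h$ is automatically proper since $P$ is compact. I would then put $N\defeq P\times(S^1)^{k-d}$, a compact real analytic manifold of dimension $k$, and let $g\colon N\to M$ be the first projection followed by $h$, so that $g$ is real analytic and proper with $g(N)=Z$. If $d<k$, I would take $Z_0=W_0=\emptyset$: then (i)--(iv) hold, (a) holds because $\dim(Z\setminus Z_0)=\dim Z<k$, and (b) concerns only maps between empty sets. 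So the heart of the matter is the case $d=k$, where no torus factor is added and $N=P$, $g=h$.

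Assume $d=k$. First I would introduce $Z^{\mathrm{reg}}$, the set of points of $Z$ near which $Z$ is a real analytic submanifold of $M$ of dimension $k$; this is an open subanalytic subset of $Z$, non-empty because $\dim Z=k$, a real analytic submanifold of $M$ of dimension $k$, with $\dim(Z\setminus Z^{\mathrm{reg}})<k$. Next I would let $C\subset P$ be the closed subanalytic subset where $dh$ has rank $<k$ and set $\Sigma\defeq h(C)$, a compact subanalytic subset of $M$. Then $\Sigma\cap Z^{\mathrm{reg}}$ is the set of critical values of the real analytic map $h^{-1}(Z^{\mathrm{reg}})\to Z^{\mathrm{reg}}$ induced by $h$ between $k$-dimensional manifolds, so by Sard's theorem it has measure zero and hence is subanalytic of dimension $<k$. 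I would then set
\[
Z_0\defeq Z^{\mathrm{reg}}\setminus\Sigma, \qquad W_0\defeq h^{-1}(Z_0).
\]

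It remains to verify the listed properties. Since $\Sigma$ is closed in $M$ and $Z^{\mathrm{reg}}$ is open in $Z$, the set $Z_0$ is open in $Z$; it is a real analytic submanifold of $M$ of dimension $k$, being open in $Z^{\mathrm{reg}}$; and $Z\setminus Z_0\subset(Z\setminus Z^{\mathrm{reg}})\cup(Z^{\mathrm{reg}}\cap\Sigma)$ has dimension $<k$. This gives (i) and (a); $N$ has dimension $k\ge k$, which is (ii); and $g$ is a proper real analytic map, which is (iii) and (iv). For (b): $W_0$ is open and subanalytic in $N$; since $h(W_0)\subset Z_0$ is disjoint from $\Sigma=h(C)$, no point of $W_0$ lies in $C$, so $dh$ has rank $k$ at every point of $W_0$, whence $h|_{W_0}\colon W_0\to Z_0$ is a submersion between $k$-dimensional real analytic manifolds, i.e.\ a local diffeomorphism, in particular a smooth morphism; and $g(W_0)=h(W_0)=Z_0$ because $Z_0\subset Z=h(P)$. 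The one place where work is really needed is the appeal to uniformization; the rest is standard, the only mild point being the use of Sard's theorem to arrange that the resolution is a local diffeomorphism, rather than merely a proper surjection, over $Z_0$.
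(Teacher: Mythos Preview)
Your proof is correct and follows essentially the same approach as the paper's: both invoke the uniformization theorem to produce a proper real analytic map from a manifold onto $Z$, and both raise the dimension by multiplying by a compact factor (you use tori, the paper uses spheres). The paper's proof is only two sentences and leaves the construction of $Z_0$ and $W_0$ entirely to the reader; you spell this out explicitly via the regular locus and Sard's theorem. One could shorten your argument slightly by noting that since $dh$ has rank $<k$ on $C$, the subanalytic image $\Sigma=h(C)$ automatically has dimension $<k$, making the appeal to Sard and measure zero unnecessary---but your route is perfectly valid.
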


\Proof
It follows immediately from the existence of a
real analytic manifold $N$ and a
proper real analytic map $g\cl N\to M$ such that $g(N)=Z$.
Note that we may assume that $N$ is equidimensional,
by multiplying each connected component of $N$ with a sphere if necessary.
\QED

\subsection{Riemann-Hilbert correspondence}

Let $X$ be a complex manifold.
The enhanced de Rham and solution functors
\begin{align*}
\drE &\colon \BDC(\D_X) \to \dere(X), \\
\solE &\colon \BDC(\D_X)^\op \to \dere(X),
\end{align*}
are defined by
\begin{align*}
\drE(\shm) &\defeq \OvE_X \ltens[\D_X] \shm, \\
\solE(\shm) &= \rhom[\D_X](\shm,\OEn_X),
\end{align*}
where $\OvE_X \defeq \Omega_X \ltens[\O_X] \OEn_X$.

The Riemann-Hilbert correspondence of \cite[Theorem 9.5.3]{DK13} implies that these functors induce fully faithful functors
\eq\ba{lcr}
\drE &\colon& \BDC_\hol(\D_X) \to \Erc(X), \\
\solE &\colon& \BDC_\hol(\D_X)^\op \to \Erc(X).
\ea
\eneq

\begin{theorem}
The functors $\drE$ and $\solE{}[d_X^\C]$
are exact. That is, for any $c\in\R$ one has
\begin{align*}
\drE\bl \derd_\hol^{\leq c}(\D_X) \br \subset \Emid{\leq c}(X),
&\qquad \solE\bl \derd_\hol^{\leq c}(\D_X) \br \subset \Emid{\geq d_X^\C-c}(X), \\
\drE\bl \derd_\hol^{\geq c}(\D_X) \br \subset \Emid{\geq c}(X),
&\qquad \solE\bl \derd_\hol^{\geq c}(\D_X) \br \subset \Emid{\leq d_X^\C-c}(X).
\end{align*}
In particular, there are commutative diagrams of embeddings
\[
\xymatrix@C3em{
\Mod_\hol(\D_X) \ar@{ >->}[r]^{\drE} & \Emid0(X) \\
\Mod_\reghol(\D_X) \ar@{ >->}[u] \ar@{ >->}[r]^{\dr} & \Dmid0(\Cfield_X), \ar@{ >->}[u]
}
\hs{1ex}
\xymatrix@C3em{
\Mod_\hol(\D_X)^\op \ar@{ >->}[r]^{\solE} & \Emid{d_X^\C}(X) \\
\Mod_\reghol(\D_X)^\op \ar@{ >->}[u] \ar@{ >->}[r]^{\sol} & \Dmid{d_X^\C}(\Cfield_X). \ar@{ >->}[u]
}
\]
\end{theorem}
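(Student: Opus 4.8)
The plan is to reduce the four inclusions to the single statement that $\drE$ is right exact, and then to deduce this from Proposition~\ref{pro:t-OE} by a dévissage. I will use from \cite{DK13} the functorial isomorphisms $\Edual_X\circ\drE\simeq\drE\circ\ddual$ and $\solE(\shm)\simeq\drE(\ddual\shm)[-d_X^\C]$ on $\BDC_\hol(\D_X)$, so that $\solE(\shm)[d_X^\C]\simeq\Edual_X\drE(\shm)$. Recall also that $\ddual$ is an anti-autoequivalence of $\BDC_\hol(\D_X)$ with $\ddual\bl\derd_\hol^{\leq c}(\D_X)\br=\derd_\hol^{\geq -c}(\D_X)$, and that $\Edual_X$ interchanges $\Emid{\leq c}(X)$ with $\Emid{\geq -c}(X)$ by Theorem~\ref{thm:mide}~(ii) (since $\mathsf m^*=\mathsf m$). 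Hence, once I know $\drE\bl\derd_\hol^{\leq c}(\D_X)\br\subset\Emid{\leq c}(X)$ for every $c\in\R$, the inclusion $\drE\bl\derd_\hol^{\geq c}(\D_X)\br\subset\Emid{\geq c}(X)$ follows by applying it to $\ddual\shm$ and using $\Edual_X\drE(\shm)\simeq\drE(\ddual\shm)$, while the two inclusions for $\solE$ follow similarly from $\solE(\shm)[d_X^\C]\simeq\Edual_X\drE(\shm)$.

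To prove the right exactness of $\drE$, I first note that $\derd_\hol^{\leq c}(\D_X)=\derd_\hol^{\leq\lfloor c\rfloor}(\D_X)$ and $\Emid{\leq\lfloor c\rfloor}(X)\subset\Emid{\leq c}(X)$, so I may take $c\in\Z$; then, since $\Emid{\leq c}(X)$ is stable by extensions and by $[n]$ for $n\geq 0$, the triangles $\tau^{\leq c-1}\shm\to\shm\to H^c(\shm)[-c]\tone$ and an induction on the cohomological amplitude reduce the problem to proving that $\drE(\shm)\in\Emid{0}(X)$ for $\shm$ a single holonomic $\D_X$-module.

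The heart of the matter is then the claim that $\drE(\shn)\in\tEprc[1/2]{\geq 0}(X)$ for every holonomic $\D_X$-module $\shn$. Granting it and applying it to $\shm$ and to the holonomic module $\ddual\shm$, I obtain $\drE(\shm)\in\tEprc[1/2]{\geq 0}(X)$ and, since $\Edual_X\drE(\shm)\simeq\drE(\ddual\shm)\in\tEprc[1/2]{\geq 0}(X)$, also $\drE(\shm)\in\dEprc[1/2]{\leq 0}(X)$; by \eqref{eq:E'E''E} this gives $\drE(\shm)\in\dEprc[1/2]{\leq 0}(X)\cap\tEprc[1/2]{\geq 0}(X)\subset\Emid{\leq 0}(X)\cap\Emid{\geq 0}(X)=\Emid{0}(X)$, which finishes the reduction. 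To establish the claim I would use $\drE(\shn)\simeq\solE(\ddual\shn)[d_X^\C]$: since the shift by $d_X^\C$ carries $\tEprc[1/2]{\geq d_X^\C}(X)$ into $\tEprc[1/2]{\geq 0}(X)$, it is enough to see that $\solE(\ddual\shn)=\rhom[\D_X](\ddual\shn,\OEn_X)$ lies in $\tEp[1/2]{\geq d_X^\C}(X)$ ($\R$-constructibility being part of the Riemann--Hilbert correspondence). As $\ddual\shn$ is $\D_X$-coherent, it admits locally a finite free resolution, so $\solE(\ddual\shn)$ is locally represented by a bounded complex of finite direct sums of copies of $\OEn_X$ in non-negative cohomological degrees; by Proposition~\ref{pro:t-OE} each such copy lies in $\tEp[1/2]{\geq d_X^\C}(X)$, and $\tEp[1/2]{\geq d_X^\C}(X)$ is stable by finite direct sums, by $[-j]$ for $j\geq 0$, and by extensions — the conditions defining it being tested by the triangulated functors $\Eepb{i_\bZ}$ against the standard t-structure on $\dere(\bZ)$, which is stable by extensions — and it is moreover a local condition on $X$; this yields $\solE(\ddual\shn)\in\tEp[1/2]{\geq d_X^\C}(X)$.

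The substantive geometric input is Proposition~\ref{pro:t-OE}, which is already in hand, so the rest of the argument is essentially formal. The points that will still require care are this last verification — that the co-support condition defining $\tEp[1/2]{\geq d_X^\C}$ is inherited by a bounded complex of copies of $\OEn_X$ and is of local nature on $X$ — and, throughout the reduction, keeping the shifts by $d_X^\C$ and the directions of the duality functors $\ddual$ and $\Edual_X$ straight.
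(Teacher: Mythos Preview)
Your argument is correct and follows essentially the same route as the paper: reduce to a single holonomic module, use Proposition~\ref{pro:t-OE} to place $\solE(\shm)$ in $\tEprc[1/2]{\geq d_X^\C}(X)$, and then invoke the duality $\Edual_X\drE\simeq\drE\circ\ddual$ together with \eqref{eq:E'E''E} to obtain both bounds. The only difference is cosmetic: where the paper passes from $\OEn_X\in\tEp[1/2]{\geq d_X^\C}(X)$ to $\rhom[\D_X](\shm,\OEn_X)\in\tEprc[1/2]{\geq d_X^\C}(X)$ in one word (``Hence''), you spell this out via local finite free resolutions and the closure properties of $\tEp[1/2]{\geq d_X^\C}$, which is a perfectly good way to justify that step.
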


\begin{proof}
It is enough to show that for any $\shm\in\Mod_\hol(\D_X)$ one has
\[
\drE(\shm) \in \Emid{0}(X), \quad
\solE(\shm) \in \Emid{d_X^\C}(X).
\]\
\noi
(i)\ By the definition,
\begin{align*}
\solE(\shm) &\simeq \rhom[\D_X](\shm,\OEn_X).
\end{align*}
By Proposition~\ref{pro:t-OE},
\[
\OEn_X \in \tEp[1/2]{\geq d_X^\C}(X).
\]
Hence
\begin{align*}
\solE(\shm) &\in \tEprc[1/2]{\geq d_X^\C}(X) \subset \Eprc[1/2]{\geq d_X^\C}(X),
\end{align*}
where the inclusions follow from \eqref{eq:E'E''E}.
Then
\[
\drE(\shm)\simeq\solE(\ddual\shm)[d^\C_X]
\in\Eprc[1/2]{\geq 0}(X).
\]

\medskip\noi
(ii)\
Note that $\ddual\shm \in \Mod_\hol(\D_X)$. Moreover, by \cite[Theorem 9.4.8]{DK13},
\begin{align*}
\Edual_X\drE(\shm) &\simeq \drE(\ddual\shm).
\end{align*}
We thus get from (i)
\begin{align*}
\drE(\shm) &\in \Eprc[1/2]{\leq 0}(X), \quad\text{and hence}\\
\solE(\shm)&\simeq \drE(\ddual\shm)[-d^\C_X]\in\Eprc[1/2]{\leq d_X^\C}(X).
\end{align*}
\end{proof}

\end{document}